\pgfplotsset{compat=1.16}
\DeclareMathAlphabet\mathbfcal{OMS}{cmsy}{b}{n}
\newtheorem{lemma}{Lemma}[section]
\newtheorem{theorem}[lemma]{Theorem}
\newtheorem{corollary}[lemma]{Corollary}
\newtheorem{proposition}[lemma]{Proposition}
\newtheorem{question}[lemma]{Question}
\theoremstyle{definition}
\newtheorem{definition}[lemma]{Definition}
\theoremstyle{remark}
\newtheorem{remark}{Remark}
\theoremstyle{definition}
\newtheorem{thmx}{Theorem}
\newcommand{\C}{\mathbb{C}}
\newcommand{\D}{\mathbb{D}}
\newcommand{\N}{\mathbb{N}}
\newcommand{\R}{\mathbb{R}}
\newcommand{\Z}{\mathbb{Z}}
\newcommand{\cC}{\mathcal{C}}
\newcommand{\cH}{\mathcal{H}}
\newcommand{\cF}{\mathcal{F}}
\newcommand{\cK}{\mathcal{K}}
\newcommand{\cM}{\mathcal{M}}
\newcommand{\cR}{\mathcal{R}}
\newcommand{\cU}{\mathcal{U}}
\newcommand{\cS}{\mathcal{S}}
\newcommand{\cT}{\mathcal{T}}
\DeclareMathOperator{\Int}{int}
\renewcommand{\epsilon}{\varepsilon}
\renewcommand{\phi}{\varphi}
\renewcommand{\theta}{\vartheta}
\DeclareMathOperator{\Deg}{deg}
\date{\today}
\begin{document}

\title[Antiholomorphic correspondences and mating II]{Antiholomorphic correspondences and mating II: Shabat polynomial slices}

\author[M. Lyubich]{Mikhail Lyubich}
\address{Institute for Mathematical Sciences, Stony Brook University, NY, 11794, USA}
\email{mlyubich@math.stonybrook.edu}
\thanks{M.L. was partially supported by NSF grants DMS-1901357 and 2247613.}

\author[J. Mazor]{Jacob Mazor}
\email{n.jacobmazor@gmail.com}
\thanks{}

\author[S. Mukherjee]{Sabyasachi Mukherjee}
\address{School of Mathematics, Tata Institute of Fundamental Research, 1 Homi Bhabha Road, Mumbai 400005, India}
\email{sabya@math.tifr.res.in, mukherjee.sabya86@gmail.com}
\thanks{S.M. was supported by the Department of Atomic Energy, Government of India, under project no.12-R\&D-TFR-5.01-0500, an endowment of the Infosys Foundation, and SERB research project grant MTR/2022/000248.}

\begin{abstract}
We study natural one-parameter families of antiholomorphic correspondences arising from univalent restrictions of Shabat polynomials, indexed by rooted dessin d'enfants. We prove that the parameter spaces are topological quadrilaterals, giving a partial description of the univalency loci for the uniformizing Shabat polynomials. We show that the escape loci of our parameter spaces are naturally (real-analytically) uniformized by disks. We proceed with designing a puzzle structure (dual to the indexing dessin) for non-renormalizable maps, yielding combinatorial rigidity in these classes.
Then we develop a renormalization theory for pinched (anti-)polynomial-like maps in order to describe all combinatorial Multibrot and Multicorn copies contained in our connectedness loci (a curious feature of these parameter spaces is the presence of multiple period one copies). Finally, we construct locally connected combinatorial models for the connectedness loci into which the indexing dessins naturally embed.
\end{abstract}

\maketitle

\setcounter{tocdepth}{1}
\tableofcontents

\section{Introduction}\label{intro}

\subsection{Preamble}
Over the past decade, a new area of research, dynamics generated by Schwarz reflections in
quadrature domains, has emerged and flourished (see survey \cite{LM23}).
It gave a natural framework for an explicit  unification
of the two parts  of the Fatou-Sullivan  Dictionary
betweem  dynamics of (anti-)rational maps and actions of Kleinian (reflection) groups.
Namely, under certain circumstances, an (anti-)rational map can be {\em mated} with a
Kleinian reflection group to produce a Schwarz dynamics.
On some part of the phase space, called the {\em tiling set}, this dynamics looks like a group action,
while on the other, called the {\em non-escaping set}, it looks like dynamics of an anti-rational map (i.e., complex conjugate of a rational map).
Moreover, it can often be lifted to an ({\em anti-)algebraic  correspondence}
with similar features, realizing in this way Fatou's vision of a unified theory
of holomorphic dynamics (see \cite{Fat29}).

In our previous paper \cite{PartI}, this scenario was described in the case of anti-Hecke groups and  Bers-like parabolic anti-rational maps; these are anti-rational maps with a simply connected and fully invariant marked immediate parabolic basin, named in analogy with Kleinian groups on boundaries of Bers slices. 
The resulting Schwarz reflections are organized in varieties split into the bifurcation and stability loci in the spirit of the
familiar pictures for families of (anti-)rational maps.
And in fact, it was shown that there is a close explicit connection between these pictures.

In (anti-)holomorphic dynamics a prominent role has been played by natural
one-parameter slices of maps, like the complex quadratic or anti-quadratic families
producing the Mandelbrot and Tricorn sets.
They demonstrate various deep features in a visual way,
with many additional topological and analytic tools at one's disposal.
In particular, the description of the bifurcation loci become much more explicit for such families.

Previously, such a program was carried out for two families of quadratic Schwarz reflections,
the so called  {\em $C\& C$ family} and the {\em cubic Chebyshev family} \cite{LLMM1,LLMM2,LLMM3}.
In this paper we introduce  an infinite collection of natural one-parameter families,
associated with maps referred to as {\em Shabat polynomials},
and describe in quite a detail the structure of their bifurcation loci. 
The simplest example of such a one-parameter family is furnished by the cubic Chebyshev polynomial.

\subsection{Bijection between three types of conformal dynamical systems}\label{bijection_subsec}
The main result of \cite{PartI} provides a bijection between three different families of dynamical systems, which we now describe.

\subsubsection{Parabolic anti-rational maps $\cF_d$} The first of them is a family $\cF_d$ of degree $d\geq 2$ anti-rational maps $R$ with a connected, simply connected parabolic basin where $R$ is conformally conjugate to a canonical unicritical parabolic anti-Blaschke product (see Section~\ref{para_anti_rat_subsec} for a precise definition).

\subsubsection{Schwarz reflection maps $\cS_{\cR_d}$} The second object is a space of Schwarz reflection maps with connected non-escaping set generated by degree $d+1$ polynomials that are injective on $\overline{\D}$ and have a unique critical point on $\mathbb{S}^1$. It turns out that such Schwarz reflections present pinched anti-polynomial-like restrictions whose escaping dynamics is modeled by a piecewise-analytic parabolic circle covering $\mathcal{R}_d$ (in other words, $\mathcal{R}_d$ is the external map of the pinched anti-polynomial-like restrictions). The circle map $\mathcal{R}_d$ arises from the anti-Hecke group $\pmb{\Gamma}_d$, which is a discrete subgroup of the group of all
conformal and anti-conformal automorphisms of $\D$ generated by the rigid rotation by angle $2\pi/(d+1)$ and the reflections in the sides of a regular ideal hyperbolic $(d+1)-$gon in the disk. This space of Schwarz reflections is denoted by $\cS_{\cR_d}$ (see Section~\ref{antiFarey_subsec} for details).

\subsubsection{Antiholomorphic correspondences} The third family in question is that of antiholomorphic correspondences. Specifically, it is the
connectedness locus of the space of antiholomorphic correspondences generated by the circular reflection $\eta(z)=1/\overline{z}$ and the local deck transformations of degree $d+1$ polynomials $f$ that are injective on $\overline{\D}$ and have a unique critical point on $\mathbb{S}^1$. Such correspondences are obtained by lifting the Schwarz reflections in $\cS_{\cR_d}$ by the polynomials $f$. We denote this space of correspondences by $\widehat{\cS_{\cR_d}}$.

According to \cite[Theorems~A,~B]{PartI}, there is a natural bijection between the spaces $\cF_d$, $\cS_{\cR_d}$, and $\widehat{\cS_{\cR_d}}$ such that for each $R\in\cF_d$, the corresponding Schwarz reflection in $\cS_{\cR_d}$ (respectively, the antiholomorphic correspondence in $\widehat{\cS_{\cR_d}}$) is a quasiconformal mating between $R$ and the external map $\cR_d$ (respectively, between $R$ and the anti-Hecke group $\pmb{\Gamma}_d$). In particular, the anti-rational map $R\in\cF_d$ and the corresponding Schwarz reflection in $\cS_{\cR_d}$ are hybrid conjugate in neighborhoods of their non-escaping sets.

\subsection{Natural one-parameter families: Shabat polynomial slices}
 In this paper, we study one-parameter slices of $\cS_{\cR_d}$ such that the polynomials whose univalent restrictions to $\D$ generate the Schwarz reflections are \emph{Shabat}; i.e., they have two critical values in the plane (cf. \cite{BZ96,LZ04}). Affine equivalence classes of Shabat polynomials are completely determined by certain combinatorial bicolored plane trees called \emph{dessin d'enfants}. It turns out that the subspace of $\cS_{\cR_d}$ generated by Shabat polynomials having a fixed dessin is real two-dimensional. We extend this slice of $\cS_{\cR_d}$ in a natural way so as to include Schwarz reflections with disconnected non-escaping set. It turns out that the Schwarz reflections in this extended slice also have three critical values: a passive critical value in the tiling set, another passive critical value at the unique cusp of the quadrature domain, and a free/active critical value. The branched covering structure of such Schwarz reflections is also encoded in a combinatorial bicolored plane tree $\cT$, which is closely related to the dessin of the generating Shabat polynomial. We denote this space of Schwarz reflections by $S_\cT$ (see Section~\ref{belyi_schwarz_subsec}). In other words, the real two-dimensional parameter space $S_\cT$ of Schwarz reflections intersects $\cS_{\cR_d}$ in its connectedness locus $\cC(S_\cT)$.

The slice of $\cF_d$ that corresponds to the connectedness locus of $S_\cT$ under the bijection of Section~\ref{bijection_subsec} is given by the collection of \emph{Belyi} anti-rational maps (anti-rational maps with three critical values in $\widehat{\C}$) in $\cF_d$ such that the persistently parabolic fixed point is a passive/confined critical value and the associated dessin d'enfant is $\cT$ (see Section~\ref{belyi_anti_rat_subsec}). Note that Shabat polynomials are particular examples of Belyi rational maps. In this paper, we reserve the term \emph{Shabat} for the non-dynamical polynomials that uniformize the Schwarz reflections and the term \emph{Belyi} for dynamical anti-rational maps or Schwarz reflections.

\subsection{Main results}
Our first main theorem gives a complete description of the interior and the boundary of the parameter space $S_{\mathcal{T}}$. We use a combination of local and global properties of Schwarz reflection dynamics to study the space of Shabat polynomials that are injective on the closed unit disk. This leads to the following result.

\begin{thmx}\label{para_space_jordan_thm_intro}
The parameter space $S_\cT$ has a bounded Jordan domain interior and a closure which is a topological quadrilateral. Furthermore, the boundary arcs are all real analytic and can be described as
\begin{enumerate}[leftmargin=8mm]
    \item $\Gamma^\perp$, a straight line segment of parameters for which a free critical point escapes the quadrature domain in one step,
    \item $\Gamma^{\mathrm{hoc}}$, an arc for which the quadrature domain has a higher order cusp, and
    \item $\Gamma^{\mathrm{dp}}$, a pair of arcs for which the quadrature domain has a double point on the boundary.
\end{enumerate}
\end{thmx}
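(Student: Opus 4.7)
The plan is to view $S_\cT$ as an open region in a two-real-dimensional parameter space of normalized Shabat polynomials with dessin $\cT$ and to read off the boundary arcs from the only possible ways in which the Shabat/Schwarz structure can degenerate. Since Shabat polynomials with a fixed dessin form a single affine equivalence class, once the domain is prescribed to be $\overline{\D}$ there remains a two-real-parameter family — for instance, a complex scaling/rotation modulus after pinning down the cusp critical value at a canonical location. Every structural quantity (locations of critical points on $\mathbb{S}^1$, the image $f(\mathbb{S}^1)$, the free critical value, and so on) then depends real-analytically on this modulus, so I only need to identify where univalency on $\overline{\D}$ or the prescribed escape combinatorics breaks down.

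In the interior of $S_\cT$ the polynomial $f$ must be univalent on $\overline{\D}$, the free critical value of the Schwarz reflection must lie in $\Omega := f(\D)$ in a controlled manner, and the underlying branched-covering structure must match $\cT$. There are only three ways in which these conditions can marginally fail. First, a second critical point of $f$ may migrate onto $\mathbb{S}^1$ and collide with the pre-existing one, producing a higher-order cusp on $\partial\Omega$; this gives $\Gamma^{\mathrm{hoc}}$, cut out by a real-analytic discriminant-type equation. Second, two distinct boundary points of $\overline{\D}$ may map under $f$ to the same image, the classical double-point failure of univalency; the dessin combinatorics of $\cT$ prescribes exactly two ``folding directions'' along which such an identification can develop, yielding the pair of arcs $\Gamma^{\mathrm{dp}}$. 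Third, the free critical value may land on the distinguished component of $\partial\Omega$ across which the Schwarz reflection carries $\Omega$ onto the tiling-set side in one step; under my normalization this becomes a linear equation in the two real parameters, so $\Gamma^\perp$ is a straight line segment. Real-analyticity of each arc follows from the implicit function theorem applied to its defining real-analytic equation.

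Once the four arcs are in hand, I would glue them into the topological quadrilateral by tracking where consecutive arcs meet. I expect the two endpoints of $\Gamma^\perp$ to sit one on each component of $\Gamma^{\mathrm{dp}}$, and the two components of $\Gamma^{\mathrm{dp}}$ to terminate on $\Gamma^{\mathrm{hoc}}$, matching the combinatorial picture of two folding directions capped off by a cusp merger on one side and by one-step escape on the other. Transversality at the four corners, and hence a genuine quadrilateral boundary, again follows from the implicit function theorem applied to the pairs of defining equations.

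The hardest step is the global topological conclusion — that these four arcs really are the entire boundary of a single Jordan domain equal to $\mathrm{int}(S_\cT)$, with no extra components, no loops, and no stray boundary pieces. To achieve this I would first establish an a priori compactness estimate, using the fixed combinatorial type of $\cT$ together with uniform bounds on the area and perimeter of $\Omega$, to show that $S_\cT$ is bounded in parameter space. Next I would appeal to the rigidity of Shabat polynomials under deformations preserving $\cT$ in order to rule out self-intersections of $\Gamma^{\mathrm{hoc}}$ and of the $\Gamma^{\mathrm{dp}}$ arcs. Finally I would run a continuous deformation/degree argument to show that every parameter in $S_\cT$ can be connected to a reference parameter without crossing any boundary arc, giving connectedness of the interior. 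Together with the local analysis at the four corners — verifying that only the expected two arcs meet there — this completes the identification of $\overline{S_\cT}$ as a topological quadrilateral bounded by $\Gamma^\perp$, $\Gamma^{\mathrm{hoc}}$, and the two components of $\Gamma^{\mathrm{dp}}$.
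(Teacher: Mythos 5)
Your identification of the three degeneration modes (higher-order cusp, boundary double point, free critical value on $\partial\Omega_a$) and of $\Gamma^\perp$ as a line segment is correct, and matches the paper's setup where the parameter is the center $a$ of the disk $\Delta_a=B(a,|\pmb{v_b}-a|)$. But the proposal leans on the implicit function theorem and a vague ``rigidity of Shabat polynomials'' in exactly the places where the paper has to work hardest, and those tools do not suffice. First, for $\Gamma^{\mathrm{hoc}}$: the higher-order-cusp condition is real-algebraic, but a real-algebraic curve can have many components and singular points, and the IFT tells you nothing about which portion of that variety actually lies in $\overline{S_\cT}$ or whether it is connected. The paper proves $\Gamma^{\mathrm{hoc}}$ is a single closed real-analytic arc \emph{dynamically}: it first constructs one $(5,2)$-cusp parameter by surgery, sweeps out an arc by quasiconformal deformation parametrized by the critical \'Ecalle height, and then uses a rigidity argument (conformal removability of the limit set plus a pullback) to show every $(5,2)$-cusp parameter lies on that arc and that the two endpoints are the only $(7,2)$-cusp parameters. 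Nothing in your discriminant/IFT approach rules out $\Gamma^{\mathrm{hoc}}$ being disconnected or having extra strata.

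Second, your ``continuous deformation/degree argument'' for connectedness of $\mathrm{int}(S_\cT)$, and your claim that the dessin ``prescribes exactly two folding directions'' giving two components of $\Gamma^{\mathrm{dp}}$, are both asserted rather than proved, and the paper obtains these facts by a quite different mechanism. The key geometric observation you are missing is the nesting of the disks $\Delta_a$ along rays emanating from the marked boundary point $\pmb{v_b}$: if $\pmb{f}$ is univalent on $\overline{\Delta_a}$ it is univalent on $\overline{\Delta_{a'}}$ for $a'$ on the segment from $\pmb{v_b}$ to $a$. This ``projection argument'' shows each component of the interior is a Jordan domain whose boundary contains a nondegenerate subinterval of $\overline{\Gamma^\perp}$, hence at least two points of $\overline{\Gamma^\perp}\cap\overline{\Gamma^{\mathrm{dp}}}$. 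The decisive step is then a counting lemma: $\overline{\Gamma^\perp}\cap\overline{\Gamma^{\mathrm{dp}}}$ contains at most two points, proved by showing that for such a parameter the free critical value is a fixed point on $\partial T_b^0(\sigma_a)$ and that two such maps with the same cyclic order of the three distinguished boundary points are M\"obius conjugate (a full quasiconformal pullback argument using that the non-escaping set is a zero-area Cantor-like set). Connectedness of the interior, and the quadrilateral structure with exactly two $\Gamma^{\mathrm{dp}}$ arcs, drop out of this count. Without a substitute for that rigidity lemma your global topology argument has no way to exclude several interior components or extra boundary pieces.
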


We refer the reader to Section~\ref{conn_para_space_sec} for the precise definitions of the sets $\Gamma^{\mathrm{hoc}}$, $\Gamma^{\perp}$, and $\Gamma^{\mathrm{dp}}$. The proof of Theorem~\ref{para_space_jordan_thm_intro} is given in Theorem~\ref{bdry_thm} and Theorem~\ref{para_space_conn_thm}.

The real two-dimensionality of $S_\cT$ is related to the fact that each Schwarz reflection in this family has a unique free critical value. We refer to the complement of the connectedness locus $\cC(S_\cT)$ in $S_\cT$ as the \emph{escape locus}, and note that for maps in the escape locus, the free critical value eventually escapes the quadrature domain and never lands on the unique cusp on its boundary. The conformal model $\cR_d$ of the escaping dynamics of Schwarz reflections in $S_\cT$ allows us to record the conformal position of the escaping free critical value. We leverage this to give an explicit uniformization of the escape locus.

\begin{thmx}\label{escape_unif_thm_intro}
The conformal position of the critical value defines a homeomorphism from the escape locus $S_{\mathcal{T}}\setminus\cC(S_\cT)$ onto a simply connected domain.
In particular, the connectedness locus is connected.
\end{thmx}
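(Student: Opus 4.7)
The plan is to build an explicit uniformization $\Phi: S_\cT\setminus\cC(S_\cT) \to \Omega$ by the conformal position of the free critical value, show it is a homeomorphism onto a simply connected $\Omega$, and then read off the connectedness of $\cC(S_\cT)$ from the topological picture. For $S$ in the escape locus, the free critical value $v(S)$ eventually enters the tiling (escape) region, on which the dynamics of $S$ is canonically real-analytically conjugate to the model circle map $\cR_d$ by the construction underpinning the bijection of Section~\ref{bijection_subsec}. I would define $\Phi(S)$ to be the image of the first iterate of $v(S)$ lying in a canonically chosen ``first tile'' under this external conjugacy, so that $\Phi$ lands in an explicit subdomain $\Omega_0$ of the escape model of $\cR_d$. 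Continuity of $\Phi$ is a standard holomorphic-motion / $J$-stability argument: on the escape locus the conjugacy to $\cR_d$ depends real-analytically on the parameter.

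Injectivity I would prove by a quasiconformal rigidity argument in the style of the Douady--Hubbard uniformization of $\C \setminus M$. If $\Phi(S_1)=\Phi(S_2)$, the composition of the two external conjugacies gives a real-analytic conjugacy between the tiling-set dynamics of $S_1$ and $S_2$ that matches free critical orbits; since the non-escaping sets here carry no internal dynamics to reconcile, this conjugacy extends by pullback to a global quasiconformal conjugacy between the two Schwarz reflections respecting the dessin $\cT$. The Measurable Riemann Mapping Theorem, combined with the fact that $S_\cT$ is parametrized by the position of a single free critical value, then forces $S_1=S_2$. Openness of $\Phi$ is the converse Ahlfors--Bers deformation: a small Beltrami perturbation supported in a single tile of the escape region realizes any prescribed infinitesimal displacement of $\Phi(S)$, producing a local inverse.

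The surjectivity step, which I expect to be the main obstacle, reduces to proving properness and identifying the image. Properness requires matching each piece of $\partial (S_\cT\setminus\cC(S_\cT))$ with a distinct piece of $\partial\Omega_0$: as a parameter approaches $\cC(S_\cT)$ the critical value exits the escape region of $\cR_d$; on $\Gamma^\perp$ the critical value has just landed on the first-escape frontier, which is a one-step boundary in the model; and on $\Gamma^{\mathrm{hoc}}$ and $\Gamma^{\mathrm{dp}}$ the quadrature domain degenerates (higher-order cusp, resp.\ pinched double point), which translates into $\Phi$ tending to the corresponding ideal pieces of a fundamental region for $\cR_d$. Each of these three degeneration modes is genuinely different and must be analyzed separately, matching local normal forms of Schwarz reflections with local normal forms of parabolic circle covers; this compatibility is the crucial technical heart of the argument. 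Assembling these pieces, $\Phi$ is a proper local homeomorphism onto the explicit sub-region $\Omega \subseteq \Omega_0$ cut out by these four boundary conditions, and $\Omega$ is simply connected by inspection of its description.

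Finally, the connectedness of $\cC(S_\cT)$ follows as a topological consequence. By Theorem~\ref{para_space_jordan_thm_intro}, the closure $S_\cT$ is a topological closed disk; the uniformization just obtained exhibits the escape locus as an open Jordan subdomain whose boundary in $S_\cT$ meets $\partial S_\cT$ exactly along $\Gamma^\perp$ and is otherwise a single interior Jordan arc. The complementary set $\cC(S_\cT)$ is therefore bounded by $\Gamma^{\mathrm{hoc}}\cup\Gamma^{\mathrm{dp}}$ together with that inner arc, and so is connected (in fact a closed topological disk) by the Jordan curve theorem applied inside $S_\cT$.
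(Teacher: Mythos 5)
Your overall architecture (define the map by the conformal position of the critical value in the $\cR_d$-model, prove continuity, injectivity by a pullback/rigidity argument, properness by a case analysis on the boundary strata) is the paper's architecture, but there are two genuine gaps and one methodological divergence worth flagging.

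First, your definition of $\Phi$ as ``the image of the first iterate of $v(S)$ lying in a canonically chosen first tile'' loses exactly the information that makes the map injective. If you record only the position of $\sigma_a^{\circ n(a)}(y_f)$ inside the fundamental tile, the target is $\mathcal{Q}_1$ and the map forgets both the depth $n(a)$ and the address of the tile of $T^\infty(\sigma_a)$ actually containing $y_f$; parameters of different depths would then collide. The paper's map (Theorem~\ref{escape_unif_thm}) is $\pmb{\Psi}(a)=\psi_a^{-1}(y_f)$, the position of the critical value \emph{itself} under the partial conjugacy of Lemma~\ref{schwarz_group}; it lands in $\Int{(\mathcal{Q})}\setminus\mathcal{Q}_1$ in a tile of the model of the same rank and address, and this is what encodes the depth. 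Relatedly, your openness step (a Beltrami perturbation in one tile realizing any infinitesimal displacement) is precisely the Nakane-style argument the paper explicitly declines to use: the escape locus is \emph{not} a single quasiconformal deformation class (depth is a conjugacy invariant), and even within a depth stratum one would have to prove surjectivity of the derivative of the deformation. The paper instead derives openness for free from injectivity plus Invariance of Domain, and your injectivity sketch is essentially the paper's (though note the technical input you omit: extending the conformal conjugacy on finitely many tiles to a global quasiconformal map requires asymptotic linearity of the conjugacy at the $(3,2)$-cusp and its preimages, and the Weyl's-lemma step requires that the Cantor non-escaping sets have zero area).

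Second, your deduction of connectedness of $\cC(S_\cT)$ is not valid as stated: you assume the boundary of the escape locus inside $\Int{S_\cT}$ is ``a single interior Jordan arc,'' but that boundary is $\partial\cC(S_\cT)$, a fractal set whose topology is exactly what is under investigation, and the Jordan curve theorem cannot be applied to it. The correct (and standard Douady--Hubbard style) deduction is: the complement of $\cC(S_\cT)$ in $\widehat{\C}$ is the union of the exterior of the closed disk $\overline{S_\cT}$ with the escape locus, glued along $\Gamma^{\perp}\cup\overline{\Gamma^{\mathrm{dp}}}$; since the escape locus is homeomorphic to the simply connected domain $\Int{(\mathcal{Q})}\setminus\mathcal{Q}_1$, this complement is a connected, simply connected open subset of the sphere, and hence its complement $\cC(S_\cT)$ is connected. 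A final minor point: $\Gamma^{\mathrm{hoc}}$ is not a separate degeneration mode for properness, since $\Gamma^{\mathrm{hoc}}\subset\overline{H}\subset\cC(S_\cT)$ (Proposition~\ref{hoc_bdry_primary}); approaching it falls under the case of accumulation on the connectedness locus, where $\pmb{\Psi}$ tends to $\partial\mathcal{Q}$.
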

\noindent (See Theorem~\ref{escape_unif_thm} for a precise statement.)

We move on to develop puzzle pieces and a renormalization theory for Schwarz reflections in $S_\cT$. Since the maps in $S_\cT$ have one free critical value, it comes as no surprise that the connectedness locus contains many little `copies' of appropriate Multibrot and Multicorn sets.

\begin{thmx}\label{renorm_locus_thm_intro}
The connectedness locus $\cC(S_\cT)$ contains infinitely many homeomorphic copies of Multibrot sets and combinatorial copies of Multicorn sets (which are the connectedness loci of unicritical holomorphic and antiholomorphic polynomials). 
Any parameter not contained in these homeomorphic/combinatorial copies of Multibrot/Multicorn sets is combinatorially rigid.
\end{thmx}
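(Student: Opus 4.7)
The plan is to follow the classical renormalization paradigm of Douady--Hubbard, adapted to the pinched (anti-)polynomial-like setting developed in the main body of the paper. First, I would exploit the puzzle structure dual to the dessin $\cT$ that is announced in the abstract: puzzle pieces are bounded by arcs of the boundary of the quadrature domain together with equipotentials and dynamical rays landing at distinguished boundary points dictated by $\cT$. The depth-zero partition mirrors $\cT$, with one tile per vertex and one edge per common boundary segment, and deeper pieces are obtained by pullback under the Schwarz reflection $F\in\cC(S_\cT)$.

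Second, for each critical puzzle piece of period $p$ whose first return map near the free critical point has degree $k\geq 2$, I would check that the $p$-th iterate, after a reflection-induced extension across the relevant arc of the quadrature domain, defines a pinched (anti-)polynomial-like map, holomorphic or antiholomorphic according to the parity of $p$. Applying the straightening theorem for such maps (part of the renormalization theory developed in the body of the paper) produces a unicritical polynomial $z^k+c$ or antipolynomial $\overline{z}^k+c$, well-defined up to hybrid equivalence. This gives, for each combinatorial renormalization type, a map $\chi$ from the corresponding renormalization locus in $\cC(S_\cT)$ to the Multibrot set $\cM_k$ or the Multicorn set $\cM_k^\ast$.

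Third, to upgrade $\chi$ to a homeomorphism onto $\cM_k$ in the holomorphic case, I would install a continuous Douady--Hubbard-style tubing over the parameter slice. Together with uniqueness of the hybrid class representative this yields injectivity, while surjectivity comes from a quasiconformal surgery producing a Schwarz reflection in $S_\cT$ that realizes any prescribed hybrid class within the allowed combinatorics. In the antiholomorphic case the continuity of straightening fails along parabolic arcs of the Multicorn (Hubbard--Schleicher, Inou--Mukherjee), so only a \emph{combinatorial} copy can be extracted; I would transplant their argument to our setting via the natural real-analytic parametrization of parabolic arcs of $\cM_k^\ast$.

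Finally, combinatorial rigidity at non-renormalizable parameters would follow from a Yoccoz-style puzzle argument: a principal nest analysis shows that the nested critical puzzle pieces shrink to a point, and a pullback/$\lambda$-lemma argument then upgrades a combinatorial conjugacy to a quasiconformal one, which must be conformal by the absence of invariant line fields on the non-escaping set. The main obstacle I anticipate is this shrinking step: the parabolic character of the external circle map $\cR_d$ and the presence of cusp and pinch singularities on the boundary of the quadrature domain mean that the classical Branner--Hubbard--Lyubich moduli estimates do not apply off the shelf, so one needs to adapt the Kahn--Lyubich quasi-additivity/covering machinery to the pinched (anti-)polynomial-like category, organized around the combinatorics of $\cT$. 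This technical adaptation will be the heart of the argument.
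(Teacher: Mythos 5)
Your overall skeleton (dessin-dual puzzles, first-return maps on critical pieces, straightening, discontinuity in the antiholomorphic case, rigidity off the renormalization loci) matches the paper's, but there are three concrete places where the proposal either diverges in a way that matters or leaves the load-bearing step unproven. First, the rigidity step. You propose to build a Yoccoz/principal-nest argument with Kahn--Lyubich quasi-additivity adapted to the pinched setting, and you correctly identify this as the heart of the matter --- but you do not carry it out, and the paper deliberately does not go this route. It imports local connectivity, absence of invariant line fields, and rational-lamination rigidity for periodically repelling, finitely renormalizable Schwarz reflections wholesale from the complex box mapping machinery of Clark--Drach--Kozlovski--van Strien as packaged in \cite{LLM24} (Theorem~\ref{fin_renorm_geom}); the paper's own contribution is purely combinatorial: the depth-$k$ dessins $\cT^k(a)$ (contact graphs of the components of $\sigma_a^{-k}(\Omega_a)$) together with the kneading sequences determine the pre-cuspidal lamination, and Proposition~\ref{rat_lam_density_prop} shows the pre-cuspidal lamination is dense in the rational lamination for non-renormalizable maps, so the cited rigidity theorem applies. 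As written, your proof of the second sentence of the theorem rests entirely on an unexecuted adaptation of a priori bounds to a new category, which is a genuine gap rather than a routine verification.

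Second, the pinched period-one renormalization does not straighten to a unicritical antipolynomial $\overline z^k+c$. When the critical orbit stays forever in the preimage component $\Omega^1_{1,a}$ touching the cusp, the return map is a \emph{pinched} anti-polynomial-like map and the Parabolic Straightening Theorem of \cite{PartI} sends it to a parabolic anti-rational map in $\cF_{d_1}$; the target parameter space is the parabolic Multicorn $\cM_{d_1}^{\mathrm{par},-}$ (Theorem~\ref{PinchedMulticorn}), and one must separately handle the parameters on $\Gamma^{\mathrm{hoc}}$, which map to double-parabolic points. You also miss that there are several period-one loci, one for each white vertex of valence at least two, of which only the one attached to the cusp is pinched. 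Third, your surjectivity mechanism (``a quasiconformal surgery producing a Schwarz reflection realizing any prescribed hybrid class'') is not available off the shelf in a real two-dimensional family; the paper instead realizes all postcritically finite parameters of the target Multibrot/Multicorn by tuning augmented Hubbard trees (Lemma~\ref{tuning_trees_lem}), invoking Poirier's realization theorem and the David surgery of \cite[Theorem~C]{PartI}, and then upgrades to almost-surjectivity via quasiconformal closedness of the image (Proposition~\ref{str_image_qc_closed_prop}). Injectivity in both the ordinary and pinched cases is obtained by Inou--Kiwi-style pullback arguments, which your tubing/uniqueness-of-hybrid-class argument approximates only in the unpinched holomorphic case.
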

\noindent (See Theorem~\ref{multicorn_thm} and Theorem~\ref{PinchedMulticorn} for precise stateements.)

Interestingly there are multiple period $1$ renormalization loci that are combinatorially equivalent to Multicorn sets, a departure from the unicritical case.

In light of discontinuity of straightening maps in antiholomorphic dynamics (cf. \cite{IM21}), the combinatorial copies of Multicorns in $\cC(S_\cT)$ may not be upgraded to genuine homeomorphic copies, at least via dynamically natural homeomorphisms.

Finally, Theorem~\ref{escape_unif_thm_intro} gives a tessellation of the escape locus by dynamically natural tiles, which comes from the tiling of the hyperbolic plane under the action of the anti-Hecke group. This, in turn, permits one to construct external parameter rays as dual to the dynamically natural tessellation; equivalently, the external parameter rays of $S_\cT$ arise from a Cayley graph of the anti-Hecke group.  
The above structure of the escape locus can be regarded as an analog of the foliation of the exterior of the Mandelbrot set or the Tricorn by external rays and equipotentials. This facilitates the implementation of Douady's philosophy `plough in the dynamical plane and harvest in the parameter plane' in our family. Specifically, we study accumulation properties of external parameter rays of $S_\cT$ (which exhibit subtle differences from the classical unicritical setting), puzzle pieces and combinatorial rigidity arguments to provide a topological description of the connectedness locus $\cC(S_\cT)$.

\begin{thmx}\label{conn_locus_model_thm_intro}
There exists a model $\widetilde{\cC_\cT}$ of $\cC(S_\cT)$ given by collapsing appropriate combinatorial classes, and this model is homeomorphic to the quotient of a disk by a closed geodesic lamination. This lamination is given by taking appropriate pullbacks of the dual lamination to $\cT$, taking the closure, and then ``filling in the gaps" by appropriately tuned laminations for Multibrot and Multicorn sets.
Furthermore, there is a natural embedding of the dessin $\cT$ in this abstract connectedness locus.
\end{thmx}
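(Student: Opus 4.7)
The plan is to combine the escape-locus uniformization of Theorem~\ref{escape_unif_thm_intro} with the combinatorial rigidity results of Theorem~\ref{renorm_locus_thm_intro} in the style of Douady and Hubbard's abstract combinatorial model of the Mandelbrot set. By Theorem~\ref{escape_unif_thm_intro} the escape locus $S_\cT \setminus \cC(S_\cT)$ is homeomorphic to a simply connected domain, and the Cayley tessellation of $\D$ under the anti-Hecke group $\pmb{\Gamma}_d$ pulls back to a tessellation of the escape locus whose dual structure yields external parameter rays $\cR_\theta$ indexed by angles $\theta$ on a circle. A ray $\cR_\theta$ will be called \emph{rational} if $\theta$ is eventually periodic under the combinatorial action of $\cR_d$. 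The first key step is to prove a parameter-ray landing theorem: every rational $\cR_\theta$ lands at a point of $\partial \cC(S_\cT)$, and every parabolic or Misiurewicz parameter is the landing point of finitely many rational rays. The proof is a Yoccoz-style parapuzzle argument, adapted to the pinched anti-polynomial-like setting, that ports the dynamical puzzles underlying Theorem~\ref{renorm_locus_thm_intro} to the parameter plane via Douady's principle.

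Using this landing theorem, declare two parameters of $\cC(S_\cT)$ to be \emph{combinatorially equivalent} when no pair of co-landing rational rays separates them, and let $\widetilde{\cC_\cT}$ denote the resulting quotient; upper-semicontinuity of co-landing makes $\widetilde{\cC_\cT}$ a compact Hausdorff space. On the lamination side, begin with the geodesic lamination $\Lambda_0$ on $\overline{\D}$ dual to the plane tree $\cT$, i.e., the one in which each edge of $\cT$ is replaced by the chord joining the two angles that separate it from the rest of $\cT$. Take all pullbacks $\Lambda_n$ of $\Lambda_0$ under the combinatorial dynamics of $\cR_d$, let $\Lambda_\infty = \overline{\bigcup_n \Lambda_n}$, and in every gap of $\Lambda_\infty$ whose corresponding region of $\cC(S_\cT)$ is a Multibrot or a (combinatorial) Multicorn copy supplied by Theorem~\ref{renorm_locus_thm_intro}, insert the tuned Thurston lamination of that copy. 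The result is a closed geodesic lamination $\Lambda$ on $\overline{\D}$.

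Next I would define $\Phi : \overline{\D}/\Lambda \to \widetilde{\cC_\cT}$ by sending the class of $\theta \in \mathbb{S}^1$ to the landing point of $\cR_\theta$, and extending to leaves and non-trivial gaps via these landing identifications together with the straightening homeomorphisms on the Multibrot and Multicorn copies. Continuity and well-definedness follow from the fact that co-landing of rational rays is, by construction, recorded exactly by the leaves of $\Lambda$; surjectivity is immediate from accessibility of $\partial \cC(S_\cT)$ by rational rays; injectivity reduces to combinatorial rigidity of non-renormalizable parameters and the known laminational models of Multibrot and Multicorn sets for the renormalizable ones, both from Theorem~\ref{renorm_locus_thm_intro}. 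A compact-Hausdorff argument then upgrades $\Phi$ to a homeomorphism. To embed the dessin, send each vertex of $\cT$ to the combinatorial class of the root of the corresponding period-one Multibrot/Multicorn copy and each edge of $\cT$ to its natural connecting path in $\widetilde{\cC_\cT}$; the duality between $\Lambda_0$ and $\cT$ guarantees that the resulting map is a topological embedding.

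The main obstacle is the parameter-ray landing theorem in the first step. In contrast with the unicritical picture, $S_\cT$ contains several period-one renormalization copies, and the particular combinatorics of the dessin $\cT$ governs how they are interlaced. The parapuzzle argument must therefore be orchestrated carefully around $\cT$, so that the dynamical rigidity of Theorem~\ref{renorm_locus_thm_intro} transfers correctly to parameter space without a priori knowledge of landing at all parabolic points; ensuring that the inserted tuned Multicorn laminations truly fit together into a single closed lamination $\Lambda$ is the companion technical point that will need careful bookkeeping at the boundaries of renormalization windows.
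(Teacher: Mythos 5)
Your overall architecture (pinched-disk model via parameter rays, lamination built from pullbacks of the dual lamination to $\cT$ with tuned Multibrot/Multicorn laminations in the gaps, dessin embedded via duality) matches the paper's, but your first key step contains a genuine error that propagates through the rest. You assert that every rational parameter ray $\cR_\theta$ \emph{lands} at a point of $\partial\cC(S_\cT)$. In antiholomorphic families this is false for periodic angles: as in the Tricorn (cf.\ \cite{IM16}, cited in the paper), a parameter ray at a periodic angle may merely \emph{accumulate} on a parabolic arc on the boundary of an odd-period hyperbolic component, and such rays are not known to land (some are known not to). The paper's Proposition~\ref{per_para_rays_prop} is carefully trichotomized for exactly this reason, and the model $\widetilde{\cC_\cT}$ is built by collapsing the non-bifurcating sub-arcs of parabolic arcs (and all of $\Gamma^{\mathrm{hoc}}$) precisely so that these accumulation sets become points in the quotient. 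Your equivalence relation, phrased purely in terms of landing points and separation by co-landing rays, does not account for this, so the map $\Phi$ is not well defined on the classes of periodic angles whose rays wiggle along parabolic arcs. Relatedly, you propose a Yoccoz-style parapuzzle argument for ray landing; the paper instead uses elementary stability/cut-line arguments for rational rays (Propositions~\ref{preper_para_rays_prop}--\ref{zero_rays_prop}), and also records the non-classical phenomenon that \emph{periodic} parameter rays here can land at Misiurewicz parameters -- a case your sketch does not anticipate.

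Two further concrete gaps. First, the boundary arc $\Gamma^{\mathrm{hoc}}$ plays a special role absent from the Mandelbrot picture: it must be collapsed to a single point of the model, and the $0/1$ and $1/1$ parameter rays land at its two \emph{distinct} endpoints (Proposition~\ref{zero_rays_prop}); your construction never mentions it. Second, your dessin embedding sends every vertex of $\cT$ to ``the root of the corresponding period-one Multibrot/Multicorn copy,'' but only white vertices of valence at least two correspond to period-one renormalization copies. White vertices of valence one must go to Misiurewicz tips where $y_f$ is a repelling fixed point (there is no copy there), and black vertices go to the combinatorial classes of pre-cuspidal parameters; the paper's Corollary~\ref{dessin_in_conn_locus_thm} and the density of pre-cuspidal parameters (Proposition~\ref{cusp_par_dense_prop}) are what justify that the pullback-closure of the dual lamination, plus the tuned gap laminations, actually exhausts $\lambda_{\mathrm{par}}$. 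Without that density statement your claim that the inserted tunings ``fit together into a single closed lamination'' equal to the parameter lamination remains unproved.
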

\noindent (See Theorem~\ref{comb_model_thm} and Theorem~\ref{dessin_in_conn_locus_thm} for precise statements.)

\subsection{Special features of the family $S_\cT$ and comments on the proofs}\label{special_features_subsec}
Although Schwarz reflections in the family $S_\cT$ have a unique free critical value, they exhibit various fine differences from the standard unicritical (anti-)polynomial families. The fact that Schwarz reflections are semi-global objects is also a source of key differences between the dynamics/parameter spaces of (anti-)polynomial and Schwarz reflections.

\subsubsection{Non-dynamical Shabat polynomials and dynamics of Belyi maps}
The primary objects of this paper, the Schwarz reflection maps, are generated by univalent restrictions of Shabat polynomials $f$ to round disks. Consequently, the Schwarz reflections $\sigma\in\cS_{\cR_d}$ themselves have three critical values, and hence are examples of Belyi maps. Naturally, there is an explicit relation between the dessin of $f$ and that of $\sigma$. Specifically, the unique cusp on the quadrature domain boundary acts as a root for the dessin $\cT$ of $\sigma$, and the dessin of $f$ is obtained by adding an edge to (the mirror image of) $\cT$ at the root point. 

\Large\begin{equation*}
\bigg\{\substack{\mathrm{Shabat}\\ \mathrm{polynomials}}\bigg\}\ \longleftrightarrow \bigg\{\substack{\mathrm{Belyi\ Schwarz}\\ \mathrm{reflections}}\bigg\}\ \longleftrightarrow \bigg\{\substack{\mathrm{Bers-Belyi\ anti-rational}\\ \mathrm{maps}}\bigg\} 
\end{equation*}
\normalsize On the other hand, the parabolic anti-rational map $R\in\cF_d$ that $\sigma$ is hybrid conjugate to has the same dessin $\cT$. These combinatorial relations between the non-dynamical Shabat polynomials and the dynamical systems given by Belyi Schwarz reflections and Belyi anti-rational maps are established and exploited in Section~\ref{shabat_poly_schwarz_sec}.

\subsubsection{Quadrature boundary degenerations}
The family $S_\cT$ is defined by restrictions of Shabat polynomials to maximal disks of univalence. Gaining a complete understanding of the collection of such round disks for a general complex polynomial is a non-trivial problem (see \cite{Bra67,CR68,Suf72,She00} for various partial results in low degree and for restricted classes of polynomials). It was reasonable to believe that in our family, a `phase transition' from univalence to non-univalence occurs when the associated quadrature domain develops either a higher order cusp or a double point. 
We employ dynamical techniques, especially a local dynamical analysis of Schwarz reflections near conformal cusps and double points, to justify this heuristic (cf. \cite{Sak91}). This yields a complete picture of the interior and the boundary of $S_\cT$ (Theorem~\ref{para_space_jordan_thm_intro}), which  plays a crucial role in the proof of the uniformization of the escape locus and the ensuing tiling structure (Theorem~\ref{escape_unif_thm_intro}).

\subsubsection{A blend of unicriticality and multicriticality}
The existence of a single free critical value and the real two-dimensionality of the parameter space $S_\cT$ make a number of techniques from unicritical polynomial dynamics available in our setting. However, the fact that Schwarz reflections in $S_\cT$ have many free critical points (all of which map to the same point) gives rise to interesting distinctions. For instance, unlike in the Multibrot/Multicorn family, certain \emph{periodic} parameter rays of $S_\cT$ land at \emph{Misiurewicz} (critically pre-periodic) parameters. Further, each of the free critical points can become a superattracting fixed point, thus producing \emph{several} period one renormalization loci. It is worth mentioning that only one such renormalization locus furnishes pinched anti-polynomial-like restrictions, and we use the Parabolic Straightening Theorem of \cite{PartI} to study this renormalization locus (see Theorem~\ref{renorm_locus_thm_intro}). All other period one renormalization loci present standard anti-polynomial-like maps, whence the classical theory of straightening maps applies (cf. \cite{IK12,IM21,IM22}).

\subsubsection{Dessins in dynamical and parameter planes}

The dessin $\cT$ plays a pivotal role in the dynamics of Schwarz reflections in $S_\cT$ as well as in the structure of the connectedness locus. For maps in the connectedness locus, the dessin `embeds' in the non-escaping set (Section~\ref{dynamical_plane_sec}). Moreover, the iterated preimages of the quadrature domain provide a natural puzzle structure in the dynamical plane of a map in $\cC(S_\cT)$, and the \emph{contact graphs} of such puzzle pieces is given by pulling back the embedding of the dessin. This turns out to be useful in the analysis of non-renormalizable parameters in $\cC(S_\cT)$ (see Section~\ref{renorm_sec}). Further, the dessin $\cT$ also determines the period $1$ renormalization loci in the connectedness locus (see Section~\ref{multibrot_multicorn_copies_sec}).
Finally, $\cT$ sits as a `spine' of the abstract connectedness locus (see Theorem~\ref{dessin_in_conn_locus_thm}).

\subsection*{Acknowledgments}
This work is based on the thesis of the second author \cite{MazThesis}. This author is grateful for funding from Stony Brook University, USA and Tata Institute of Fundamental Research, India. 
Part of this work was done during the authors’ visits to Institute for Theoretical Studies at ETH Z{\"u}rich, the IMS at Stony Brook, and SLMath (MSRI) in Berkeley. The authors thank these institutes for their hospitality and support.

\section{Background and prior results}\label{background_sec}

\subsection{Shabat polynomials}\label{shabat_tree_subsec}

\begin{definition}[Shabat polynomials]\label{shabat_def}
	A polynomial $f:\widehat{\C}\to\widehat{\C}$ (of degree at least three) is called a \emph{Shabat polynomial} if it has exactly two finite critical values. Two Shabat polynomials $f_1$ and $f_2$ are called \emph{equivalent} if there exist M{\"o}bius maps $A_1, A_2$ such that $f_2=A_1\circ f_1\circ A_2$.
\end{definition}

\begin{definition}[Bicolored plane trees]\label{bicolored_plane_tree_def}
	\noindent\begin{itemize}[leftmargin=8mm]
		\item A combinatorial \emph{bicolored plane tree} is a combinatorial tree $\mathcal{T}$ equipped with
		\begin{enumerate}
			\item a coloring of the vertices in two colors (say, black and white) in such a way that every edge connects a white vertex to a black vertex, and
			
			\item a circular order of the edges around every vertex.  
		\end{enumerate}
		
		\item Two combinatorial bicolored plane trees are said to be \emph{isomorphic} if there exists a tree isomorphism between them that preserves the colors of corresponding vertices and the circular order of the edges around corresponding vertices.
		
		\item For a combinatorial bicolored plane tree $\mathcal{T}$, the \emph{opposite} tree $\mathcal{T}^{\mathrm{op}}$ is a combinatorial bicolored plane tree that is isomorphic to $\mathcal{T}$ as a bicolored tree, but has the opposite circular order of the edges around every vertex.
	\end{itemize}
\end{definition}

Let $\gamma$ be a simple arc in $\C$ connecting the two finite critical values $y_1$ and $y_2$ of a Shabat polynomial $f$. Since $f$ has no pole in the plane, the preimage 
$$
\mathcal{T}_\gamma(f):=f^{-1}(\gamma)
$$ 
has the structure of a tree with vertices at $f^{-1}(\{y_1, y_2\})$. We color preimages of $y_1$ (respectively, $y_2$) black (respectively, white). Then $\mathcal{T}_\gamma(f)$ has the structure of a bicolored plane tree. Moreover, $\mathcal{T}_\gamma(f)$ has $\Deg{(f)}$ edges, and the valence of a vertex of $\mathcal{T}_\gamma(f)$ is equal to the local degree of $f$ at that point. 

\begin{proposition}\label{homotopic_trees}
	Let $\gamma_1, \gamma_2\subset\C$ be simple arcs connecting the two critical values $y_1$ and $y_2$ of a Shabat polynomial $f$. Then,
	\begin{enumerate}[leftmargin=8mm]
		\item $\mathcal{T}_{\gamma_1}(f)$ and  $\mathcal{T}_{\gamma_2}(f)$ have the same vertex set, and
		
		\item the trees $\mathcal{T}_{\gamma_1}(f)$ and  $\mathcal{T}_{\gamma_2}(f)$ are isotopic relative to the vertices.
	\end{enumerate}
\end{proposition}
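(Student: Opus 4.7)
Part (1) is immediate from the definition: by construction $\mathcal{T}_\gamma(f) = f^{-1}(\gamma)$ has vertex set $f^{-1}(\{y_1,y_2\})$, which depends only on $f$ and the critical values, not on the arc joining them, so both trees share the common vertex set $V := f^{-1}(\{y_1,y_2\})$.

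For part (2), the plan is to produce a compactly supported ambient isotopy $\Phi_t : \C \to \C$ with $\Phi_0 = \mathrm{id}$, $\Phi_t(y_i) = y_i$ for all $t \in [0,1]$ and $i \in \{1,2\}$, and $\Phi_1(\gamma_1) = \gamma_2$, and to lift it through $f$ to an ambient isotopy of the source carrying $\mathcal{T}_{\gamma_1}(f)$ to $\mathcal{T}_{\gamma_2}(f)$ while fixing $V$ pointwise. The existence of such a $\Phi_t$ reduces to the standard planar fact that any two simple arcs in $\C$ with common endpoints are ambient isotopic relative to those endpoints (put them in general position and reduce bigons using Jordan--Schoenflies); since both arcs are bounded, $\Phi_t$ may be chosen to equal the identity outside a large disk.

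Next I would lift $\Phi_t$ through $f$. Since $y_1, y_2$ are the only critical values of $f$ in $\C$, the restriction $f : \C\setminus V \to \C\setminus\{y_1,y_2\}$ is a proper local biholomorphism, hence a covering map, and the homotopy lifting property yields a unique continuous $\Psi_t$ on $\C\setminus V$ with $\Psi_0 = \mathrm{id}$ and $f \circ \Psi_t = \Phi_t \circ f$. To extend $\Psi_t$ continuously across each $p \in V$, I would work in local coordinates in which $f$ is conjugate to $z \mapsto z^k$ (where $k$ is the local degree of $f$ at $p$); writing $\Phi_t(w) = w\,\alpha_t(w)$ near $0$ with $\alpha_t$ continuous and non-vanishing, one extracts a continuous family of $k$-th roots $\beta_t(z)$ of $\alpha_t(z^k)$ normalized by $\beta_0 \equiv 1$, and sets $\Psi_t(z) := z\,\beta_t(z)$. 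Gluing these local extensions with the covering lift, and with the identity on a neighborhood of infinity, produces a compactly supported ambient isotopy $\Psi_t$ of $\C$ lifting $\Phi_t$.

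Finally, $\Psi_t$ preserves $V$ for every $t$, and continuity in $t$ together with $\Psi_0 = \mathrm{id}$ forces each vertex of $V$ to be fixed pointwise throughout the isotopy. Since $\Psi_1(f^{-1}(\gamma_1)) = f^{-1}(\Phi_1(\gamma_1)) = f^{-1}(\gamma_2)$, the restriction of $\Psi_t$ to $\mathcal{T}_{\gamma_1}(f)$ supplies the required isotopy to $\mathcal{T}_{\gamma_2}(f)$ rel vertices. The main technical point I expect is the branch-point extension in the third paragraph: one must verify that the purely topological covering lift on $\C \setminus V$ extends continuously across each branch point. The explicit $z \mapsto z^k$ local model settles this, and uses essentially that $\Phi_t$ fixes the critical values $y_1, y_2$ for \emph{every} $t$, not merely at $t = 0$ and $t = 1$.
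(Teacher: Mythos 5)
Your argument is correct and follows essentially the same route as the paper: both reduce to the fact that two simple arcs in $\C$ with the same endpoints are isotopic rel endpoints, and then lift that isotopy through the covering map $f:\C\setminus f^{-1}(\{y_1,y_2\})\to\C\setminus\{y_1,y_2\}$. The paper lifts the isotopy edge-by-edge and leaves the behavior at the vertices implicit, whereas you phrase it as an ambient isotopy and verify the branch-point extension via the local model $z\mapsto z^k$; this is just a more detailed rendering of the same idea.
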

\begin{proof}
	1) The vertex set of both the trees is $f^{-1}(\{y_1, y_2\})$.
	
	2) As $\gamma_1, \gamma_2\subset\C$ are simple arcs connecting $y_1$ and $y_2$, they are isotopic in $\C$ such that the isotopy fixes the endpoints (for instance, see \cite[Proposition~2.2]{FM}). Finally, since $f:\C\setminus f^{-1}(\{y_1, y_2\})\to\C\setminus\{y_1, y_2\}$ is a covering map, the above isotopy between $\gamma_1$ and $\gamma_2$ can be lifted to isotopies between corresponding edges of $\mathcal{T}_{\gamma_1}(f)$ and  $\mathcal{T}_{\gamma_2}(f)$ relative to $f^{-1}(\{y_1, y_2\})$.
\end{proof}

Proposition~\ref{homotopic_trees} tells that the isotopy type of $\mathcal{T}_\gamma(f)$ (relative to the vertices) is independent of the choice of an arc $\gamma$ connecting $y_1$ and $y_2$. In particular, the various $\mathcal{T}_\gamma(f)$ are isomorphic as combinatorial bicolored plane trees. We will denote this combinatorial bicolored plane tree by $\mathcal{T}(f)$, and call it the \emph{dessin d'enfants} of $f$ (compare \cite[Sec.2]{LZ04}). Moreover, it is easy to see that the isomorphism class of $\mathcal{T}(f)$ remains unchanged if $f$ is replaced by a Shabat polynomial equivalent to~$f$.

\begin{remark}
	Some authors allow Shabat polynomials to have a single critical value in the plane. On the contrary, we adopt the convention that a Shabat polynomial has exactly two finite critical values. Hence, the dessins d'enfant associated with the Shabat polynomials considered in this paper necessarily have at least one black and one white vertex of valence greater than one.
\end{remark}

\begin{theorem}\cite[Theorem~2.2.9]{LZ04}, \cite[Theorem~I.5]{schneps}\label{shabat_classification_theorem}
	The map $$f\mapsto\mathcal{T}(f)$$ induces a bijection between the set of equivalence classes of Shabat polynomials and the set of isomorphism classes of combinatorial bicolored plane trees (with at least one black and one white vertex of valence greater than one).
\end{theorem}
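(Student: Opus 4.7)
I would split the argument into well-definedness, injectivity, and surjectivity, routing the latter two through the classical monodromy/Riemann existence dictionary for branched covers of $\widehat{\C}$ with critical values in $\{0, 1, \infty\}$.

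\emph{Well-definedness.} If $f_2 = A_1 \circ f_1 \circ A_2$ with $A_1, A_2$ M\"obius (hence affine, since both $f_i$ are polynomials fixing $\infty$), then $\mathcal{T}_\gamma(f_2) = A_2^{-1}(\mathcal{T}_{A_1^{-1}(\gamma)}(f_1))$, and $A_2^{-1}$ is an orientation-preserving homeomorphism of $\widehat{\C}$ that sends preimages of the critical values of $f_1$ to those of $f_2$ and hence respects the bicoloring and cyclic order. Combined with Proposition~\ref{homotopic_trees}, this yields a well-defined isomorphism class of bicolored plane trees.

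\emph{Reduction to monodromy.} After normalizing $f$ within its equivalence class so that the two finite critical values are $0$ and $1$, the restriction
\[
f \colon \widehat{\C} \setminus f^{-1}\{0, 1, \infty\} \longrightarrow \widehat{\C} \setminus \{0, 1, \infty\}
\]
is an unbranched cover of degree $n = \Deg f$. Choosing a basepoint and standard loops around $0, 1, \infty$ yields a monodromy triple $(\sigma_0, \sigma_1, \sigma_\infty) \in S_n^3$ satisfying $\sigma_0 \sigma_1 \sigma_\infty = \mathrm{id}$, generating a transitive subgroup, with $\sigma_\infty$ an $n$-cycle (since $\infty$ is the unique, totally ramified preimage of $\infty$). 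Modulo simultaneous conjugation, this is precisely the data in $\mathcal{T}(f)$: labeling the $n$ edges of $\mathcal{T}(f)$ via the sheets of the cover over a path just above $[0,1]$, the cycles of $\sigma_0$ (resp. $\sigma_1$) record the cyclic order of edges around black (resp. white) vertices, while the cycles of $\sigma_\infty$ enumerate faces; a tree contributes exactly one face, matching $\sigma_\infty$ being a single $n$-cycle. Establishing this dictionary carefully is the combinatorial heart of the theorem.

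\emph{Injectivity and surjectivity.} An isomorphism $\mathcal{T}(f_1) \cong \mathcal{T}(f_2)$ translates into a simultaneous conjugation of the two monodromy triples, and Riemann's existence theorem then produces M\"obius $A_1, A_2$ realizing $f_2 = A_1 \circ f_1 \circ A_2$. Conversely, given any bicolored plane tree $\mathcal{T}$ with $n$ edges (and at least one black and one white vertex of valence $\geq 2$), read off a triple $(\sigma_0, \sigma_1, \sigma_\infty)$ as above; Riemann's existence theorem yields a holomorphic branched cover $g \colon X \to \widehat{\C}$ from a compact Riemann surface, and Riemann--Hurwitz together with $\sigma_\infty$ being an $n$-cycle forces $X$ to have genus $0$. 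Post-composing with a M\"obius map sending the unique point of $g^{-1}(\infty)$ to $\infty$ converts $g$ into a polynomial of degree $n$ with the two finite critical values $\{0, 1\}$, whose dessin is $\mathcal{T}$.

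\emph{Main obstacle.} The principal difficulty is the combinatorial bookkeeping connecting the cyclic order at vertices of the dessin to the cycle structures of $\sigma_0, \sigma_1$, and in particular verifying that the face permutation $\sigma_\infty = (\sigma_0 \sigma_1)^{-1}$ reduces to a single $n$-cycle exactly when the underlying $1$-complex is a tree. Once this dictionary is set up, the analytic steps (Riemann existence, Riemann--Hurwitz, normalization of the single ramified preimage of $\infty$) are standard machinery.
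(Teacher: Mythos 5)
The paper does not prove this statement at all: it quotes it from \cite[Theorem~2.2.9]{LZ04} and \cite[Theorem~I.5]{schneps}, and your outline is precisely the standard monodromy/Riemann-existence argument given in those sources (well-definedness via affine changes of variable, the dictionary between the bicolored plane tree with its cyclic orders and a transitive triple $(\sigma_0,\sigma_1,\sigma_\infty)$ with $\sigma_\infty$ an $n$-cycle, then Riemann existence plus an Euler-characteristic count to force genus $0$ and a single face), so it is correct and matches the intended proof. The only point worth making explicit is that the hypothesis that $\mathcal{T}$ has at least one black and one white vertex of valence at least two is exactly what guarantees that both $0$ and $1$ are genuine critical values of the polynomial produced in the surjectivity step, so that it is Shabat in the paper's strict sense of having exactly two finite critical values.
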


We refer the reader to \cite{dessin_book_schneps,BZ96,LZ04} for a comprehensive account on Shabat polynomials (and more generally, Belyi pairs), dessins d'enfants associated with such maps, and the action of the absolute Galois group on dessins d'enfants.

\subsection{Background on Schwarz reflections}

\begin{definition}\label{Schwarz_reflec_def}
	A bounded domain $\Omega\subset \C$ satisfying $\Omega=\text{int }\overline \Omega$ is called a \textit{quadrature domain} if there exists a continuous map $\sigma\colon\overline{\Omega}\to \widehat{\C}$ such that:
	\begin{itemize}[leftmargin=6mm]
		\item $\sigma$ is anti-meromorphic on $\Omega$ and
		\item $\sigma(z)=z$ for all $z\in \partial \Omega$.
	\end{itemize}
	We say that $\sigma$ is the \textit{Schwarz reflection} map for $\Omega$.
\end{definition}

In this work we will be exclusively concerned with simply connected quadrature domains, in which case the Schwarz reflection map takes on a special form. Here and in the rest of the paper, we will use the notation $\eta(z)=1/\overline{z}$. 

\begin{proposition}\cite[Theorem~1]{AS}\cite[Proposition~2.3]{LMM1}\label{simp_conn_quad}
	A simply connected domain $\Omega\subsetneq\widehat{\C}$ with $\infty\notin\partial\Omega$ and $\Int{\overline{\Omega}}=\Omega$ is a quadrature domain if and only if the Riemann uniformization $f:\mathbb{D}\to\Omega$ extends to a rational map on $\widehat{\C}$. The Schwarz reflection map $\sigma$ of $\Omega$ is given by $f\circ\eta\circ(f\vert_{\mathbb{D}})^{-1}$.
	\[ \begin{tikzcd}
		\overline{\mathbb{D}} \arrow{r}{f} \arrow[swap]{d}{\eta} & \overline{\Omega} \arrow{d}{\sigma} \\
		\widehat{\C}\setminus\mathbb{D} \arrow[swap]{r}{f}& \widehat{\C}.
	\end{tikzcd}
	\]
	In this case, if the degree of the rational map $f$ is $d+1$, then $\sigma:\sigma^{-1}(\Omega)\to\Omega$ is a (branched) covering of degree $d$, and $\sigma:\sigma^{-1}(\Int{\Omega^c})\to\Int{\Omega}^c$ is a (branched) covering of degree $d+1$.
\end{proposition}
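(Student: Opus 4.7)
My plan is to establish this equivalence via Schwarz reflection across the unit circle $\mathbb{S}^1$, a classical idea in the theory of quadrature domains. The forward direction uses the Schwarz reflection to extend the Riemann map to $\widehat{\C}$, while the reverse direction uses the rational extension to construct the Schwarz reflection.

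For the forward direction, assume $\Omega$ is a simply connected quadrature domain with continuous Schwarz reflection $\sigma:\overline{\Omega}\to\widehat{\C}$, and let $f:\mathbb{D}\to\Omega$ be a Riemann uniformization. I would first note that $f$ extends continuously to $\overline{\mathbb{D}}$: boundaries of quadrature domains are contained in real-algebraic curves and are in particular locally connected, so Carath\'eodory's theorem applies. Then $\sigma\circ f:\overline{\mathbb{D}}\to\widehat{\C}$ is continuous, anti-holomorphic on $\mathbb{D}$, and equals $f$ on $\mathbb{S}^1$ since $\sigma|_{\partial\Omega}=\mathrm{id}$. I would then assemble
\[
F(z)=\begin{cases} f(z), & z\in\overline{\mathbb{D}},\\ (\sigma\circ f)(\eta(z)), & z\in\widehat{\C}\setminus\mathbb{D}, \end{cases}
\]
and observe that the second piece is holomorphic in $z$ (the composition of two anti-holomorphic maps), that the two pieces agree on $\mathbb{S}^1$ (where $\eta=\mathrm{id}$), and therefore by Morera / removable singularities across a real-analytic arc, $F$ is meromorphic on all of $\widehat{\C}$, i.e., rational. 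The restriction $F|_\mathbb{D}=f$ is the desired rational extension.

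For the converse, assume $f:\mathbb{D}\to\Omega$ extends to a rational map of $\widehat{\C}$ (still denoted $f$), and define $\sigma:=f\circ\eta\circ(f|_{\mathbb{D}})^{-1}$ on $\Omega$. This is anti-meromorphic on $\Omega$. To extend $\sigma$ continuously to $\overline{\Omega}$ with $\sigma|_{\partial\Omega}=\mathrm{id}$, I would use that $f|_{\overline{\mathbb{D}}}$ is a continuous surjection onto $\overline{\Omega}$ (by the hypothesis $\Omega=\mathrm{int}\,\overline{\Omega}$), and that for any preimage $z\in\mathbb{S}^1$ of a boundary point $w\in\partial\Omega$ one has $\eta(z)=z$, whence $f(\eta(z))=f(z)=w$. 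The same observation shows the formula is well-defined on $\partial\Omega$ even if several points of $\mathbb{S}^1$ map to the same boundary point (as at a conformal cusp or double point), establishing $\sigma$ as the Schwarz reflection of $\Omega$.

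The degree count follows by tracing preimages: since $f|_\mathbb{D}$ is univalent and $\eta$ is an involution, a fiber $\sigma^{-1}(w)$ corresponds bijectively to $f^{-1}(w)\cap(\widehat{\C}\setminus\overline{\mathbb{D}})$. For $w\in\Omega$ exactly one $f$-preimage lies in $\mathbb{D}$ and none on $\mathbb{S}^1$, giving $d$ preimages outside; for $w\in\mathrm{int}\,\Omega^c$ all $d+1$ preimages lie outside. Ramification is inherited from that of $f$. The main subtlety throughout is boundary regularity — continuous extension of $f$ across $\mathbb{S}^1$ in one direction, and continuous single-valued extension of $\sigma$ to $\partial\Omega$ in the other — but both are handled cleanly by the hypotheses and the standard regularity of quadrature domain boundaries, so I do not foresee a serious technical obstacle beyond careful bookkeeping at singular boundary points.
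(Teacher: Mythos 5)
The paper offers no proof of this proposition --- it is quoted verbatim from \cite[Theorem~1]{AS} and \cite[Proposition~2.3]{LMM1} --- so there is nothing in-paper to compare against; your argument is the classical one, and its overall architecture (reflect $f$ across $\mathbb{S}^1$ using $\sigma$ in one direction, define $\sigma=f\circ\eta\circ(f\vert_{\D})^{-1}$ in the other, then count preimages) is correct. The converse direction and the degree count are fine: for $w\in\Omega$ properness of $f\vert_{\D}$ forces $f(\mathbb{S}^1)\subset\partial\Omega$, so exactly one preimage lies in $\D$ and none on $\mathbb{S}^1$, giving the degrees $d$ and $d+1$ as you state.

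The one step you should not wave through is the continuous extension of $f$ to $\overline{\D}$ in the forward direction. Justifying it by ``boundaries of quadrature domains are contained in real-algebraic curves, hence locally connected'' is problematic on two counts: in the simply connected case the algebraicity of $\partial\Omega$ is normally \emph{deduced from} the rationality of the Riemann map (i.e.\ from the very statement you are proving), and the independent route to boundary regularity is Sakai's theorem \cite{Sak91}, a far deeper result than \cite[Theorem~1]{AS} and an anachronistic input here. (Local connectivity of an arbitrary compact subset of a real-algebraic curve is also not automatic and would itself need an argument.) The standard elementary fix avoids boundary regularity entirely: writing $S=\overline{\sigma}$ for the Schwarz function, one has $S(f(w))-\overline{f(w)}\to 0$ as $\vert w\vert\to 1$ by continuity of $\sigma$ on $\overline{\Omega}$ and properness of $f\vert_{\D}$; hence on a thin annulus $\{r<\vert w\vert<1\}$ free of poles of $S\circ f$, the holomorphic function $f+S\circ f$ has imaginary part tending to $0$ on $\mathbb{S}^1$ and $f-S\circ f$ has real part tending to $0$, so both extend holomorphically across $\mathbb{S}^1$ by the Schwarz reflection principle for harmonic functions, and therefore so does $f$. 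With that extension in hand, your Morera gluing of $f$ with $\sigma\circ f\circ\eta$ goes through and yields the rational extension. I would also add one sentence in the converse direction confirming that $\sigma$ is \emph{continuous} (not merely well-defined) at double points of $\partial\Omega$, which follows from continuity of $f$ on $\overline{\D}$ together with a prime-end argument on each local component of $\Omega$ near such a point.
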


We refer the reader to \cite{AS}, \cite{lee-makarov}, \cite{LM23} and the references therein for more background on quadrature domains and Schwarz reflection maps.

Let us now return to the degree $(d+1)$ rational map $f$ that is univalent on $\overline{\D}$. We set $\Omega:=f(\D)$ and denote the associated Schwarz reflection map by $\sigma$.

We define $T(\sigma):=\widehat{\mathbb{C}}\setminus \Omega$ and $S(\sigma)$ to be the singular set of $\partial T(\sigma)$. We further set $T^0(\sigma):=T(\sigma)\setminus S(\sigma)$, and $$T^\infty(\sigma):=\bigcup_{n\geq0} \sigma^{-n}(T^0(\sigma)).$$ We will call $T^\infty(\sigma)$ the \emph{tiling set} of $\sigma$. For any $n\geq0$, the connected components of $\sigma^{-n}(T^0(\sigma))$ are called \emph{tiles} of rank $n$. Two distinct tiles have disjoint interior. And the tiling set, $T^\infty$, is open \cite[Proposition~2.3]{PartI}. The \emph{non-escaping set} of $\sigma$ is defined as 
$$
K(\sigma):=\widehat{\C}\setminus T^\infty(\sigma)\subset \Omega\cup S(\sigma).
$$ 
The common boundary of the non-escaping set $K(\sigma)$ and the tiling set $T^\infty(\sigma)$ is called the \emph{limit set} of $\sigma$, denoted by $\Lambda(\sigma)$. 

\subsection{Schwarz reflections with the anti-Farey external map}\label{antiFarey_subsec}

Let $\Omega$ be a Jordan quadrature domain such that 
\begin{enumerate}[leftmargin=8mm]
    \item the Riemann uniformization $f:\D\to\Omega$ extends to a degree $d+1$ rational map of $\widehat{\C}$,
    \item there is a unique critical point of $\sigma$, of local degree $d+1$, in $\sigma^{-1}(\Int T)$ with the associated critical value at $\infty$, and
    \item there is exactly one cusp singularity on the boundary $\partial\Omega$.
\end{enumerate}    
After possibly pre-composing $f$ with an automorphism of the disk, we may assume that $f(0)$ is the unique critical point of $\sigma$
in $\Int T$. By Proposition~\ref{simp_conn_quad}, this is equivalent to requiring that $f$ is a degree $d+1$ polynomial which is injective on $\overline{\D}$ and has exactly one (simple) critical point on $\partial \D$.

In \cite[Proposition 3.3]{PartI} it was shown that under the further condition that $K(\sigma)$ is connected, the dynamics of $\sigma$ on the tiling set $T^\infty(\sigma)$ is modeled by a certain explicit conformal class, which we now describe.

Consider an ideal, regular, hyperbolic, $(d+1)-$gon $\Pi$ contained in the unit disk $\D$ (we assume that $\Pi$ is closed in $\D$). By an isometric change of coordinates, we can place the ideal vertices of $\Pi$ at the $(d+1)-$st roots of unity. We set $\omega:=e^{2\pi i/(d+1)}$, and $M_\omega(z)=\omega z$. Let $\D_1$ be the component of $\D\setminus\Pi$ having $1$ and $e^{2\pi i/(d+1)}$ on its boundary. Further, let $Q$ be the sector of angle $2\pi/(d+1)$ from the origin that contains $\D_1$, such that the radial line $(0,1)$ is contained in $Q$ but $M_\omega(0,1)$ is not.
We define the map $\widehat{\mathcal R_d}\colon \overline{\D_1}\to Q$ given by reflection across $\partial\D_1\cap \D$ (which is one of the sides of $\Pi$) followed by a rotation of angle $ \frac{2\pi}{d+1}k(\cdot)$, where $k(\cdot)$ is the unique integer so that the image lands in $Q$. 

Letting $\mathcal{Q}:=\overline{Q}/\langle z\sim \omega z\rangle$ and $\mathcal{Q}_1=\Pi/\langle M_\omega\rangle$ we obtain an anti-holomorphic map 
$$
\mathcal{R}_d\colon \mathcal{Q}\setminus\Int{\mathcal{Q}_1}\to \mathcal{Q}
$$ 
induced by $\widehat{\mathcal R_d}$. We call this map the \textit{anti-Farey map}. The map $\mathcal R_d$ restricts to the identity on $\partial \mathcal{Q}_1$, and to an expansive, piecewise analytic, orientation-reversing degree $d$ covering on $\partial\mathcal{Q}\cong\mathbb{S}^1$. Moreover, $\mathcal{R}_d$ has a unique critical point of local degree $d+1$. We also note that points in $\mathcal{Q}\setminus\mathcal{Q}_1$ have $d$ preimages and points in $\mathcal{Q}_1$ have $d+1$ preimages under $\mathcal{R}_d$. We refer the reader to \cite[\S~3]{PartI} for more details on the anti-Farey map, and how it relates to the group dynamics of the \emph{anti-Hecke group} $\pmb{\Gamma}_d$, which is a discrete subgroup of $\mathrm{Aut}^\pm(\D)$ (the group of all conformal and anti-conformal automorphisms of $\D$) generated by the rotation $M_\omega$ and the reflection in any of the sides of $\Pi$.

\begin{definition}\label{srd_def}
	We define $\mathcal{S}_{\mathcal{R}_d}$ to be the space of pairs $(\Omega,\sigma)$, where 
	\begin{enumerate}[leftmargin=8mm]
		\item $\Omega$ is a Jordan quadrature domain with associated Schwarz reflection map $\sigma:\overline{\Omega}\rightarrow\widehat{\C}$, and
		
		\item there exists a conformal map $\psi:(\Int{\mathcal{Q}},0)\rightarrow (T^\infty(\sigma),\infty)$ that conjugates $\mathcal{R}_d:\Int{\mathcal{Q}}\setminus\Int{\mathcal{Q}_1}\longrightarrow\Int{\mathcal{Q}}$ to $\sigma:T^\infty(\sigma)\setminus\Int{T^0(\sigma)}\longrightarrow T^\infty(\sigma)$.
	\end{enumerate}
\end{definition}

It was shown in \cite[Proposition~3.3]{PartI} that the second condition is equivalent to the Jordan quadrature domain $\Omega$ having a single cusp singularity on its boundary, a fully ramified critical value (of $\sigma$) in $\Int T=\widehat{\C}\setminus\overline{\Omega}$, and a connected non-escaping~set.

The anti-Farey map serves as the external map for the space $\cS_{\R_d}$ as the power map $z\mapsto z^d$ serves as an external map for polynomials via B\"ottcher coordinates. Following \cite[\S 2]{LLMM1}, one can define an analog of external rays that exist in polynomial dynamics (cf. \cite{Milnor06,Lyu25}).

Let $\rho_1,\cdots,\rho_{d+1}$ be the reflections in the sides of $\Pi$.

\begin{definition}\label{Gd_rays}
	Let $(i_1, i_2, \cdots)\in\{1,\cdots, d+1\}^{\N}$ with $i_j\neq i_{j+1}$ for all $j$. The corresponding infinite sequence of tiles $\{\Pi,\rho_{i_1}(\Pi),\rho_{i_1}\circ\rho_{i_2}(\Pi),\cdots\}$ shrinks to a single point of $\mathbb{S}^1\cong \R/\Z$, which we denote by $\theta(i_1, i_2, \cdots)$. We define a \emph{$\pmb{\Gamma}_d-$ray at angle $\theta(i_1, i_2, \cdots)$} to be the concatenation of hyperbolic geodesics (in $\D$) connecting the consecutive points of the sequence $\{0,\rho_{i_1}(0),\rho_{i_1}\circ\rho_{i_2}(0),\cdots\}$. Clearly, this curve lands at the point $\theta(i_1, i_2, \cdots)\in\mathbb{S}^1$. 
	\end{definition}
	
	We note that in general there may be more than one $\pmb{\Gamma}_d-$rays at a given angle; for instance, both the symbol sequences $(1,d+1,1,d+1,\cdots)$ and $(d+1,1,d+1,1,\cdots)$ yield $\pmb{\Gamma}_d-$rays at angle $0$.
	
	The $\pmb{\Gamma}_d-$rays in $\D$ with angles in $[0,\frac{1}{d+1})$ yield rays in $\mathcal{Q}$ such that the image of (the tail of) a ray at angle $\theta$ under $\mathcal{R}_d$ is (the tail of) a ray at angle $\mathcal{R}_d(\theta)$.
	
\subsection{Pinched anti-polynomial-like maps and a straightening theorem}

\subsubsection{Pinched anti-polynomial-like maps}
In \cite[\S 2.4]{PartI} the notion of a \textit{pinched polynomial-like map} was introduced, generalizing the standard definition so as to allow the possibility of the domain not being compactly contained in the range. We define a \emph{polygon} to be a closed Jordan domain with a piecewise smooth boundary. A \textit{pinched polygon} is a compact set in $\widehat{\C}$ which is homeomorphic to the quotient of a closed disk under a finite geodesic lamination and which has a piecewise smooth boundary. The cut-points of the
boundary of a pinched polygon will be called its \emph{pinched points}, and the non-cut points where two smooth local boundary arcs meet are called its \emph{corners}.

\begin{definition}
	Let $P_2\subset \widehat{\C}$ be a polygon, and let $P_1\subset P_2$ be a pinched polygon where $\partial P_1\cap\partial P_2$ is the set of corners of $P_2$ and is contained in the set of corners of~$P_1$. 

Suppose that there is a continuous map $g\colon P_1\to P_2$ such that
\begin{enumerate}[leftmargin=8mm]
\item $g$ is a proper (anti-)holomorphic map from each component of $\Int (P_1)$ onto $\Int (P_2)$, 
\item $g$ is locally injective at the corners of $P_1$, and 
\item the corners and pinched points of $P_1$ are the preimages of the corners of~$P_2$.
\end{enumerate}

We then call the triple $(g,P_1,P_2)$ a \emph{pinched (anti-)polynomial-like map}.
\end{definition}

\subsubsection{Parabolic anti-rational maps}\label{para_anti_rat_subsec}
Let us recall the family of anti-rational maps to which the straightening theorems of \cite{PartI} applies. Throughout we have the notation that $B_d\colon \D\to \D$ is the anti-holomorphic Blaschke product given by $z\mapsto \frac{(d-1)\bar z^d + (d+1)}{(d+1)\bar z^d + (d-1)}$.

\begin{definition}\label{para_anti_rat_family_def}
We define the family $\cF_d$ to be the collection of degree $d\geq 2$ anti-rational maps $R$ with the following properties.
\begin{enumerate}[leftmargin=8mm]
\item $\infty$ is a parabolic fixed point for $R$.
\item There is a marked immediate parabolic basin $\mathcal{B}(R)$ of $\infty$ which is simply connected and completely invariant.
\item $R\vert_{\mathcal{B}(R)}$ is conformally conjugate to $B_d\vert_{\D}$.
\end{enumerate}
\end{definition}

For a map $R\in \mathcal F_d$ we define the {\em filled Julia set} $\mathcal K(R)$ as $\widehat\C\setminus\mathcal B(R)$.

\subsubsection{A straightening theorem for pinched anti-polynomial}

 One of the main technical results of \cite{PartI} asserts that under some additional regularity assumptions, a pinched anti-polynomial-like map is hybrid conjugate to a parabolic anti-rational map. The precise conditions required for this straightening result can be found in \cite[Definition~4.5]{PartI}, where maps satisfying these additional properties are termed \emph{simple pinched anti-polynomial-like maps}.

\begin{theorem}\label{straightening_thm}\cite[Theorem 4.8]{PartI}
	\noindent\begin{enumerate}[leftmargin=8mm]\upshape
		\item Let $(F,P_1,P_2)$ be a simple pinched anti-polynomial-like map of degree $d\geq~2$. Then $F$ is hybrid conjugate to a simple pinched anti-polynomial-like restriction of a degree $d$ anti-rational map $R$ with a simple parabolic fixed point.
		
		\item If the filled Julia set of $F$ is connected, then $R$ is unique up to M{\"o}bius conjugacy, and has a unique representative in $\ \faktor{\cF_d^{\mathrm{simp}}}{\mathrm{Aut}(\C)}$, where $\cF_d^{\mathrm{simp}}$ consists of maps in $\cF_d$ for which $\infty$ is a simple parabolic fixed point.
	\end{enumerate}
\end{theorem}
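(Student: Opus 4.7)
The plan is to run a quasiconformal surgery in the spirit of Douady--Hubbard's straightening theorem, with the standard power-map external model replaced by a parabolic anti-holomorphic circle map. Given a simple pinched anti-polynomial-like map $(F,P_1,P_2)$ of degree $d$, I would first fix a concrete parabolic external model: namely the anti-Blaschke product $B_d$, whose restriction to a suitable neighborhood of $\overline{\D}$ can be recognized, via the Fatou coordinate on the parabolic basin, as the canonical semi-conjugate model of the escaping dynamics of any map in $\cF_d^{\mathrm{simp}}$. The goal is to glue $F$ on $P_1$ to (a pinched version of) $B_d$ on $\widehat{\C}\setminus P_2$ across the ``collar'' $P_2\setminus P_1$.

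Next I would carry out the gluing. Classically one uses a quasiconformal interpolation across the annulus $\Int(P_2)\setminus P_1$, which is where the ``simple'' hypothesis from \cite[Definition 4.5]{PartI} carries the real weight: near each corner/pinched point of $P_1$ we need local models matching those of the external parabolic map, and the regularity built into the definition should guarantee that the external boundary $\partial P_2$ and the internal boundary $\partial P_1$ meet the pinched points tangentially in a compatible way, so the collar degenerates into a union of cusped quadrilaterals across which quasiconformal interpolation is possible. The output is a quasiregular branched cover $\widetilde F\colon \widehat\C\to\widehat\C$ of degree $d$ that agrees with $F$ on a neighborhood of the filled Julia set $\mathcal K(F)$ and with the external model outside a compact set, with $\infty$ a (simple) parabolic fixed point of the model.

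The standard measurable Riemann mapping machinery now yields a parabolic anti-rational representative. Define a Beltrami coefficient $\mu$ on $\widehat{\C}$ by pulling back the standard complex structure from the external model under forward iterates of $\widetilde F$, and setting $\mu\equiv 0$ on $\mathcal K(F)$; invariance under $\widetilde F$ follows because $\widetilde F$ is anti-holomorphic wherever $\mu\neq 0$ by construction, and $\|\mu\|_\infty<1$ because each orbit crosses the fixed gluing collar at most once. Integrating $\mu$ by MRMT produces a quasiconformal homeomorphism $\phi$ conjugating $\widetilde F$ to an honest anti-rational map $R$. Since $\phi$ is conformal on the parabolic basin of the external model and transports $B_d$ to the action of $R$ on its immediate basin of $\infty$, the map $R$ lies in $\cF_d^{\mathrm{simp}}$, and by construction $\phi\vert_{\mathcal K(F)}$ is a hybrid conjugacy.

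For the uniqueness claim under $\mathcal K(F)$ connected: if two representatives $R_1,R_2\in\cF_d^{\mathrm{simp}}$ are both hybrid conjugate to $F$, composing gives a quasiconformal conjugacy $\Psi\colon\widehat\C\to\widehat\C$ between them that is conformal on $\mathcal K(R_1)$ (the hybrid part) and, after a normalization, conformal on the parabolic basin $\mathcal B(R_1)$ as well, because any conjugacy between $B_d\vert_\D$ and itself that matches the Fatou coordinate is M\"obius. Connectedness of $\mathcal K(F)$ makes the limit set the common boundary $\partial\mathcal B(R_1)$, which has zero area by standard arguments for parabolic anti-rational maps, so the Beltrami coefficient of $\Psi$ vanishes almost everywhere and $\Psi$ is M\"obius. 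The principal obstacle throughout is the gluing step at the pinched points, where the usual annular interpolation degenerates; making this work is precisely what the ``simple'' regularity of Definition 4.5 is designed to enable, by imposing matching parabolic normal forms on $F$ and on the external model at those points.
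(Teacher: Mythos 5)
This statement is imported verbatim from the companion paper (\cite[Theorem 4.8]{PartI}); the present paper contains no proof of it, so there is nothing internal to compare your argument against. Your outline is the natural Douady--Hubbard strategy with a parabolic external model, and it is surely close in spirit to the proof in Part I; but as written it has two concrete soft spots. First, the gluing step: you assert that the ``simple'' hypothesis guarantees that $\partial P_1$ and $\partial P_2$ meet the pinched points \emph{tangentially}. That is the opposite of what is needed. The way this paper actually invokes simple pinched restrictions (see the proof of Theorem~\ref{PinchedMulticorn}, where the restriction is arranged so that ``the boundaries $\partial\sigma_{a_i}(V_i)$, $\partial V_i$ subtend positive angles at $y_c$'') indicates that the regularity condition forces the boundary arcs to make \emph{positive angles} at each pinched point. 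Tangential contact would turn the degenerate collar into a region with outward-pointing cusps, across which no quasiconformal interpolation to the external model exists; positive angles are precisely what make the collar a finite union of honest quadrilaterals where the interpolation goes through. So the sentence carrying ``the real weight'' of the hypothesis, as you put it, rests on a mischaracterization of that hypothesis.

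Second, the uniqueness argument. Composing the two hybrid conjugacies only gives a quasiconformal conjugacy between $R_1$ and $R_2$ on a \emph{neighborhood of the filled Julia sets}; producing a single global quasiconformal map that is simultaneously $\bar\partial$-trivial a.e.\ on $\mathcal{K}(R_1)$ and conformal on $\mathcal{B}(R_1)$ requires matching the internal conjugacy with the external one along $\partial\mathcal{K}(R_1)$, which is exactly where connectedness enters and where one typically runs a pullback argument using the rigidity of the parabolic external class (compare the pullback constructions in Lemma~\ref{dp_pb_two_lem} and Theorem~\ref{escape_unif_thm} of this paper). You skip this step entirely. Moreover, your appeal to ``zero area of $\partial\mathcal{B}(R_1)$ by standard arguments for parabolic anti-rational maps'' is both unjustified (there is no general zero-area statement for parabolic Julia sets) and unnecessary: once the global map is assembled, $\bar\partial\Psi=0$ a.e.\ on $\mathcal{K}(R_1)$ by the hybrid condition and $\Psi$ is conformal on the basin, so Weyl's lemma applies regardless of the measure of the boundary. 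The correct load-bearing step is the assembly of $\Psi$, not the area estimate.
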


\subsection{Natural bijection between parameter spaces}

The following lemma connects Schwarz reflections in the dense open subset $\mathcal{S}_{\mathcal{R}_d}^{\mathrm{simp}}$ of $\mathcal{S}_{\mathcal{R}_d}$ (consisting of maps in $\mathcal{S}_{\mathcal{R}_d}$ with a simple cusp on
the associated quadrature domain boundaries) to the theory of pinched anti-polynomial-like maps.

\begin{theorem}\label{simp_restrict_thm}\cite[Lemma~4.13, Theorem~4.15]{PartI}
\noindent\begin{enumerate}[leftmargin=8mm] 
\item Let $(\Omega,\sigma)\in \mathcal{S}_{\mathcal{R}_d}^{\mathrm{simp}}$. Then, there exists a Jordan domain $V\subset \Omega$ with $\overline V\supset K(\sigma)$ and a conformal map $\beta:\overline{V}\hookrightarrow\widehat{\C}$ such that $\beta$ conjugates $\sigma\colon \overline{\sigma^{-1}(V)}\to \overline{V}$ to a simple pinched anti-polynomial-like map $(F,P_1,P_2)$ of degree $d$. Moreover, the filled Julia set of this pinched anti-polynomial-like map is $\beta(K(\sigma))$.

\item The above simple pinched anti-polynomial-like map is hybrid conjugate to an anti-rational map in $\cF_d^{\mathrm{simp}}$, which is unique up to M{\"o}bius conjugation. 
\end{enumerate}
\end{theorem}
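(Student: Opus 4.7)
The defining feature of $\cS_{\cR_d}^{\mathrm{simp}}$ is that the unique cusp $c\in\partial\Omega$ is a \emph{simple} parabolic fixed point of $\sigma$, so $\sigma^2$ is a holomorphic simple parabolic germ at $c$ with one attracting direction into $\Omega$ and one repelling direction into $\Int T$. My plan is first to invoke Leau--Fatou flower theory (in its anti-holomorphic form) to produce an attracting petal $P_a\subset\Omega$ at $c$, bounded by two smooth analytic arcs meeting at $c$ at a prescribed opening angle, on which $\sigma(P_a\setminus\{c\})\subset P_a$. Since $\sigma$ is the identity on $\partial\Omega\setminus\{c\}$, points there land in $T^0(\sigma)$, so $K(\sigma)\cap\partial\Omega=\{c\}$ and $K(\sigma)\setminus P_a$ is compact in $\Omega$. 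I would then enclose $K(\sigma)\setminus P_a$ by a smooth Jordan arc in $\Omega$ meeting $\partial P_a$ transversely at two points, and let $V$ be the Jordan domain bounded by this arc glued (and smoothed) to the two boundary arcs of $P_a$ near $c$. By construction $V\subset\Omega$, $\overline V\supset K(\sigma)$, and $\partial V$ is smooth away from $c$ and has a corner at $c$. A careful tuning using contraction of $\sigma^{-1}$ toward $K(\sigma)$ away from $c$, together with the parabolic normal form at $c$, arranges $\sigma^{-1}(V)\subset V$ with $\partial V\cap\partial\sigma^{-1}(V)=\sigma^{-1}(c)\cap\partial V$.

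\textbf{Pinched structure and straightening.} By Condition~(2) of Definition~\ref{srd_def}, the only critical value of $\sigma$ in the tiling set is $\infty$, so $\sigma\colon \sigma^{-1}(V)\to V$ is a proper branched covering of degree $d$. Its boundary is smooth except at the preimages of $c$, where the two boundary arcs of $V$ at $c$ pull back to two arcs meeting at each such preimage, producing either a corner (where $\sigma$ is locally injective) or a pinched point (where $\sigma$ has higher local degree). At $c$ itself, the parabolic normal form keeps $c$ a corner. Taking a Riemann embedding $\beta\colon \overline V\hookrightarrow \widehat\C$ and setting $P_2:=\beta(\overline V)$, $P_1:=\beta(\overline{\sigma^{-1}(V)})$, $F:=\beta\circ\sigma\circ\beta^{-1}$, the triple $(F,P_1,P_2)$ is a pinched anti-polynomial-like map of degree $d$; the equality $\bigcap_n F^{-n}(P_2)=\beta(K(\sigma))$ follows because every point of $V\setminus K(\sigma)$ eventually exits $V$. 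After checking the further regularity axioms of \cite[Definition~4.5]{PartI} (which follow from the parabolic normal forms at $c$ and its preimages), Theorem~\ref{straightening_thm}(1) gives a hybrid conjugacy to a pinched restriction of some $R\in\cF_d^{\mathrm{simp}}$. Because $(\Omega,\sigma)\in\cS_{\cR_d}$ forces $T^\infty(\sigma)$ to be the conformal image of the simply connected $\Int \mathcal{Q}$, $K(\sigma)$ is connected; so is the filled Julia set of $F$, and Theorem~\ref{straightening_thm}(2) yields uniqueness of $R$ up to M\"obius conjugacy.

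\textbf{Main obstacle.} The delicate step is engineering $V$ so that the full list of \emph{simple} pinched anti-polynomial-like axioms is met: the corners and pinch points of $P_1$ must be exactly the preimages of the corner of $P_2$, the two analytic boundary arcs at each pinch must meet in the standard parabolic tangency, $F$ must be locally injective at each corner, and the pinched inclusion $P_1\subset P_2$ must hold with equality on boundaries only at preimages of $c$. Achieving the last item requires a delicate interplay between the smooth enclosure of $K(\sigma)\setminus P_a$ and the forward invariance of $P_a$; the rest boils down to a careful local parabolic analysis at $c$ and at each of its preimages in $K(\sigma)$, where simplicity of the cusp is essential to avoid the higher-order tangencies that would break the standard form.
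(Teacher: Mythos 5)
This statement is imported verbatim from Part I (\cite[Lemma~4.13, Theorem~4.15]{PartI}), so the present paper contains no proof to compare against line by line; your outline --- restrict $\sigma$ to $\sigma^{-1}(V)\to V$ for a pinched domain $V\supset K(\sigma)$ meeting $\partial\Omega$ only at the cusp, uniformize, verify the simplicity axioms, and invoke Theorem~\ref{straightening_thm} --- is indeed the intended architecture. However, there is a concrete error at the heart of your construction of $V$. For a simple $(3,2)$-cusp the invariant direction of $\sigma$ at $y_c$ pointing into $\Omega$ is \emph{repelling}, not attracting: this is exactly Proposition~\ref{cusp_dyn_cor}, and Lemma~\ref{cusp_anal_dyn_lem} characterizes the existence of an attracting direction in $\Omega$ at the cusp as the defining property of $\Gamma^{\mathrm{hoc}}$, i.e.\ of the higher-order cusps excluded from $\mathcal{S}_{\mathcal{R}_d}^{\mathrm{simp}}$. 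Consequently the forward-invariant attracting petal $P_a\subset\Omega$ with $\sigma(P_a\setminus\{c\})\subset P_a$ that anchors your construction does not exist; worse, if it did exist it would give $P_a\subset\sigma^{-1}(P_a)$, so the preimage of $V$ would bulge \emph{outward} at the cusp, destroying the containment $P_1\subset P_2$ that a pinched anti-polynomial-like restriction requires. The correct local model is a \emph{repelling} petal $P_r\subset\Omega$, backward-invariant for the branch of $\sigma^{-1}$ fixing $y_c$: the contraction of that inverse branch is what both traps $K(\sigma)$ near the cusp inside $\overline{P_r}$ (points of $\Omega$ near $y_c$ outside $P_r$ lie on the attracting, droplet side and escape to the tiling set) and pushes $\sigma^{-1}(V)$ strictly inside $V$ near $y_c$, leaving contact only at $y_c$ itself.

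A second, lesser issue is that you defer the verification of the full list of simplicity axioms of \cite[Definition~4.5]{PartI} --- in particular that the two boundary arcs of $P_1$ meet with the prescribed tangency at each pinched point and that $F$ is locally injective at the corners --- to a closing remark, whereas this verification is the actual content of \cite[Lemma~4.13]{PartI} and rests on the same local Puiseux analysis of the Schwarz function at the simple cusp that you currently have the wrong way round. The remaining steps (degree $d$ via Proposition~\ref{simp_conn_quad}, identification of the filled Julia set with $\beta(K(\sigma))$, connectivity of $K(\sigma)$ from Definition~\ref{srd_def}, and uniqueness via Theorem~\ref{straightening_thm}) are in order.
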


In \cite[Theorem 4.12]{PartI} another straightening technique was developed, which allows one to straighten all Schwarz reflections in $\cS_{\cR_d}$, albeit at the expense of losing some parameter control of the straightening map.

\begin{theorem}\cite[Theorems~A,~B]{PartI}\label{qc_bijection_thm}
\noindent\begin{enumerate}[leftmargin=8mm]\upshape
\item There exists a bijection
$$
\chi\colon \faktor{\mathcal S_{\mathcal R_d}}{\mathrm{Aut}(\C)}\to \faktor{\mathcal F_d}{\mathrm{Aut}(\C)}
$$
such that each Schwarz reflection $(\Omega,\sigma)$ admits a pinched anti-polynomial-like restriction that is hybrid conjugate to a pinched anti-polynomial-like restriction of $\chi((\Omega,\sigma))$.

\item Let $R\in\cF_d$, $(\Omega,\sigma)=\chi^{-1}(R)$, and $f$ be a degree $d+1$ polynomial such that $f:\overline{\D}\to\overline{\Omega}$ is a homeomorphism. Then, the antiholomorphic correspondence $\mathfrak{C}$ defined by the polynomial relation
\begin{equation*}
\frac{f(w)-f(1/\overline{z})}{w-1/\overline{z}}=0,
\end{equation*}
is a mating of $\pmb{\Gamma}_d$ and $R$.
\end{enumerate}
\end{theorem}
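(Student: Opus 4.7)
The plan is to separate the argument into the construction of the bijection $\chi$, its injectivity and surjectivity, and finally the identification of the lifted correspondence as a mating. On the dense open subspace $\cS_{\cR_d}^{\mathrm{simp}}\subset\cS_{\cR_d}$ of maps with a simple cusp, Theorem~\ref{simp_restrict_thm} already produces a simple pinched anti-polynomial-like restriction $(F,P_1,P_2)$ of each $(\Omega,\sigma)$, and Theorem~\ref{straightening_thm} straightens $F$ to an essentially unique $R\in\cF_d^{\mathrm{simp}}$, so $\chi$ is naturally defined on $\cS_{\cR_d}^{\mathrm{simp}}/\mathrm{Aut}(\C)$. To extend $\chi$ to the full space $\cS_{\cR_d}$, I would invoke the more general pinched anti-polynomial-like straightening of \cite[Theorem~4.12]{PartI}, which no longer demands a simple cusp, at the price of losing some parameter control, but still produces a hybrid conjugate anti-rational map in $\cF_d$.

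For injectivity, I would combine rigidity of hybrid conjugacies with the fact that both representatives share the same external model $\cR_d$. A hybrid conjugacy between the pinched anti-polynomial-like restrictions of two Schwarz reflections in $\cS_{\cR_d}$ that straighten to the same $R$ can be extended across the tiling set using the conformal maps $\psi$ provided by Definition~\ref{srd_def}; a standard Beltrami glueing then upgrades the quasiconformal conjugacy to a global conformal one, forcing Möbius equivalence. For surjectivity I would perform the reverse quasiconformal surgery: given $R\in\cF_d$, excise a neighborhood of the filled Julia set $\mathcal{K}(R)$, glue in the model $(\mathcal{Q},\cR_d)$ across an annular quasiconformal interpolation, and pull back the standard complex structure by a suitable invariant Beltrami differential. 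The Measurable Riemann Mapping Theorem then yields an element of $\cS_{\cR_d}$ whose pinched anti-polynomial-like restriction is hybrid conjugate to that of $R$, producing a preimage of the class of $R$ under $\chi$.

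For part (2), I would analyse the correspondence $\mathfrak{C}$ obtained by lifting $\sigma$ through $f$. After removing the diagonal factor $w=1/\overline{z}$, the polynomial relation $(f(w)-f(1/\overline{z}))/(w-1/\overline{z})=0$ parametrises the pairs $(z,w)$ satisfying $f(w)=\sigma(f(z))$, so the forward branch of $\mathfrak{C}$ is intertwined by $f$ with the Schwarz dynamics of $\sigma$. Because $f$ is a homeomorphism from $\overline{\D}$ to $\overline{\Omega}$ that extends to a branched cover of $\widehat{\C}$, the lift of the tiling dynamics $\sigma:T^\infty(\sigma)\setminus\Int{T^0(\sigma)}\to T^\infty(\sigma)$ unfolds into the action of the anti-Hecke group $\pmb{\Gamma}_d$ on the tile tessellation of $\D$, via the identification of $\cR_d$ with the quotient dynamics of $\pmb{\Gamma}_d$. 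On $f^{-1}(K(\sigma))$, the correspondence reduces to the non-escaping dynamics of $\sigma$, which by part (1) is hybrid conjugate to the filled Julia dynamics of $R$. Gluing these two pieces along the limit set $\Lambda(\sigma)$ and its $f$-preimages then realises $\mathfrak{C}$ as a mating of $\pmb{\Gamma}_d$ and $R$.

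The main obstacle, in my view, is controlling the straightening uniformly across $\cS_{\cR_d}$, particularly for parameters where the cusp degenerates into a higher-order cusp or into a double point, and ensuring that the surgical inverse construction respects the parabolic combinatorics at the cusp so that the unique cusp of $\Omega$ corresponds exactly to the unique simple parabolic fixed point of $R$. This cusp-level rigidity is what pins down a bona fide bijection rather than a mere correspondence of classes, and it is also what makes the mating decomposition in part (2) unambiguous.
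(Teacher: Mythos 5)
The paper does not prove this statement: it is quoted directly from \cite[Theorems~A,~B]{PartI}, so there is no in-paper argument to compare yours against. Your sketch is consistent with the machinery this paper imports from that source — straightening of simple pinched anti-polynomial-like restrictions (Theorems~\ref{straightening_thm} and~\ref{simp_restrict_thm}) on the simple-cusp locus, the coarser straightening of \cite[Theorem~4.12]{PartI} elsewhere, a pullback/surgery argument for bijectivity, and the lift of $\sigma=f\circ\eta\circ(f\vert_{\D})^{-1}$ through $f$ for the mating statement — the main point you gloss over being the zero-area/conformal-removability input on the limit set needed to promote the glued quasiconformal conjugacy to a M\"obius one, an ingredient this paper makes explicit in its own analogous pullback arguments (Lemma~\ref{dp_pb_two_lem}, Theorem~\ref{escape_unif_thm}).
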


In this paper we will need to establish the existence of Schwarz reflections for which all bounded critical orbits are finite and have certain prescribed combinatorics. The strategy will be to first construct anti-polynomials with desired combinatorics using a result of Poirier's described in the following subsection, and then to apply the following theorem, also established in \cite{PartI}.

\begin{theorem}\cite[Theorem C]{PartI}\label{Thm_C_part_I}
For any semi-hyperbolic anti-holomorphic polynomial $p$ with connected Julia set, there is a pinched neighborhood $U$ of the filled Julia set $K(p)$ such that $p$ is David conjugate on $U$ to a Schwarz reflection $(\Omega,\sigma)\in \mathcal S_{\mathcal R_d}$.
\end{theorem}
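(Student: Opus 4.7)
My plan is to glue the anti-polynomial dynamics of $p$ on its filled Julia set to a suitable external model built from the anti-Farey map $\mathcal{R}_d$, and then to upgrade the resulting topological semi-conjugacy to a David conjugacy. The first step is to fix the combinatorial data. Since $p$ is semi-hyperbolic with connected Julia set, $J(p)$ is locally connected and every external ray lands; I would select a $p$-invariant lamination on $\mathbb{S}^1$ that records the pinching pattern of prime ends of $K(p)$ which needs to be matched with the dual pinching structure of the anti-Farey model at the images of the $(d+1)$-st roots of unity on $\partial \mathcal{Q}$. The pinched neighborhood $U \supset K(p)$ would then be cut out by finitely many external rays landing on the repelling cycles selected by this lamination, together with an equipotential; the identifications on $\partial U$ implement the pinching pattern prescribed by the lamination.

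Next, I would weld $p|_U$ to $\mathcal{R}_d|_{\mathcal{Q}}$ along their ideal boundaries by a circle homeomorphism $h$ that conjugates the boundary dynamics. Using the B\"ottcher coordinate of $p$ near $\infty$ and the canonical uniformization of $\mathcal{Q}$, there is a natural candidate for $h$ obtained by composing these conformal identifications. The key technical point is to verify that $h$ extends to a David homeomorphism across the interface: the mismatch between the hyperbolic (analytic, expanding) B\"ottcher dynamics of $p$ and the parabolic (neutral at the cusps) dynamics of $\mathcal{R}_d$ obstructs quasisymmetry, but standard estimates for parabolic welding should yield the required exponential decay for the area of the superlevel sets of the complex dilatation in annular neighborhoods of each parabolic cusp.

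Once the welding is constructed, I would pull back the standard complex structure on the external model under $h$, extend by the standard structure on $K(p)$, and propagate the resulting Beltrami differential $\mu$ under the backward iterates of $p$. Semi-hyperbolicity is crucial at this stage: it provides bounded geometry of the puzzle pieces and controls the recurrence of the post-critical set to $J(p)$, which are precisely the ingredients that ensure the spread dilatation remains in the David class globally (rather than only being measurable). Solving the David equation produces an integrating homeomorphism $\Phi$ that conjugates $p|_U$, modulo the inner boundary identifications, to an anti-meromorphic map $\sigma$ on a quadrature domain $\Omega=\Phi(U)$. The construction is arranged so that $\sigma$ has external dynamics conformally equivalent to $\mathcal{R}_d$, a single cusp on $\partial\Omega$ (coming from the unique parabolic orbit of $\mathcal{R}_d$ on $\partial\mathcal{Q}$ modulo the rotation $M_\omega$), and a fully ramified critical value over $\infty$, placing $(\Omega,\sigma)$ in $\mathcal{S}_{\mathcal{R}_d}$.

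The hard part will be establishing David regularity at two levels: first for the welding homeomorphism $h$ itself, and then for its iterated pullbacks. The parabolic character of $\mathcal{R}_d$ generates distortion that is integrable but not quasisymmetric, so the semi-hyperbolicity of $p$ must be converted into a quantitative estimate on the area of the set where the dilatation of the spread Beltrami coefficient exceeds a given threshold, uniform enough to be compatible with the David class globally. This is where the assumption that critical orbits do not accumulate on critical points (encoded in semi-hyperbolicity) enters most decisively, preventing the uncontrolled amplification of dilatation near the post-critical set.
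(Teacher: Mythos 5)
This statement is quoted from \cite[Theorem C]{PartI} and is not proved in the present paper, so your attempt has to be measured against the David-surgery machinery of \cite{PartI} and \cite{LMMN} on which it rests. Your overall architecture is the right one: conjugate the external dynamics $\overline{z}^d$ of $p$ to the anti-Farey circle map $\mathcal{R}_d\vert_{\partial\mathcal{Q}}$ by a circle homeomorphism $h$, extend $h$ to a David homeomorphism, use it to replace the external dynamics of $p$, spread the resulting Beltrami coefficient, and integrate. Two points, however, are substantively off.

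First, you misplace where semi-hyperbolicity does its work. In the correct construction the Beltrami coefficient is standard (zero) on $K(p)$ and is supported entirely in the basin of infinity, where the dynamics is conformally conjugate to $\overline{z}^d$ via the B\"ottcher coordinate; the backward spreading therefore never amplifies dilatation, and there is no issue of ``uncontrolled amplification near the post-critical set'' --- the bounded post-critical set lies in $K(p)$, where $\mu\equiv 0$. The David property of the spread coefficient is a statement about areas, and the genuine difficulty is transferring the exponential area decay from the disk model to $\mathcal{B}_\infty(p)$ under the B\"ottcher coordinate; this is where semi-hyperbolicity enters, through the fact that $\mathcal{B}_\infty(p)$ is a John domain (so $|\phi'|^2$ has the higher integrability needed to preserve the David condition). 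Second, and more seriously, your proposal omits the removability step entirely. After integrating $\mu$, the conjugated map is anti-meromorphic off the image of $J(p)$ and merely continuous across it; to conclude that it is a genuine Schwarz reflection one must know that $J(p)$ is removable for $W^{1,1}$ functions, which follows from the John property via the Jones--Smirnov theorem \cite{JS00} (exactly as invoked in Step I of the proof of Lemma~\ref{hoc_arc_lem} in this paper). Without that step the construction does not produce an element of $\mathcal{S}_{\mathcal{R}_d}$, so this is a genuine gap rather than a stylistic difference. The ``standard estimates for parabolic welding'' you allude to for the David extension of $h$ are also not routine --- they constitute the main theorem of \cite{LMMN} --- but citing that result is legitimate; the missing removability argument is not something you can wave away.
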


\subsection{PCF maps and Hubbard trees}

Recall that for a polynomial $p\colon \C\to \C$ the \textit{post-critical} set is defined to be

$$P(p)= \overline{\bigcup_{n\geq 1,\,c\text{ critical}}p^{\circ n}(c)}.$$

A polynomial with a finite post-critical set (called a \emph{postcritically finite or PCF} polynomial) is described by its \textit{Hubbard tree}, which is a forward invariant subset of the filled Julia set containing all finite critical points. Hubbard trees were first considered in work of Douady and Hubbard \cite{DH1} and were later expanded upon by Poirier \cite{PoiThesis,Poi13}.

\begin{definition}\label{def_augmented_hubbard_tree_poly}
    The \emph{augmented hubbard tree} of a postcritically finite (anti-)polynomial is defined to be the minimal hull in the filled Julia set of the post-critical set, preimages of the critical values, the landing point of the $0$ ray, and its preimages.\\
Furthermore, if the landing point of the $0-$ray has valence at least two, we record the circular ordering of the corresponding edges and the $0-$ray.
\end{definition}

As defined above, Hubbard trees are rigid geometric objects which are acted upon by polynomials. Combinatorial analogs were considered by Poirier in \cite{PoiThesis,Poi13}.

\begin{definition}
    
An \textit{angled tree} is an abstract planar embedded tree in which one assigns angles $\angle_v(e,e')\in\R/2\pi\Z$ to any two edges $e$ and $e'$ incident at a common vertex $v$. Throughout the paper angles will be normalized so that if $e$ and $e'$ are consecutive in the anti-clockwise circular ordering around $v$ then $\angle_v(e,e')=2\pi/\mathrm{valence}(v)$. \textit{A (normalized) orientation-reversing angled tree dynamics} is an angled tree $T$ together with a map $f\colon T\to T$ such that:

\begin{enumerate}[leftmargin=8mm]
    \item $f$ sends vertices of $T$ to vertices of $T$ and injectively maps edges of $T$ to unions of edges of $T$,
    \item $f$ reverses the circular ordering of edges at every vertex,
    \item for every vertex $v$ there is a positive integer $d(v)$ such that $\angle_{f(v)}(f(e),f(e')) = -d(v)\cdot \angle_v(e,e')$.
\end{enumerate}
    Say that a vertex is a \textit{critical point} for angled tree dynamics if $d(v)\geq 2$. We may then classify vertices as being of \textit{Fatou type} if their orbit eventually lies in a periodic critical cycle, and of \textit{Julia type} otherwise.
    Lastly, we say that an angled tree dynamics is \textit{expanding} if given any two adjacent Julia type vertices $v$ and $v'$, there is some $n$ for which $f^{\circ n}(v)$ and $f^{\circ n}(v')$ are not adjacent.
    \end{definition}
    For more details on angled tree dynamics and generalizations thereof, we refer the reader to \cite{Poi13}.

    It is not too difficult to see that a Hubbard tree for an anti-holomorphic PCF polynomial is described combinatorially by an expanding, orientation-reversing angled tree dynamics. The following theorem due to Poirier (following W. Thurston's realization theorem) gives a converse to this statement.

\begin{theorem}\cite[Theorem~5.1]{Poi13}\label{poi_thm} 
A normalized orientation-reversing angled tree dynamics can be realized as the dynamics associated to a postcritically finite anti-holomorphic polynomial map if and only if it is expanding. Such a realization is unique up to affine conjugation in the dynamical plane.
\end{theorem}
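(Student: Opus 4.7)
The plan is to follow Thurston's realization strategy, suitably adapted to the orientation-reversing setting. For necessity, I would argue as follows: given a PCF anti-holomorphic polynomial $p$ with augmented Hubbard tree $T$, the axioms of a normalized orientation-reversing angled tree dynamics are built into the construction (the angle condition $\angle_{f(v)}(f(e),f(e'))=-d(v)\cdot\angle_v(e,e')$ records the local anti-holomorphic model $z\mapsto \overline{z}^{d(v)}$ at a critical point of multiplicity $d(v)$). Expansion then follows from the fact that Julia-type vertices lie in the Julia set $J(p)$, which carries an expanding conformal metric off the post-critical set; two adjacent Julia vertices cannot stay adjacent forever along their orbits, else a definite-length arc in $T\cap J(p)$ would be periodic and violate expansion of $p$ near repelling cycles.

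For sufficiency, given an expanding $(T,f)$ I would thicken it into a topological Thurston map on $S^2$. Embed $T$ in $S^2=\C\cup\{\infty\}$ using its planar structure, add $\infty$ as a super-attracting fixed point, and partition $S^2\setminus T$ into the unbounded face and finitely many bounded complementary faces. Using the Fatou-type vertex labeling, extend $f$ to an orientation-reversing branched cover $F\colon S^2\to S^2$: near each Fatou-type periodic critical vertex $v$ attach a disk on which $F$ is the model $\overline{z}^{d(v)}$ in a local chart, pull this structure back along the Fatou orbit, and extend to the unbounded face by $\overline{z}^{\Deg}$ after identifying it with a neighborhood of $\infty$. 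The angle conditions and the planar/circular ordering data guarantee that this extension is well-defined up to isotopy rel post-critical set.

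The core analytic step is then to invoke Thurston's topological characterization of rational maps in its orientation-reversing form (equivalently, apply the classical theorem to $F\circ F$, which is an orientation-preserving PCF Thurston map). I need to rule out a Thurston obstruction, i.e.\ an $F$-stable multicurve $\Gamma$ in $S^2\setminus P(F)$ whose transition matrix has spectral radius $\geq 1$. This is the main obstacle. The argument I would run: any essential simple closed curve $\gamma$ in $S^2\setminus P(F)$ must intersect $T$, and its intersection pattern with the edges of $T$ is coarsely controlled by the combinatorics of $T$; iterated $F$-preimages of $\gamma$ produce curves whose intersections with $T$ correspond, under $f^{-n}$, to arcs joining Julia-type vertices that are forced apart by the expanding hypothesis. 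This separation forces the geometric intersection numbers with a fixed transversal to grow, which is incompatible with an invariant multicurve of non-contracting transition matrix; hence no obstruction exists and $F$ is Thurston-equivalent to an anti-rational map, which is a polynomial since $\infty$ is a totally invariant super-attracting fixed point.

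For uniqueness, suppose $p_1,p_2$ are two PCF anti-polynomials realizing the same normalized orientation-reversing angled tree dynamics. Their extensions to Thurston maps $F_1,F_2$ are combinatorially equivalent by construction, so Thurston rigidity (again applied to $F_i\circ F_i$) furnishes a M\"obius map conjugating $p_1$ to $p_2$ that is isotopic rel post-critical set to an orientation-preserving homeomorphism respecting the tree embeddings. Since both polynomials are monic and centered (or in any fixed affine normalization), this M\"obius conjugacy reduces to an affine map, proving uniqueness up to affine conjugation.
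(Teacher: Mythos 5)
This statement is quoted verbatim from Poirier (\cite[Theorem~5.1]{Poi13}); the paper offers no proof of it, so there is nothing in the source to compare your argument against. Judged on its own, your sketch follows the expected Thurston-realization outline (thicken the tree to a branched cover, rule out obstructions, conclude by rigidity), and the necessity and uniqueness directions are reasonable in spirit. But two steps as written have genuine gaps.

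First, the elimination of Thurston obstructions --- the core of the sufficiency direction --- is only asserted. The claim that preimages of an essential curve have growing intersection number with a transversal, hence no invariant multicurve with non-contracting transition matrix exists, is a heuristic, not an argument; in general, controlling intersection numbers of pullbacks with a tree does not by itself bound the spectral radius of the transition matrix. The standard (and, as far as I know, the actual) route for topological polynomials is structurally different: by the Berstein--Levy theorem, any Thurston obstruction for a topological polynomial contains a Levy cycle, i.e.\ a periodic cycle of curves each mapped by degree one; such a cycle encircles at least two post-critical points whose separating combinatorics never changes under iteration, which is exactly what the expansion hypothesis on adjacent Julia-type vertices forbids. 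Without invoking this reduction (or supplying a genuine substitute), the obstruction step does not close. Second, passing to $F\circ F$ is not innocent: Thurston equivalence of $F\circ F$ to a rational map $R$ does not by itself produce an anti-rational map equivalent to $F$ (one must either use an orientation-reversing version of Thurston's characterization directly, or argue separately that the realizing map and the equivalence can be chosen to intertwine $F$ itself, not just its square). The same caveat applies to your uniqueness argument, where rigidity for $F_i\circ F_i$ yields a M\"obius map conjugating the squares; an extra step is needed to see that it conjugates $p_1$ to $p_2$.
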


\subsection{Constructing special Shabat anti-polynomials}\label{dyn_shabat_existence_subsec}

We say that an anti-polynomial $p$ is \emph{Shabat} if it has exactly two finite critical values. One can associate a dessin d'enfant to a Shabat anti-polynomial $p$ as in the holomorphic case. 

A Shabat anti-polynomial $p$ is called \emph{dynamically Shabat} if the postcritical set of $p$ (in $\C$) coincides with the set of finite critical values of $p$. 
Clearly, dynamically Shabat anti-polynomials are postcritically finite. In the holomorphic setting, such polynomials were studied in \cite{pilgrim}.

\begin{lemma}\label{dyn_shabat_existence_lem}
	Let $\mathcal{T}$ be a bicolored plane tree with at least one black and one white vertex of valence greater than one. Then, there exists a dynamically Shabat anti-polynomial $p$ such that 
	\begin{enumerate}[leftmargin=8mm]
		\item the dessin d'enfant of $p$ is isomorphic to $\mathcal{T}$,
		
		\item the augmented Hubbard tree $\mathcal{H}(p)$ of $p$ is planar isomorphic to $\mathcal{T}$.
	\end{enumerate} 
\end{lemma}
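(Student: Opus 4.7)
The plan is to apply Poirier's realization theorem (Theorem~\ref{poi_thm}) to an explicit expanding orientation-reversing angled tree dynamics built on $\mathcal{T}$, and to read off both conclusions from that construction.

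\smallskip

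\noindent\textbf{Setup.} I would fix a black vertex $v_B$ and a white vertex $v_W$ of $\mathcal{T}$, each of valence at least two (which exist by hypothesis). Endow the underlying plane tree of $\mathcal{T}$ with the normalized angle assignment, and define a self-map $F\colon \mathcal{T}\to\mathcal{T}$ by sending every black vertex to $v_B$, every white vertex to $v_W$, and mapping each edge homeomorphically onto the unique arc $[v_B,v_W]\subset\mathcal{T}$. Assign to each vertex the local degree $d(v):=\mathrm{val}(v)$.

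\smallskip

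\noindent\textbf{Verification of Poirier's axioms.} By construction $F$ sends vertices to vertices and edges injectively to unions of edges. At any vertex $v$, all $\mathrm{val}(v)$ incident edges collapse under $F$ onto the initial edge of $[v_B,v_W]$ at $F(v)$, so the orientation-reversing circular-order requirement becomes vacuous, and the angle identity
$$
\angle_{F(v)}\bigl(F(e),F(e')\bigr) = -d(v)\cdot\angle_v(e,e')
$$
reduces to $0\equiv -\mathrm{val}(v)\cdot(2\pi k/\mathrm{val}(v))\pmod{2\pi}$. Since $v_B$ and $v_W$ both have valence $\geq 2$, they are fixed critical vertices, so $\{v_B\}$ and $\{v_W\}$ are periodic critical cycles; every other vertex lands in their union after one iterate. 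Hence every vertex is of Fatou type and the expansion hypothesis is vacuous.

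\smallskip

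\noindent\textbf{Realization and verification of the claims.} By Theorem~\ref{poi_thm} there is a PCF anti-holomorphic polynomial $p$ (unique up to affine conjugation) realizing $(\mathcal{T},F)$, with $\mathcal{T}$ serving as its augmented Hubbard tree. The finite critical points of $p$ are precisely the vertices of $\mathcal{T}$ of valence $\geq 2$, and the only finite critical values are $v_B$ and $v_W$, so $p$ is Shabat. The post-critical set equals $\{v_B,v_W\}$, which is the full set of finite critical values, so $p$ is dynamically Shabat. Taking $\gamma:=[v_B,v_W]\subset\mathcal{T}$, the preimage $p^{-1}(\gamma)$ agrees with $\mathcal{T}$ as a bicolored plane tree, giving (1). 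Since Poirier's bijection produces $\mathcal{T}$ itself as the augmented Hubbard tree of $p$, conclusion (2) follows.

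\smallskip

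\noindent\textbf{Principal obstacle.} The main technical point is the careful verification of the orientation-reversing circular-order axiom in the degenerate configuration where every edge at a given vertex is collapsed under $F$ onto the same image edge at $F(v)$, together with confirming that the landing point of the $0$-ray and its preimages (which enter the augmented Hubbard tree by Definition~\ref{def_augmented_hubbard_tree_poly}) are absorbed into the combinatorics of $\mathcal{T}$ rather than producing extra vertices --- this is guaranteed precisely by choosing $\mathcal{T}$ as the input angled tree in Poirier's bijection.
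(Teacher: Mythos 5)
Your construction is genuinely different from the paper's (which normalizes a classical Shabat polynomial $q$ realizing $\mathcal{T}$, post-composes with an anti-affine map so that the two finite critical values become the fixed points $0$ and $1$, and crucially roots the tree at a \emph{valence-one} black vertex so that the black critical value $1$ is a repelling fixed point rather than a critical point). Your argument for conclusion (1) is sound: the angled tree dynamics you describe is expanding for vacuous reasons, Poirier's theorem realizes it, an edge count shows $p^{-1}([v_B,v_W])=\mathcal{T}$, and the postcritical set is $\{v_B,v_W\}$, so $p$ is dynamically Shabat with dessin $\mathcal{T}$.

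The gap is in conclusion (2), and it is exactly the point your ``principal obstacle'' paragraph waves at. By Definition~\ref{def_augmented_hubbard_tree_poly}, the augmented Hubbard tree must contain the landing point $\beta$ of the $0$-ray \emph{and all of its preimages}. Poirier's theorem (Theorem~\ref{poi_thm}) realizes the angled tree dynamics as a Hubbard tree; neither the axioms of Definition of angled tree dynamics nor the realization statement carry any information about external rays, so nothing is ``guaranteed by choosing $\mathcal{T}$ as the input.'' In your construction both finite critical values $v_B,v_W$ are superattracting fixed points, so \emph{every} vertex of $\mathcal{T}$ lies in the Fatou set (each is a center of a bounded Fatou component), whereas $\beta$ is necessarily a repelling fixed point on the Julia set: no choice of B\"ottcher normalization can make the $0$-ray land at a vertex of $\mathcal{T}$. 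Consequently $\mathcal{H}(p)$ acquires $\deg(p)$ extra marked points ($\beta$ and its preimages, one in the interior of each edge if they lie on $\mathcal{T}$ at all), so it is at best a subdivision of $\mathcal{T}$ with twice as many edges, not a tree planar isomorphic to $\mathcal{T}$; and even the weaker statement that these points lie \emph{on} $\mathcal{T}$ requires proving that some fixed external ray lands on the arc $[v_B,v_W]$ --- a nontrivial landing argument (a repelling fixed point of an anti-polynomial may instead receive only period-two rays, cf.\ the orbit-portrait step in the paper's proof). This ray-landing analysis is the actual content of the paper's proof, and it is made tractable there precisely because the root is a valence-one black vertex, forcing the black critical value to be a repelling fixed point at which the candidate $0$-ray can land; that structural choice is also what Remark~\ref{zero_ray_rem} and the later surgeries in Lemma~\ref{slice_non_emp_lem} and Lemma~\ref{hoc_arc_lem} depend on. To repair your argument you would have to either modify the tree dynamics so that one critical value is a non-critical repelling fixed point, or supply the missing landing argument and then still reconcile the subdivided tree with the claimed planar isomorphism.
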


\begin{proof}
Without loss of generality, we can assume that $\cT$ is embedded in the plane so that it has a black vertex of valence one at $1$, and its adjacent white vertex (which is necessarily of valence at least two) is at $0$. Let $q$ be a Shabat polynomial realizing this tree, that is a polynomial with two critical values $a=q(0)$ and $b=q(1)$ in the plane such that for a closed arc $\gamma$ with endpoints at $a, b$, the set $q^{-1}(\gamma)$ has the structure of a bicolored plane tree isomorphic to $\cT$. Now define
$$
p(z) = \overline{\psi \circ {q(z)}},
$$
where $\psi(w)=\frac{w-a}{b-a}$; i.e., the affine map which sends $a$ to $0$ and $b$ to $1$.
	
We first note that $p$ is an anti-holomorphic polynomial with critical values $0$ and $1$ (and hence Shabat), and its dessin $p^{-1}([0,1])$ is isomorphic to $\cT$. Furthermore, note that $0$ and $1$ are fixed, and hence $\{0,1\}$ is the entire postcritical set of $p$. We also note that $0$ is a superattracting fixed point and $1$ is a repelling fixed point of~$p$.
	
Let $\widetilde{\gamma}\subset \cK(p)$ be an arc connecting $0$ and $1$.  Since $p^{-1}(\widetilde{\gamma})$ is isomorphic to $\cT$, it now suffices to prove that $p^{-1}(\widetilde{\gamma})$ is the augmented Hubbard tree of $p$. By construction, $p^{-1}(\widetilde{\gamma})$ is the hull in $\cK(p)$ of the post-critical set and the preimages of the critical values. To complete the proof, we will show that the 
the dynamical $0-$ray of $p$ lands at the repelling fixed point $1$. Since any fixed dynamical ray of $p$ can be turned into the $0-$ray by changing the normalization of the B{\"o}ttcher coordinate of $p$ at $\infty$, we only need to argue that some fixed dynamical ray of $p$ lands at $1$. By way of contradiction, assume that this is not the case.
Then by \cite[Lemma~2.5]{Muk1}, an even number of period two rays land at $1$. 
In this case, it is easy to see using the local orientation-reversing dynamics of $p$ that there are exactly two locally $p-$invariant components of $\mathcal{K}(p)\setminus\{1\}$ near $1$. One of these two components of $\mathcal{K}(p)\setminus\{1\}$ would contain $p^{-1}(\widetilde{\gamma})$ and hence all the finite critical points of $p$. 
This would force the other component in question to be $p-$invariant and disjoint from the Hubbard tree $\widetilde{\gamma}$ of $p$. But this would contradict the fact that cut-points of $\mathcal{K}(p)$ eventually land on its Hubbard tree under iteration (see \cite[Proposition~6.19]{kiwi1}).
\end{proof}

\begin{remark}\label{zero_ray_rem}
Since the arc $\widetilde{\gamma}$ is $p-$invariant and it contains no attracting/neutral fixed points in its interior, a simple argument from one-dimensional dynamics shows that the forward orbits of all points in $\widetilde{\gamma}\setminus\{1\}$ converge to $0$. Hence, $\widetilde{\gamma}\setminus\{1\}$ is contained in the fixed immediate superattracting basin of $0$. In other words, the repelling fixed point $1$ lies on the boundary of this Fatou component.
\end{remark}

\section{One-parameter families in $\cF_d$ and associated\\ Belyi Schwarz reflections}\label{shabat_poly_schwarz_sec}

\subsection{Slices $\mathfrak{L}_{\mathcal{T}}$ of Belyi anti-rational maps}\label{belyi_anti_rat_subsec}

An anti-rational map is called \emph{Belyi} if it has at most three critical values in $\widehat{\C}$. 
If $R\in\cF_d$ is Belyi with exactly three critical values, then the filled Julia set $\mathcal{K}(R)$ contains two of the three critical values of $R$. As in the holomorphic case, we can define the \emph{dessin d'enfant} of $R$ to be the bicolored plane tree obtained by taking the $R-$preimage of a simple arc $\gamma$ connecting the two critical values of $R$ in $\mathcal{K}(R)$. As the third critical value of $R$ (lying in $\mathcal{B}(R)$) is fully ramified, it follows that the dessin d'enfant of $R$ is a tree. 

If $R\in\cF_d$ is Belyi with exactly two critical values, then $\mathcal{K}(R)$ contains exactly one critical value of $R$ which must be fully ramified. Thus, this critical value must be different from the parabolic fixed point $\infty$. In this case, one can still construct a dessin d'enfant of $R$ by choosing $\gamma$ to be a simple arc that connects $\infty$ to the unique critical value of $R$ in $\mathcal{K}(R)$.

We will denote this (combinatorial) bicolored plane tree by $\mathcal{T}(R)$ (cf. Subsection~\ref{shabat_tree_subsec}).

A natural way of defining dynamically natural sub-families of Belyi anti-rational maps in $\cF_d$ is to introduce critical orbit relations. 

\begin{definition}\label{fd_slice_def}
We define
\begin{align*}
\mathfrak{L} & :=\{R\in\cF_d: R\ \textrm{is\ Belyi,\ and\ if\ $R$ has\ three\ critical\ values,\ then\ the\ parabolic}\\
&\qquad \textrm{fixed\  point}\ \infty\ \textrm{is\ a\ critical\ value\ of}\ R\}.
\end{align*}
\end{definition}

Note that for $R\in\mathfrak{L}$, the parabolic fixed point $\infty$ gives rise to a marked vertex of valence one (also called an endpoint or a tip) of $\mathcal{T}(R)$. As a convention, we will color this vertex black and denote it by $v_b$. The adjacent white vertex (of valence at least two) is denoted by $v_w'$.

\begin{definition}\label{fd_slice_tree_def}
We define
$$
\displaystyle \mathfrak{L}_{\mathcal{T}}:=\{R\in\mathfrak{L}: \left(\mathcal{T}(R),v_b\right)\cong(\mathcal{T},O)\},
$$
where $(\mathcal{T},O)$ is a bicolored plane tree with a black vertex $O$ of valence one as a root, and the isomorphism is understood to preserve the root and the bicolored plane structure. We also set
$$
\displaystyle \mathfrak{L}_{\mathcal{T}}^{\mathrm{simp}}:= \mathfrak{L}_{\mathcal{T}}\cap\cF_d^{\mathrm{simp}}=\{R\in\mathfrak{L}_{\mathcal{T}}: \infty\ \textrm{is\ a\ simple\ parabolic\ fixed\ point\ of}\ R\}.
$$
\end{definition}

We also remark that if $\mathcal{T}$ is a star-tree, then each map in $\mathfrak{L}_{\mathcal{T}}$ has a unique critical value in $\mathcal{K}(R)$.

\begin{lemma}\label{slice_non_emp_lem}
$\Int{\mathfrak{L}_{\mathcal{T}}}\neq \emptyset$.
\end{lemma}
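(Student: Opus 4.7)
The plan is to produce a single map $R_0\in\mathfrak{L}_\cT$ by starting on the antiholomorphic polynomial side and straightening into $\cF_d$, and then to argue that $R_0$ is structurally stable within the two-dimensional parameter slice that $\mathfrak{L}_\cT$ inherits from $\cC(S_\cT)$ through the bijection $\chi$ of Theorem~\ref{qc_bijection_thm}.

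First, I would apply Lemma~\ref{dyn_shabat_existence_lem} to the underlying bicolored plane tree of $(\cT,O)$, choosing the embedding so that the root $O$ sits at the point $1$. This yields a dynamically Shabat antipolynomial $p$ whose dessin is isomorphic to $\cT$, whose superattracting fixed point at $0$ corresponds to the white vertex adjacent to $O$, and whose repelling fixed point at $1$ (at which the $0$-ray lands, by Remark~\ref{zero_ray_rem}) corresponds to $O$. Being postcritically finite, $p$ is semi-hyperbolic with connected Julia set, so Theorem~\ref{Thm_C_part_I} applies and produces a Schwarz reflection $(\Omega_0,\sigma_0)\in\cS_{\cR_d}$ that is David-conjugate to $p$ on a pinched neighbourhood of $\cK(p)$. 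The David homeomorphism preserves the Hubbard-tree combinatorics of $p$, and hence, via the dessin dictionary between Shabat polynomials and Belyi Schwarz reflections set up in Section~\ref{shabat_poly_schwarz_sec}, the rooted dessin of $\sigma_0$ (with the unique cusp of $\partial\Omega_0$ serving as root) is isomorphic to $(\cT,O)$. In particular $(\Omega_0,\sigma_0)\in S_\cT$, and since $p$ has a superattracting fixed point, so does $\sigma_0$, placing $(\Omega_0,\sigma_0)$ in a superattracting hyperbolic component of $\cC(S_\cT)$, which is open in the real two-dimensional parameter space $S_\cT$.

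Next, set $R_0 := \chi((\Omega_0,\sigma_0))\in\cF_d$ via Theorem~\ref{qc_bijection_thm}. Because $(\Omega_0,\sigma_0)$ and $R_0$ are hybrid conjugate on neighbourhoods of their non-escaping sets, $R_0$ inherits the Belyi structure of $\sigma_0$: it has exactly three critical values, one being the parabolic fixed point $\infty$ (matched with the cusp, and hence with the root $O$), and its rooted dessin is isomorphic to $(\cT,O)$. Thus $R_0\in\mathfrak{L}_\cT$. Since $\chi$ is a bijection that intertwines the natural topologies on the two-dimensional slices $\cC(S_\cT)$ and $\mathfrak{L}_\cT$, the $\chi$-image of the open hyperbolic component containing $(\Omega_0,\sigma_0)$ is an open neighbourhood of $R_0$ inside $\mathfrak{L}_\cT$, and therefore $R_0\in\Int(\mathfrak{L}_\cT)$.

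The main obstacle is the combinatorial bookkeeping across the two conjugacies: one must verify that the specific black valence-one vertex $O$ of $\cT$, which corresponds to the repelling boundary fixed point $1$ of $p$, really ends up at the parabolic fixed point $\infty$ of $R_0$, so that the rooted dessin matches. This is precisely the content of the dessin dictionary of Section~\ref{shabat_poly_schwarz_sec}, applied first through the David conjugacy (matching $1$ with the cusp of $\partial\Omega_0$) and then through the hybrid conjugacy (matching the cusp with $\infty$ of $R_0$).
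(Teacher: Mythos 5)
Your construction of a single map in $\mathfrak{L}_\cT$ is workable in spirit (it is close to the alternative David-surgery route that the paper itself mentions in the remark following the lemma), but the openness half of your argument has a genuine gap, and the overall route is circular. On circularity: you invoke the family $S_\cT$, its connectedness locus, the openness of hyperbolic components, and the dessin dictionary of Section~\ref{shabat_poly_schwarz_sec} --- but $S_\cT$ is only defined (Definition~\ref{shabat_schwarz_family_def}) via Proposition~\ref{shabat_good_uni_prop}, whose hypothesis is a map in $\chi^{-1}(\mathfrak{L}_\cT)$, i.e.\ it presupposes $\mathfrak{L}_\cT\neq\emptyset$; and Corollary~\ref{non_emp_cor} ($\Int{S_\cT}\neq\emptyset$) is itself deduced downstream from the present lemma. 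More seriously, your final step rests on the assertion that ``$\chi$ intertwines the natural topologies.'' Theorem~\ref{qc_bijection_thm} only provides a bijection; continuity of straightening is precisely the delicate issue in antiholomorphic dynamics that the paper flags (cf.\ the discussion of discontinuity of straightening and \cite{IM21}), and no such topological compatibility of $\chi$ is established anywhere. Even granting it, the $\chi$-image of an open subset of $\cC(S_\cT)$ would only be relatively open in $\mathfrak{L}_\cT/\mathrm{Aut}(\C)$, which says nothing about the interior of $\mathfrak{L}_\cT$ in the ambient space of anti-rational maps. (You also apply Lemma~\ref{dyn_shabat_existence_lem} unconditionally, whereas it requires a black and a white vertex of valence at least two and so excludes star-trees, for which the paper substitutes $p(z)=\overline{z}^d$.)

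The paper avoids all of this by staying entirely on the anti-rational side: it takes the dynamically Shabat anti-polynomial $p$, M{\"o}bius-conjugates so that the repelling fixed critical value goes to $\infty$, rescales so that $\infty$ becomes parabolic, performs a quasiconformal surgery on the parabolic basin to normalize the critical value to {\'E}calle height zero (placing the map in $\cF_d$ with rooted dessin $(\cT,O)$), and then perturbs the superattracting fixed point into linearly attracting ones to sweep out a non-empty open set of $\mathfrak{L}_\cT$ directly. If you wish to keep your construction of the single map $R_0$, you still need to replace the push-forward through $\chi$ by a perturbation argument carried out in $\cF_d$ itself.
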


\begin{proof}
Let $p$ be the Shabat anti-polynomial constructed in Lemma~\ref{dyn_shabat_existence_lem} (if $\mathcal T$ is a star-tree, instead use $p(z)=\overline{z}^d$). Recall that $p$ has three critical values, each a fixed point (or two critical values and a repelling fixed point). One of these critical values is $\infty$, which is fully ramified, one of the finite ones is a fixed critical point, which we denote $\pmb{v_w}$ and denote the remaining critical value as $\pmb{v_b}$ (in the case where $\cT$ is a star-tree use $\pmb{v_b}=1$.). Let $\psi(z)= \frac{z-\pmb{v_w}}{z-\pmb{v_b}}$ (i.e., $\psi$ is a M\"obius transformation taking $\pmb{v_w}$ to $0$ and $\pmb{v_b}$ to $\infty$) consider the map
	
	$$R(z) = \frac{1}{p'(\pmb{v_b})} \cdot \left(\psi\circ p\circ \psi^{-1}(z)\right).$$
	
	Necessarily we have that $0$ and $\infty$ are fixed points of $R$, and since conjugacies preserve multipliers of fixed points we have $R'(0)=0$ and $R'(\infty)=\frac{1}{p'(\pmb{v_b})}\cdot p'(\pmb{v_b})=1$. In particular $\infty$ is a parabolic fixed point, and the only other critical value of $R$  is $\frac{1}{p'(\pmb{v_b})}$. This is a fully ramified critical value. After applying a quasiconformal surgery on the parabolic basin to give this critical value height zero (cf. \cite[\S 2]{HS}, \cite[\S 3]{MNS}), we see that $R\in \mathfrak{L}_\cT$. Furthermore, a standard perturbation argument to turn superattracting fixed points to linearly attracting ones produces a non-enpty open set in $ \mathfrak{L}_\cT$.
\end{proof}

\begin{remark}
	Another proof of the above can be achieved using a \textit{David surgery} argument, by replacing the basin of infinity for $p$ with the parabolic Blaschke product $B_d(z) = \frac{(d+1)\overline z^d + (d-1)}{(d-1)\overline z^d +(d+1)}$.
\end{remark}

\subsection{The associated Belyi Schwarz reflections $S_\cT$}\label{belyi_schwarz_subsec}

We will now study the preimage of the family $\mathfrak{L}_{\mathcal{T}}$ under the straightening map $\chi$, and see that the corresponding Schwarz reflections come from univalent restrictions of Shabat polynomials whose dessin d'enfant can be explicitly read from $\mathcal{T}$.

\begin{proposition}\label{shabat_good_uni_prop}
Let $[\Omega,\sigma]\in\chi^{-1}(\mathfrak{L}_{\mathcal{T}})$, and $f:\D\to\Omega$ be a uniformizing polynomial map. Then the following hold.
\begin{enumerate}[leftmargin=8mm]
\item $f$ is a Shabat polynomial whose dessin d'enfant, denoted as $\mathcal{T}^{\mathrm{aug}}$, is obtained by adding a single edge to $\mathcal{T}^{\mathrm{op}}$ at the black vertex $v_b$ (see Definition~\ref{bicolored_plane_tree_def} for the notion of the opposite of a plane tree). The other endpoint of this new edge is necessarily white, and we denote it by $v_w$.

\noindent In particular, $\mathcal{T}^{\mathrm{aug}}$ has a black vertex $v_b$ of valence $2$ that lies between a terminal white vertex $v_w$ and a white vertex $v_w'$ of valence at least two.

\item If $emb:\mathcal{T}^{\mathrm{aug}}\to\C$ induces an isomorphism between the combinatorial tree $\mathcal{T}^{\mathrm{aug}}$ and a planar realization of it, then $emb(v_b)\in\mathbb{S}^1$ and $emb(v_w)\in\D$.
\end{enumerate}
\end{proposition}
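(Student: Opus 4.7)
The strategy is twofold. First we count the critical values of $f$ and show it is Shabat, using the hybrid conjugacy with $R := \chi([\Omega,\sigma]) \in \mathfrak{L}_{\mathcal{T}}$ and the rigidity imposed by the anti-Farey external model $\mathcal{R}_d$. Then we describe the dessin $\mathcal{T}^{\mathrm{aug}}$ by pulling back a suitable arc between the finite critical values of $f$ through the factorization $\sigma = f \circ \eta \circ f^{-1}$.

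For the Shabat property, note that $f|_{\overline{\mathbb{D}}}$ is univalent onto $\overline{\Omega}$, with a unique simple critical point $c_0 \in \mathbb{S}^1$ sitting over the cusp of $\partial\Omega$. Together with the critical point of local degree $d+1$ at $\infty$, Riemann--Hurwitz forces exactly $d-1$ additional free critical points of $f$ in $A := \widehat{\mathbb{C}} \setminus \overline{\mathbb{D}}$ (counted with multiplicity). Via the relation $\sigma \circ f = f \circ \eta$ on $\overline{\mathbb{D}}$, each such $w^* \in A$ gives rise to a critical point $f(1/\overline{w^*}) \in \Omega$ of $\sigma$ with critical value $f(w^*)$. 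Since $\sigma|_{T^\infty(\sigma)\setminus\Int T^0(\sigma)}$ is conformally conjugate to $\mathcal{R}_d$, whose unique critical point (of local degree $d+1$) is already realized by $f(0)$ with value $\infty$, all remaining $d-1$ free $\sigma$-critical points must lie in $K(\sigma)$. The hybrid conjugacy of Theorems~\ref{simp_restrict_thm} and~\ref{qc_bijection_thm} sets up a bijection between their critical values and those of $R$ in $\mathcal{K}(R) \setminus \{\infty\}$; for $R \in \mathfrak{L}_{\mathcal{T}}$ the latter set is the singleton $\{y\}$ consisting of the unique non-$\infty$ critical value of $R$ in $\mathcal{K}(R)$. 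Hence the free critical values of $f$ all lie in $\{\mathrm{cusp}, v\}$ with $v \in \Omega$ corresponding to $y$, both values are genuinely attained, and $f$ is therefore Shabat with finite critical values precisely $\{\mathrm{cusp}, v\}$.

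Turning to the dessin, fix a simple arc $\widetilde{\gamma} \subset K(\sigma)$ joining the cusp to $v$. The hybrid conjugacy identifies $\sigma^{-1}(\widetilde{\gamma})$ with $\mathcal{T}$ as rooted bicolored plane trees, the cusp corresponding to the root $v_b$. Meanwhile $f \circ \eta$ is an anti-holomorphic homeomorphism from $\widehat{\mathbb{C}} \setminus \mathbb{D}$ onto $\overline{\Omega}$ that carries $\mathcal{T}' := f^{-1}(\widetilde{\gamma}) \cap (\widehat{\mathbb{C}} \setminus \mathbb{D})$ onto $\sigma^{-1}(\widetilde{\gamma})$ and sends $c_0$ to the cusp; being orientation-reversing, it flips the cyclic edge order at each vertex, yielding $\mathcal{T}' \cong \mathcal{T}^{\mathrm{op}}$ as rooted bicolored plane trees with $c_0 \leftrightarrow v_b$. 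The complementary piece of $\mathcal{T}^{\mathrm{aug}} := f^{-1}(\widetilde{\gamma})$ is the single edge $f|_{\overline{\mathbb{D}}}^{-1}(\widetilde{\gamma})$ running from $c_0 \in \mathbb{S}^1$ to $w_v := f|_{\overline{\mathbb{D}}}^{-1}(v) \in \mathbb{D}$; attaching it to $\mathcal{T}'$ at $c_0$ promotes $v_b$ to a valence-two black vertex sandwiched between $v_w'$ and the new valence-one white vertex $v_w := w_v$. This realizes $\mathcal{T}^{\mathrm{aug}}$ as $\mathcal{T}^{\mathrm{op}}$ augmented by a single edge at $v_b$ with the new endpoint white, establishing~(1). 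Part~(2) is then immediate from the explicit planar embedding $\mathcal{T}^{\mathrm{aug}} = f^{-1}(\widetilde{\gamma}) \subset \widehat{\mathbb{C}}$, which places $v_b$ at $c_0 \in \mathbb{S}^1$ and $v_w$ at $w_v \in \mathbb{D}$.

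The main obstacle is the critical-point bookkeeping: showing that all $d-1$ free $\sigma$-critical points lie in $K(\sigma)$ rather than the tiling set (which rests on the uniqueness of the critical point in the $\mathcal{R}_d$-model) and then collapsing their values onto the single point $v$ via the hybrid conjugacy's identification of the non-$\infty$ critical values of $R$ in $\mathcal{K}(R)$. Once this inventory is settled, the combinatorial match between $\mathcal{T}^{\mathrm{aug}}$ and the augmentation of $\mathcal{T}^{\mathrm{op}}$ is a direct transport of the picture through the anti-holomorphic identification $f \circ \eta$.
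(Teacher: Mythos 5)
Your proof is correct and follows essentially the same route as the paper's: it first inventories the critical values of $f$ via the hybrid conjugacy with $R\in\mathfrak{L}_{\mathcal{T}}$ (supplemented by the uniqueness of the critical point of the external model $\mathcal{R}_d$ to place the free critical points of $\sigma$ in $K(\sigma)$), and then pulls back an arc joining the cusp to the free critical value through the factorization $\sigma=f\circ\eta\circ(f\vert_{\mathbb{D}})^{-1}$, with the orientation reversal of $\eta$ accounting for $\mathcal{T}^{\mathrm{op}}$. One minor wording slip: when $\mathcal{T}$ is not a star-tree, the critical values of $\sigma$ at its free critical points form the set $\{y_c,y_f\}$, corresponding to $\{\infty,y\}\subset\mathcal{K}(R)$ rather than to $\mathcal{K}(R)\setminus\{\infty\}$ as you state, but your ensuing conclusion that all finite critical values of $f$ lie in $\{\mathrm{cusp},v\}$ (and that both are attained) is still exactly right, so this does not affect the argument.
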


\begin{remark}
(1) Recall that the vertex set of the embedded tree $\mathcal{T}_\gamma(f)$ (where $\gamma$ is an arc connecting the finite critical values of $f$) and the isomorphism $emb$ restricted to the vertex set of $\mathcal{T}^{\mathrm{aug}}$ are independent of the choice $\gamma$ (cf. Section~\ref{shabat_tree_subsec}).

(2) We use the notation $\mathcal{T}^{\mathrm{aug}}$ to remind the reader that it is an augmentation of the original tree $\mathcal{T}$.
\end{remark}

\begin{proof}
1) Let $\chi(\Omega,\sigma)=R$. Then, $\sigma\vert_{K(\sigma)}$ is hybird conjugate to $R\vert_{\mathcal{K}(R)}$.

Let us denote the unique cusp on $\partial\Omega$ by $y_c$. Note that $y_c$ is fixed under $\sigma$, and corresponds to the fixed point $\infty$ of $R$ under the hybrid conjugacy. 
It now follows that $\sigma$ has exactly $d-1$ critical points (counted with multiplicities) and at most two critical values in $K(\sigma)$. More precisely, if $\mathcal{T}$ is a star-tree, then $\sigma$ has a unique critical point of multiplicity $d-1$ and hence a single critical value in $K(\sigma)$; otherwise, $\sigma$ has exactly two critical values $K(\sigma)$ one of which is $y_c$. 
 
Consider a simple arc $\gamma_\sigma\subset\Omega$ connecting these two critical values of $\sigma$ (if $\sigma$ has only one critical value in $K(\sigma)$, then we choose $\gamma_\sigma$ to be an arc connecting this critical value to $y_c$). The existence of a hybrid conjugacy between $\sigma$ and $R$ implies that $\mathcal{T}_{\gamma_\sigma}(\sigma):=\sigma^{-1}(\gamma_\sigma)$ is a tree with a plane bicolored structure. We denote this combinatorial tree by $\mathcal{T}(\sigma)$, and note that it is is isomorphic to $\mathcal{T}(R)\cong\mathcal{T}$. Moreover, the root $v_b$ of $\mathcal{T}(R)$ defines a root point for $\mathcal{T}(\sigma)$, and this root corresponds to the vertex $y_c$ of $\mathcal{T}_{\gamma_\sigma}(\sigma)$.
Abusing notation, we denote this root point of $\mathcal{T}(\sigma)$ by $v_b$. One can think of $\mathcal{T}(\sigma)$ as an analogue of dessin d'enfant for the Schwarz reflection map $\sigma$.

Recall that $f$ has a unique (simple) critical point on $\mathbb{S}^1$ with associated critical value $y_c$. As $f$ has no critical point in $\D$, we conclude that $f$ has precisely $d-1$ finite critical points in $\D^*$ (counted with multiplicities). Since $\sigma= f\circ \eta\circ (f\vert_{\D})^{-1}$, we see that these critical points are given by $\eta((f\vert_{\D})^{-1}(\mathrm{crit}(\sigma)))$, and they are mapped by $f$ to the two critical values of $\sigma$ (respectively, to the unique critical value and $y_c$) in $K(\sigma)$. Therefore, $f$ is a Shabat polynomial.
\begin{figure}
\captionsetup{width=0.98\linewidth}
\begin{tikzpicture}
\node[anchor=south west,inner sep=0] at (6.6,0) {\includegraphics[width=0.45\textwidth]{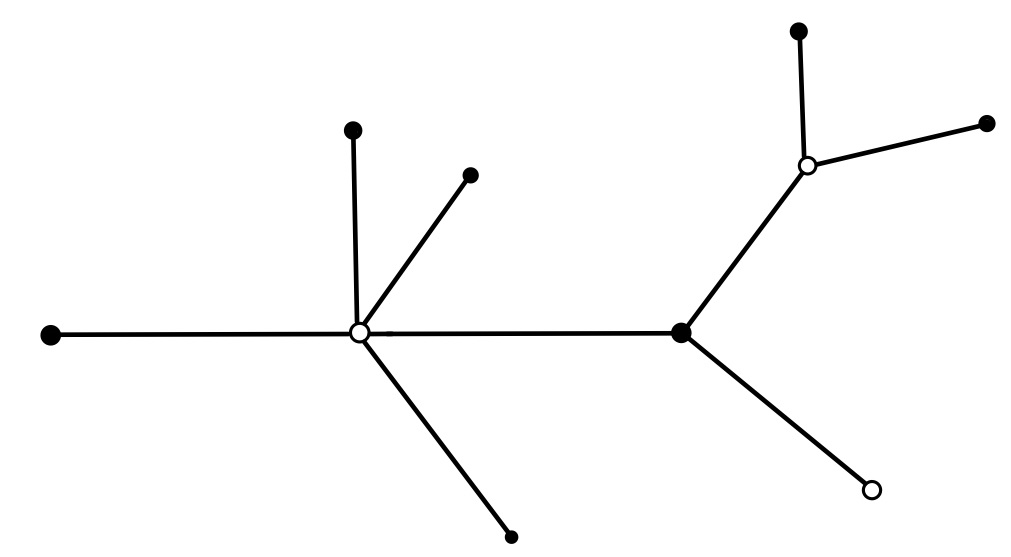}};
\node[anchor=south west,inner sep=0] at (-0.5,0) {\includegraphics[width=0.54\textwidth]{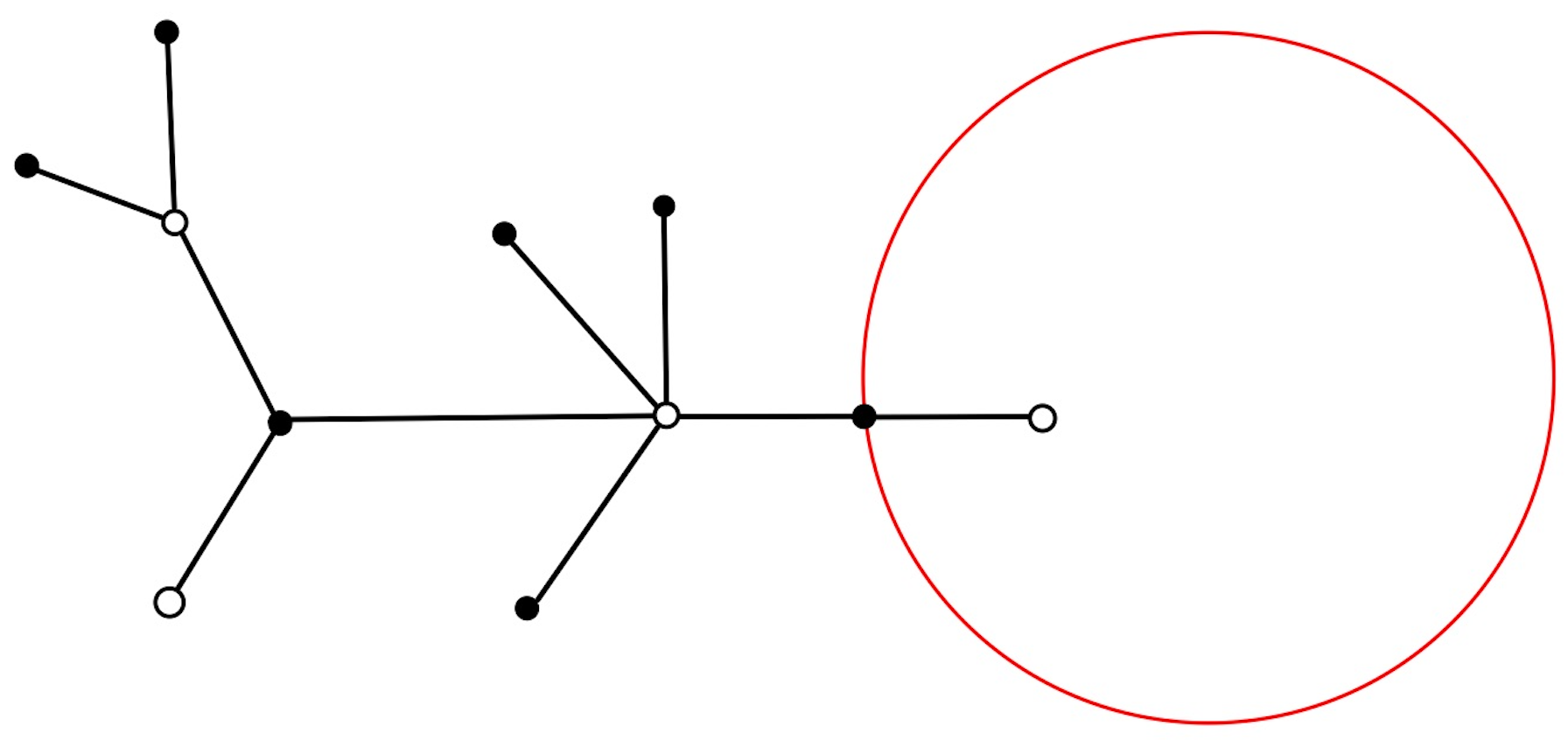}}; 
\node at (1.8,2.8) {$\mathcal{T}^{\mathrm{aug}}$};
\node at (9.8,2.8) {$\mathcal{T}$};
\node at (5.4,1.6) {$\D$};
\node at (1.4,1) {\begin{tiny}$\mathrm{emb}(v_w')$\end{tiny}};
\draw [->,line width=0.5pt] (1.8,1.2) to (2.2,1.32);
\node at (4.6,0.6) {\begin{tiny}$\mathrm{emb}(v_w)$\end{tiny}};
\draw [->,line width=0.5pt] (4.4,0.8) to (4.1,1.32);
\node at (2.9,0.6) {\begin{tiny}$\mathrm{emb}(v_b)$\end{tiny}};
\draw [->,line width=0.5pt] (3,0.8) to (3.2,1.32);
\node at (7,1) {$y_c$};
\end{tikzpicture}
\caption{Relation between the dessin d'enfant $\mathcal{T}^{\mathrm{aug}}\cong f^{-1}(\gamma_\sigma)$ of $f$ and the dessin d'enfant $\mathcal{T}\cong\sigma^{-1}(\gamma_\sigma)$ of the Schwarz reflection map $\sigma$.}
\label{augmented_dessin_fig}
\end{figure}

It is easy to see from the relation between $f$ and $\sigma$ that 
\begin{equation}
f^{-1}(\gamma_\sigma)=\eta((f\vert_{\overline{\D}})^{-1}(\sigma^{-1}(\gamma_\sigma)))\ \bigcup\ (f\vert_{\overline{\D}})^{-1}(\gamma_\sigma).
\label{dessin_f_eqn}
\end{equation}
Moreover, the closed arc $(f\vert_{\overline{\D}})^{-1}(\gamma_\sigma)$ is contained in $\overline{\D}$ and intersects $\mathbb{S}^1$ only at $(f\vert_{\overline{\D}})^{-1}(y_c)$, and $\eta((f\vert_{\overline{\D}})^{-1}(\sigma^{-1}(\gamma_\sigma)))$ is a topological tree (with $d$ edges) contained in $\overline{\D^*}$ intersecting $\mathbb{S}^1$ only at $(f\vert_{\overline{\D}})^{-1}(y_c)$ (see Figure~\ref{augmented_dessin_fig}).
We conclude that combinatorially, the dessin d'enfant of $f$ is obtained by adding an edge to $\mathcal{T}^{\mathrm{op}}$ at the vertex $v_b$ (note the appearance of the orientation-reversing map $\eta$ in Relation~\eqref{dessin_f_eqn}). We denote this combinatorial tree by $\mathcal{T}^{\mathrm{aug}}$, and call the newly added vertex $v_w$ (it corresponds to the endpoint of $(f\vert_{\overline{\D}})^{-1}(\gamma_\sigma)$ different from $(f\vert_{\overline{\D}})^{-1}(y_c)$).

Consequently, $\mathcal{T}^{\mathrm{aug}}$ has a black vertex $v_b$ of valence $2$ with an adjacent white vertex $v_w$ of valence $1$. We further note that $f$ is univalent on the closed disk $\overline{\D}$ which has the vertex $(f\vert_{\overline{\D}})^{-1}(y_c)$ on its boundary, and contains the vertex $(f\vert_{\overline{\D}})^{-1}(y_f)$ (where $y_f$ is the unique `free' critical value of $\sigma$ in $K(\sigma)$ different from $y_c$) in its interior.

2) We now consider an isomorphism $emb$ between the combinatorial bicolored plane tree $\mathcal{T}^{\mathrm{aug}}$ and its planar realization $f^{-1}(\gamma_\sigma)$. By construction, $emb(v_b)=(f\vert_{\overline{\D}})^{-1}(y_c)\in\mathbb{S}^1$ and $emb(v_w)=(f\vert_{\overline{\D}})^{-1}(y_f)\in\D$, where $y_f$ is the unique critical value of $\sigma$ (in $K(\sigma)$) different from $y_c$.
\end{proof}

We now wish to give an explicit description of $\chi^{-1}(\mathfrak{L}_{\mathcal{T}})$ as a real two-dimensional family of Schwarz reflections. 
To this end, we study an extended space of Schwarz reflections allowing possibly disconnected Julia sets, so that $\chi^{-1}(\mathfrak L_\cT)$ lies inside of the extended space as the connectedness locus. The extension itself is a real two dimensional space of Schwarz reflections whose actions over the rank zero tile are given by restrictions of $\mathcal R_d$.

Observe that by Proposition~\ref{shabat_good_uni_prop} and Theorem~\ref{shabat_classification_theorem}, all $[\Omega,\sigma]\in\chi^{-1}(\mathfrak{L}_{\mathcal{T}})$, the polynomial uniformizations $f:\D\to\Omega$ are equivalent Shabat polynomials. After possibly conjugating $\sigma$ by an affine map (which amounts to replacing $\Omega$ by an affine image of it), we can require that all such $\sigma$ have the same marked critical values. Then, the corresponding uniformizing polynomials $f$ only differ by pre-composition by an affine map.
Instead of fixing the domain of univalence $\D$ and varying the polynomial uniformizations (that differ by pre-composition by affine maps), it will be slightly more convenient to fix a polynomial uniformization and restrict it to various disks of univalency (that are affine images of $\D$). This leads to the following space of Schwarz reflections.

\begin{definition}\label{shabat_schwarz_family_def}
Fix a degree $d+1$ Shabat polynomial $\pmb{f}$ produced by Proposition~\ref{shabat_good_uni_prop} together with an isomorphism $\pmb{emb}$ of the combinatorial tree $\mathcal{T}^{\mathrm{aug}}$ (which is the dessin d'enfant of $\pmb{f}$) and a planar realization of it. We also set $\pmb{v_b}:=\pmb{emb}(v_b)$, $\pmb{v_w}:=\pmb{emb}(v_w)$, $\pmb{v_w'}=\pmb{emb}(v_w')$, $\pmb{f}(\pmb{v_b})=y_c$, and $\pmb{f}(\pmb{v_w})=\pmb{f}(\pmb{v_w'})=y_f$.

\noindent Define
$$
S_{\mathcal{T}}:=\{ a\in\C: \pmb{v_w}\in\Delta_a:=B(a,\vert \pmb{v_b}-a\vert)\ \textrm{and}\ \pmb{f}\vert_{\overline{\Delta_a}}\ \textrm{is\ univalent}\},
$$ 
and
$$
S_{\mathcal{T}}^{\mathrm{simp}}:=\{a\in S_{\mathcal{T}}: \textrm{the\ unique\ cusp}\ y_c\ \textrm{on}\ \partial\Omega_a\ \textrm{is\ simple;\ i.e.,\ of\ type}\ (3,2)\}.
$$
Finally, we set $\Omega_a:= \pmb{f}(\Delta_a)$. 
\end{definition}

\begin{remark}
(1) Since $\pmb{v_b}\in\mathbb{S}^1$ and $\pmb{v_w}\in\D$, we have that $\Delta_0=\D$ and thus $0\in S_\cT$ (see Proposition~\ref{shabat_good_uni_prop}).

(2) For each $a\in S_\cT$, the quadrature domain $\Omega_a$ contains $y_f$ and its boundary $\partial\Omega_a$ has a conformal cusp at $y_c$. In particular, $$\overline{\Delta_a}\cap \pmb{f}^{-1}(\{y_c,y_f\})=\{\pmb{v_b},\pmb{v_w}\}.$$
\end{remark}

We denote the reflection in the circle $\partial\Delta_a$ by $\eta_a$, and the Schwarz reflection map of the quadrature domain $\Omega_a$ by $\sigma_a=\pmb{f}\circ\eta_a\circ (\pmb{f}\vert_{\overline{\Delta_a}})^{-1}$. By definition, the critical points of the Schwarz reflection map $\sigma_a$ are given by $$\{\pmb{f}(\eta_a(\zeta)): \pmb{f}'(\zeta)=0,\ \zeta\neq \pmb{v_b}\}.$$ If $\pmb{v_b}$ is the only critical point of $\pmb{f}$ over $y_c$ (see Figure~\ref{star_subtree_fig}), then the only critical values of $\sigma_a$ are $y_f, \infty$. Otherwise, the set of critical values of $\sigma_a$ is $\{y_c,y_f,\infty\}$. In light of this fact, we call the map $\sigma_a$ a Belyi map.

\begin{remark}\label{assumption_explanation}
(1) Abusing notation, we will identify the parameter space $S_\cT$ with the associated family of Schwarz reflection maps $\{\sigma_a:\overline{\Omega_a}\to\widehat{\C}: a\in S_\cT\}$.

(2) The family $S_\cT$ can be regarded as a generalization of the family of Schwarz reflection maps arising from the cubic Chebyshev polynomial studied in \cite{LLMM3}. 
\end{remark}

\begin{proposition}\label{connected_critical}
For $a\in S_{\mathcal{T}}$, the following are equivalent.
\begin{enumerate}[leftmargin=8mm]
\item $y_f\in K(\sigma_a)$.

\item $T^\infty(\sigma_a)$ is a simply connected domain.

\item $K(\sigma_a)$ is connected. 
\end{enumerate}
\end{proposition}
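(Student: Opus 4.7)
The plan is to establish the cycle $(1) \Rightarrow (2) \Rightarrow (3) \Rightarrow (1)$. The middle implication is purely topological; the other two rest respectively on an inductive Riemann--Hurwitz / pullback construction and on \cite[Proposition~3.3]{PartI}, which provides the conformal model of $T^\infty$ for maps in $\cS_{\cR_d}$.

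For $(2) \Rightarrow (3)$, since $K(\sigma_a) = \widehat{\C}\setminus T^\infty(\sigma_a)$ and the complement of a simply connected domain in $\widehat{\C}$ is connected (by Alexander duality, or a direct argument via the Jordan curve theorem), $K(\sigma_a)$ is connected.

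For $(3) \Rightarrow (1)$, note that by the very construction of $S_\cT$, the Schwarz reflection $\sigma_a$ satisfies the standing hypotheses of \cite[Proposition~3.3]{PartI}: $\Omega_a$ is a Jordan quadrature domain with a unique cusp at $y_c$, and $\infty$ is a fully ramified critical value of $\sigma_a$ of local degree $d+1$. Combined with the assumed connectivity of $K(\sigma_a)$, that proposition places $\sigma_a$ in $\cS_{\cR_d}$ and furnishes a conformal conjugacy $\psi\colon (\Int\mathcal{Q},0) \to (T^\infty(\sigma_a),\infty)$ between $\mathcal R_d$ and $\sigma_a$. Since $0$ is the only critical value of $\mathcal R_d$ inside $\Int\mathcal{Q}$ and corresponds to $\infty$ under $\psi$, the only critical value of $\sigma_a$ inside $T^\infty(\sigma_a)$ is $\infty$; hence $y_f \not\in T^\infty(\sigma_a)$, and $y_f \in K(\sigma_a)$.

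For $(1) \Rightarrow (2)$, assume $y_f \in K(\sigma_a)$. Since also $y_c \in \partial\Omega_a \subset \Lambda(\sigma_a) \subset K(\sigma_a)$, the only critical value of $\sigma_a$ that can possibly lie inside $T^\infty(\sigma_a)$ is $\infty$, and this value is fully ramified of local degree $d+1$ and already sits in $T^0$. I would then introduce the truncated tiling sets $T^{\leq n} := \bigcup_{k=0}^{n} \sigma_a^{-k}(T^0)$ and show by induction that each is a simply connected domain. The base case $T^0$ is a closed topological disk minus the single boundary point $y_c$, hence simply connected. At the inductive step, $\sigma_a \colon \sigma_a^{-1}(T^{\leq n}) \to T^{\leq n}$ is a branched covering whose only critical value ($\infty$) already lies in $T^0 \subset T^{\leq n}$ and is fully ramified; a componentwise Riemann--Hurwitz computation shows that each component of the preimage is a topological disk, and these components attach to $T^{\leq n}$ along arcs of $\partial\Omega_a$ meeting at the cusp $y_c$ in a tree-like fashion. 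Consequently $T^{\leq n+1}$ remains simply connected, and the increasing union yields the simple connectivity of $T^\infty(\sigma_a)$.

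The main obstacle will be the bookkeeping in this last step: one must check carefully that the tiles of successive ranks attach along the cusped boundary of $\Omega_a$ without ever creating a handle, i.e., that the preimages of $y_c$ act as pinch points rather than generating annular topology in the union. This is essentially the combinatorial core of the proof of \cite[Proposition~3.3]{PartI}; alternatively, one may bypass the explicit induction by remarking that under hypothesis $(1)$ the pullback construction of loc.\ cit.\ produces the conformal conjugacy $\psi$ directly, placing $\sigma_a$ in $\cS_{\cR_d}$ and thereby yielding $(2)$ and $(3)$ at once.
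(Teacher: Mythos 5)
Your proof is correct, and two of its three implications coincide with the paper's argument: $(1)\Rightarrow(2)$ is the same inductive Riemann--Hurwitz construction on truncated tiling sets (the paper writes $E^k$ for your $T^{\leq k}$ and is equally brief about why the rank-$(n+1)$ tiles attach without creating handles, relying on the containment $\partial T^0(\sigma_a)\subset\partial\sigma_a^{-1}(T^0(\sigma_a))$), and $(2)\Rightarrow(3)$ is the identical topological remark. Where you genuinely diverge is $(3)\Rightarrow(1)$: the paper argues contrapositively and elementarily --- if $y_f\in T^\infty(\sigma_a)$, then the tile containing the corresponding critical point is ramified and therefore disconnects $K(\sigma_a)$ (citing \cite[Proposition~5.23]{LLMM1}) --- whereas you invoke the external-map characterization \cite[Proposition~3.3]{PartI} to place $\sigma_a$ in $\cS_{\cR_d}$ and then read off $y_f\notin T^\infty(\sigma_a)$ from the fact that $\cR_d$ has no critical value in $\Int{\mathcal{Q}}$ other than the one corresponding to $\infty$. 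Your route is legitimate (the structural hypotheses of that proposition --- Jordan quadrature domain, unique cusp, fully ramified critical value at $\infty$ --- do hold for every $a\in S_{\cT}$), and it has the side benefit of delivering $(2)$ directly from $(3)$ as well; but it imports a substantially heavier result whose proof contains essentially the same tile-by-tile analysis, while the paper's local argument is more economical and keeps the logical dependence on Proposition~3.3 of \cite{PartI} confined to the later Proposition~\ref{conn_locus_srd_prop}.

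One small correction: in $(1)\Rightarrow(2)$ you justify $y_c\in K(\sigma_a)$ via the chain $y_c\in\partial\Omega_a\subset\Lambda(\sigma_a)\subset K(\sigma_a)$, but $\partial\Omega_a$ is not contained in the limit set --- its non-singular points lie in $T^0(\sigma_a)$ and hence in the open tiling set. The fact you need is still true, for a different reason: $y_c$ is a fixed point of $\sigma_a$ lying in the singular set $S(\sigma_a)$, so its orbit never enters $T^0(\sigma_a)$ and it belongs to $K(\sigma_a)$. This does not affect the rest of your argument.
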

\begin{proof}
($1\implies2$) Let $E^k$ be the union of the tiles of $\sigma_a$ of rank $\leq k$. 

Note that since $\pmb{v_w}\in\Delta_a$, we have $y_f\in\Omega_a$. Hence, $\sigma_a:\sigma_a^{-1}(\Int{T^0(\sigma_a)})\to\Int{T^0(\sigma_a)}$ is a degree $(d+1)$ branched cover branched only at $f(a)$. It now follows from the Riemann-Hurwitz formula that $\sigma_a^{-1}(\Int{T^0(\sigma_a)})$ is a simply connected domain. Moreover, we have $\partial T^0(\sigma_a)\subset\partial\sigma_a^{-1}(T^0(\sigma_a))$. Hence, $\Int{E^1}$ is a simply connected domain. 

If $y_f\in K(\sigma_a)$, then every tile of rank $\geq 2$ is unramified, and we can iterate the arguments of the previous paragraph to conclude that $\Int{E^k}$ is a simply connected domain, for each $k$. Since $T^\infty(\sigma_a)=\bigcup_{k\geq 0}\Int{E^k}$ (cf. \cite[Proposition 2.3]{PartI}), we conclude that $T^\infty(\sigma_a)$ is a simply connected domain.

($2\implies3$) The complement of a simply connected domain is a full continuum.

($3\implies1$) Suppose that $K(\sigma_a)$ is connected. If $y_f\in T^\infty(\sigma_a)$, then the tile containing a critical point of $\sigma_a$ with corresponding critical value $y_f$ would be ramified, and disconnect $K(\sigma_a)$ (cf. \cite[Proposition~5.23]{LLMM1}). Therefore, $y_f$ must lie in $K(\sigma_a)$.
\end{proof}

Proposition~\ref{connected_critical} leads to the following definition.

\begin{definition}[Connectedness locus and escape locus]\label{conn_escape_def}
The connectedness locus of the family $S_\cT$ is defined as $$\cC(S_\cT)=\{a\in S_{\mathcal{T}}: y_f\in K(\sigma_a)\}=\{a\in S_{\mathcal{T}}: K(\sigma_a)\ \textrm{is\ connected}\}.$$ The complement of the connectedness locus in the parameter space $S_{\mathcal{T}}$ is called the \emph{escape locus}. We similarly define $\cC(S_\cT^{\mathrm{simp}})$ as the set of parameters $a\in S_\cT^{\mathrm{simp}}$ with connected $K(\sigma_a)$.
\end{definition}

The next proposition characterizes the connectedness locus of $S_\cT$ as those maps in $S_\cT$ that have anti-Farey as their external map.

\begin{proposition}\label{conn_locus_srd_prop}
$S_\cT\cap\mathcal{S}_{\mathcal{R}_d}=\cC(S_\cT)$. 
\end{proposition}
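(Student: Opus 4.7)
My plan is to unwind Definition~\ref{srd_def} via the dynamical characterization recalled in Subsection~\ref{antiFarey_subsec}: a pair $(\Omega,\sigma)$ lies in $\mathcal{S}_{\mathcal{R}_d}$ if and only if $\Omega$ is a Jordan quadrature domain with a single cusp on $\partial\Omega$, $\sigma$ has a fully ramified critical value in $\Int T$, and $K(\sigma)$ is connected. The strategy is to show that the first three conditions hold automatically for every $a\in S_\cT$, so that membership in $\mathcal{S}_{\mathcal{R}_d}$ reduces to connectedness of $K(\sigma_a)$, which is precisely the defining property of $\cC(S_\cT)$ by Proposition~\ref{connected_critical}.

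For the Jordan-quadrature-domain condition, univalence of $\pmb{f}$ on $\overline{\Delta_a}$ makes $\Omega_a=\pmb{f}(\Delta_a)$ a Jordan domain; precomposing $\pmb{f}|_{\Delta_a}$ with an affine Riemann map $\D\to\Delta_a$ gives a uniformization that extends to the polynomial $\pmb{f}$, so Proposition~\ref{simp_conn_quad} realizes $\Omega_a$ as a quadrature domain. For the fully ramified critical value, the defining formula $\sigma_a=\pmb{f}\circ\eta_a\circ(\pmb{f}|_{\overline{\Delta_a}})^{-1}$ together with $\eta_a(\infty)=a$ yields $\sigma_a^{-1}(\infty)=\{\pmb{f}(a)\}$, and the degree statement in Proposition~\ref{simp_conn_quad} shows that this preimage has local degree $d+1$. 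Hence $\infty\in\Int T$ is indeed a fully ramified critical value of $\sigma_a$.

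The only point requiring a real argument is the uniqueness and simplicity of the cusp on $\partial\Omega_a$. Cusps/corners of $\partial\Omega_a$ correspond to critical points of $\pmb{f}$ on $\partial\Delta_a$. The vertex $\pmb{v_b}\in\partial\Delta_a$ is such a point, of local degree $2$ (since $\pmb{v_b}$ has valence $2$ in $\mathcal{T}^{\mathrm{aug}}$ by Proposition~\ref{shabat_good_uni_prop}(1)), so it contributes a simple $(3,2)$-cusp at $y_c$. To rule out any other critical point $c\neq\pmb{v_b}$ of $\pmb{f}$ lying on $\partial\Delta_a$, I would use the dessin: such a $c$ is a black or white vertex of $\mathcal{T}^{\mathrm{aug}}$ of valence $\geq 2$, so $\pmb{f}(c)\in\{y_c,y_f\}$. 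If $\pmb{f}(c)=y_c=\pmb{f}(\pmb{v_b})$, injectivity of $\pmb{f}$ on $\partial\Delta_a$ fails; if $\pmb{f}(c)=y_f=\pmb{f}(\pmb{v_w})$, then since $c\in\partial\Delta_a$ and $\pmb{v_w}\in\Int\Delta_a$ are distinct, injectivity of $\pmb{f}$ on $\overline{\Delta_a}$ fails. Either case contradicts $a\in S_\cT$. I expect this univalence-plus-dessin dichotomy to be essentially the only substantive step in the proof.

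Combining the above, the membership condition for $\mathcal{S}_{\mathcal{R}_d}$ collapses, within $S_\cT$, to the single requirement that $K(\sigma_a)$ be connected, which is precisely $\cC(S_\cT)$ by Proposition~\ref{connected_critical}. This yields both inclusions $S_\cT\cap\mathcal{S}_{\mathcal{R}_d}\subseteq\cC(S_\cT)$ (from the connectedness component of the characterization) and $\cC(S_\cT)\subseteq S_\cT\cap\mathcal{S}_{\mathcal{R}_d}$ (by assembling the three structural conditions with connectedness), completing the proof.
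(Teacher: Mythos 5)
Your proposal is correct and follows essentially the same route as the paper: both reduce membership in $\mathcal{S}_{\mathcal{R}_d}$, via the characterization from \cite[Proposition~3.3]{PartI} (single cusp, fully ramified critical value in $\Int T$, connected non-escaping set), to the connectedness criterion of Proposition~\ref{connected_critical}. The paper's proof is shorter only because the structural conditions you verify explicitly (Jordan quadrature domain, uniqueness of the cusp, fully ramified critical value at $\infty$) are already built into the construction of $S_\cT$ through Proposition~\ref{shabat_good_uni_prop} and the remark following Definition~\ref{shabat_schwarz_family_def}, so your dessin-based check of cusp uniqueness, while sound, duplicates material established earlier.
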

\begin{proof}
Clearly, if $(\Omega_a,\sigma_a)\in S_\cT\cap\mathcal{S}_{\mathcal{R}_d}$, then the tiling set $T^\infty(\sigma_a)$ is biholomorphic to a round disk and hence its complement $K(\sigma)$ is connected.

Conversely, let $\sigma_a\in\cC(S_\cT)$. By Proposition~\ref{connected_critical}, the critical value $y_f$ lies in $K(\sigma_a)$, and hence by \cite[Proposition~3.3]{PartI}, the tiling set dynamics of $\sigma_a$ is conformally conjugate to the anti-Farey map $\mathcal{R}_d$. Hence, $(\Omega_a,\sigma_a)\in S_\cT\cap\mathcal{S}_{\mathcal{R}_d}$. We conclude that $S_\cT\cap\mathcal{S}_{\mathcal{R}_d}=\cC(S_\cT)$.
\end{proof}

\begin{proposition}\label{shabat_family_prop}
$\chi$ induces a bijection between $\cC(S_\cT)$ and $\faktor{\mathfrak{L}_{\mathcal{T}}}{\mathrm{Aut}(\C)}$, and restricts to a bijection between $\cC(S_\cT^{\mathrm{simp}})$ and $\faktor{\mathfrak{L}_{\mathcal{T}}^{\mathrm{simp}}}{\mathrm{Aut}(\C)}$.
\end{proposition}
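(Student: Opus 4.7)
The plan is to combine the global bijectivity of $\chi:\faktor{\mathcal{S}_{\mathcal{R}_d}}{\mathrm{Aut}(\C)}\to\faktor{\mathcal{F}_d}{\mathrm{Aut}(\C)}$ from Theorem~\ref{qc_bijection_thm} with the identification $\cC(S_\cT)=S_\cT\cap\mathcal{S}_{\mathcal{R}_d}$ of Proposition~\ref{conn_locus_srd_prop}, the Shabat classification (Theorem~\ref{shabat_classification_theorem}), and the dessin description of uniformizing polynomials (Proposition~\ref{shabat_good_uni_prop}). The task then reduces to showing that $\chi$ and its inverse preserve the combinatorial Belyi structure indexed by the rooted tree $(\cT,v_b)$, and that the passage from a class $[R]$ to a parameter $a\in\cC(S_\cT)$ is unique.

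For the forward inclusion, given $a\in\cC(S_\cT)$, Proposition~\ref{conn_locus_srd_prop} places $[\sigma_a]$ in $\faktor{\mathcal{S}_{\mathcal{R}_d}}{\mathrm{Aut}(\C)}$, so $R:=\chi(\sigma_a)$ is defined. By construction $\sigma_a$ has critical values in $\{y_c,y_f,\infty\}$ with the cusp $y_c$ fixed. The hybrid conjugacy between a pinched anti-polynomial-like restriction of $\sigma_a$ and one of $R$ (from Theorem~\ref{qc_bijection_thm}) carries $y_c$ to the parabolic fixed point $\infty$ of $R$ and matches the local degrees of $\sigma_a$ at its critical points in $K(\sigma_a)$ with those of $R$ at its critical points in $\mathcal{K}(R)$. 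The $R$-preimage of an arc joining the two critical values inside $\mathcal{K}(R)$ is then isomorphic, as a rooted bicolored plane tree, to the $\sigma_a$-preimage of an arc joining the corresponding critical values inside $K(\sigma_a)$, which is $\cT$. Hence $R$ is Belyi with $\infty$ as a critical value and rooted dessin $(\cT,v_b)$, so $[R]\in\faktor{\mathfrak{L}_\cT}{\mathrm{Aut}(\C)}$.

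For the reverse inclusion, given $[R]\in\faktor{\mathfrak{L}_\cT}{\mathrm{Aut}(\C)}$, surjectivity of $\chi$ yields $[\Omega,\sigma]\in\faktor{\mathcal{S}_{\mathcal{R}_d}}{\mathrm{Aut}(\C)}$ mapping to $[R]$. By Proposition~\ref{shabat_good_uni_prop}, the uniformizing polynomial $f$ of $\Omega$ is Shabat with dessin $\cT^{\mathrm{aug}}$, so Theorem~\ref{shabat_classification_theorem} provides affine maps $A_1,A_2$ (polynomial M\"obius equivalences are automatically affine) with $f=A_1\circ\pmb{f}\circ A_2$, where the equivalence can be chosen to respect the distinguished vertex $v_b$. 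Conjugating $\sigma$ by $A_1^{-1}$ preserves its class in $\faktor{\mathcal{S}_{\mathcal{R}_d}}{\mathrm{Aut}(\C)}$, so we may assume $A_1=\mathrm{id}$, whence $\Omega=\pmb{f}(A_2(\D))$. Root-preservation forces the boundary critical point on $\partial A_2(\D)$ to be $\pmb{v_b}$ and the interior preimage of $y_f$ in $A_2(\D)$ to be $\pmb{v_w}$, so $A_2(\D)=\Delta_a$ with $a$ its center, and $a\in S_\cT$. Since $[\sigma]\in\mathcal{S}_{\mathcal{R}_d}$ forces $K(\sigma_a)$ to be connected, Proposition~\ref{conn_locus_srd_prop} places $a$ in $\cC(S_\cT)$. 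The delicate point of this step, which I expect to be the main technical obstacle, is the root-preservation of the Shabat equivalence: one has to upgrade Theorem~\ref{shabat_classification_theorem} to a correspondence between rooted dessins and Shabat polynomials with a marked boundary critical point, and then exploit the $\mathrm{Aut}(\C)$-freedom to eliminate $A_1$.

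For injectivity, suppose $a,a'\in\cC(S_\cT)$ with $\chi(\sigma_a)\sim\chi(\sigma_{a'})$. Bijectivity of $\chi$ gives an affine $A$ conjugating $\sigma_a$ to $\sigma_{a'}$. Being affine $A$ fixes $\infty$; it maps the cusp of $\sigma_a$ to that of $\sigma_{a'}$, so $A(y_c)=y_c$; the remaining critical value $y_f$ is shared by both Schwarz reflections and hence must also satisfy $A(y_f)=y_f$. Two distinct finite fixed points force $A=\mathrm{id}$, so $\sigma_a=\sigma_{a'}$ and $\Omega_a=\Omega_{a'}$; since $\pmb{v_w}$ lies in a unique connected component of $\pmb{f}^{-1}(\Omega_a)$, we conclude $\Delta_a=\Delta_{a'}$ and hence $a=a'$. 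Finally, the refined statement $\chi:\cC(S_\cT^{\mathrm{simp}})\leftrightarrow\faktor{\mathfrak{L}_\cT^{\mathrm{simp}}}{\mathrm{Aut}(\C)}$ follows from Theorem~\ref{simp_restrict_thm}, which under $\chi$ identifies the condition of having a simple cusp (type $(3,2)$) on $\partial\Omega$ with the condition that the parabolic fixed point $\infty$ of $\chi(\sigma)$ be simple.
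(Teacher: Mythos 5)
Your proposal is correct and follows essentially the same route as the paper: both use the global bijection of Theorem~\ref{qc_bijection_thm}, transfer the rooted dessin through the hybrid conjugacy in one direction, invoke Proposition~\ref{shabat_good_uni_prop} together with Theorem~\ref{shabat_classification_theorem} to produce the affine change of coordinates $A$ identifying $\Omega$ with some $\Omega_a$ in the other, and settle injectivity by observing that an affine conjugacy fixing $\infty$, $y_c$, and $y_f$ is the identity. The ``root-preservation'' issue you flag is resolved in the paper exactly as you suggest: after normalizing so that both Shabat polynomials share the same marked critical values $y_c,y_f$, the target-side M\"obius map in the Shabat equivalence is forced to be the identity, leaving only the affine precomposition $A$.
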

\begin{proof}
Let us first note that no two maps $\sigma_{a_1},\sigma_{a_2}\in \cC(S_\cT)$ are M{\"o}bius conjugate. This is because any M{\"o}bius map conjugating $\sigma_{a_1}$ to $\sigma_{a_2}$ would fix the unique critical value $\infty$ in the tiling sets $T^\infty(\sigma_{a_i})$, the conformal cusp $y_c$ on the boundaries $\partial\Omega_{a_i}$ and the other critical value $y_f\in K(\sigma_{a_i})$ of $\sigma_{a_i}$.

We proceed to show that $\chi^{-1}(\mathfrak{L}_{\mathcal{T}})$ is contained in $\cC(S_\cT)$. To this end, let $(\Omega,\sigma)\in\chi^{-1}(\mathfrak{L}_{\mathcal{T}})$. By Proposition~\ref{shabat_good_uni_prop}, there exists a Shabat polynomial $f_1$ such that the dessin d'enfant of $f_1$ is isomorphic to $\mathcal{T}^{\mathrm{aug}}$ and $f_1:\overline{\D}\to\overline{\Omega}$ is a homeomorphism.
After possibly replacing $\Omega$ by an affine image, we can assume that the unique cusp on $\partial\Omega$ is $y_c$ and the only other critical value of $\sigma$ in $K(\sigma)$ is $y_f$. Then, the proof of Proposition~\ref{shabat_good_uni_prop} shows that the vertices $v_b, v_w$ of the combinatorial tree $\mathcal{T}^{\mathrm{aug}}$ correspond to the critical and co-critical values $\pmb{x}_1:=(f_1\vert_{\overline{\D}})^{-1}(y_c), \pmb{x}_2:=(f_1\vert_{\overline{\D}})^{-1}(y_f)$, respectively.
\begin{figure}[ht!]
\captionsetup{width=0.98\linewidth}
\begin{tikzpicture}
\node[anchor=south west,inner sep=0] at (3.6,0) {\includegraphics[width=0.6\textwidth]{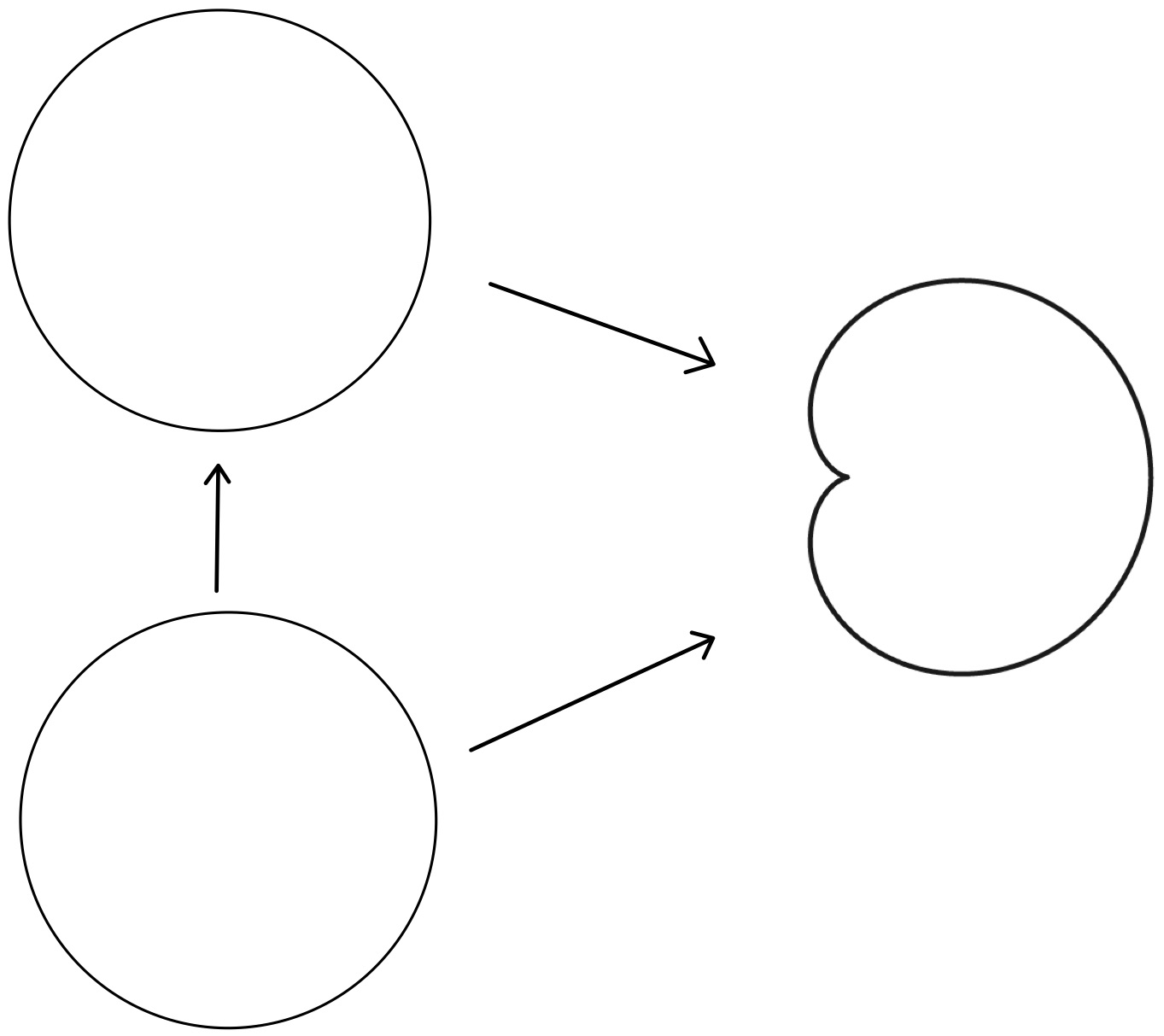}};
\node at (3.7,4.25) {$\D$};
\node at (7.5,1.8) {$\pmb{f}$};
\node at (7.5,5) {$f_1$};
\node at (3.8,0.4) {$\Delta_a$};
\node at (11,2.4) {$\Omega_a$};
\node at (9.12,3.64) [circle,fill=blue,inner sep=1.5pt] {};
\node at (9.45,3.64) {\textcolor{blue}{$y_c$}};
\node at (4.8,3.3) {$A$};
\node at (5.05,1.4) [circle,fill=black,inner sep=1.5pt] {};
\node at (5.05,1.16) {$a$};
\node at (5.7,2) [circle,fill=red,inner sep=1.5pt] {};
\node at (5.6, 2.24) {\textcolor{red}{$\pmb{v_w}$}};
\node at (5.05,5.4) [circle,fill=black,inner sep=1.5pt] {};
\node at (5.05, 5.08) {$0$};
\node at (4.5,5.9) [circle,fill=red,inner sep=1.5pt] {};
\node at (4.2,6) {\textcolor{red}{$\pmb{x}_2$}};
\node at (6.4,1) [circle,fill=blue,inner sep=1.5pt] {};
\node at (6.7,0.8) {\textcolor{blue}{$\pmb{v_b}$}};
\node at (6.2,6.06) [circle,fill=blue,inner sep=1.5pt] {};
\node at (6.56,6.12) {\textcolor{blue}{$\pmb{x}_1$}};
\node at (10.2,3.6) [circle,fill=black,inner sep=1.5pt] {};
\node at (10.2,3.28) {$\pmb{f}(a)$};
\node at (9.6,4.5) [circle,fill=red,inner sep=1.5pt] {};
\node at (9.96,4.5) {\textcolor{red}{$y_f$}};
\end{tikzpicture}
\caption{Pictured is the affine change of coordinate $A$ appearing in the proof of Proposition~\ref{shabat_family_prop}. The corresponding points are marked in the same color.}
\label{affine_change_fig}
\end{figure}

Theorem~\ref{shabat_classification_theorem} now implies that there exists an affine map $A$ with $A(\pmb{v_b})=\pmb{x}_1$, $A(\pmb{v_w})=\pmb{x}_2$, and $\pmb{f}\equiv f_1\circ A$. Setting $a:=A^{-1}(0)$, we conclude that $A^{-1}(\D)$ is a round disk centered at $a$ having $\pmb{v_b}$ on its boundary such that $\pmb{f}$ is univalent on $A^{-1}(\overline{\D})=\overline{\Delta_{a}}$ (see Figure~\ref{affine_change_fig}). Thus, $\Omega=f_1(\D)=\pmb{f}(A^{-1}(\D))=\Omega_{a}$, and hence $\sigma\equiv\sigma_{a}$. Finally, $\pmb{v_w}=A^{-1}(\pmb{x}_2)\in\Delta_{a}$. It follows that $a\in S_{\mathcal{T}}$, and hence $(\Omega,\sigma)\in S_\cT$. Finally, the fact that the $\chi-$preimage of any map in $\mathfrak{L}_{\mathcal{T}}\subset\cF_d$ has a connected non-escaping set implies that $(\Omega,\sigma)\in\cC(S_\cT)$.

Conversely, let $(\Omega_a,\sigma_a)\in \cC(S_\cT)$. We need to argue that $R:=\chi(\sigma_a)\in\mathfrak{L}_{\mathcal{T}}$. Since $\sigma_a$ has at most two critical values in $K(\sigma_a)$, it follows that $R$ has at most two critical values in $\mathcal{K}(R)$. Hence, $R$ has at most three critical values. Moreover, $R$ has three critical values if and only if $y_c$ is a critical value of $\sigma_a$. Since the hybrid conjugacy between $\sigma_a$ and $R$ carries $y_c$ to $\infty$, we conclude that if $R$ has three critical values, then $\infty$ is one of them. Therefore, $R\in\mathfrak{L}$.

We will now describe the dessin d'enfant of $R$ by dualizing some of the arguments of Proposition~\ref{shabat_good_uni_prop}. Let $\gamma_a\subset\overline{\Omega_a}$ be a simple arc connecting $y_c$ and $y_f$. Since $\pmb{f}:\overline{\Delta_a}\to\overline{\Omega_a}$ is a homeomorphism, it follows that $\pmb{f}^{-1}(\gamma_a)\cap\overline{\Delta_a}$ is a simple arc that connects $\pmb{v_b}$ to $\pmb{v_w}$. Note that the embedded tree $\pmb{f}^{-1}(\gamma_a)\setminus\Delta_a$ is isomorphic to $\mathcal{T}^{\mathrm{op}}$ as a combinatorial bicolored plane tree such that the isomorphism sends $\pmb{v_b}=(\pmb{f}\vert_{\overline{\Delta_a}}^{-1})(y_c)$ to $v_b$.
Thus, the tree
$$
\sigma_a^{-1}(\gamma_a)=\pmb{f}(\eta_a(\pmb{f}^{-1}(\gamma_a)\setminus\Delta_a))\subset\overline{\Omega_a}
$$
is isomorphic to $\mathcal{T}$ as a combinatorial bi-colored plane tree. Clearly, the hybrid conjugacy yields an isomorphism between $\sigma_a^{-1}(\gamma_a)$ and a planar realization of the dessin d'enfant $\mathcal{T}(R)$ of $R$. We conclude that $\mathcal{T}(R)\cong\mathcal{T}$; i.e., $R\in\mathfrak{L}_{\mathcal{T}}$. 

Finally, the second statement follows from \cite[Theorem~5.1]{PartI}.
\end{proof}

\begin{corollary}\label{non_emp_cor}
$\Int{S_{\mathcal{T}}}\neq\emptyset$.
\end{corollary}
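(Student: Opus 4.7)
The plan is to prove that $S_\cT$ is open in $\C$; since $0\in S_\cT$ by the remark following Definition~\ref{shabat_schwarz_family_def}, this will immediately give $\Int{S_\cT}\neq\emptyset$. The set $S_\cT$ is cut out by two conditions: the inclusion $\pmb{v_w}\in\Delta_a$, equivalent to $|\pmb{v_w}-a|<|\pmb{v_b}-a|$, which is manifestly open in $a$; and the univalence of $\pmb{f}$ on $\overline{\Delta_a}$. Thus the real content is the openness of the univalence condition.

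Fix $a_0\in S_\cT$. By Proposition~\ref{shabat_good_uni_prop}, $\pmb{f}$ has a simple critical point at $\pmb{v_b}\in\partial\Delta_{a_0}$ and no other critical point on $\overline{\Delta_{a_0}}$. Since the critical set of $\pmb{f}$ is a fixed finite subset of $\C$, and $a\mapsto\overline{\Delta_a}$ is Hausdorff continuous with $\pmb{v_b}\in\partial\Delta_a$ for every $a$, the same property persists on $\overline{\Delta_a}$ for $a$ near $a_0$. My strategy is to show that $\pmb{f}(\partial\Delta_a)$ is a Jordan curve for all such $a$: once this is established, the argument principle applied to the polynomial $\pmb{f}$ (together with the absence of critical points in the open disk $\Delta_a$) forces $\pmb{f}|_{\overline{\Delta_a}}$ to be a homeomorphism onto the closed Jordan region bounded by $\pmb{f}(\partial\Delta_a)$, hence univalent.

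For $a=a_0$, the curve $\pmb{f}(\partial\Delta_{a_0})=\partial\Omega_{a_0}$ is a Jordan curve, smooth away from a single cusp at $y_c=\pmb{f}(\pmb{v_b})$. As $a$ varies near $a_0$, the boundary $\partial\Delta_a$ deforms smoothly through $\pmb{v_b}$, so $\pmb{f}(\partial\Delta_a)$ deforms in $C^\infty$ away from $y_c$ and retains a cusp at $y_c$ (as $\pmb{v_b}$ stays a simple boundary critical point of $\pmb{f}$). A self-intersection of $\pmb{f}(\partial\Delta_a)$ occurring away from $y_c$ would, by Hausdorff continuity, persist in the limit and contradict the Jordan property at $a_0$; this rules out global self-intersection.

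The hardest part is the local analysis near $y_c$, where one must verify that the two smooth branches of $\pmb{f}(\partial\Delta_a)$ emanating from the cusp do not pinch into a double point or cross each other as $a$ varies. This will be handled using the local expansion $\pmb{f}(z)=y_c+c(z-\pmb{v_b})^2+O((z-\pmb{v_b})^3)$ with $c\neq 0$: in suitable local coordinates $\pmb{f}$ is modeled by $w\mapsto w^2$, and the two arcs of $\partial\Delta_a$ meeting transversally at $\pmb{v_b}$ are mapped to two smooth branches forming a standard simple cusp whose opening direction varies continuously with $a$. This continuous dependence, combined with the non-degeneracy at $a_0$, yields the desired local injectivity in a neighborhood of $a_0$, and hence the openness of $S_\cT$.
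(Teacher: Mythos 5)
Your plan hinges on the claim that $S_\cT$ is open in $\C$, and that claim is false. The paper shows (Lemma~\ref{bdry_part_lem}, Theorem~\ref{bdry_thm}) that $\Gamma^{\mathrm{hoc}}\subset S_{\mathcal{T}}\cap\partial S_{\mathcal{T}}$: at a parameter $a$ where $y_c$ is a higher-order cusp, $\pmb{f}$ is univalent on $\overline{\Delta_a}$ and $\pmb{v_w}\in\Delta_a$, yet pushing $a$ slightly away from $\pmb{v_b}$ along the ray $\overrightarrow{\ell_a}$ destroys univalence. So the univalence condition is \emph{not} open, and $\Int{S_\cT}$ is strictly smaller than $S_\cT$ (it is cut out by the additional requirement that $y_c$ be a simple $(3,2)$ cusp). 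The place where your argument breaks is exactly the step you flag as ``the hardest part.'' Your claim that a self-intersection of $\pmb{f}(\partial\Delta_a)$ away from $y_c$ would persist in the limit is fine, but the dangerous degeneration is the other one: if $z_n\neq w_n\in\partial\Delta_{a_n}$ satisfy $\pmb{f}(z_n)=\pmb{f}(w_n)$ and $a_n\to a_0\in S_\cT$, then by injectivity at $a_0$ the two preimages must collide, and they can only collide at the unique boundary critical point $\pmb{v_b}$ — i.e.\ the double point is absorbed into the cusp. Whether this actually happens is governed not by the quadratic model $w\mapsto w^2$ but by third-order data: the osculating-circle condition of Lemma~\ref{cusp_anal_dyn_lem}, which distinguishes $(3,2)$ cusps (where your conclusion holds) from $(\nu,2)$ cusps with $\nu\geq5$ (where it fails). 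Your local analysis, as sketched, would ``prove'' openness at every point of $\Gamma^{\mathrm{hoc}}$, which is wrong. A further unaddressed point: even if you only run the argument at $a_0=0$, nothing in the setup guarantees that $y_c$ is a simple cusp of $\partial\Omega_0$, so you cannot conclude $0\in\Int{S_\cT}$ without extra input.

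The fix within your framework would be to first establish the characterization $\Int{S_{\mathcal{T}}}=\{a:\pmb{v_w}\in\Delta_a,\ \pmb{f}\vert_{\overline{\Delta_a}}\ \text{univalent},\ y_c\ \text{a}\ (3,2)\ \text{cusp}\}$ and then exhibit one parameter satisfying all three conditions — but producing such a parameter is essentially the content the corollary is meant to encode. The paper instead argues indirectly: Lemma~\ref{slice_non_emp_lem} constructs a nonempty open set of Belyi anti-rational maps in $\mathfrak{L}_{\mathcal{T}}$ (via Poirier's realization theorem and a perturbation/surgery argument), and Proposition~\ref{shabat_family_prop} transports this to $S_\cT$ through the bijection $\chi$. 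You should either follow that route or supply the missing cusp-type analysis.
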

\begin{proof}
This follows from Lemma~\ref{slice_non_emp_lem} and Proposition~\ref{shabat_family_prop}.
\end{proof}

\begin{corollary}\label{schwarz_tree_cor}
Let $a\in S_{\mathcal{T}}$, $\gamma_0\subset\overline{\Omega_a}$ be a simple arc connecting $y_c$ and $y_f$. Then, the embedded tree
$$
\sigma_a^{-1}(\gamma_a)=\pmb{f}(\eta_a(\pmb{f}^{-1}(\gamma_a)\setminus\Delta_a))\subset\overline{\Omega_a}
$$ 
is isomorphic to $\mathcal{T}$ as a combinatorial bi-colored plane tree, and the isomorphism identifies the vertex $y_c$ of $\sigma_a^{-1}(\gamma_a)$ with the root of $\mathcal{T}$.
\end{corollary}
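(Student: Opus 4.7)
The proof is essentially a distillation of the combinatorial computation already performed in the last two paragraphs of the proof of Proposition~\ref{shabat_family_prop}: that argument only used the univalence of $\pmb{f}$ on $\overline{\Delta_a}$ (the defining condition of membership in $S_\cT$) and made no use of the connectedness of $K(\sigma_a)$, so it carries over verbatim to an arbitrary parameter $a \in S_\cT$. The plan is to repeat the four-step chain of identifications $\pmb{f}^{-1}(\gamma_a) \leftrightarrow \mathcal{T}^{\mathrm{aug}}$, restriction to the exterior of $\Delta_a$, application of $\eta_a$, and application of $\pmb{f}$, carefully tracking the bicolored plane-tree structure (in particular the $\mathrm{op}$-reversals of circular orderings) at each step.

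First I would use Proposition~\ref{shabat_good_uni_prop} together with Proposition~\ref{homotopic_trees} to identify the full preimage $\pmb{f}^{-1}(\gamma_a)$ with a planar realization of $\mathcal{T}^{\mathrm{aug}}$, in such a way that the distinguished edge added to $\mathcal{T}^{\mathrm{op}}$ to form $\mathcal{T}^{\mathrm{aug}}$ corresponds to the component of $\pmb{f}^{-1}(\gamma_a)$ lying inside $\overline{\Delta_a}$. Since $\pmb{f}\vert_{\overline{\Delta_a}}$ is univalent and carries the simple arc $\gamma_a$ homeomorphically back, this inside component is the simple arc from $\pmb{v_b}$ to $\pmb{v_w}$, and $\pmb{v_b}$ is the unique intersection of $\pmb{f}^{-1}(\gamma_a)$ with $\partial \Delta_a$ (being the only critical point of $\pmb{f}$ on $\overline{\Delta_a}$). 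Consequently $\pmb{f}^{-1}(\gamma_a)\setminus \Delta_a$, viewed as a bicolored plane tree embedded in $\widehat{\C}\setminus \Delta_a$ and rooted at $\pmb{v_b}$, is isomorphic to $\mathcal{T}^{\mathrm{op}}$.

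Next I would apply the circular reflection $\eta_a$. Because $\eta_a$ fixes $\partial \Delta_a$ pointwise (in particular $\pmb{v_b}$) and is orientation-reversing, it sends the tree above homeomorphically into $\overline{\Delta_a}$ while reversing the cyclic ordering of edges at every vertex. Hence $\eta_a(\pmb{f}^{-1}(\gamma_a)\setminus \Delta_a)$ is a rooted bicolored plane tree isomorphic to $(\mathcal{T}^{\mathrm{op}})^{\mathrm{op}} = \mathcal{T}$, still rooted at $\pmb{v_b}$. Finally, $\pmb{f}\vert_{\overline{\Delta_a}}$ is an orientation-preserving homeomorphism onto $\overline{\Omega_a}$, so it carries this tree onto $\sigma_a^{-1}(\gamma_a)$ as bicolored plane trees, sending the root to $\pmb{f}(\pmb{v_b}) = y_c$, which is the desired conclusion.

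There is no real obstacle here; the only thing that requires care is the bookkeeping of $\mathrm{op}$-reversals, namely that the one anti-holomorphic factor $\eta_a$ in the composition $\sigma_a = \pmb{f}\circ \eta_a \circ (\pmb{f}\vert_{\overline{\Delta_a}})^{-1}$ accounts for exactly one reversal of circular ordering, which converts the outside part of $\mathcal{T}^{\mathrm{aug}}$ (namely $\mathcal{T}^{\mathrm{op}}$) precisely into $\mathcal{T}$.
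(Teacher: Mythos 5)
Your proposal is correct and follows essentially the same route as the paper: the corollary is stated there without a separate proof precisely because it is the computation carried out verbatim in the final paragraphs of the proof of Proposition~\ref{shabat_family_prop} (identify $\pmb{f}^{-1}(\gamma_a)$ with $\mathcal{T}^{\mathrm{aug}}$, observe that the part outside $\Delta_a$ is $\mathcal{T}^{\mathrm{op}}$ rooted at $\pmb{v_b}$, and push it forward by the orientation-reversing $\eta_a$ followed by the orientation-preserving $\pmb{f}\vert_{\overline{\Delta_a}}$), and you correctly note that this argument uses only the univalence of $\pmb{f}$ on $\overline{\Delta_a}$ and hence applies to every $a\in S_{\mathcal{T}}$. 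Your explicit tracking of the single $\mathrm{op}$-reversal contributed by $\eta_a$ is exactly the bookkeeping the paper relies on.
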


Corollary~\ref{schwarz_tree_cor} can be restated as follows: $\sigma_a$ is a Belyi map whose \emph{dessin d'enfant} is given by $\mathcal{T}$.

The relationship between the dessin d'enfant of $\pmb{f}$ and that of $\sigma_a$ can be used to prove that the family $S_\cT$ is quasiconformally closed.

\begin{proposition}\label{qc_closed_prop}
Let $a\in S_{\mathcal{T}}$, $\mu$ be a $\sigma_a-$invariant Beltrami coefficient on $\widehat{\C}$, and $\Phi$ be the
quasiconformal map solving the Beltrami equation with coefficient $\mu$ such that $\Phi$ fixes $y_c, y_f$, and $\infty$. Then, there exists $a'\in S_{\mathcal{T}}$ such that $\Phi(\Omega_a)=\Omega_{a'}$, and  $\Phi\circ\sigma_a\circ\Phi^{-1}=\sigma_{a'}$ on $\Omega_{a'}$.
\end{proposition}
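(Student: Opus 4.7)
The plan is to use the Shabat classification of uniformizing polynomials (Theorem~\ref{shabat_classification_theorem}) combined with the identification in Proposition~\ref{shabat_good_uni_prop} of $\chi^{-1}(\mathfrak{L}_\cT)$ with Shabat polynomials realizing the dessin $\cT^{\mathrm{aug}}$. The first step is the standard quasiconformal deformation argument: since $\mu$ is $\sigma_a$-invariant on $\overline{\Omega_a}$, the conjugate $\sigma' := \Phi\circ\sigma_a\circ\Phi^{-1}$ is anti-meromorphic on $\Phi(\Omega_a)$ and extends continuously to the identity on $\partial\Phi(\Omega_a)$. Hence $\Phi(\Omega_a)$ is a Jordan quadrature domain with Schwarz reflection $\sigma'$. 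By Proposition~\ref{simp_conn_quad}, its Riemann uniformization extends to a rational map of degree $d+1$; normalizing the Riemann map to send $0$ to the unique $\sigma'$-preimage of $\infty$ in $\Phi(\Omega_a)$ makes the local degree at $\infty$ equal $d+1$, so this rational map is a polynomial, which we call $g$.

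Next I would show that $g$ is Shabat-equivalent to $\pmb{f}$. By Corollary~\ref{schwarz_tree_cor} the Belyi Schwarz reflection $\sigma_a$ has dessin $\cT$, and since $\Phi$ is a homeomorphism fixing the marked critical values $y_c$, $y_f$, and $\infty$, the dessin of $\sigma'$ is also $\cT$. The combinatorial arguments from the proof of Proposition~\ref{shabat_good_uni_prop}---which only use the Belyi structure of the Schwarz reflection and the explicit relation $g^{-1}(\gamma') = \eta(g|_{\overline{\D}}^{-1}(\sigma'^{-1}(\gamma'))) \cup g|_{\overline{\D}}^{-1}(\gamma')$, not hybrid conjugation---then yield that $g$ is a Shabat polynomial whose dessin is the augmentation $\cT^{\mathrm{aug}}$, matching that of $\pmb{f}$. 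By Theorem~\ref{shabat_classification_theorem}, $g = A_1\circ\pmb{f}\circ A_2$ for M\"obius maps $A_1,A_2$, necessarily affine since both polynomials have $\infty$ as the totally ramified critical value. The normalization of $\Phi$ forces both $g$ and $\pmb{f}$ to have the same finite critical values $y_c$, $y_f$ with matching colorings, so $A_1$ fixes $y_c$, $y_f$, and $\infty$; being affine, $A_1 = \mathrm{id}$. Therefore $g = \pmb{f}\circ A$ for a single affine map $A$.

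Finally I would set $\Delta_{a'} := A(\D)$, which is a round disk, and let $a'$ be its center. Univalence of $g$ on $\overline{\D}$ transfers to univalence of $\pmb{f}$ on $\overline{\Delta_{a'}}$ via $\pmb{f} = g\circ A^{-1}$. To see $\pmb{v_w}\in\Delta_{a'}$, observe that the Shabat classification pairs the distinguished valence-one white vertex of the dessin of $g$---which by Proposition~\ref{shabat_good_uni_prop} is the point $(g|_{\overline{\D}})^{-1}(y_f)\in\D$---with the corresponding vertex $\pmb{v_w}$ of the dessin of $\pmb{f}$ under $A$, so $\pmb{v_w} = A((g|_{\overline{\D}})^{-1}(y_f)) \in A(\D) = \Delta_{a'}$. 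Hence $a'\in S_\cT$, and $\Omega_{a'} = \pmb{f}(\Delta_{a'}) = g(\D) = \Phi(\Omega_a)$, so the Schwarz reflection of $\Omega_{a'}$ equals $\sigma' = \Phi\circ\sigma_a\circ\Phi^{-1}$. The principal technical work lies in the second step, namely identifying the dessin of $g$ and running the Shabat classification carefully enough to pin down $A_1 = \mathrm{id}$ with the correct identification of the augmented vertex; the remaining steps are essentially bookkeeping.
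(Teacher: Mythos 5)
Your proposal is correct and follows essentially the same route as the paper's sketch: show $\Phi(\Omega_a)$ is a Jordan quadrature domain uniformized by a polynomial $g$, identify the dessin of the conjugated Schwarz reflection with $\cT$ and hence the dessin of $g$ with $\cT^{\mathrm{aug}}$, and then invoke the Shabat classification together with the affine-change-of-coordinates argument of Proposition~\ref{shabat_family_prop} to land back in $S_\cT$. The only detail you leave implicit is that $\pmb{v_b}\in\partial\Delta_{a'}$ (so that $\Delta_{a'}$ really is $B(a',|\pmb{v_b}-a'|)$), which follows from the same vertex-matching you use for $\pmb{v_w}$.
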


\begin{proof}[Sketch of Proof]
The assumption that $\mu$ is $\sigma_a-$invariant implies that $\widecheck{\sigma}:=\Phi\circ\sigma_a\circ\Phi^{-1}$ is anti-meromorphic on $\widecheck{\Omega}:=\Phi(\Omega_a)$, and continuously extends to the identity map on $\partial\widecheck{\Omega}$. Thus, $\widecheck{\Omega}$ is a simply connected quadrature domain with Jordan boundary and Schwarz reflection map $\widecheck{\sigma}$. Also, $\widecheck{\sigma}:\widecheck{\sigma}^{-1}(\Int{\widecheck{\Omega}^c})\rightarrow\Int{\widecheck{\Omega}^c}$ is a $(d+1):1$ branched cover with a critical point of local degree $d+1$. According to \cite[Proposition~3.3]{PartI}, there is a degree $d+1$ polynomial $g$ which carries $\overline{\D}$ injectively onto $\overline{\widecheck{\Omega}}$. Thus, $\widecheck{\sigma}\equiv g\circ\eta\circ\left(g\vert_{\overline{\D}}\right)^{-1}$ on $\overline{\widecheck{\Omega}}$. The fact that $\widecheck{\sigma}$ has at most two critical points in $K(\widecheck{\sigma})$ (recall that the same is true for $\sigma$) implies that $g$ has at most two finite critical values. Thus, $g$ is a Shabat polynomial.

That the dessin d'enfant of $\sigma_a$ is isomorphic to $\mathcal{T}$ implies that the same is true for $\widecheck{\sigma}$. Arguing as in Proposition~\ref{shabat_good_uni_prop}, one concludes that the dessin d'enfant of $g$ is isomorphic to $\mathcal{T}^{\mathrm{aug}}$. As $g$ and $\pmb{f}$ have isomorphic dessin d'enfants and the same marked critical values, one can use the arguments of Proposition~\ref{shabat_family_prop} to deduce that $\widecheck{\sigma}$ lies in the family $S_\cT$.
\end{proof}

\subsection{External dynamics in the escape locus}

\begin{definition}[Depth]\label{def_depth}
For any $a$ in the escape locus of $S_\cT$, the \emph{smallest} positive integer $n(a)$ such that $\sigma_a^{\circ n(a)}(y_f)\in T^0(\sigma_a)$ is called the \emph{depth} of $a$.
\end{definition}

\begin{lemma}\label{schwarz_group}
1) For $a\in\cC(\mathcal{S_{\mathcal{T}}})$, the map $\sigma_a:T^\infty(\sigma_a)\setminus\Int{T^0(\sigma_a)}\to T^\infty(\sigma_a)$ is conformally conjugate to $\mathcal{R}_d:\Int{\mathcal{Q}}\setminus\Int{\mathcal{Q}_1}\to\Int{\mathcal{Q}}$.

2) For $a\in S_{\mathcal{T}}\setminus\cC(\mathcal{S_{\mathcal{T}}})$, 
$$
\sigma_a:\displaystyle\bigcup_{n=1}^{n(a)} \sigma_a^{-n}(T^0(\sigma_a))\to\displaystyle\bigcup_{n=0}^{n(a)-1} \sigma_a^{-n}(T^0(\sigma_a))
$$ 
is conformally conjugate to 
$$
\mathcal{R}_d:\displaystyle\bigcup_{n=1}^{n(a)} \mathcal{R}_d^{-n}(\mathcal{Q}_1)\to\displaystyle\bigcup_{n=0}^{n(a)-1}\mathcal{R}_d^{-n}(\mathcal{Q}_1).
$$
\end{lemma}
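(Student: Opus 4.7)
Part~(1) is immediate: by Proposition~\ref{conn_locus_srd_prop}, $a\in\cC(S_\cT)$ implies $\sigma_a\in\cS_{\cR_d}$, and the required conformal conjugacy is then built into Definition~\ref{srd_def}. The substantive content of the lemma is part~(2).

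For part~(2), my plan is to construct the conjugacy rank by rank, mirroring the proof of \cite[Proposition~3.3]{PartI} but truncating at depth $n(a)$. As the base case, I would fix a conformal equivalence $\psi_0\colon (\mathcal{Q}_1,0)\to (T^0(\sigma_a),\infty)$. Both sides are simply connected disks (once one accounts for the orbifold cone point of order $d+1$ at $0\in\mathcal{Q}_1$), each with a single boundary corner: the image of an ideal vertex of $\Pi$ on the model side and the conformal cusp $y_c$ on the dynamical side. Normalizing $\psi_0$ to send $0\mapsto\infty$ and corner to corner, its Carath\'eodory extension to the closure conjugates $\mathcal{R}_d|_{\partial\mathcal{Q}_1}=\mathrm{id}$ to $\sigma_a|_{\partial T^0\cap\partial\Omega_a}=\mathrm{id}$.

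The inductive step uses covering-theoretic lifting. By the definition of $n(a)$, the free critical value $y_f$ lies in a tile of rank exactly $n(a)$, so for every $1\leq k\leq n(a)$ the map $\sigma_a\colon \sigma_a^{-k}(T^0)\to \sigma_a^{-(k-1)}(T^0)$ has the same branching data as its model counterpart $\mathcal{R}_d\colon \mathcal{R}_d^{-k}(\mathcal{Q}_1)\to \mathcal{R}_d^{-(k-1)}(\mathcal{Q}_1)$: fully ramified of degree $d+1$ when $k=1$ (at $\pmb{f}(a)$ and at the critical point of $\mathcal{R}_d$, respectively) and unramified of degree $d$ when $2\leq k\leq n(a)$. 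Given $\psi$ already defined on $\bigcup_{n=0}^{k-1}\mathcal{R}_d^{-n}(\mathcal{Q}_1)$, I would lift it through the matching covers to extend $\psi$ conformally across $\mathcal{R}_d^{-k}(\mathcal{Q}_1)$. Iterating up to $k=n(a)$ then produces the desired conjugacy.

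The main obstacle I anticipate is checking that the lifts at successive ranks patch continuously along shared tile boundaries and identify the ``correct'' components on the two sides. Patching is automatic from uniqueness of covering-space lifts together with real-analyticity of $\mathcal{R}_d$ and $\sigma_a$ across the non-singular parts of tile boundaries. The combinatorial identification of components is forced because both tiling structures arise from the same boundary-reflection data (the anti-Hecke group on the model side and the Schwarz reflection on the dynamical side), so pinning down the base-level correspondence via $\psi_0$ propagates through the dynamics and fixes all higher-rank identifications up to depth~$n(a)$.
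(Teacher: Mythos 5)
Your proof is correct and follows essentially the same route as the paper: a base conformal map $\psi_0\colon\mathcal{Q}_1\to T^0(\sigma_a)$ normalized by $0\mapsto\infty$ and cusp-to-cusp, then lifted rank by rank through the matching branched covers (fully ramified of degree $d+1$ at rank one, unramified of degree $d$ through rank $n(a)$), with equivariance and the boundary-fixing property forcing the patching and the component identifications. The only cosmetic difference is that for part~(1) you invoke Proposition~\ref{conn_locus_srd_prop} and Definition~\ref{srd_def}, whereas the paper simply continues the same lifting construction to tiles of all ranks; both are valid.
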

\begin{proof}
Since $\mathcal{Q}_1$ is simply connected, we can choose a homeomorphism $$\psi_a:\mathcal{Q}_1\to T^0(\sigma_a)$$ such that it is conformal on the interior. We can further assume that $\psi_a(0)=\infty$, and its continuous extension sends the cusp point $1\in\partial\mathcal{Q}_1$ to the point $y_c\in\partial T^0(\sigma_a)$.

Note that $\sigma_a:\sigma_a^{-1}(T^0(\sigma_a))\to T^0(\sigma_a)$ is a $(d+1):1$ branched cover branched only at $\pmb{f}(a)$, and $\mathcal{R}_d:\mathcal{R}_d^{-1}(\mathcal{Q}_1)\to\mathcal{Q}_1$ is a $(d+1):1$ branched cover branched only at $\rho_1(0)$. Moreover, $\sigma_a$ fixes $\partial T^0(\sigma_a)$ pointwise, and $\mathcal{R}_d$ fixes $\partial\mathcal{Q}_1$ pointwise.

This allows one to lift $\psi_a$ to a conformal isomorphism from $\mathcal{R}_d^{-1}(\mathcal{Q}_1)$ onto $\sigma_a^{-1}(T^0(\sigma_a))$ such that the lifted map sends $\rho_1(0)$ to $\pmb{f}(a)$, and continuously matches with the initial map $\psi_a$ on $\mathcal{Q}_1$. We denote this extended conformal isomorphism by $\psi_a$. By construction, $\psi_a$ is equivariant with respect to the actions of $\mathcal{R}_d$ and $\sigma_a$ on $\mathcal{R}_d^{-1}(\mathcal{Q}_1)$ and $\partial\sigma_a^{-1}(T^0(\sigma_a))$, respectively. 

1) If $a\in\cC(\mathcal{S_{\mathcal{T}}})$, then every tile of $T^\infty(\sigma_a)$ (of rank greater than one) maps diffeomorphically onto $\sigma_a^{-1}(T^0(\sigma_a))$ under some iterate of $\sigma_a$, and each tile of $\D_1$ (of rank greater than one) maps diffeomorphically onto $\mathcal{R}_d^{-1}(\mathcal{Q}_1)$ under some iterate of $\mathcal{R}_d$. This fact, along with the equivariance property of $\psi_a$ mentioned above, enables us to lift $\psi_a$ to all tiles using the iterates of $\mathcal{R}_d$ and $\sigma_a$. This produces the desired biholomorphism $\psi_a$ between $\Int{\mathcal{Q}}$ and $T^\infty(\sigma_a)$ which conjugates $\mathcal{R}_d$ to $\sigma_a$.

2) For $a\in S_{\mathcal{T}}\setminus\cC(\mathcal{S_{\mathcal{T}}})$, the above construction of $\psi_a$ can be carried out onto the tiles of $T^\infty(\sigma_a)$ that map diffeomorphically onto $\sigma_a^{-1}(T^0(\sigma_a))$, which includes all tiles of rank up to $n(a)$. This completes the proof.
\end{proof}

\begin{definition}[Dynamical Rays]\label{dyn_ray_schwarz}
The image of a $\pmb{\Gamma}_d-$ray at angle $\theta\in[0,\frac{1}{d+1})$ in $\mathcal{Q}$ under the map $\psi_a$ (see Lemma~\ref{schwarz_group}) is called a $\theta-$dynamical ray of~$\sigma_a$.
\end{definition}

Clearly, the image (of the tail) of a dynamical $\theta-$ray under $\sigma_a$ is (the tail of) a dynamical ray at angle $\mathcal{R}_d(\theta)$.

\section{The parameter space $S_\mathcal T$ is a quadrilateral}\label{conn_para_space_sec}

In this section we show that the parameter space $S_\mathcal T$ is a topological quadrilateral with boundary given by  real analytic arcs, and analyze the dynamics of the associated maps at those boundary components.

\subsection{Boundedness of $S_{\mathcal{T}}$}

Note that $\partial\Delta_a$ contains a fixed base point $\pmb{v_b}$. As $|a|\to\infty$, the radius $\vert a-\pmb{v_b}\vert$ of the disk $\Delta_a$ also goes to infinity. Hence, as $a\to\infty$ along some $\theta-$ray $\{\pmb{v_b}+R e^{i\theta}: R\in(0,\infty)\}$, the disk $\overline{\Delta_a}$ converges to some closed half-plane in $\widehat{\C}$.
Since the polynomial $\pmb{f}$ behaves like $c\cdot z^{d+1}$ near $\infty$ with $c\neq 0$ and $d+1\geq 3$, it follows that $\pmb{f}$ cannot be injective on a half-plane near $\infty$. Hence, for $\vert a\vert$ large enough, $\pmb{f}$ cannot be injective on $\Delta_a$.

This gives the following.

\begin{lemma}\label{para_space_bdd_lem}
The parameter space $S_{\mathcal{T}}$ is a bounded subset of $\C$.
\end{lemma}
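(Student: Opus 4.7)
The plan is to argue by contradiction. Suppose $S_\cT$ is unbounded; extract a sequence $a_n \in S_\cT$ with $\vert a_n\vert \to \infty$ and write $a_n = \pmb{v_b} + R_n e^{i\theta_n}$, where $R_n = \vert a_n - \pmb{v_b}\vert \to \infty$ and, after passing to a subsequence, $\theta_n \to \theta_\infty$. A direct computation gives that for any fixed $z = \pmb{v_b} + w$, the condition $z \in \Delta_{a_n}$, namely $\vert w - R_n e^{i\theta_n}\vert < R_n$, is for $n$ large equivalent to $\re(w e^{-i\theta_\infty}) > 0$ (expanding the inequality, dividing by $R_n$, and letting $R_n \to \infty$). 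Consequently the disks $\Delta_{a_n}$ converge in the Carath\'eodory sense to the open half-plane
\[
H := \{z \in \C : \re((z-\pmb{v_b}) e^{-i\theta_\infty}) > 0\},
\]
so that any compact $K \subset H$ satisfies $K \subset \Delta_{a_n}$ for all sufficiently large $n$.

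It then suffices to exhibit two distinct points $w_1, w_2 \in H$ with $\pmb{f}(w_1) = \pmb{f}(w_2)$, as this will contradict the univalence of $\pmb{f}$ on $\overline{\Delta_{a_n}}$ that is built into the definition of $S_\cT$. Since $\pmb{f}$ has degree $d+1 \geq 3$, near infinity $\pmb{f}(z) = c z^{d+1} + O(z^d)$ with $c \neq 0$, and the $d+1$ preimages under $\pmb{f}$ of a large value $y$ cluster near the vertices of a regular $(d+1)$-gon of large radius: they are asymptotically of the form $(y/c)^{1/(d+1)} \omega^k$ for $\omega = e^{2\pi i/(d+1)}$ and $k = 0, \dots, d$. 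Two consecutive such preimages are separated in argument by $2\pi/(d+1) \leq 2\pi/3 < \pi$. Hence for our target half-plane $H$ one can choose the branch of $(y/c)^{1/(d+1)}$ so that two consecutive preimages $w_1, w_2$ lie in a closed sub-sector of $H$ at large radius, and taking $\vert y\vert$ sufficiently large makes the $o(1)$ error terms negligible so the true preimages remain in $H$.

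Combining the two steps, for $n$ large enough the chosen points $w_1, w_2 \in H$ both lie in $\Delta_{a_n}$, contradicting the univalence of $\pmb{f}$ on $\overline{\Delta_{a_n}}$. The only (mild) technical point is the angular bookkeeping in the second step; its essential content is that $\deg \pmb{f} = d + 1 \geq 3$ forces the angle $2\pi/(d+1)$ between consecutive asymptotic preimages to be strictly less than $\pi$, leaving room for two of them to sit in a common open half-plane. Had the degree been $2$, the preimages would be antipodal and this argument would break down; the hypothesis $d \geq 2$ is exactly what makes the boundedness of the parameter space an essentially geometric fact about polynomials of degree $\geq 3$.
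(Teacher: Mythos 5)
Your proof is correct and follows essentially the same route as the paper: let $|a|\to\infty$ along a fixed direction, observe that $\Delta_a$ converges to a half-plane, and use the asymptotics $\pmb{f}(z)\sim c z^{d+1}$ with $d+1\geq 3$ to violate univalence there. The only difference is that you make explicit (via the near-regular $(d+1)$-gon of preimages of a large value and the bound $2\pi/(d+1)<\pi$) the step the paper merely asserts, namely that a degree $\geq 3$ polynomial cannot be injective on a half-plane near $\infty$.
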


\subsection{Dynamics near cusp points} 

Recall that the only singularities on the boundary of a quadrature domain are conformal cusps and double points. In this subsection, we collect some results on the local dynamics of a Schwarz reflection map near a cusp and deduce some immediate applications to our parameter space $S_\cT$. We refer the reader to \cite[Appendix~A.2]{PartI} for the notion of the \emph{type} of a conformal cusp.

\begin{definition}\label{hoc_pb_def}
\noindent\begin{enumerate}[leftmargin=8mm]
\item We define 
\begin{equation*}
\Gamma^{\mathrm{hoc}}:= \{a\in S_{\mathcal{T}}: y_c=\pmb{f}(\pmb{v_b})\ \textrm{is\ a\ cusp\ of\ type}\ (\nu,2),\ \nu\geq 5\},
\end{equation*}
and call $\Gamma^{\mathrm{hoc}}$ the \emph{higher order cusp locus}.
\vspace{1mm}

\item We define 
\begin{equation*}
\begin{split}
&\Gamma^{\perp}:=\{a\in\C: \pmb{v_w}\in\partial\Delta_a,\ \pmb{f}\vert_{\overline{\Delta_a}}\ \textrm{is\ univalent}\}\ \subset\\
& \textrm{The\ perpendicular\ bisector\ of\ the\ straight\ line\ segment\ joining}\ \pmb{v_w}\ \textrm{and}\ \pmb{v_b}.
\end{split}
\end{equation*}
\end{enumerate}
\end{definition}

After possibly postcomposing $\pmb{f}$ with an affine map, we can assume that $\pmb{f}$ has asymptotics $f(z)=(z-\pmb{v_b})^2+O((z-\pmb{v_b})^3)$ near $\pmb{v_b}$.

For $\theta\in\R/2\pi\Z$, denote by $\alpha_{\theta}$ the curve germ at $\pmb{v_b}$ that maps under $\pmb{f}$ to a straight line segment $\{y_c-r e^{2i\theta}: r\in[0,\epsilon]\}$ (for $\epsilon>0$ small).

The following lemma can be deduced from \cite[Remark~A.2, Proposition~A.4]{PartI} (also see \cite[Proposition~4.5]{LLMM3}).

\begin{lemma}\label{cusp_anal_dyn_lem}
Let $a\in S_{\mathcal{T}}$. Then the following are equivalent.
\begin{enumerate}[leftmargin=8mm]
\item $a\in\Gamma^{\mathrm{hoc}}$. 

\item The circle $\partial\Delta_a$ is an osculating circle to the curve germ $\alpha_{\theta}$ at $\pmb{v_b}$, where $\theta=\arg{(a-\pmb{v_b})}$.

\item The second iterate $\sigma_a^{\circ 2}$ has at least one attracting and at least one repelling direction in $\Omega_a$ at the cusp $y_c$. In particular, the unique free critical orbit of $\sigma_a$ (i.e., the forward orbit of $y_f$) non-trivially converges to $y_c$.
\end{enumerate}
\end{lemma}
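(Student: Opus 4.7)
The plan is to derive each of the three equivalences from a local analysis of $\sigma_a = \pmb{f}\circ\eta_a\circ(\pmb{f}|_{\overline{\Delta_a}})^{-1}$ near $\pmb{v_b}$, feeding into the cusp classification and local dynamical normal forms of \cite[Appendix~A.2, Prop.~A.4]{PartI} (cf.\ \cite[Prop.~4.5]{LLMM3}). In the standing normalization $\pmb{f}(z)-y_c = (z-\pmb{v_b})^2 + O((z-\pmb{v_b})^3)$, the tangent to $\partial\Delta_a$ at $\pmb{v_b}$ is perpendicular to $a-\pmb{v_b}$, so $\partial\Omega_a = \pmb{f}(\partial\Delta_a)$ is tangent at $y_c$ to the line through $y_c$ in direction $e^{2i\theta}$ with $\theta=\arg(a-\pmb{v_b})$. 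By definition $\alpha_\theta$ is the $\pmb{f}$-preimage germ of the axial half-line at $y_c$, hence $\partial\Delta_a$ and $\alpha_\theta$ automatically share a tangent at $\pmb{v_b}$.

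For (1)$\Leftrightarrow$(2), I would apply the Puiseux-type computation underlying \cite[Rem.~A.2]{PartI}: the cusp type of $\partial\Omega_a$ at $y_c$ is controlled by the order of contact between $\partial\Delta_a$ and $\alpha_\theta$ at $\pmb{v_b}$. First-order contact (matching tangent but unequal curvatures) produces a simple cusp of type $(3,2)$, while contact of order $\geq 2$ (matching tangent \emph{and} curvature) forces the cusp type to be $(\nu,2)$ for some $\nu\geq 5$. Since the osculating circle of a smooth plane curve germ is characterized as the unique circle matching both tangent and curvature at the basepoint, (1)$\Leftrightarrow$(2) follows.

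For (1)$\Leftrightarrow$(3) I invoke the Leau--Fatou flower description of the holomorphic map $\sigma_a^{\circ 2}$ at its parabolic fixed point $y_c$ provided by \cite[Prop.~A.4]{PartI}: the order of tangency of $\sigma_a^{\circ 2}$ to the identity at $y_c$ grows with the cusp type $(\nu,2)$, and the associated attracting/repelling directions are arranged symmetrically about the cusp axis. In the simple case $\nu=3$ the unique attracting/repelling pair is tangent to $\partial\Omega_a$, so neither direction points into $\Omega_a$; for $\nu\geq 5$ additional pairs open up on the $\Omega_a$ side of $\partial\Omega_a$, yielding (3). The ``in particular'' statement then follows from the Fatou flower theorem: any attracting petal of $\sigma_a^{\circ 2}$ at $y_c$ contained in $\Omega_a$ must capture a critical orbit. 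Since the only critical values of $\sigma_a$ are $y_c$, $y_f$, and $\infty$, with $y_c$ fixed and $\infty\in T^\infty(\sigma_a)$ disjoint from the parabolic basin, the free critical orbit through $y_f$ is the only possible candidate and therefore converges non-trivially to $y_c$.

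The main technical obstacle is (1)$\Leftrightarrow$(2), where the translation between the notion of cusp type from \cite[App.~A.2]{PartI} and the geometric order-of-contact condition requires careful Puiseux bookkeeping; however, this essentially parallels the computation in \cite[Prop.~4.5]{LLMM3}, so the main work reduces to verifying that our setup matches theirs rather than introducing new ideas.
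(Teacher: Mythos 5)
Your proposal follows essentially the same route as the paper, which in fact offers no written argument for this lemma beyond citing \cite[Remark~A.2, Proposition~A.4]{PartI} and \cite[Proposition~4.5]{LLMM3}: the equivalence $(1)\Leftrightarrow(2)$ via order of contact of $\partial\Delta_a$ with $\alpha_\theta$ under the normalization $\pmb{f}(z)-y_c=(z-\pmb{v_b})^2+O((z-\pmb{v_b})^3)$, and $(1)\Leftrightarrow(3)$ via the parabolic normal form of $\sigma_a^{\circ 2}$ at the cusp, is exactly the intended deduction.

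One factual slip: for a $(3,2)$ cusp it is not true that ``neither direction points into $\Omega_a$'' --- the paper's own Proposition~\ref{cusp_dyn_cor} records that the invariant axial direction $\{y_c+re^{2i\arg(a-\pmb{v_b})}\}$ \emph{does} lie in $\Omega_a$ and is repelling there; what fails in the $(3,2)$ case is only the existence of an \emph{attracting} direction in $\Omega_a$ (the attracting direction points into the droplet). Since item $(3)$ is a conjunction, your conclusion that it fails for $\nu=3$ survives, so this does not damage the equivalence, but the description of the local flower should be corrected.
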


Since $\sigma_a\in S_\cT$ has exactly one one free critical value, it follows that $\sigma_a^{\circ 2}$ can have at most two attracting petals for a parabolic point. The following proposition then follows from this observation and \cite[Proposition A.4]{PartI}.

\begin{proposition}\label{cusp_dyn_cor}
Let $a\in S_{\mathcal{T}}$. Then the following statements hold.
\begin{enumerate}[leftmargin=8mm]
\item $y_c$ is a $(\nu,2)-$cusp of $\partial\Omega_a$, where $\nu\in\{3,5,7\}$.

\item The invariant direction $\{y_c+re^{2i\arg{(a-\pmb{v_b})}}: r\in[0,\epsilon]\}$ is a repelling (respectively, attracting) direction for $\sigma_a$ at $y_c$ if $y_c$ is a $(3,2)$ or $(7,2)$ (respectively, $(5,2)$) cusp of $\partial\Omega_a$.
\end{enumerate}
\end{proposition}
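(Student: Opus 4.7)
The plan is to combine the local classification of Schwarz reflection dynamics at cusps from \cite[Proposition~A.4]{PartI} with a counting argument that uses the global restriction that $\sigma_a\in S_\mathcal{T}$ carries a single free critical value.

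For part~(1), since $\pmb{f}$ has local degree two at $\pmb{v_b}$, the cusp at $y_c$ is necessarily of type $(\nu,2)$ with $\nu$ odd and $\nu\geq 3$. I would first invoke \cite[Proposition~A.4]{PartI} to list the local model of $\sigma_a$ at a $(\nu,2)$-cusp; from this model one reads off that the number of attracting petals of the holomorphic second iterate $\sigma_a^{\circ 2}$ at $y_c$ is a strictly increasing function of $\nu$ which exceeds $2$ as soon as $\nu\geq 9$. On the other hand, by Fatou's theorem each immediate parabolic basin of $\sigma_a^{\circ 2}$ at $y_c$ must absorb a distinct free critical orbit. The map $\sigma_a$ has only one free critical value $y_f$, and its $\sigma_a$-orbit splits into at most two $\sigma_a^{\circ 2}$-orbits (the orbit of $y_f$ and that of $\sigma_a(y_f)$), so $\sigma_a^{\circ 2}$ can admit at most two attracting petals at $y_c$. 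This forces $\nu\leq 7$, yielding $\nu\in\{3,5,7\}$.

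For part~(2), \cite[Proposition~A.4]{PartI} also specifies, for each admissible $\nu$, whether the distinguished $\sigma_a$-invariant tangent direction at the cusp lies along an attracting or a repelling axis of the parabolic petal structure. In our chosen coordinates the asymptotic $\pmb{f}(z)=(z-\pmb{v_b})^2+O((z-\pmb{v_b})^3)$ near $\pmb{v_b}$ sends the radial direction through $a$ in $\Delta_a$ to the direction $e^{2i\arg(a-\pmb{v_b})}$ at $y_c$ in $\Omega_a$, which is the unique $\sigma_a$-invariant tangent at the cusp. Comparing this with the classification in \cite[Proposition~A.4]{PartI} yields the stated alternation: the direction is repelling for a $(3,2)$- or $(7,2)$-cusp and attracting for a $(5,2)$-cusp.

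The main obstacle is the bookkeeping in part~(2): one must track the sign of the leading non-trivial term of the local expansion of $\sigma_a^{\circ 2}$ along the radial direction through $a$, separately in each of the three cases, using the explicit normal form of \cite[Proposition~A.4]{PartI} together with the coordinate change $w=(z-\pmb{v_b})^2$. Once the three signs are computed the alternation pattern drops out. The petal count for part~(1) is, by contrast, a direct application of the Fatou--Shishikura constraint.
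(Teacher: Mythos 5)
Your proposal is correct and follows essentially the same route as the paper: the paper's proof is exactly the observation that a single free critical value forces $\sigma_a^{\circ 2}$ to have at most two attracting petals at the cusp (since the $\sigma_a$-orbit of $y_f$ splits into at most two $\sigma_a^{\circ 2}$-orbits), combined with the local classification in \cite[Proposition~A.4]{PartI}. You have merely spelled out the Fatou--Shishikura petal count and the coordinate bookkeeping that the paper leaves implicit.
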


\begin{lemma}\label{pb_hoc_disjoint_lem}
$\Gamma^{\mathrm{hoc}}\cap\overline{\Gamma^{\perp}}=\emptyset$.
\end{lemma}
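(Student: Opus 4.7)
My plan is to derive the result directly from the definitions: the set $S_{\mathcal{T}}$ is defined using the \emph{open} disk $\Delta_a = B(a, |\pmb{v_b}-a|)$ containing $\pmb{v_w}$, so $\Gamma^{\mathrm{hoc}} \subset S_{\mathcal{T}}$ forces strict interior containment of $\pmb{v_w}$ in $\Delta_a$, whereas $\Gamma^{\perp}$ requires $\pmb{v_w}$ to lie on the boundary circle $\partial \Delta_a$. These two conditions are incompatible even in the limit, so the intersection is empty.

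More precisely, I would first observe that $\Gamma^{\perp}$ is contained in the perpendicular bisector
\[
L := \{a \in \C : |a - \pmb{v_w}| = |a - \pmb{v_b}|\}
\]
of the segment from $\pmb{v_w}$ to $\pmb{v_b}$. Since $a \mapsto |a-\pmb{v_w}| - |a-\pmb{v_b}|$ is continuous on $\C$, the set $L$ is closed, and hence $\overline{\Gamma^{\perp}} \subset L$. On the other hand, any $a \in S_{\mathcal{T}}$ satisfies $\pmb{v_w} \in B(a, |\pmb{v_b}-a|)$, which (since $B$ denotes an open ball) translates to the strict inequality $|a-\pmb{v_w}| < |a-\pmb{v_b}|$; thus $a \notin L$. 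In particular $\Gamma^{\mathrm{hoc}} \subset S_{\mathcal{T}}$ is disjoint from $L$, and therefore from $\overline{\Gamma^{\perp}}$.

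The main ``obstacle'' is essentially nothing beyond carefully parsing the definitions: the result is an immediate consequence of the strict-versus-equality distinction between the interior condition for $S_{\mathcal{T}}$ and the boundary condition for $\Gamma^{\perp}$. If desired, one can also see this dynamically: by Lemma~\ref{cusp_anal_dyn_lem}(3), any $a \in \Gamma^{\mathrm{hoc}}$ has the free critical value $y_f$ entering the parabolic basin inside $\Omega_a$, forcing $\pmb{v_w} \in \Delta_a$ strictly, which again contradicts $|a-\pmb{v_w}| = |a-\pmb{v_b}|$ on $\overline{\Gamma^{\perp}}$. The real content of the lemma lies in its downstream role in Theorem~\ref{bdry_thm}, where it confirms that $\Gamma^{\mathrm{hoc}}$ and $\Gamma^{\perp}$ sit as \emph{opposite} (rather than adjacent) sides of the topological quadrilateral promised in Theorem~\ref{para_space_jordan_thm_intro}, with the two arcs of $\Gamma^{\mathrm{dp}}$ bridging them.
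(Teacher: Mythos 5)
Your argument is correct for the statement as written, and it is genuinely different from (and more elementary than) the paper's proof. The paper argues dynamically: for $a\in\Gamma^{\mathrm{hoc}}$ the forward orbit of the free critical value $y_f$ converges non-trivially to the cusp $y_c$ (Lemma~\ref{cusp_anal_dyn_lem}), whereas for $a\in\overline{\Gamma^{\perp}}$ the point $y_f$ lies on $\partial\Omega_a\setminus\{y_c\}$ and is therefore fixed by $\sigma_a$ --- incompatible behaviours. Your route instead just observes that $\Gamma^{\mathrm{hoc}}\subset S_{\mathcal{T}}$ forces the strict inequality $|a-\pmb{v_w}|<|a-\pmb{v_b}|$, while $\overline{\Gamma^{\perp}}$ is contained in the (closed) perpendicular bisector where equality holds; this is airtight and shorter. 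What the paper's approach buys, however, is strictly more than the set-theoretic disjointness: it shows that for \emph{every} $a\in\overline{\Gamma^{\perp}}$ the cusp $y_c$ cannot attract the critical orbit and hence must be of type $(3,2)$. That stronger dynamical fact is what is actually invoked later --- in Lemma~\ref{bdry_part_lem} the present lemma is cited to conclude that $y_c$ is a $(3,2)$ cusp for $a\in\Gamma^{\perp}$, and this does \emph{not} follow from your version, since $\Gamma^{\perp}$ is disjoint from $S_{\mathcal{T}}$ to begin with and so a higher-order cusp on $\Gamma^{\perp}$ would not contradict $a\notin\Gamma^{\mathrm{hoc}}$ as you have proved it. So your proof is a valid replacement for the literal statement, but if one adopts it, the downstream citation would need to be re-justified by exactly the fixed-point observation the paper uses (which you in fact sketch in your dynamical aside, modulo the slight misstatement that the dynamics ``forces $\pmb{v_w}\in\Delta_a$'' --- what it forces is $y_f\notin\partial\Omega_a$, i.e.\ $\pmb{v_w}\notin\partial\Delta_a$).
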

\begin{proof}
For $a\in\Gamma^{\mathrm{hoc}}$, the cusp point $y_c$ non-trivially attracts the forward orbit of $y_f$. On the other hand, for $a\in\overline{\Gamma^{\perp}}$, the critical value $y_f$ lies on $\partial\Omega_a\setminus\{y_c\}$, and is thus fixed. Hence, $\overline{\Gamma^{\perp}}\cap\Gamma^{\mathrm{hoc}}=\emptyset$. 
\end{proof}

\subsection{Dynamics near double points}

A point $p\in\partial\Omega_a$ is said to be a \emph{double point} if for all sufficiently small $\varepsilon>0$, the intersection $B(p,\varepsilon)\cap\Omega_a$ is a union of two Jordan domains, and $p$ is a non-singular boundary point of each of them. In particular, two distinct non-singular (real-analytic) local branches of $\partial\Omega_a$ intersect tangentially at a double point $p$. One can further classify such double points according to the order of contact of the two real-analytic branches $\gamma_1$ and $\gamma_2$ of $\partial\Omega_a$ at $p$. Let $\iota_1$ and $\iota_2$ be the local Schwarz reflection maps associated with $\gamma_1$ and $\gamma_1$.

Recall that the \emph{order of contact} between two analytic curves which intersect at a point $p$ is the maximum $k$ such that the $k-$jets of the two curves agree at $p$.

\begin{proposition}\label{prop_double_point_germ}
	Suppose that $\gamma_1$ and $\gamma_2$ have a contact of order $k$ at $p$. Then $\iota_1\circ\iota_2$ is a parabolic germ of the form $z\mapsto z+a(z-p)^{k+1}+O((z-p)^{k+2})$ with $a\neq 0$.
\end{proposition}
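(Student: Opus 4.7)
The plan is to work in local coordinates in which $p=0$ and the common tangent line to $\gamma_1,\gamma_2$ at $0$ is the real axis; this amounts to a translation and rotation, and will only rescale the eventual leading coefficient by a nonzero unimodular factor. Parametrize each branch as $x\mapsto x+i\phi_i(x)$ with $\phi_i$ real analytic and $\phi_i(0)=\phi_i'(0)=0$, so that contact of exact order $k$ reads $(\phi_1-\phi_2)(x)=cx^{k+1}+O(x^{k+2})$ with $c\in\mathbb{R}\setminus\{0\}$. Introduce the Schwarz function $S_i$ of $\gamma_i$ at $0$, i.e.\ the unique holomorphic germ satisfying $S_i(z)=\bar z$ on $\gamma_i$, so that $\iota_i(z)=\overline{S_i(z)}$. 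Differentiating the identity $S_i(x+i\phi_i(x))=x-i\phi_i(x)$ at $x=0$ gives $S_i(0)=0$ and $S_i'(0)=1$.

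The key algebraic reduction is as follows. The involutivity $\iota_i\circ\iota_i=\mathrm{id}$, unpacked in terms of $S_i$, is equivalent to $\widetilde S_i=S_i^{-1}$, where $\widetilde S_i(w):=\overline{S_i(\bar w)}$. Hence the \emph{holomorphic} germ $\iota_1\circ\iota_2(z)=\widetilde S_1(S_2(z))$ admits the clean description
\begin{equation*}
\iota_1\circ\iota_2(z)\ =\ S_1^{-1}\circ S_2(z).
\end{equation*}
Writing $S_1^{-1}\circ S_2-\mathrm{id}=S_1^{-1}\circ S_2-S_1^{-1}\circ S_1$ and applying the fundamental theorem of calculus, with $(S_1^{-1})'(0)=1$, yields $\iota_1\circ\iota_2(z)-z=(1+O(z))\,(S_2-S_1)(z)$. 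So the remaining task is to expand $S_2-S_1$ to leading order.

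This last step is where the main work lies. The strategy is to evaluate $S_2$ at a point of $\gamma_1$: writing $z=x+i\phi_1(x)=(x+i\phi_2(x))+i(\phi_1-\phi_2)(x)$, a first-order Taylor expansion of $S_2$ about the nearby point $x+i\phi_2(x)\in\gamma_2$, at which $S_2$ takes the known value $x-i\phi_2(x)$, together with $S_2'(0)=1$ and $\phi_1-\phi_2=O(x^{k+1})$, produces
\begin{equation*}
S_2(x+i\phi_1(x))\ =\ x-i\phi_2(x)+i(\phi_1-\phi_2)(x)+O(x^{k+2}).
\end{equation*}
Subtracting from $S_1(x+i\phi_1(x))=x-i\phi_1(x)$ gives $(S_1-S_2)(x+i\phi_1(x))=-2ic\,x^{k+1}+O(x^{k+2})$. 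Inverting $z=x+O(x^2)$ and substituting back, we conclude
\begin{equation*}
\iota_1\circ\iota_2(z)\ =\ z+2ic\,z^{k+1}+O(z^{k+2}),\qquad 2ic\neq 0,
\end{equation*}
as required. The delicate point is precisely the bookkeeping in going from the real parametric identity characterizing $S_i$ to an honest expansion in the holomorphic variable $z$, where one must verify that all error terms introduced by the Taylor expansion and by the inversion $x\mapsto z$ are absorbed into $O(z^{k+2})$; once this is in hand, the identification $\iota_1\circ\iota_2=S_1^{-1}\circ S_2$ makes the rest mechanical.
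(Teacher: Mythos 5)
Your argument is correct, and it takes a genuinely different route from the paper's. The paper normalizes asymmetrically: it straightens $\gamma_1$ to the real axis (so $\iota_1(z)=\bar z$), introduces a conformal coordinate $\psi$ straightening $\gamma_2$, characterizes contact of order $k$ by the reality of the first $k$ Taylor coefficients of $\psi$, writes $\iota_2\circ\iota_1(z)=\psi^{-1}\bigl(\overline{\psi(\bar z)}\bigr)$, and finishes with a computation involving the compositional inverse of a power series. You instead keep the two branches on equal footing, encode each by its Schwarz function $S_i$, and use the involutivity of $\iota_i$ to get the clean identity $\iota_1\circ\iota_2=S_1^{-1}\circ S_2$, which reduces everything to the leading term of $S_2-S_1$; that term you extract by a first-order Taylor expansion transverse to the curves, obtaining the explicit leading coefficient $2ic$ (with $c$ the leading coefficient of $\phi_1-\phi_2$ in the tangent-line frame). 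The checks you flag are indeed the only delicate ones, and they go through: $S_i(0)=0$, $S_i'(0)=1$ follow by differentiating $S_i(x+i\phi_i(x))=x-i\phi_i(x)$; the error $O(h^2)=O(x^{2k+2})$ in the Taylor step is absorbed into $O(x^{k+2})$; and since $z=x+i\phi_1(x)$ is a tangent-to-identity reparametrization, the expansion of the holomorphic germ $S_2-S_1$ along $\gamma_1$ determines its power series to the stated order. Your version buys a symmetric treatment of the two branches and an explicit formula for the leading coefficient in terms of the geometric separation of the curves; the paper's version avoids Schwarz functions at the cost of the reality-of-coefficients lemma for $\psi$ and $\psi^{-1}$. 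Both are sketches at a comparable level of rigor, and either fills in to a complete proof.
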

\begin{proof}[Proof sketch]
	After a suitable analytic change of coordinates we may assume that $p=0$ and $\gamma_1$ is given locally by the real axis. The order of contact between $\gamma_1$ and $\gamma_2$ is then given by $\left(\min\left\{n:\frac{d^n \text{Im}(\gamma_2)}{d\text{Re}(\gamma_2)^n}\neq 0\right\}-1\right)$.
	
	On the other hand, if $\psi$ is a local analytic change of coordinates sending $\gamma_2$ to the real axis, so that one may parameterize $\gamma_2$ as $\gamma_2(t)=\psi^{-1}(t+0i)$, then the order of contact between $\gamma_1$ and $\gamma_2$ can be expressed as the largest $k$ such that for all $n\leq k$ the $n$th coefficient of the power series of $\psi^{-1}$ is real.
	
	A routine power series computation shows that the first $k$ terms of $\psi^{-1}$ are real if and only if the first $k$ terms of $\psi$ are real.
	
	Now in the given coordinates, $\iota_1(z)=\overline z$ and $\iota_2(z) = \psi^{-1}\left(\overline{\psi(z)}\right)$, so that $\iota_2\circ \iota_1 (z) = \psi^{-1}\left(\overline{\psi(\overline z)}\right)$. Now as exactly the first $k$ coefficients of $\psi$ are real, we conclude that $\overline{\psi(\overline z)}$ agrees with $\psi(z)$ up to the first $k$ terms of its power series, that is, $\overline{\psi(\overline z)}-\psi(z)=bz^{k+1}+O(z^{k+2})$ for some $b\neq 0$.
	
Finally, a standard computation involving the compositional inverse of a power series yields that 
$$
\iota_2\circ \iota_1 (z) = \psi^{-1}\left(\overline{\psi(\overline z)}\right)=z+c\cdot z^{k+1}+O(z^{k+2}),\ \textrm{for some}\ c\neq 0,
$$
in the chosen coordinates. This completes the proof of the proposition.
\end{proof}

We also note that in the above setting, $\iota_2\circ\iota_1$ is the inverse of $\iota_1\circ\iota_2$, and these two germs are anti-conformally conjugate via $\iota_1$. Further, $k$ is necessarily odd since otherwise the two branches $\gamma_1$ and $\gamma_2$ would cross at $p$. 

\begin{definition}
We define 
$$
\Gamma^{\mathrm{dp}}:=\{a\in\C: \pmb{v_w}\in\Delta_a,\ f\vert_{\Delta_a}\ \textrm{is\ univalent,\ and}\ \partial\Omega_a\ \textrm{has\ a\ double\ point}\},
$$
and call $\Gamma^{\mathrm{dp}}$ the \emph{double point locus}.
\end{definition}

We point out that if $a\in\Gamma^{\mathrm{dp}}$, then $y_f\in\Omega_a$ and hence $\partial\Omega_a$ has a unique cusp at~$y_c$.

We now study the local dynamics of $\sigma_a$ near a double point $p$ of $\partial\Omega_a$.

\begin{lemma}\label{dp_dyn_classification_lem}
Let $p$ be a double point on $\partial\Omega_a$. Then the following assertions hold.
\begin{enumerate}[leftmargin=8mm]
\item The two non-singular branches of $\partial\Omega_a$ at $p$ have contact of order one or three.

\item If the contact is of order one, then the two associated normal directions at $p$ are repelling directions for $\sigma_a^{\circ 2}$ and there is no attracting direction for $\sigma_a^{\circ 2}$ at $p$. 

\item If the contact is of order three, then the two associated normal directions at $p$ are the only attracting directions for $\sigma_a^{\circ 2}$, and the unique free critical orbit of $\sigma_a$ non-trivially converges to $p$. In particular, $\partial\Omega_a$ has at most one such double point. Furthermore, there are four repelling directions for $\sigma_a^{\circ 2}$.
\end{enumerate}
\end{lemma}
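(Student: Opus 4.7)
The argument centers on the parabolic germ $g:=\iota_1\circ\iota_2$ at $p$ furnished by Proposition~\ref{prop_double_point_germ}, combined with a bookkeeping of which Leau--Fatou petals of $g$ at $p$ fall inside the two lobes $U_1, U_2$ of $B(p,\varepsilon)\cap\Omega_a$ versus inside the pinched complement of $\Omega_a$ near $p$. First I will set up coordinates. Since $\gamma_1,\gamma_2$ share a common tangent at $p$, after an analytic change of chart I may take $p=0$, $\gamma_1$ to be the real axis, and $\gamma_2$ of the form $y = a\,x^{k+1}+O(x^{k+2})$ with $a\neq 0$, where $k$ is the order of contact (necessarily odd, since even order would force the branches to cross). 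The two lobes then occupy the open upper and lower half-planes near $0$, while the pinched complement is squeezed along the common tangent direction $\theta\in\{0,\pi\}$; hence the two ``normal directions at $p$'' are precisely $\theta=\pm\pi/2$. A direct expansion of $\iota_2(z)=\psi_2(\overline{\psi_2^{-1}(z)})$ with $\psi_2(t)=t+iat^{k+1}+\cdots$, followed by $\iota_1(z)=\bar z$, yields $g(z)=z-2ia\,z^{k+1}+O(z^{k+2})$. In particular the coefficient $c$ in Proposition~\ref{prop_double_point_germ} satisfies $\arg c=\pm\pi/2$, so the $k$ attracting axes of $g$ (where $\arg(cz^k)=\pi$) and the $k$ repelling axes (where $\arg(cz^k)=0$) are equally spaced and always include one of the normal directions $\pm\pi/2$ as an attracting axis and the other as a repelling axis.

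The second ingredient is that, near $p$, $\sigma_a^{\circ 2}$ coincides with $g$ on $U_2$ and with $g^{-1}=\iota_2\circ\iota_1$ on $U_1$. For $k=1$, the single attracting axis of $g$ is the normal into $U_1$ (say $-\pi/2$) and the single repelling axis is the normal into $U_2$ ($\pi/2$); since the dynamics on $U_1$ is $g^{-1}$, both normal directions become repelling for $\sigma_a^{\circ 2}$ and no attracting direction exists, proving (2). For $k=3$, the three attracting axes of $g$ are $\pi/2, 7\pi/6, -\pi/6$ and the three repelling axes are $\pi/6, 5\pi/6, -\pi/2$; on $U_2$ there is exactly one attracting and two repelling axes of $g$ (along $\pi/2$ and $\pi/6, 5\pi/6$), while inverting the dynamics on $U_1$ turns $-\pi/2$ into an attracting direction for $\sigma_a^{\circ 2}$ and $-5\pi/6, -\pi/6$ into repelling directions. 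This produces exactly two attracting directions of $\sigma_a^{\circ 2}$ (along $\pm\pi/2$) and four repelling ones, giving the structural half of (3).

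To rule out $k\ge 5$: the same equally-spaced configuration forces at least two attracting axes of $g$ into the upper half plane $U_2$, so $\sigma_a^{\circ 2}|_{U_2}$ has at least two attracting petals at $p$. Since $\sigma_a^{\circ 2}$ is tangent to the identity at $p$, it cannot permute these petals, making each a separately invariant attracting Fatou component. By Fatou's theorem each such component must capture its own critical orbit, contradicting the fact that $\sigma_a$ has a single free critical value $y_f$ and hence a single free critical orbit. This proves (1). Finally, the remaining assertions of (3) follow by noting that the two $k=3$ attracting petals of $\sigma_a^{\circ 2}$ at $p$ are exchanged by the involution $\sigma_a$ (which locally swaps the two lobes), so they form a single period-two cycle of Fatou components; this cycle must absorb the unique free critical orbit, whence that orbit converges non-trivially to $p$, and a second contact-three double point would demand a second free critical orbit, which does not exist. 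The main difficulty is to make the geometric bookkeeping of the Leau--Fatou picture with respect to the three regions $U_1, U_2$, and the pinched complement rigorous; the normal form above is designed precisely to make this bookkeeping transparent.
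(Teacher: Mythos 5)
Your proof is correct in substance, but it takes a genuinely different route from the paper at the key point. The paper does not pin down the argument of the leading coefficient of the parabolic germ: for $k=3$ it splits into two sub-cases according to whether the inward normal to $\gamma_1$ is attracting or repelling for $\iota_2\circ\iota_1$, and excludes the bad sub-case (four attracting directions for $\sigma_a^{\circ 2}$) by a dynamical argument — it would force two period-two cycles of parabolic basins and hence two infinite critical orbits. Your explicit normal form $g(z)=z-2ia z^{k+1}+O(z^{k+2})$ with $a$ real shows the coefficient is purely imaginary, which forces exactly one normal to be an attracting axis and the other a repelling axis of $g$ for every odd $k$; this excludes the paper's bad sub-case by pure computation and makes the $k=1$ and $k=3$ bookkeeping automatic. (Your sharpening is in fact latent in the paper's own proof of Proposition~\ref{prop_double_point_germ}, where the coefficient arises as $\overline{c_{k+1}}-c_{k+1}$, visibly purely imaginary, though the proposition only records that it is nonzero.) What the computation buys is a uniform, sign-correct treatment of all $k$ at once; what it costs is that you must justify the identification of $\sigma_a^{\circ 2}$ with $g$ on one lobe and $g^{-1}$ on the other, which you do correctly. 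For $k\ge 5$ and for the Fatou-type conclusion in part (3) you fall back on the same critical-orbit counting as the paper.

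One step in your $k\ge 5$ exclusion is stated too loosely. Two invariant attracting petals of $\sigma_a^{\circ 2}$ do not by themselves contradict the existence of a single free critical orbit of $\sigma_a$: a single $\sigma_a$-orbit converging to $p$ splits into two distinct $\sigma_a^{\circ 2}$-orbits (the even- and odd-indexed subsequences), which could a priori feed two petals. The contradiction requires observing that your two petals both lie in $U_2$, while $\sigma_a$ maps $U_2$ into $U_1$ near $p$; hence the two petals belong to two \emph{distinct} period-two cycles of petals under $\sigma_a$, and each such cycle needs its own infinite free critical orbit. You use exactly this pairing correctly in your $k=3$ discussion, so this is a one-sentence repair rather than a flaw in the approach.
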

\begin{proof}
We denote the common tangent line for $\gamma_1$ and $\gamma_2$ at $p$ by $\ell$.
\smallskip

\noindent\textbf{Case 1 ($k=1$).} Up to second order, the local power series of $\iota_1, \iota_2$ depend only on the (signed) curvature of $\gamma_1, \gamma_2$ at $p$. Hence, it suffices to assume that $\iota_1, \iota_2$ are Schwarz reflections with respect to the osculating circles to $\gamma_1, \gamma_2$ at $p$ (cf. \cite[Sec.7]{davis74}). A simple computation now shows that the inward normal to $\gamma_1$ (respectively, the outward normal to $\gamma_1$) at $p$ is a repelling direction for $\gamma_2\circ\gamma_1$ (respectively, for $\gamma_1\circ\gamma_2$). In other words, these normal directions are repelling directions for $\sigma_a^{\circ 2}$. 
\begin{figure}
\captionsetup{width=0.98\linewidth}
\begin{tikzpicture}
\node[anchor=south west,inner sep=0] at (0,0) {\includegraphics[width=0.25\textwidth]{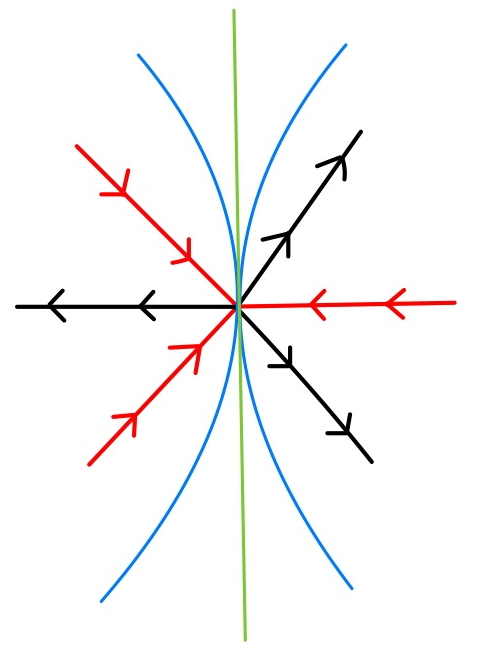}};
 \node[anchor=south west,inner sep=0] at (6.2,0) {\includegraphics[width=0.27\textwidth]{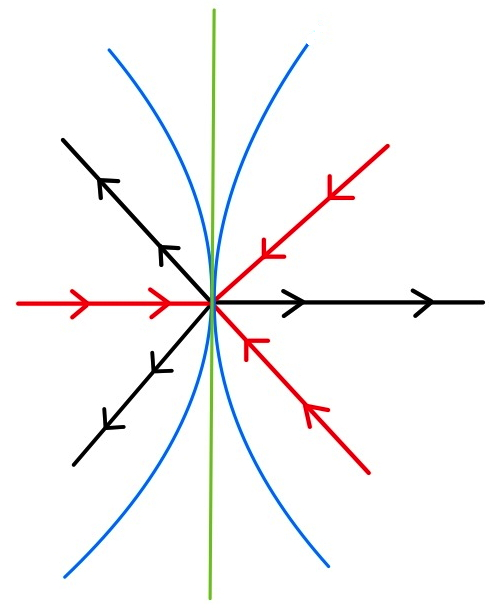}};
\node at (8.55,4.05) {$\gamma_1$};
\node at (2.5,4.05) {$\gamma_1$};
\node at (0.7,4.05) {$\gamma_2$};
\node at (6.75,4.05) {$\gamma_2$};
\node at (1.4,0.4) {$\ell$};
\node at (7.5,0.4) {$\ell$};
   \end{tikzpicture}
\caption{The situations considered in the two sub-cases 2a and 2b in the proof of Lemma~\ref{dp_dyn_classification_lem} are depicted. The arrows indicate attracting/repelling directions for the parabolic germ $\iota_2\circ\iota_1$.}
\label{triple_tangency_parabolic_fig}
\end{figure}
\smallskip

\noindent\textbf{Case 2 ($k=3$).} In this case, the germ $\iota_2\circ\iota_1$ has three attracting and three repelling directions. Note that the inward normal to $\gamma_1$ at $p$ is invariant under this germ. 
\smallskip

\noindent\textbf{Sub-case 2a.} If the inward normal to $\gamma_1$ at $p$ is an attracting direction for $\iota_2\circ\iota_1$, then the other two attracting directions of $\iota_2\circ\iota_1$ are on the opposite side of $\ell$ (see Figure~\ref{triple_tangency_parabolic_fig} (left)). Thus, these two directions are repelling directions for $\iota_1\circ\iota_2$ and hence for $\sigma_a^{\circ 2}$. On the other hand, the outward normal to $\gamma_1$ at $p$ is attracting for $\iota_1\circ\iota_2$ (since it is repelling for $\iota_2\circ\iota_1$). Therefore, the only attracting directions for $\sigma_a^{\circ 2}$ at $p$ are the above two normal vectors, and they are exchanged by $\sigma_a$ (see \cite[Proposition~5.15]{LLMM1} for the same situation in the circle-and-cardioid family of Schwarz reflections). By Fatou-type arguments, the unique free critical orbit of $\sigma_a$ must converge to $p$ asymptotic to this $2-$cycle of attracting directions (cf. \cite[Propositions~5.30, 5.32]{LLMM1}). The statement about the repelling directions for $\sigma_a^{\circ 2}$ also follows.
\smallskip

\noindent\textbf{Sub-case 2b.} If the inward normal to $\gamma_1$ at $p$ is a repelling direction for $\iota_2\circ\iota_1$, then so is the outward normal to $\gamma_1$ at $p$ for the inverse germ $\iota_1\circ\iota_2$. In this case, $\sigma_a^{\circ 2}$ has four attracting directions which are pairwise exchanged by $\sigma_a$ (see Figure~\ref{triple_tangency_parabolic_fig} (right)). Once again, a Fatou-type argument shows that there are two period two cycles of parabolic basins of $\sigma_a$ (at $p$) and each such cycle contains an infinite critical orbit of $\sigma_a$. But this contradicts the fact that $\sigma_a$ has at most one infinite critical orbit. Hence, this sub-case cannot occur. 
\smallskip

\noindent\textbf{Case 3 ($k\geq 5$).} The same arguments as in the previous case show that there must be at least two period two cycles of parabolic basins of $\sigma_a$ at $p$. But this would require at least two infinite critical orbits of $\sigma_a$, implying that this case is impossible.
\end{proof}

\begin{definition}
We call a double point of $\partial\Omega_a$ \emph{regular} (respectively, \emph{special}) if the two non-singular branches of $\partial\Omega_a$ at $p$ have contact of order one (respectively, three).
\end{definition}

For $a\in\Gamma^{\mathrm{dp}}$, the desingularized droplet $T^0(\sigma_a)$ is defined as the set obtained by removing the cusp and the double points from $\widehat{\C}\setminus\Omega_a$. Note that in this case, $T^0(\sigma_a)$ contains a unique unbounded component, denoted by $T_u^0(\sigma_a)$, and finitely many bounded components, whose union is denoted by $T^0_b(\sigma_a)$.

\begin{lemma}\label{no_bad_dp_lem}
Let $a\in\Gamma^{\mathrm{dp}}$, and $X$ be a component of $T^0_b(\sigma_a)$. Then, the set of singular points on $\partial X$ is either exactly two double points of $\partial\Omega_a$, or exactly one double point and the unique cusp of $\partial\Omega_a$.
\end{lemma}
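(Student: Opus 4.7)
My plan is to combine a combinatorial count of boundary arcs and singular points on $\partial\Omega_a$ with a branched-cover argument for $\pmb{f}$ to rule out bounded components having fewer than two singular points on their boundary.

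First I would carry out the combinatorial count. Viewing $\partial\Omega_a\subset\widehat{\C}$ as a graph whose vertices are the $m$ double points together with the cusp $y_c$ and whose edges are the non-singular arcs of $\partial\Omega_a$, the valences give $m+1$ vertices and $2m+1$ edges (each double point is $4$-valent and the cusp is $2$-valent). Euler's formula on $\widehat{\C}$ then yields $m+2$ faces: $\Omega_a$, $m$ bounded components of $T$, and one unbounded component. Since each arc has $\Omega_a$ on one side and exactly one $T$-component on the other, letting $k_i$ denote the number of arcs on $\partial X_i$ for a bounded component $X_i$ and $k_u$ the same quantity for the unbounded component, we get
\[
\sum_i k_i + k_u = 2m+1.
\]

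The key step is to prove $k_i\geq 2$ for every bounded $X$. Suppose for contradiction that $k=1$, so $\partial X$ is a single arc closing at one singular point $s$. If $s=y_c$, then the preimage of this arc in $\partial\Delta_a$ under $\pmb{f}|_{\partial\Delta_a}$ would have both endpoints at the unique preimage $\pmb{v_b}$ of $y_c$, forcing the preimage to be all of $\partial\Delta_a$ and hence $\partial X=\partial\Omega_a$; but this implies there are no double points on $\partial\Omega_a$, contradicting $a\in\Gamma^{\mathrm{dp}}$. If instead $s$ is a double point $p$ with preimages $z_1\ne z_2\in\partial\Delta_a$, then $y_c\notin\overline X$, so $\pmb{f}$ is unramified over the simply connected set $\overline X$. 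Consequently $\pmb{f}^{-1}(\overline X)$ is a disjoint union of $d+1$ closed topological disks, each mapped homeomorphically onto $\overline X$ by $\pmb{f}$. The lift $\tilde A\subset\partial\Delta_a$ of the unique arc of $\partial X$ is connected and therefore lies in a single component $D_0$; but then $z_1,z_2\in\partial D_0$ are two distinct preimages of $p$ under the homeomorphism $\pmb{f}|_{D_0}\colon D_0\to\overline X$, forcing $z_1=z_2$, which is absurd.

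Combined with the trivial lower bound $k_u\geq 1$, the inequality $k_i\geq 2$ together with $\sum k_i + k_u = 2m+1$ forces $k_i=2$ for every bounded component and $k_u=1$. Since $y_c$ is the unique cusp of $\partial\Omega_a$, the two singular points on each $\partial X_i$ must consist of either two double points or one double point together with $y_c$, exactly as the lemma states. The main obstacle is ruling out the limaçon-like configuration $k=1$ at a double point; a winding-number analysis turned out to be inconclusive, and the clean resolution goes through the unramified cover structure of $\pmb{f}$ over $\overline X$ and the fact that on each component the map is a homeomorphism.
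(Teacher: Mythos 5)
Your route is genuinely different from the paper's. The paper first rules out a bounded component $X$ whose boundary meets the singular set in only one point by a dynamical argument: a component $Y$ of $\sigma_a^{-1}(X)$ would give an unbranched covering $\sigma_a:\Int{Y}\to\Int{X}$ of a disk, hence a homeomorphism, contradicting the fact that points of $\partial X$ have at least two $\sigma_a$-preimages on $\partial Y$; it then disposes of the ``at most two'' direction by an informal counting argument supported by a figure. Your Euler-characteristic count makes that second direction fully rigorous ($V=m+1$, $E=2m+1$, $F=m+2$ are all correct, each edge does border $\Omega_a$ on exactly one side, and the bijection between bounded faces and components of $T^0_b(\sigma_a)$ holds), and your replacement of the dynamical covering argument by the covering structure of the non-dynamical map $\pmb{f}$ over $\overline{X}$ is sound in both sub-cases of $k=1$: the cusp sub-case via uniqueness of $\pmb{v_b}$ over $y_c$, and the double-point sub-case via the $d+1$ homeomorphic sheets over the simply connected set $\overline{X}$.

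There is, however, a gap in the last step. From $k_i=2$ you conclude that $\partial X_i$ carries ``two singular points,'' but two boundary arcs need not have two \emph{distinct} singular endpoints: the closed boundary walk of $X_i$ could a priori consist of two loops based at the same double point $p$, in which case $\partial X_i$ would contain exactly one singular point --- precisely the sort of degenerate configuration the lemma is meant to exclude. Your $k=1$ argument does not cover this, and its method does not transfer: with two loops at $p$ the set $\overline{X}$ is no longer simply connected, so the unramified-cover step breaks down. The configuration can be excluded, but it requires an argument; for instance, two loops at $p$ on $\partial X_i$ would consume all four local arc-ends at $p$, so both edges incident to $p$ are loops, and hence both of their preimage arcs on $\partial\Delta_a$ have endpoint set $\{z_1,z_2\}=\pmb{f}^{-1}(p)\cap\partial\Delta_a$; this forces $z_1,z_2$ to be the only marked points of $\partial\Delta_a$, contradicting the presence of the marked point $\pmb{v_b}\notin\{z_1,z_2\}$. (Two loops at the cusp are impossible for the simpler reason that $y_c$ has only two arc-ends.) With this case added, your proof is complete, and arguably tightens the paper's treatment of the ``at most two'' direction.
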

\begin{figure}
\captionsetup{width=0.98\linewidth}
\begin{tikzpicture}
\node[anchor=south west,inner sep=0] at (2,0) {\includegraphics[width=0.3\textwidth]{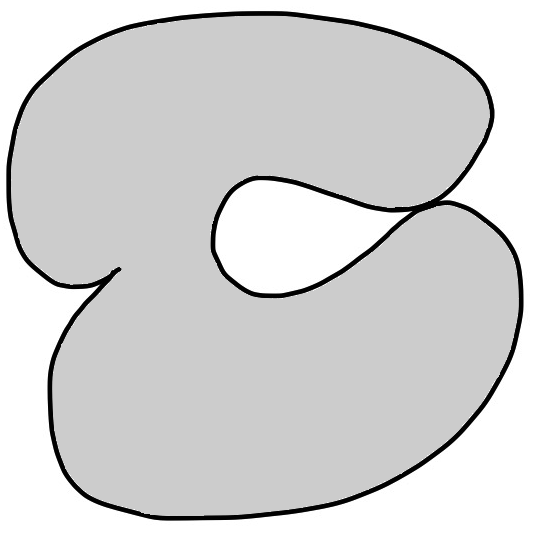}};
 \node[anchor=south west,inner sep=0] at (7,0) {\includegraphics[width=0.32\textwidth]{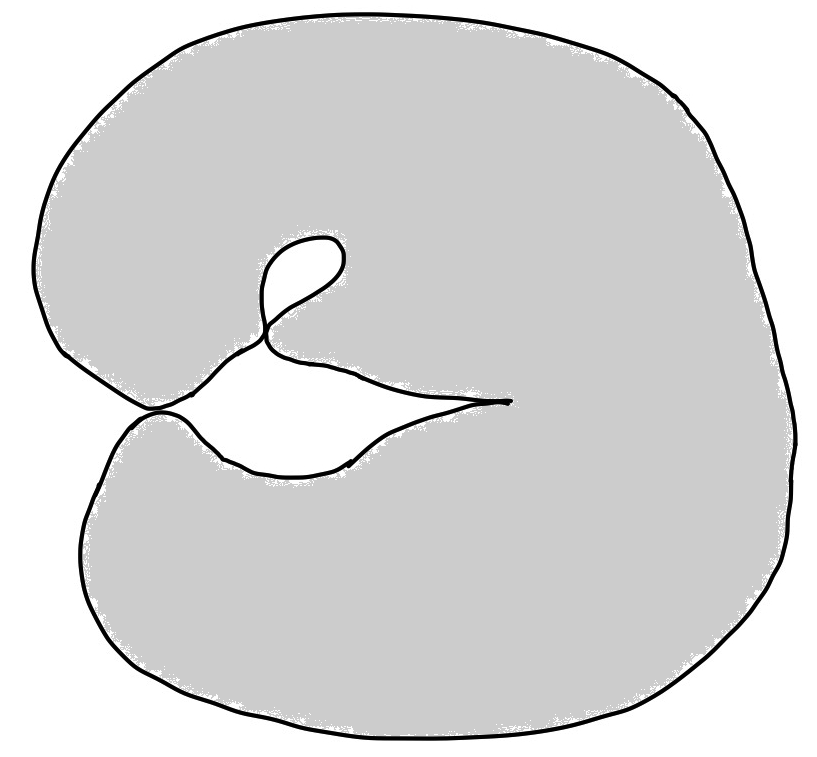}};
\node at (3.5,0.8) {$\Omega_a$};
\node at (4,2.12) {$X$};
\node at (5.04,2.1) {$p$};
\node at (9.6,0.8) {$\Omega_a$};
\node at (8.4,1.72) {$X$};
\node at (7.8,1.4) {$p$};
\node at (9.2,3) {$X'$};
\draw [->,line width=0.5pt] (9,2.8) to (8.48,2.4);
\end{tikzpicture}
\caption{Pictured are two a priori possible configurations of the desingularized droplet that are disallowed by Lemma~\ref{no_bad_dp_lem}.}
\label{bad_double_point_fig}
\end{figure}
\begin{proof}
By our hypothesis, $\partial X$ contains at least one double point $p$ of $\partial\Omega_a$.

We will first argue that $p$ cannot be the only singular point of $\partial\Omega_a$ on $\partial X$. By way of contradiction, let us assume that this is the case (see Figure~\ref{bad_double_point_fig} (left)). Then, $\partial X\setminus\{p\}$ is a non-singular real-analytic arc, and hence there exists a component $Y$ of $\sigma_a^{-1}(X)$ such that $\partial X\subsetneq\partial Y$. Since $a\in\Gamma^{\mathrm{dp}}$, the free critical value of $\sigma_a$ lies in $\overline{\Omega_a}$, which is disjoint from $\Int{X}$. Hence, $\sigma_a:\Int{Y}\to \Int{X}$ is a covering map. As $\Int{X}$ is a topological disk, $\sigma_a:\Int{Y}\to \Int{X}$ must be a homeomorphism. But this is impossible because each point on $\partial X\setminus\{p\}$ has at least two preimages under $\sigma_a$ on the boundary of $Y$.

Therefore, $\partial X$ must contain at least two double points of $\partial\Omega_a$ or a double point and the unique cusp of $\partial\Omega_a$. Also note that since $\partial\Omega_a$ is a real-algebraic curve, it has at most finitely many double points. Hence, if $\partial X$ contains an additional double point of $\partial\Omega_a$, then there must exist some component $X'\neq X$ of $T^0_b(\sigma_a)$ such that the only singularity on $\partial X'$ is a double point of $\partial\Omega_a$ (see Figure~\ref{bad_double_point_fig} (right)). But this contradicts the conclusion of the previous paragraph. This completes the~proof.
\end{proof}

We now proceed to study the connection between double points on $\partial\Omega_a$ and the global dynamics of the Schwarz reflection map $\sigma_a$.

\begin{figure}
\captionsetup{width=0.98\linewidth}
\begin{tikzpicture}
\node[anchor=south west,inner sep=0] at (0,0) {\includegraphics[width=0.42\textwidth]{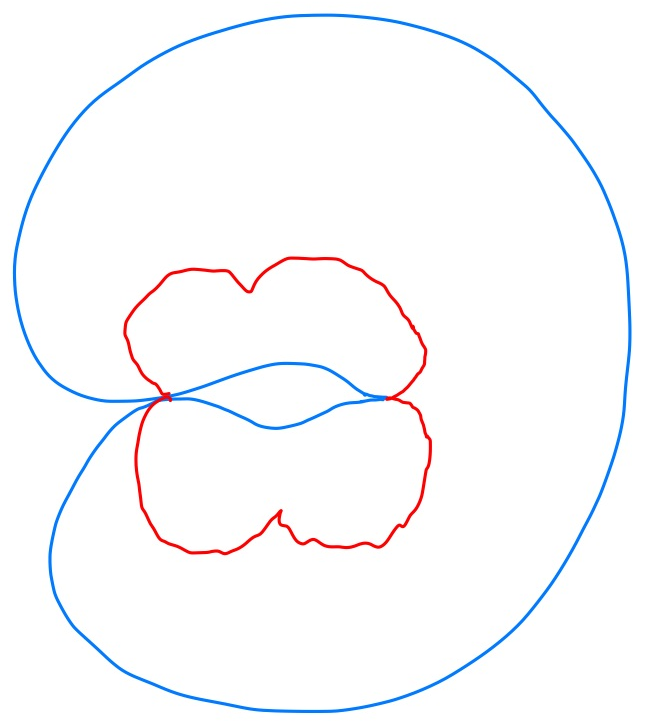}};
 \node[anchor=south west,inner sep=0] at (6.4,0) {\includegraphics[width=0.36\textwidth]{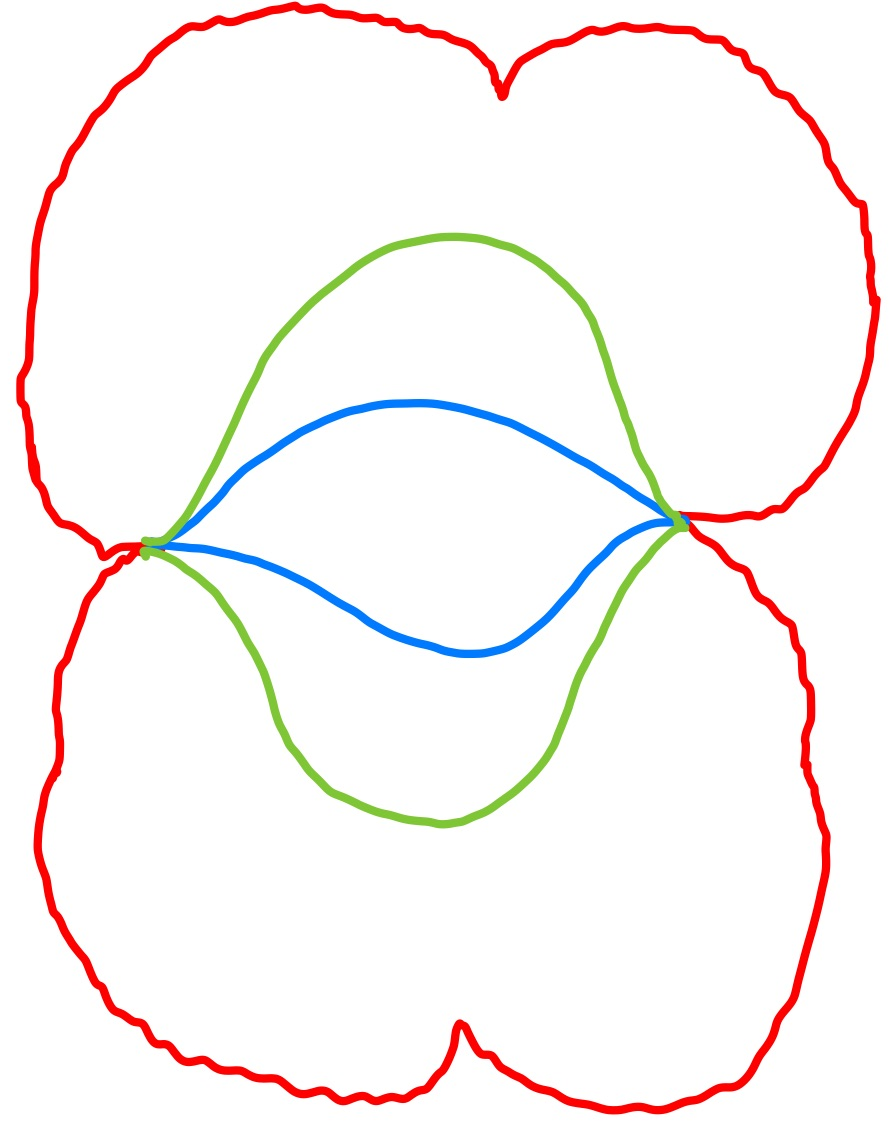}};
\node at (0.8,2.45) {$p$};
\node at (3.66,2.7) {$y_c$};
\node at (10.5,2.9) {$y_c$};
\node at (6.66,2.8) {$p$};
\node at (2.32,2.72) {\begin{large}$X$\end{large}};
\node at (8.48,3) {\begin{large}$X$\end{large}};
\node at (3.2,5) {\begin{Large}$\Omega_a$\end{Large}};
\node at (2.84,1.84) {\begin{large}$\mathcal{U}$\end{large}};
\node at (9.6,1.12) {\begin{Large}$\mathcal{U}$\end{Large}};
\node at (8.56,2.1) {\begin{footnotesize}$\sigma_a^{-1}(X)$\end{footnotesize}};
\node at (8.64,3.96) {\begin{footnotesize}$\sigma_a^{-1}(X)$\end{footnotesize}};
\end{tikzpicture}
\caption{The component $X$ of $T^0_a(\sigma_a)$ (introduced in Proposition~\ref{generic_bdd_droplet_dyn_prop}) and its $\sigma_a-$preimages in the tiling component $\mathcal{U}$ are displayed.}
\label{double_point_dyn_fig}
\end{figure}

\begin{proposition}\label{generic_bdd_droplet_dyn_prop}
Let $a\in\Gamma^{\mathrm{dp}}$. Then $T_b^0(\sigma_a)$ is connected and some forward iterate of the critical value $y_f$ of $\sigma_a$ lands in $T_b^0(\sigma_a)$.
\end{proposition}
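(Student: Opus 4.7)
The plan is to first establish a sharpened form of Part (2)---that \emph{every} connected component $X$ of $T_b^0(\sigma_a)$ receives a forward iterate of $y_f$---from which both assertions follow: Part (2) becomes trivial, and Part (1) is obtained by noting that the forward orbit of $y_f$ halts upon its first entry into $T^0$ and thus cannot visit multiple components.

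Fix a connected component $X$ of $T_b^0(\sigma_a)$. By Lemma~\ref{no_bad_dp_lem}, $\partial X$ carries exactly two singular points of $\partial\Omega_a$, at least one of which is a double point $p$. The three critical values of $\sigma_a$ are $y_c \in \partial\Omega_a$, $y_f \in \Omega_a$, and $\infty \in T_u^0$, so none lies in the open tile $X$; hence $\sigma_a \colon \sigma_a^{-1}(X) \to X$ is an unramified $(d{+}1)$-sheeted cover and $\sigma_a^{-1}(X)$ consists of $d{+}1$ disjoint homeomorphic copies of $X$ inside $\Omega_a$. Assuming toward contradiction that no forward iterate of $y_f$ lies in $X$, no iterated preimage $\sigma_a^{-n}(X)$ can contain $y_f$, and inductively each $\sigma_a^{-n}(X)$ is a disjoint union of $(d{+}1)^n$ homeomorphic copies of $X$ in $\Omega_a$.

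The dynamical input is the parabolic picture at $p$ from Lemma~\ref{dp_dyn_classification_lem}: $\sigma_a^{\circ 2}$ at $p$ has two repelling directions pointing into the two complementary droplet wedges (one of which is $X$), and no attracting direction. Combining the local parabolic normal form at $p$ with Koebe distortion along the unramified preimage tree, I would argue that the rank-$n$ preimages of $X$ shrink in diameter while accumulating along the grand orbit of $p$, producing a family of open pieces dense near a $\sigma_a$-invariant portion of $\Lambda(\sigma_a)$. Straightening $\sigma_a$ to an anti-rational map $R \in \cF_d$ via Theorem~\ref{straightening_thm} and transporting this density through the hybrid conjugacy, the critical orbit of $R$ cannot avoid this dense family, and pulling back to $\sigma_a$ this forces some forward iterate of $y_f$ to land in some $\sigma_a^{-n}(X)$---and hence, after $n$ more steps, in $X$---contradicting the assumption.

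Given the sharpened Part (2), Part (1) follows: if $T_b^0$ decomposed as a disjoint union of at least two components $X_1 \ne X_2$, then the orbit of $y_f$ would have to enter each. But $\sigma_a$ is undefined on $\mathrm{int}(\Omega_a^c)$ and is the identity on $\partial\Omega_a \setminus S$, so once the orbit lands in some $X_i$ it either terminates or stays stationary there, never reaching another component. The main obstacle in this plan is the density/accumulation step in the sharpened Part (2): without uniform expansion at the parabolic double point $p$, one must delicately combine the parabolic normal form with Koebe distortion along the unramified preimage chain to control the rank-$n$ preimages of $X$, and then invoke the pinched anti-polynomial-like straightening of \cite{PartI} to harvest the required orbit-density in the anti-rational setting.
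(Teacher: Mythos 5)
There is a genuine gap at the heart of your argument. Your plan is to show that the iterated preimages $\sigma_a^{-n}(X)$ form a family of open pieces accumulating densely on part of $\Lambda(\sigma_a)$, and then to conclude that the critical orbit ``cannot avoid this dense family.'' That last inference is not valid: an orbit can perfectly well avoid a dense open set (its complement is a nonempty closed set with empty interior, and nothing prevents the postcritical set from living there). Indeed, the whole content of the proposition is to rule out the scenario where $y_f$ stays in $K(\sigma_a)$ forever, and $K(\sigma_a)$ is exactly the complement of the union of all iterated preimages of $T^0$; so asserting that density of those preimages forces the orbit into them is essentially assuming what you want to prove. Neither the Koebe distortion estimates nor the straightening to an anti-rational map supplies the missing implication. (There are also smaller inaccuracies: $\sigma_a^{-n}(X)$ has at most $d(d+1)^{n-1}$ components, not $(d+1)^n$, since $\sigma_a:\sigma_a^{-1}(\Omega_a)\to\Omega_a$ has degree $d$; and Lemma~\ref{no_bad_dp_lem} only guarantees the existence of \emph{some} component $X$ whose two singular boundary points are a double point and the cusp --- the paper works with that specific $X$, whose boundary contains $y_c$, and this is used essentially.)

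The paper's actual mechanism is quite different and worth internalizing. Working inside the tiling-set component $\mathcal{U}\supset X$, one finds exactly two preimage copies of $\Int{X}$ adjacent to $X$ in $\mathcal{U}$, shows inductively that $\mathcal{U}$ is simply connected, and extracts a single conformal \emph{contracting inverse branch} $g:\Int{(X\cup\mathcal{W}_1)}\to\mathcal{V}_1\subsetneq\Int{(X\cup\mathcal{W}_1)}$ of $\sigma_a^{\circ 2}$. The repelling direction of $\sigma_a^{\circ 2}$ at the double point $p$ produces $g$-orbits converging to $p$, so by the Denjoy--Wolff-type statement \cite[Lemma~5.5]{Milnor06} \emph{all} $g$-orbits converge to $p$; but the repelling direction at the cusp $y_c\in\partial X$ produces $g$-orbits converging to $y_c$, forcing all $g$-orbits to converge to $y_c$ as well --- a contradiction. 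This is where the hypothesis that the critical orbit avoids $X$ is actually used (it guarantees $g$ is a well-defined unramified inverse branch), and it is the uniqueness of the Denjoy--Wolff point, not any density statement, that closes the argument. Your final deduction of connectivity from the sharpened escape statement is correct and matches the paper, but it rests on the step that is not established.
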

\begin{proof}
By Proposition~\ref{no_bad_dp_lem} and the fact that $\partial\Omega_a$ has at most finitely many singular points, there must exist a component $X$ of $T^0_b(\sigma_a)$ which has the cusp $y_c$ and a double point $p$ (of $\partial\Omega_a$) on its boundary, and such that $\partial X\setminus\{p, y_c\}$ is the union of a pair of non-singular real-analytic arcs. In particular, $\partial X\setminus\{p, y_c\}$ has a small neighborhood contained in the tiling set of $\sigma_a$.
We denote the component of $T^\infty(\sigma_a)$ containing $X$ by $\mathcal{U}$ (see Figure~\ref{double_point_dyn_fig}).

By way of contradiction, let us suppose that the orbit $\{\sigma_a^{\circ n}(y_f)\}_{n\geq 0}$ does not intersect $X$. Under this assumption, the Riemann-Hurwitz formula implies that $\sigma_a^{-1}(\Int{X})\cap\mathcal{U}$ is the union of two simply connected domains each of which maps homeomorphically onto $\Int{X}$. Moreover, $\partial X\subsetneq\partial(\sigma_a^{-1}(X)\cap\mathcal{U})$, and $\Int{(X\cup(\sigma_a^{-1}(X)\cap\mathcal{U}))}$ is a simply connected domain (see Figure~\ref{double_point_dyn_fig}). Since the free critical orbit never meets $X$, one can now apply the above argument inductively to conclude that for each $n\geq 1$,
\begin{itemize}[leftmargin=6mm]
\item $\sigma_a^{-n}(\Int{X})\cap\mathcal{U}$ is the union of two simply connected domains each of which maps homeomorphically onto $\Int{X}$ under $\sigma_a^{\circ n}$, and

\item $\displaystyle\Int{\left(\bigcup_{i=0}^n(\sigma_a^{-i}(X)\cap\mathcal{U})\right)}$ is a simply connected domain.
\end{itemize}
Since $\displaystyle\mathcal{U}=\bigcup_{n=1}^\infty\left(\Int{\left(\bigcup_{i=0}^n(\sigma_a^{-i}(X)\cap\mathcal{U})\right)}\right)$ is an increasing union of simply connected domains, we conclude that $\mathcal{U}$ is also a simply connected domain. We denote the two connected components of $\mathcal{U}\setminus\left(X\cup\sigma_a^{-1}(X)\right)$ by $\mathcal{V}_1$ and $\mathcal{V}_2$, and the two connected components of $\mathcal{U}\setminus X$ by $\mathcal{W}_1$ and $\mathcal{W}_2$ such that $\mathcal{W}_j\supset\mathcal{V}_j$, $j\in\{1,2\}$. Note that the above arguments also demonstrate simple connectivity of the domains $\mathcal{V}_j, \mathcal{W}_j$, $j\in\{1,2\}$.
Moreover, $\sigma_a^{\circ 2}:\mathcal{V}_1\rightarrow \Int{\left(X\cup\mathcal{W}_1\right)}$ is a conformal isomorphism. We denote the corresponding inverse branch by $g:\Int{\left(X\cup\mathcal{W}_1\right)}\rightarrow\mathcal{V}_1\subsetneq\Int{\left(X\cup\mathcal{W}_1\right)}$, and observe that $g$ is a contraction with respect to the hyperbolic metric of $\Int{\left(X\cup\mathcal{W}_1\right)}$.

It follows by the previous paragraph and the local dynamics of $\sigma_a^{\circ 2}$ near $p$ (see Lemma~\ref{dp_dyn_classification_lem}) that there are points in $\mathcal{U}$ close to $p$ whose $g-$orbits converge to $p$ (asymptotically to a repelling direction of $\sigma_a^{\circ 2}$ at $p$). By \cite[Lemma~5.5]{Milnor06}, all $g-$orbits must converge to $p$.

By Lemma~\ref{cusp_anal_dyn_lem} and Corollary~\ref{cusp_dyn_cor}, the cusp point $y_c$ also has a repelling direction in $\Omega_a$. Hence, there are points in $\Omega_a$ near the cusp that eventually land in $X$ under iteration of $\sigma_a$. Consequently, there are points in $\mathcal{U}$ close to $y_c$ whose $g-$orbits converge to $y_c$. By \cite[Lemma~5.5]{Milnor06}, all $g-$orbits must converge to $y_c$. This contradicts the conclusion of the previous paragraph, and proves that the orbit $\{\sigma_a^{\circ n}(y_f)\}_{n\geq 0}$ must intersect $X$.

If $T_b^0(\sigma_a)$ had a component $X'$ other than $X$, then one can repeat the above argument to conclude that the orbit $\{\sigma_a^{\circ n}(y_f)\}_{n\geq 0}$ must also intersect $X'$, which is impossible. Thus, we conclude that $T_b^0(\sigma_a)=X$. \end{proof}

\begin{corollary}\label{no_special_dp_cor}
Let $a\in\Gamma^{\mathrm{dp}}$. Then the unique double point on $\partial\Omega_a$ is a regular double point.
\end{corollary}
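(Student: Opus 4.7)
The plan is a proof by contradiction, pitting the parabolic attraction at a special double point (Lemma~\ref{dp_dyn_classification_lem}(3)) against the escape of the free critical orbit into the bounded droplet established in Proposition~\ref{generic_bdd_droplet_dyn_prop}.

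First I would suppose that the unique double point $p$ of $\partial\Omega_a$ is special. Then Lemma~\ref{dp_dyn_classification_lem}(3) tells me that the parabolic germ $\sigma_a^{\circ 2}$ has attracting directions along the two normals at $p$, and the unique free critical orbit $\{\sigma_a^{\circ n}(y_f)\}_{n\geq 0}$ converges non-trivially to $p$. In particular, this orbit is infinite, every iterate lies in $\overline{\Omega_a}$, and the orbit accumulates on $p$ without ever landing on it. From this I would deduce $y_f\in K(\sigma_a)$, as follows: no iterate of $y_f$ can lie in the open part of $T_b^0(\sigma_a)$ (strictly outside $\overline{\Omega_a}$) without terminating the orbit; and no iterate can lie on $\partial\Omega_a\setminus S(\sigma_a)$ either, since any such point is fixed by $\sigma_a$ and would force the orbit to be eventually constant at a point distinct from the singular target $p$. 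Hence no iterate of $y_f$ belongs to $T^0(\sigma_a)$, i.e.\ $y_f\notin T^\infty(\sigma_a)$.

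Finally I would contradict this conclusion using Proposition~\ref{generic_bdd_droplet_dyn_prop}, which provides an iterate $\sigma_a^{\circ N}(y_f)\in T_b^0(\sigma_a)\subset T^\infty(\sigma_a)=\widehat{\C}\setminus K(\sigma_a)$. This gives $y_f\in T^\infty(\sigma_a)$, directly contradicting what was deduced above. Therefore $p$ cannot be special and must be regular. The most delicate step in this strategy is the middle one: ruling out that an iterate of $y_f$ becomes a spurious fixed point on the smooth part of $\partial\Omega_a$ (which would be consistent with landing in $T_b^0$ yet not in the interior of a complementary component). This is exactly what the eventually-constant observation resolves, and it is the main obstacle to making the contradiction airtight.
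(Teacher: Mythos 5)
Your proposal is correct and follows essentially the same route as the paper: combine Lemma~\ref{dp_dyn_classification_lem}(3) (a special double point forces the free critical orbit to converge non-trivially to $p$) with Proposition~\ref{generic_bdd_droplet_dyn_prop} (the orbit must land in $T_b^0(\sigma_a)$) to reach a contradiction. The only difference is that you spell out why these two conclusions are incompatible, which the paper leaves as "which is absurd"; your elaboration is sound.
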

\begin{proof}
By Lemma~\ref{dp_dyn_classification_lem} and Proposition~\ref{generic_bdd_droplet_dyn_prop}, the existence of a special double point on $\partial\Omega_a$ would force the free critical orbit of $\sigma_a$ to non-trivially converge to the special double point as well as escape to $T_b^0(\sigma_a)$, which is absurd.
\end{proof}

\begin{corollary}\label{hoc_dp_disjoint_cor}
$\Gamma^{\mathrm{hoc}}\cap\mathrm{\Gamma^\mathrm{dp}}=\emptyset$.
\end{corollary}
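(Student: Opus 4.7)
The plan is to derive a contradiction from the incompatible asymptotic behavior of the free critical orbit in the two regimes. Suppose, for contradiction, that there exists a parameter $a\in\Gamma^{\mathrm{hoc}}\cap\Gamma^{\mathrm{dp}}$.

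Since $a\in\Gamma^{\mathrm{hoc}}$, Lemma~\ref{cusp_anal_dyn_lem}(3) implies that the unique free critical orbit $\{\sigma_a^{\circ n}(y_f)\}_{n\geq 0}$ non-trivially converges to the cusp $y_c$. In particular, no iterate of $y_f$ lies in the tiling set $T^\infty(\sigma_a)$, so $y_f\in K(\sigma_a)$.

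On the other hand, since $a\in\Gamma^{\mathrm{dp}}$, Proposition~\ref{generic_bdd_droplet_dyn_prop} guarantees that some forward iterate of $y_f$ lands in $T_b^0(\sigma_a)\subset T^\infty(\sigma_a)$. This contradicts the previous paragraph, and hence $\Gamma^{\mathrm{hoc}}\cap\Gamma^{\mathrm{dp}}=\emptyset$.

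Since both inputs are already in place in the excerpt, there is no serious obstacle; the only point requiring care is to note that the two conclusions truly exclude one another, which follows from the fact that a point whose orbit non-trivially converges to a parabolic boundary point cannot have any iterate in the (open) tiling set.
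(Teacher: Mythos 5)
Your proposal is correct and follows essentially the same route as the paper: both derive a contradiction by pairing Lemma~\ref{cusp_anal_dyn_lem} (the free critical orbit must converge non-trivially to the cusp $y_c$) with Proposition~\ref{generic_bdd_droplet_dyn_prop} (the free critical orbit must escape to $T_b^0(\sigma_a)$). The extra remark you add about why the two conclusions exclude each other is a harmless elaboration of what the paper leaves implicit.
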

\begin{proof}
By way of contradiction, assume that $a\in\Gamma^{\mathrm{hoc}}\cap\mathrm{\Gamma^\mathrm{dp}}$. By Proposition~\ref{generic_bdd_droplet_dyn_prop}, the free critical orbit of $\sigma_a$ escapes to $T_b^0(\sigma_a)$. But this is impossible since the free critical orbit of $\sigma_a$ must also converge non-trivially to the cusp point $y_c$ by Lemma~\ref{cusp_anal_dyn_lem}.
\end{proof}

\begin{corollary}\label{free_crit_dp_cor}
Let $a\in\Gamma^{\mathrm{dp}}$. Then the forward orbit of the free critical value of $\sigma_a$ is disjoint from $\overline{T_u^0(\sigma_a)}\cup\{y_c\}$.
\end{corollary}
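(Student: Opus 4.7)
The plan is to argue by contradiction, leveraging Proposition~\ref{generic_bdd_droplet_dyn_prop}, which supplies a minimal integer $m_0\geq 1$ with $\sigma_a^{\circ m_0}(y_f)\in T_b^0(\sigma_a)=X$, together with the elementary observation that every point of $\partial\Omega_a$ is fixed by $\sigma_a$.

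My first step will be a purely topological separation statement. The two distinct components $X$ and $T_u^0(\sigma_a)$ of $T^0(\sigma_a)=(\widehat{\C}\setminus\Omega_a)\setminus S(\sigma_a)$ are cut apart inside $\widehat{\C}\setminus\Omega_a$ only by points of the singular set $S(\sigma_a)$, namely the cusp $y_c$ and the double point shared by $\partial X$ and $\partial T_u^0(\sigma_a)$ (whose existence is furnished by Lemma~\ref{no_bad_dp_lem}). It follows that $\overline{X}\cap\overline{T_u^0(\sigma_a)}\subset S(\sigma_a)$, and since $X$ is disjoint from $S(\sigma_a)$ while $y_c\in S(\sigma_a)$, this yields $X\cap\bigl(\overline{T_u^0(\sigma_a)}\cup\{y_c\}\bigr)=\emptyset$. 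This already rules out $\sigma_a^{\circ m_0}(y_f)$ belonging to the forbidden set.

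To handle the earlier iterates $\sigma_a^{\circ j}(y_f)$ with $0\leq j<m_0$, I would first note that each lies in $\overline{\Omega_a}$, because $\sigma_a^{\circ (j+1)}(y_f)$ is defined. Since $\overline{T_u^0(\sigma_a)}\cup\{y_c\}$ is disjoint from the open set $\Omega_a$, its intersection with $\overline{\Omega_a}$ lies entirely in $\partial\Omega_a$. Thus if any such iterate landed in the forbidden set, it would sit on $\partial\Omega_a$ and hence be fixed by $\sigma_a$; the tail of the orbit would then stagnate there, forcing $\sigma_a^{\circ m_0}(y_f)=\sigma_a^{\circ j}(y_f)$ to lie in $\overline{T_u^0(\sigma_a)}\cup\{y_c\}$, contradicting the disjointness from $X$ just established. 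For iterates past $m_0$, either $\sigma_a^{\circ m_0}(y_f)\in\Int X$ (whence $\sigma_a$ is not defined there and the orbit terminates), or $\sigma_a^{\circ m_0}(y_f)$ lies on a non-singular boundary arc of $X$ (whence it is fixed and all subsequent iterates remain in $X$); in neither case does the orbit re-enter the forbidden set.

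I do not anticipate any serious obstacle here; the only subtlety to track is that $T^0(\sigma_a)$ contains the non-singular boundary arcs of $\Omega_a$, so the components $X$ and $T_u^0(\sigma_a)$ need not be open in $\widehat{\C}$, but they are nonetheless cleanly separated by the finite singular set $S(\sigma_a)$, and that is all that is needed for the closure-disjointness driving the argument.
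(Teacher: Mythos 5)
Your argument is correct and follows the same route as the paper, which simply cites Proposition~\ref{generic_bdd_droplet_dyn_prop} and leaves the remaining bookkeeping implicit; you have filled in exactly that bookkeeping (the disjointness of $T_b^0(\sigma_a)$ from $\overline{T_u^0(\sigma_a)}\cup\{y_c\}$, and the fact that an orbit point landing on $\partial\Omega_a$ is fixed and so could never subsequently reach $T_b^0(\sigma_a)$). No issues.
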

\begin{proof}
This follows from Propositions~\ref{generic_bdd_droplet_dyn_prop}.
\end{proof}

\subsection{Description of the boundary and interior of the parameter space $S_{\mathcal{T}}$}

\begin{lemma}\label{bdry_part_lem}
$\Gamma^{\mathrm{hoc}}\bigcup\overline{\Gamma^{\mathrm{dp}}}\bigcup\Gamma^{\perp}\subset\partial S_{\mathcal{T}}$.
\end{lemma}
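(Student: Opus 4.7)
The plan is to handle the three pieces separately, showing in each case that the parameter lies in $\overline{S_\cT}\setminus\Int S_\cT$.

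\textbf{For $\Gamma^\perp$.} For $a\in\Gamma^\perp$, the equality $\pmb{v_w}\in\partial\Delta_a$ forces $a\notin S_\cT$, and hence $a\notin\Int S_\cT$. To approximate $a$ by $S_\cT$-parameters, I perturb transversally to the perpendicular bisector, into the open half-plane where $\pmb{v_w}$ enters $\Delta_{a'}$. The key point is that univalence of $\pmb{f}|_{\overline{\Delta_{a'}}}$ persists. If it failed along a sequence $a_n\to a$, one extracts pairs $z_1^n\neq z_2^n$ in $\overline{\Delta_{a_n}}$ with $\pmb{f}(z_1^n)=\pmb{f}(z_2^n)$; after passing to a subsequence, these converge to some $(z_1,z_2)\in\overline{\Delta_a}^2$ with $\pmb{f}(z_1)=\pmb{f}(z_2)$. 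Univalence of $\pmb{f}|_{\overline{\Delta_a}}$ forces $z_1=z_2=:z$, and since $\pmb{f}$ is locally injective away from its critical points, $z$ must be a critical point of $\pmb{f}$ on $\overline{\Delta_a}$, hence $z=\pmb{v_b}$ by Proposition~\ref{shabat_good_uni_prop}. Such merging of two preimages at $\pmb{v_b}$ is precisely the higher-order cusp phenomenon, which by Lemma~\ref{pb_hoc_disjoint_lem} is excluded in a neighborhood of $\Gamma^\perp$.

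\textbf{For $\overline{\Gamma^{\mathrm{dp}}}$.} A double point on $\partial\Omega_a$ directly contradicts univalence of $\pmb{f}|_{\overline{\Delta_a}}$, so $\Gamma^{\mathrm{dp}}\cap S_\cT=\emptyset$; openness of $\Int S_\cT$ then yields $\overline{\Gamma^{\mathrm{dp}}}\cap\Int S_\cT=\emptyset$. It remains to show $\Gamma^{\mathrm{dp}}\subset\overline{S_\cT}$, which will pass to the closure. For $a\in\Gamma^{\mathrm{dp}}$, the unique double point on $\partial\Omega_a$ is of regular type (first-order contact) by Corollary~\ref{no_special_dp_cor}. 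A one-sided perturbation of $a$ separates the two tangent branches of $\partial\Omega_a$ meeting at this point, so $\partial\Omega_{a'}$ becomes a Jordan curve with only the $(3,2)$-cusp at $y_c$ and no double points; this renders $\pmb{f}|_{\overline{\Delta_{a'}}}$ univalent. Since $\pmb{v_w}\in\Delta_a$ is an open condition, the perturbed $a'$ retains $\pmb{v_w}\in\Delta_{a'}$ and thus lies in $S_\cT$.

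\textbf{For $\Gamma^{\mathrm{hoc}}$.} Since $\Gamma^{\mathrm{hoc}}\subset S_\cT$ by definition, it suffices to prove $\Gamma^{\mathrm{hoc}}\cap\Int S_\cT=\emptyset$. For $a\in\Gamma^{\mathrm{hoc}}$, Lemma~\ref{cusp_anal_dyn_lem} tells us that $\partial\Delta_a$ osculates the local preimage curve $\alpha_\theta$ at $\pmb{v_b}$, where $\theta=\arg(a-\pmb{v_b})$. Using a local normal form for the $(\nu,2)$-cusp with $\nu\geq 5$, an appropriate one-sided perturbation of the center $a$ breaks this higher-order contact and forces $\partial\Delta_{a'}$ to meet $\alpha_\theta$ transversally at an additional point close to $\pmb{v_b}$; this produces a genuine boundary double point for $\partial\Omega_{a'}$, so $\pmb{f}|_{\overline{\Delta_{a'}}}$ becomes non-univalent and $a'\notin S_\cT$.

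The hardest step is this last local singularity analysis near $\pmb{v_b}$: one must identify the unfolding direction of the osculating circle that produces a transverse crossing with the preimage curve (rather than a retreat to a clean $(3,2)$-cusp configuration), thereby confirming that $\Gamma^{\mathrm{hoc}}$ truly sits on the boundary of the univalency locus rather than in its interior.
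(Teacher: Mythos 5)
Your overall strategy (two-sided approximation of each arc) matches the paper's, and your $\Gamma^{\perp}$ argument is essentially the paper's with the compactness step spelled out. But the other two cases have genuine gaps, and in both the missing ingredient is the same elementary observation that the paper builds everything on: for $a'$ on the ray from $\pmb{v_b}$ through $a$, the disks $\Delta_{a'}$ are \emph{nested} (all share the boundary point $\pmb{v_b}$, and $\overline{\Delta_{a'}}\subset\Delta_a\cup\{\pmb{v_b}\}$ when $|a'-\pmb{v_b}|<|a-\pmb{v_b}|$). For $a\in\Gamma^{\mathrm{dp}}$ your claim that a one-sided perturbation ``separates the two tangent branches'' and thereby ``renders $\pmb{f}|_{\overline{\Delta_{a'}}}$ univalent'' is not justified: a local unfolding of the tangency at the double point says nothing about injectivity elsewhere on $\overline{\Delta_{a'}}$, and the inference from ``no double points and a $(3,2)$-cusp on $\partial\Omega_{a'}$'' to univalence is exactly what needs proving, not a premise. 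The paper instead pushes $a$ radially \emph{toward} $\pmb{v_b}$, so that $\overline{\Delta_{a'}}\subset\Delta_a\cup\{\pmb{v_b}\}$ and univalence on $\overline{\Delta_{a'}}$ is inherited for free from univalence on the open disk $\Delta_a$ (which holds by the definition of $\Gamma^{\mathrm{dp}}$); pushing away from $\pmb{v_b}$ gives the non-univalent side because the enlarged disk contains $\overline{\Delta_a}\setminus\{\pmb{v_b}\}$ and hence the identified boundary pair. No local analysis at the double point is needed at all.

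For $\Gamma^{\mathrm{hoc}}$ you explicitly leave the ``hardest step'' open, and the mechanism you propose is not the right one: a transversal crossing of $\partial\Delta_{a'}$ with one branch of $\alpha_{\theta}$ does not produce a boundary double point of $\partial\Omega_{a'}$, since that would require the $\pmb{f}$-partner of the crossing point (which lies on the \emph{other} branch of $\alpha_{\theta}$) to also land on $\partial\Delta_{a'}$, which is not automatic. What actually happens in the paper is again radial nesting: pushing $a$ away from $\pmb{v_b}$ along $\overrightarrow{\ell_a}$ strictly decreases the curvature of $\partial\Delta_{a'}$ at $\pmb{v_b}$ while keeping the tangency, so the circle separates from the osculating position at second order while $\alpha_{\theta}$ separates only at third order; hence \emph{both} branches of $\alpha_{\theta}$ near $\pmb{v_b}$ are swallowed by the open disk $\Delta_{a'}$, which therefore contains pairs of points identified by $\pmb{f}$, and $\pmb{f}|_{\Delta_{a'}}$ fails to be injective. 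This is the unfolding-direction analysis you flagged as missing; it is resolved by Lemma~\ref{cusp_anal_dyn_lem} (the osculation characterization of $\Gamma^{\mathrm{hoc}}$) together with the monotonicity of the disks along rays from $\pmb{v_b}$, not by producing a boundary double point.
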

\begin{proof}
Let $a\in\Gamma^{\mathrm{hoc}}$. By definition, $a\in S_{\mathcal{T}}$. By Lemma~\ref{cusp_anal_dyn_lem}, the circle $\partial\Delta_a$ is an osculating circle to the curve germ $\alpha_\theta$ at $\pmb{v_b}$ that maps under $\pmb{f}$ to a straight line segment $\{y_c-re^{2i\theta}:r\in[0,\epsilon]\}$, where $\theta= \arg{(a-\pmb{v_b})}$. Let $\overrightarrow{\ell_a}$ be the infinite ray from $\pmb{v_b}$ to $\infty$ passing through $a$, and $a'$ be a parameter obtained by pushing $a$ slightly along $\overrightarrow{\ell_a}$ away from $\pmb{v_b}$. Then, $\Delta_{a'}$ contains points of $\alpha_\theta$ that are identified under $\pmb{f}$, and hence
$\pmb{f}$ is not injective on $\Delta_{a'}$. Therefore, $a\in\partial S_{\mathcal{T}}$; i.e., $\Gamma^{\mathrm{hoc}}\subset\partial S_{\mathcal{T}}$.

Let $a\in\Gamma^{\perp}$. By Lemma~\ref{pb_hoc_disjoint_lem} and the definition of $\Gamma^{\perp}$, $\pmb{f}$ is univalent on $\overline{\Delta_a}$ and $y_c$ is a $(3,2)$ cusp. Thus, by Lemma~\ref{cusp_anal_dyn_lem}, the circle $\partial\Delta_a$ is not an osculating circle to the curve germ $\alpha_\theta$ at $\pmb{v_b}$. Hence, pushing $a$ along $\overrightarrow{\ell_a}$ slightly away from $\pmb{v_b}$ produces parameters $a'$ such that $\pmb{f}$ is injective on $\overline{\Delta_{a'}}$ and $\pmb{v_w}\in\Delta_{a'}$. Hence, $a\in\partial S_{\mathcal{T}}$. This proves that $\Gamma^{\perp}\subset\partial S_{\mathcal{T}}$.

Let $a\in\Gamma^{\mathrm{dp}}$. Once again, pushing $a$ along $\overrightarrow{\ell_a}$ slightly away from $\pmb{v_b}$ produces parameters $a'$ such that $\pmb{f}$ is not injective on $\Delta_{a'}$ (since $\Delta_{a'}\cup\{\pmb{v_b}\}\supset\overline{\Delta_a}$). On the other hand, pushing $a$ slightly along $\overrightarrow{\ell_a}$ towards $\pmb{v_b}$ produces parameters $a'$ such that $\pmb{f}$ is injective on $\overline{\Delta_{a'}}$ (since $\overline{\Delta_{a'}}\subset\Delta_a\cup\{\pmb{v_b}\}$) and $\pmb{v_w}\in\Delta_{a'}$ (see Figure~\ref{projection_fig}). Hence, $a\in\partial S_{\mathcal{T}}$. This proves that $\Gamma^{\mathrm{dp}}\subset\partial S_{\mathcal{T}}$.
The result now follows by taking closures.
\end{proof}

\begin{theorem}\label{bdry_thm}
We have 
\begin{equation*}
\partial S_{\mathcal{T}}= \Gamma^{\mathrm{hoc}}\bigcup\overline{\Gamma^{\mathrm{dp}}}\bigcup\Gamma^{\perp},
\end{equation*}
and
\begin{equation*}
\begin{split}
\Int{S_{\mathcal{T}}}
&= \{a\in\C: \pmb{v_w}\in\Delta_a,\ f\vert_{\overline{\Delta_a}}\ \textrm{is\ univalent,\ and}\ y_c\ \textrm{is\ a}\ (3,2)\ \textrm{cusp}\}.
\end{split}
\end{equation*}
\end{theorem}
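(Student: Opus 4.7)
My approach leverages Lemma~\ref{bdry_part_lem} (which gives the inclusion $\Gamma^{\mathrm{hoc}}\cup\overline{\Gamma^{\mathrm{dp}}}\cup\Gamma^{\perp}\subseteq\partial S_\cT$) together with Corollary~\ref{cusp_dyn_cor} (which forces the cusp at $y_c$ to be of type $(3,2)$, $(5,2)$, or $(7,2)$) to reduce the theorem to three steps: (i) showing that the claimed interior $U$ is open, (ii) identifying $\partial S_\cT\cap S_\cT$ with $\Gamma^{\mathrm{hoc}}$, and (iii) forcing every limit point in $\overline{S_\cT}\setminus S_\cT$ to land in $\Gamma^{\perp}\cup\overline{\Gamma^{\mathrm{dp}}}$. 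Steps (i) and (ii) jointly give the interior formula, and the boundary formula is assembled from (ii) and (iii).

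\emph{Interior characterization.} Let $U$ denote the right-hand side of the interior formula. Since $\Gamma^{\mathrm{hoc}}\subseteq\partial S_\cT$ by Lemma~\ref{bdry_part_lem} and $S_\cT\setminus U = \Gamma^{\mathrm{hoc}}$ by Corollary~\ref{cusp_dyn_cor}, the inclusion $\Int S_\cT\subseteq U$ is immediate. For the reverse, I would fix $a\in U$ and verify that all three conditions defining $U$ persist under small perturbations of $a$. The condition $\pmb{v_w}\in\Delta_a$ is trivially open, and Lemma~\ref{cusp_anal_dyn_lem} renders ``$y_c$ is a $(3,2)$ cusp'' open (non-osculation of $\partial\Delta_a$ with the curve germ $\alpha_{\arg(a-\pmb{v_b})}$ is a codimension-one real-analytic condition). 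For univalence, I would argue by contradiction: given $a_n\to a$ with $\pmb{f}\vert_{\overline{\Delta_{a_n}}}$ non-injective, extract distinct sequences $z_1^{(n)}\neq z_2^{(n)}\in\overline{\Delta_{a_n}}$ with common $\pmb{f}$-image, which by compactness and injectivity of $\pmb{f}\vert_{\overline{\Delta_a}}$ collide at a single critical point $z^*\in\overline{\Delta_a}$. Since critical points of $\pmb{f}$ other than $\pmb{v_b}$ remain outside $\overline{\Delta_{a_n}}$ for large $n$ (an open condition inherited from $a$), one must have $z^*=\pmb{v_b}$. But the local model $\pmb{f}(\pmb{v_b}+\zeta) = y_c+\zeta^2 g(\zeta)$ with $g(0)\neq 0$ pairs preimages approximately as $\zeta\leftrightarrow -\zeta$, and the non-osculation hypothesis guarantees $\partial\Delta_{a_n}$ locally separates these pairs for $n$ large, contradicting the existence of the sequence.

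\emph{Boundary characterization.} Decomposing $\partial S_\cT = (\partial S_\cT\cap S_\cT)\cup(\overline{S_\cT}\setminus S_\cT)$, the interior formula identifies the first piece as $S_\cT\setminus U = \Gamma^{\mathrm{hoc}}$. For $a\in\overline{S_\cT}\setminus S_\cT$, pick $a_n\in S_\cT$ with $a_n\to a$. A Hurwitz argument on compact subsets of $\Delta_a$ shows $\pmb{f}\vert_{\Delta_a}$ is injective, so $\Omega_a = \pmb{f}(\Delta_a)$ is a simply connected open set. Either $\pmb{v_w}\in\partial\Delta_a$ with $\pmb{f}\vert_{\overline{\Delta_a}}$ still univalent (placing $a$ in $\Gamma^{\perp}$), or there exist distinct $z_1,z_2\in\overline{\Delta_a}$ with $\pmb{f}(z_1)=\pmb{f}(z_2)$. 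In the latter case I would argue that both points lie on $\partial\Delta_a$, so that this identification realizes a double point of $\partial\Omega_a$, placing $a$ in $\Gamma^{\mathrm{dp}}$ (or on $\overline{\Gamma^{\mathrm{dp}}}\cap\overline{\Gamma^{\perp}}$ if $\pmb{v_w}\in\partial\Delta_a$ simultaneously).

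\textbf{The main obstacle} is ruling out the ``interior identification'' scenario $z_1\in\partial\Delta_a$, $z_2\in\Delta_a$, as a limit from $S_\cT$. The strategy is topological: for $a_n\in S_\cT$ the curves $\pmb{f}(\partial\Delta_{a_n})$ are Jordan curves bounding the Jordan quadrature domains $\Omega_{a_n}$, while $w := \pmb{f}(z_2)$ lies in the interior $\Omega_{a_n}$ for large $n$, and the curves $\pmb{f}(\partial\Delta_{a_n})$ Hausdorff-converge to $\pmb{f}(\partial\Delta_a)$, which passes through $w$. A Jordan curve surrounding a point in its interior region cannot touch that point without self-intersecting, so the limiting parametrization $\pmb{f}\vert_{\partial\Delta_a}$ must itself be non-injective, reducing the problem to the boundary-to-boundary identification case and landing $a$ in $\overline{\Gamma^{\mathrm{dp}}}$. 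Making this pinching argument rigorous — tracking the relative position of $z_1$ with respect to $\partial\Delta_{a_n}$ via the local biholomorphy of $\pmb{f}$ at $z_1$ (with $z_1\neq\pmb{v_b}$, since $\pmb{v_b}$ is the unique preimage of $y_c$ lying in $\overline{\Delta_a}$) and analysing carefully how the Jordan curves $\pmb{f}(\partial\Delta_{a_n})$ develop the pinch toward $w$ — is the technical heart of the proof.
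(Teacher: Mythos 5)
Your proposal follows essentially the same route as the paper's proof: the interior formula comes from openness of the three defining conditions (with persistence of univalence reduced to the local behaviour at $\pmb{v_b}$ via the non-osculation criterion of Lemma~\ref{cusp_anal_dyn_lem}) combined with Lemma~\ref{bdry_part_lem}, and the boundary is handled by splitting into $a\in S_{\mathcal{T}}$ (forced into $\Gamma^{\mathrm{hoc}}$) versus $a\notin S_{\mathcal{T}}$ (Hurwitz on $\Delta_a$, then the dichotomy between $\Gamma^{\perp}$ and $\overline{\Gamma^{\mathrm{dp}}}$). The one point where you go beyond the paper --- excluding an identification $\pmb{f}(z_1)=\pmb{f}(z_2)$ with $z_1\in\partial\Delta_a$, $z_2\in\Delta_a$ --- is indeed left implicit there and your conclusion is correct, but it follows from the limit configuration alone, with no need to track the pinching of the approximating curves: if $\pmb{f}\vert_{\partial\Delta_a}$ were injective, then $J:=\pmb{f}(\partial\Delta_a)$ would be a Jordan curve containing $\partial\Omega_a$ and meeting the open set $\Omega_a$ at $w$, so the bounded connected set $\Omega_a$ would intersect the unbounded complementary component of $J$ and could never reach its own boundary without leaving that component --- a contradiction.
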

\begin{proof}
If $\pmb{f}\vert_{\overline{\Delta_a}}$ is univalent and the cusp $y_c\in\partial\Omega_a$ is of type $(3,2)$, then $a$ has a neighborhood such that for all parameters $a'$ in this neighborhood, $\pmb{f}\vert_{\overline{\Delta_{a'}}}$ is univalent. Since parameters in $S_{\mathcal{T}}$ for which $y_c$ is a higher order cusp belong to the boundary of $S_{\mathcal{T}}$ (by Lemma~\ref{bdry_part_lem}), the description of the interior of $S_{\mathcal{T}}$ given in the statement of the theorem follows.

Now let $a\in\partial S_{\mathcal{T}}$. We consider two cases.
\vspace{1mm}

\noindent\textbf{Case 1: $a\in S_{\mathcal{T}}$.} By the description of $\Int{S_{\mathcal{T}}}$, we have that $y_c$ is a cusp of type $(\nu,2)$ of $\partial\Omega_a$, with $\nu\geq 5$; i.e., $a\in\Gamma^{\mathrm{hoc}}$.
\vspace{1mm}

\noindent\textbf{Case 2: $a\notin S_{\mathcal{T}}$.} Since $\pmb{f}\vert_{\Delta_a}$ is the local uniform limit of a sequence of injective holomorphic maps and $\pmb{f}$ is non-constant, we have that $\pmb{f}\vert_{\Delta_a}$ is injective. The assumption that $a\notin S_{\mathcal{T}}$ now implies that either $\pmb{f}$ is not injective on $\partial\Delta_a$ or $\pmb{v_w}\in\partial\Delta_a$ (or both). In the former case, there is a double point on $\partial\Omega_a$. Moreover, as $a\in\overline{S_{\mathcal{T}}}$, we have that $\pmb{v_w}\in\overline{\Omega_a}$. Hence, $a\in\overline{\Gamma^{\mathrm{dp}}}$. In the latter case, either $a\in\overline{\Gamma^{\mathrm{dp}}}$ or $a\in \Gamma^{\perp}$ depending on whether there is a double point on $\partial\Omega_a$ or not.

Combining the two cases, we conclude that 
$$
\partial S_{\mathcal{T}}\subset \Gamma^{\mathrm{hoc}}\cup\overline{\Gamma^{\mathrm{dp}}}\cup\Gamma^{\perp}.
$$
The description of the boundary of $S_{\mathcal{T}}$ follows from the above containment and Lemma~\ref{bdry_part_lem}.
\end{proof}

\subsection{Connectedness of $S_{\mathcal{T}}$} 

Our next goal is to prove that the parameter space $S_{\mathcal{T}}$ is connected. This will be done through a series of lemmas.

\begin{lemma}\label{para_comp_jordan_lem}
Each connected component of $\Int{S_{\mathcal{T}}}$ is a Jordan domain. Moreover, $\overline{S_{\mathcal{T}}}=\overline{\Int{S_{\mathcal{T}}}}$.
\end{lemma}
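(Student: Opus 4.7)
The core observation is a radial nesting structure. For a fixed angle $\theta$ and $a_r := \pmb{v_b} + re^{i\theta}$, the triangle inequality yields $\overline{\Delta_{a_{r_1}}} \subset \overline{\Delta_{a_{r_2}}}$ whenever $0 < r_1 < r_2$, since both closed disks are internally tangent at $\pmb{v_b}$. Consequently, the two defining conditions of $S_{\mathcal T}$ are monotone along each such ray: the set of $r$ for which $\pmb{f}\vert_{\overline{\Delta_{a_r}}}$ is univalent is an interval of the form $(0, r_2^*(\theta))$ or $(0, r_2^*(\theta)]$, while the set of $r$ for which $\pmb{v_w} \in \Delta_{a_r}$ has the form $(r_1^*(\theta), \infty)$. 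Thus each radial slice $I(\theta) := \{r > 0 : a_r \in S_{\mathcal T}\}$ is an interval, possibly empty.

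Using this, I would first establish the density $\overline{S_{\mathcal T}} = \overline{\Int{S_{\mathcal T}}}$. By Theorem~\ref{bdry_thm}, $S_{\mathcal T} \cap \partial S_{\mathcal T} = \Gamma^{\mathrm{hoc}}$. For $a \in \Gamma^{\mathrm{hoc}}$, I would push $a$ slightly toward $\pmb{v_b}$ along its radial ray. Nesting preserves univalence on the closed disk; the strict containment $\pmb{v_w} \in \Delta_a$ (valid because $a \notin \overline{\Gamma^{\perp}}$ by Lemma~\ref{pb_hoc_disjoint_lem}) persists under small perturbations; and the osculating characterization of $\Gamma^{\mathrm{hoc}}$ in Lemma~\ref{cusp_anal_dyn_lem} picks out a unique radius along each radial direction, so the pushed parameters acquire a simple $(3,2)$ cusp. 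Hence these pushed parameters lie in $\Int{S_{\mathcal T}}$, proving $a \in \overline{\Int{S_{\mathcal T}}}$.

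For the Jordan property, I would reparametrize via polar coordinates $(r,\theta)$ centered at $\pmb{v_b}$, where $S_{\mathcal T}$ is ``vertically convex'' by the interval property. A connected component $U$ of $\Int{S_{\mathcal T}}$ corresponds to a maximal open arc $J \subset \R/2\pi\Z$ on which $I(\theta)$ has nonempty interior, and in polar coordinates $U$ has the form $\{(r,\theta) : \theta \in J,\ r_1(\theta) < r < r_2(\theta)\}$. Since $r_1$ and $r_2$ are cut out implicitly by the real-analytic equations defining $\Gamma^{\perp}$, $\Gamma^{\mathrm{hoc}}$, and $\Gamma^{\mathrm{dp}}$, they are piecewise real-analytic. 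The main remaining steps are to check that $r_1, r_2$ extend continuously to $\overline J$ with $r_1(\theta) = r_2(\theta)$ at the endpoints $\partial J$, and that the two boundary arcs $\theta \mapsto \pmb{v_b} + r_i(\theta)e^{i\theta}$ are simple and meet only at these endpoints; together these give that $\partial U$ is a Jordan curve.

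The main obstacle is this last verification of the closing-up of the two boundary arcs into a single simple curve. It requires a careful local analysis at transition points where arcs of different types meet --- e.g., where $\Gamma^{\perp}$ abuts $\Gamma^{\mathrm{dp}}$, or where $\Gamma^{\mathrm{hoc}}$ abuts $\Gamma^{\mathrm{dp}}$. The pairwise disjointness of Lemma~\ref{pb_hoc_disjoint_lem} and Corollary~\ref{hoc_dp_disjoint_cor} rules out certain coincidences, and combined with the real-analyticity of each arc and the radial parametrization, one expects only transverse crossings to occur; making this rigorous is where the delicate work lies.
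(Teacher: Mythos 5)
Your proposal follows essentially the same route as the paper's: the radial nesting $\overline{\Delta_{a'}}\subset\overline{\Delta_a}$ for $a'\in\ell_a$, the resulting monotonicity of the two defining conditions along rays from $\pmb{v_b}$, and the half-plane $\mathbf{P}=\{|z-\pmb{v_w}|<|z-\pmb{v_b}|\}$ cutting out the inner boundary are exactly the paper's projection argument, and your treatment of the density statement $\overline{S_{\mathcal T}}=\overline{\Int{S_{\mathcal{T}}}}$ is the same (the paper rules out a higher-order cusp at the pushed parameter by noting it would force non-univalence further out along the ray, rather than by uniqueness of the osculating radius, but both work). The one substantive difference is the step you flag as the ``main obstacle.'' The paper does not analyze the junctions where $\Gamma^{\perp}$, $\Gamma^{\mathrm{dp}}$ and $\Gamma^{\mathrm{hoc}}$ meet at this stage (that structure is only established later, in Lemma~\ref{dp_pb_two_lem} and Theorem~\ref{para_space_conn_thm}); instead it upgrades the projection argument to the statement that for \emph{every} $a\in\Gamma^{\mathrm{dp}}\cup\Gamma^{\mathrm{hoc}}$ the entire punctured segment $\ell_a\cap\mathbf{P}\setminus\{a\}$ lies in $\Int{S_{\mathcal{T}}}$, the key extra point being that a higher-order cusp at an intermediate parameter of the segment would contradict univalence at the outer one. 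This radial accessibility of every outer boundary point, combined with radial convexity of the interior, already forces your outer radius function $r_2(\theta)$ to be continuous by a soft argument (lower semicontinuity from openness; a downward jump would produce a point of $\partial S_{\mathcal T}\cap\mathbf{P}$ above $r_2(\theta)$ whose inward segment is interior, a contradiction), so the closing-up of the boundary into a Jordan curve needs no case analysis of transverse crossings or piecewise-analytic parametrizations. In short: your approach is the paper's, and the delicate work you anticipate can be replaced by the stronger form of the projection argument.
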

\begin{figure}
\captionsetup{width=0.98\linewidth}
\begin{tikzpicture}
\node[anchor=south west,inner sep=0] at (0,0) {\includegraphics[width=0.6\textwidth]{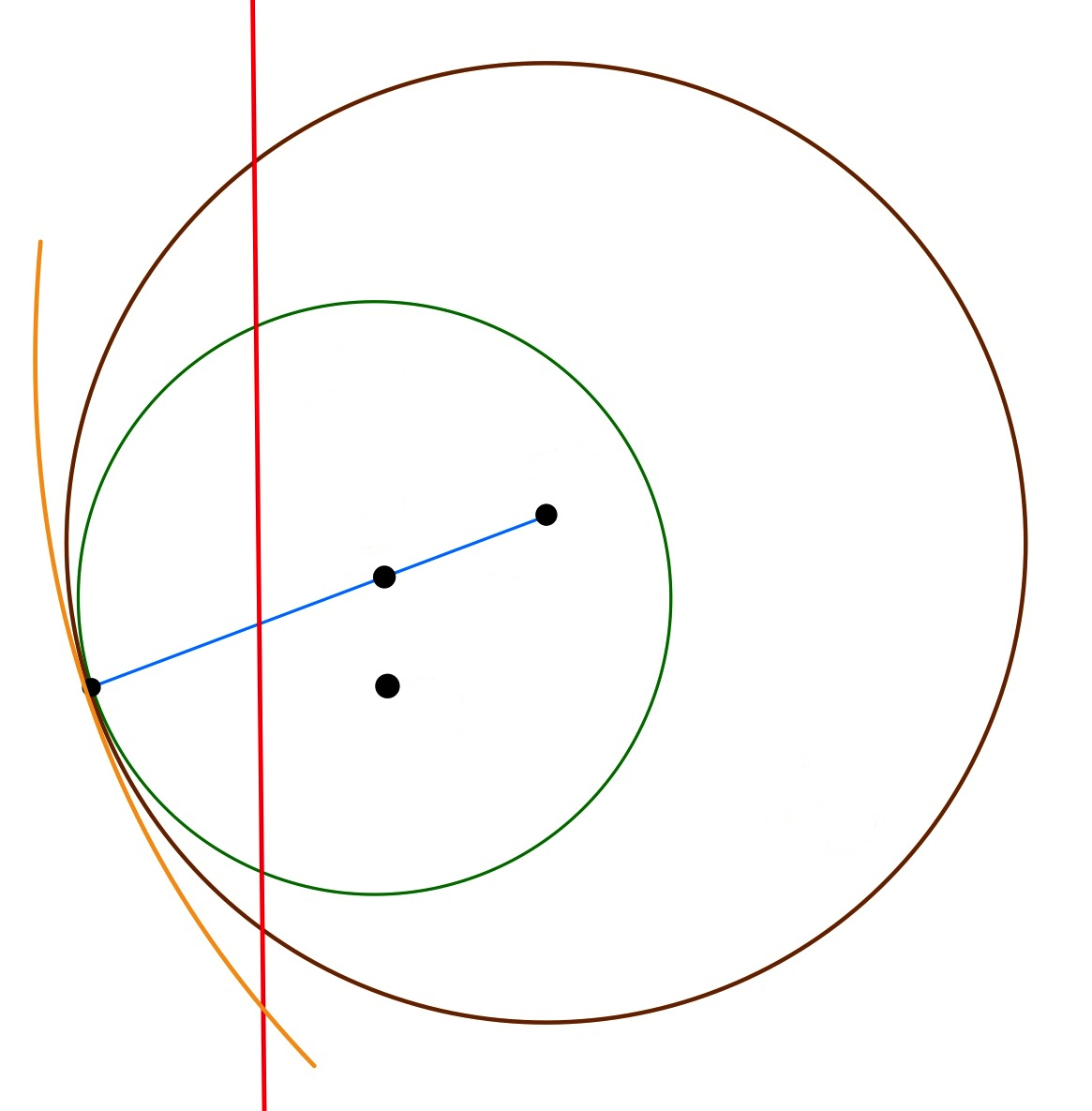}};
\node at (0.4,2.9) {\begin{large}$\pmb{v_b}$\end{large}};
\node at (2.8,2.7) {\begin{large}$\pmb{v_w}$\end{large}};
\node at (0,4.8) {\begin{large}$\alpha_\theta$\end{large}};
\node at (4.4,6.6) {\begin{large}$\Delta_a$\end{large}};
\node at (2.7,5.25) {\begin{large}$\Delta_{a'}$\end{large}};
\node at (2.7,4) {\begin{large}$a'$\end{large}};
\node at (3.8,4.4) {\begin{large}$a$\end{large}};
\node at (3.36,3.72) {\begin{large}$\ell_a$\end{large}};
\end{tikzpicture}
\caption{The red straight line is the perpendicular bisector of the line segment joining $\pmb{v_b}$ and $\pmb{v_w}$, the half-plane to the right of the red straight line is $\mathbf{P}$, and $\alpha_\theta$ is the curve germ at $\pmb{v_b}$ that maps under $\pmb{f}$ to a straight line segment $\{y_c-re^{2i\theta}: r\in[0,\epsilon]\}$, where $\theta=\arg{(a-\pmb{v_b})}$. There is a nesting structure of the disks $\Delta_a$ when $a$ lies on a fixed straight ray emanating from $\pmb{v_b}$.}
\label{projection_fig}
\end{figure}
\begin{proof}
Recall that by Corollary~\ref{non_emp_cor}, the interior of $S_{\mathcal{T}}$ is non-empty.

Given any $a\in \C$, let $\ell_a$ be the straight line segment connecting $\pmb{v_b}$ to $a$. Now let $a' \in \ell_a$. By definition, $|a-\pmb{v_b}| > |a'-\pmb{v_b}|$ and hence $\overline{\Delta_{a'}}\subset \overline{\Delta_a}$. Thus, if $\pmb{f}$ is univalent on $\overline{\Delta_a}$ then it is also univalent on $\overline{\Delta_{a'}}$. This means that if $a\in S_{\mathcal{T}}$, then the intersection of $\ell_a$ with the half plane $\mathbf{P}:=\{z\in\C: \vert z-\pmb{v_w}\vert<\vert z-\pmb{v_b}\vert\}$ (which is one of the complementary components of the perpendicular bisector of the line segment joining $\pmb{v_w}$ and $\pmb{v_b}$) also lies in the parameter space (see Figure~\ref{projection_fig}). This implies that each connected component of $S_{\mathcal{T}}$ is simply connected.

Now suppose that $a\in\Gamma^{\mathrm{dp}}\cup\Gamma^{\mathrm{hoc}}$, and $a'\in \ell_a\cap\mathbf{P}$, $a'\neq a$. The above argument and the ones used in the proof of Lemma~\ref{bdry_part_lem} show that $\pmb{f}$ is injective on $\overline{\Delta_{a'}}$ and $\pmb{v_w}\in\Delta_{a'}$ (see Figure~\ref{projection_fig}). We also claim that $y_c$ is a $(3,2)$ cusp on $\partial\Omega_{a'}$. Indeed, if this were not true, then pushing $a'$ along $\ell_a$ in the direction of $a$ (i.e., away from $\pmb{v_b}$) would result in $\pmb{f}$ to be non-univalent on the corresponding disk contradicting the assumption that $f\vert_{\Delta_a}$ is univalent. Therefore, $\ell_a\cap\mathbf{P}\setminus\{a\}\subset \Int{S_{\mathcal{T}}}$ for all $a\in\Gamma^{\mathrm{dp}}\cup\Gamma^{\mathrm{hoc}}$ (by the description of $ \Int{S_{\mathcal{T}}}$ given in Theorem~\ref{bdry_thm}). It follows that each component of $\Int{S_{\mathcal{T}}}$ is a Jordan domain.

By Theorem~\ref{bdry_thm}, if $a\in S_{\mathcal{T}}\setminus\Int{S_{\mathcal{T}}}$, then $a$ must lie on $\Gamma^{\mathrm{hoc}}$. But the above argument demonstrates that in this case, there are points $a'\in\ell_a$ arbitrarily close to $a$ such that $a'\in\Int{S_{\mathcal{T}}}$. This proves that $S_{\mathcal{T}}\subset \overline{\Int{S_{\mathcal{T}}}}$, and hence, $\overline{S_{\mathcal{T}}}=\overline{\Int{S_{\mathcal{T}}}}$.
\end{proof}

\begin{lemma}\label{dp_pb_two_lem}
$\overline{\Gamma^{\perp}}\cap\overline{\Gamma^{\mathrm{dp}}}$ contains at most two points.
\end{lemma}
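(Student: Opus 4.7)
The plan is to first locate $\overline{\Gamma^{\perp}}\cap\overline{\Gamma^{\mathrm{dp}}}$ inside the line $\ell^{\perp}$, then show it sits in the relative boundary of $\Gamma^{\perp}$ as a subset of $\ell^{\perp}$, and finally bound this boundary by two points via a connectedness argument for $\Gamma^{\perp}$.

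First, by definition $\Gamma^{\perp}\subset\ell^{\perp}$ (the perpendicular bisector of $[\pmb{v_b},\pmb{v_w}]$), while $\Gamma^{\mathrm{dp}}$ lies in the open half-plane $H_+=\{a:|a-\pmb{v_w}|<|a-\pmb{v_b}|\}$ (as $\Gamma^{\mathrm{dp}}$ requires $\pmb{v_w}\in\Delta_a$ strictly); hence $\overline{\Gamma^{\perp}}\cap\overline{\Gamma^{\mathrm{dp}}}\subset\ell^{\perp}$. Next, for any $a\in\Gamma^{\perp}$, the polynomial $\pmb{f}$ is univalent on the compact set $\overline{\Delta_a}$, and this is an open condition in $a\in\C$ (as $\pmb{f}$ is a polynomial and $\overline{\Delta_{a'}}$ varies continuously with $a'$); therefore $a$ admits a $\C$-neighborhood $U$ with $\pmb{f}|_{\overline{\Delta_{a'}}}$ univalent for all $a'\in U$, which forces $U\cap\Gamma^{\mathrm{dp}}=\emptyset$. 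Consequently $\Gamma^{\perp}\cap\overline{\Gamma^{\mathrm{dp}}}=\emptyset$ and
$$
\overline{\Gamma^{\perp}}\cap\overline{\Gamma^{\mathrm{dp}}}\ \subset\ \overline{\Gamma^{\perp}}\setminus\Gamma^{\perp}=:\partial_{\ell^{\perp}}\Gamma^{\perp},
$$
the relative boundary of $\Gamma^{\perp}$ inside the line $\ell^{\perp}$.

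It remains to show that $\Gamma^{\perp}$ is a single (possibly empty) bounded open interval in $\ell^{\perp}$, which would force $\partial_{\ell^{\perp}}\Gamma^{\perp}$ to have at most two points. Boundedness is immediate from Lemma~\ref{para_space_bdd_lem} (and directly from the fact that as $|a|\to\infty$ along $\ell^{\perp}$ the disks $\Delta_a$ exhaust a closed half-plane, on which the degree $d+1\geq 3$ polynomial $\pmb{f}$ cannot be univalent). For connectedness, I would parameterize $\ell^{\perp}$ by $t\in\R$ so that $t=0$ corresponds to the midpoint of $[\pmb{v_b},\pmb{v_w}]$ and track, as $t$ increases (respectively decreases) from $0$, the first value $t_+$ (resp.\ $t_-$) at which $\pmb{f}|_{\overline{\Delta_a}}$ loses univalency. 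The loss is witnessed by a pair $(z_1(t),z_2(t))\in\partial\Delta_{a(t)}$ with $\pmb{f}(z_1)=\pmb{f}(z_2)$; I would argue, via the implicit function theorem applied to the bidegree-$(d,d)$ equation $g(z_1,z_2)=(\pmb{f}(z_1)-\pmb{f}(z_2))/(z_1-z_2)=0$ together with the circle conditions defining $\partial\Delta_a$, that once this pair appears it persists for all larger $t$ (resp.\ smaller $t$) and can disappear only through a collision event: merging to a critical point (forcing a higher-order cusp and thus a transition to $\Gamma^{\mathrm{hoc}}$, impossible along $\overline{\Gamma^{\perp}}$ by Lemma~\ref{pb_hoc_disjoint_lem}), or collision with $\pmb{v_b}$ or $\pmb{v_w}$, which again corresponds to a different boundary stratum and cannot restore univalency on $\ell^{\perp}$. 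This monotone loss of univalency in each direction along $\ell^{\perp}$ forces $\Gamma^{\perp}=(t_-,t_+)$, whose boundary in $\ell^{\perp}$ contains at most two points.

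The main obstacle is rigorously establishing the persistence/irreversibility claim in the last paragraph: although intuitively plausible, it requires a careful global analysis of how the continuous pair $(z_1(t),z_2(t))$ on $\partial\Delta_{a(t)}$ evolves, ruling out re-entry into the univalency regime for $t$ beyond the first failure. Making this precise calls on the real-algebraic structure of the double-point curve $V'=\{g=0\}\setminus\{z_1=z_2\}$, combined with the Shabat-specific combinatorics of $\pmb{f}$ encoded in the dessin $\mathcal{T}^{\mathrm{aug}}$, to exclude unexpected cancellations of double points as the center $a$ traverses $\ell^{\perp}$.
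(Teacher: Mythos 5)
Your reduction to ``$\overline{\Gamma^{\perp}}\cap\overline{\Gamma^{\mathrm{dp}}}$ lies in the two-point relative boundary of an interval $\Gamma^{\perp}\subset\ell^{\perp}$'' leaves the entire difficulty unresolved, and the step you flag as ``the main obstacle'' is in fact the whole content of the lemma. The persistence claim --- once univalence of $\pmb{f}$ on $\overline{\Delta_{a(t)}}$ fails along $\ell^{\perp}$ it cannot be regained --- has no support from the nesting mechanism used elsewhere in the paper: the inclusion $\overline{\Delta_{a'}}\subset\overline{\Delta_a}$ holds only for $a'$ on the segment from $\pmb{v_b}$ to $a$, whereas along $\ell^{\perp}$ the disks $\Delta_{a(t)}$ form a pencil of circles through the two fixed points $\pmb{v_b}$ and $\pmb{v_w}$ and are pairwise non-nested, so increasing $t$ both adds and removes territory and a double point of $\partial\Omega_{a(t)}$ can in principle disappear. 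Note also that the connectedness of $\Gamma^{\perp}$ is essentially a piece of the connectedness of $S_{\mathcal{T}}$, and the paper's logical order is the reverse of yours: Lemma~\ref{dp_pb_two_lem} is an input to Theorem~\ref{para_space_conn_thm}, so you cannot borrow connectedness from there; you would have to establish it by the unfinished real-algebraic analysis of the double-point variety, which is not easier than the lemma itself. A secondary error: univalence of $\pmb{f}$ on the closed disk $\overline{\Delta_a}$ is \emph{not} an open condition in $a$ (it fails to be open exactly at $\Gamma^{\mathrm{hoc}}$, where $\overline{\Delta_a}$ osculates a curve germ through the boundary critical point $\pmb{v_b}$); the conclusion $\Gamma^{\perp}\cap\overline{\Gamma^{\mathrm{dp}}}=\emptyset$ is still true, but only because the cusp is of type $(3,2)$ for $a\in\Gamma^{\perp}$ by Lemma~\ref{pb_hoc_disjoint_lem}, which is what the openness statement in Theorem~\ref{bdry_thm} actually requires.

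The paper's proof is entirely different and dynamical. For $a\in\overline{\Gamma^{\perp}}\cap\overline{\Gamma^{\mathrm{dp}}}$ the free critical value $y_f$ lies on $\partial\Omega_a$ and is therefore a fixed point of $\sigma_a$; by Proposition~\ref{generic_bdd_droplet_dyn_prop} it lies on the boundary of the bounded droplet component $T_b^0(\sigma_a)$, whose Jordan boundary then carries three distinguished points: the cusp $y_c$, the unique double point, and $y_f$. These admit only two cyclic orders, and for each order a quasiconformal pullback argument (gluing conformal identifications of the two droplet components, iterating lifts, and invoking zero area of the non-escaping set plus Weyl's lemma) shows the parameter is unique. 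That rigidity argument is the missing engine in your proposal; without it, or a completed monotonicity analysis along $\ell^{\perp}$, the proof does not go through.
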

\begin{proof}
Let $a\in\overline{\Gamma^{\perp}}\cap\overline{\Gamma^{\mathrm{dp}}}$. Then the free critical value $y_f$ of $\sigma_a$ lies on $\partial\Omega_a$, and thus is fixed. Moreover, by Proposition~\ref{generic_bdd_droplet_dyn_prop}, $y_f$ lies on $\partial T^0_b(\sigma_a)$, but is not the unique double point or the cusp point $y_c$ of $\partial\Omega_a$. We also remark that since $y_f$ is not a double point of $\partial\Omega_a$, the only $\pmb{f}-$preimage of $y_f$ on $\Delta_a$ is $\pmb{v_w}$, which is not a critical point of $\pmb{f}$. Hence, $y_f$ is not a cusp of $\partial\Omega_a$; i.e., $\partial\Omega_a$ has a unique cusp at $y_c$.

Therefore, for each $a\in\overline{\Gamma^{\perp}}\cap\overline{\Gamma^{\mathrm{dp}}}$, the interior of $T_b^0(\sigma_a)$ is a Jordan domain whose boundary contains three distinct distinguished points; namely, the unique double point of $\partial\Omega_a$, the unique cusp $y_c$ of $\partial\Omega_a$, and the unique free critical value $y_f$ of $\sigma_a$. On the other hand, the interior of $T_u^0(\sigma_a)$ is also a Jordan domain which contains the fully branched critical value $\infty$ of $\sigma_a$ and has the unique cusp $y_c$ of $\partial\Omega_a$ on its boundary.

Let us now assume that there are two parameters $a_1, a_2\in\overline{\Gamma^{\perp}}\cap\overline{\Gamma^{\mathrm{dp}}}$ such that for both parameters, the unique double point of $\partial\Omega_{a_i}$, the cusp $y_c$, and the unique free critical value $y_f$ of $\sigma_{a_i}$ lie in the same cyclic order on $\partial T_b^0(\sigma_{a_i})$. We claim that $a_1=a_2$. Since these three distinguished points on the boundary of  $T^0_b(\sigma_a)$ can lie in exactly two different cyclic orders, the proof will be complete once the claim is established.

Let $\mathfrak{g}_b:\Int{T_b^0(\sigma_{a_1})}\rightarrow\Int{T_b^0(\sigma_{a_2})}$ be the conformal isomorphism whose homeomorphic boundary extension (also denoted by $\mathfrak{g}_b$) carries the cusp, double point and the free critical value associated with $a_1$ to those associated with $a_2$. Furthermore, let  $\mathfrak{g}_u:\Int{T_u^0(\sigma_{a_1})}\rightarrow\Int{T_u^0(\sigma_{a_2})}$ be the conformal isomorphism which sends the fully branched critical value $\infty$ of $\sigma_{a_1}$ to the fully branched critical value $\infty$ of $\sigma_{a_1}$, and whose homeomorphic boundary extension (also denoted by $\mathfrak{g}_b$) takes the unique double point of $\partial\Omega_{a_1}$ to the unique double point of $\partial\Omega_{a_2}$. Since $y_f$ is fixed under $\sigma_{a_i}$, $i\in\{1,2\}$, Lemmas~\ref{cusp_anal_dyn_lem} and~\ref{dp_dyn_classification_lem} imply that the double points on $\partial\Omega_{a_1}$, $\partial\Omega_{a_2}$ are regular and the cusps on them are of type $(3,2)$. Hence, one can apply the arguments of \cite[Lemma~8.11, Appendix~B]{LLMM2} or \cite[Lemmas~5.3,~5.4]{LMM1} to conclude that there exists a global $K-$quasiconformal map $\mathfrak{G}_0$ of the Riemann sphere that continuously matches with $\mathfrak{g}_u$ and $\mathfrak{g}_b$ on their domains of definition.

Since the free critical orbits of $\sigma_{a_1}, \sigma_{a_2}$ lie in the respective tiling sets, one can apply classical arguments of Fatou to see that $\Int{K(\sigma_{a_1})}=\Int{K(\sigma_{a_2})}=\emptyset$ (see \cite[Propositions~5.30,~5.32]{LLMM1}). Hence, $\partial T^\infty(\sigma_{a_i})=K(\sigma_{a_i})$; i.e., $\overline{T^\infty(\sigma_{a_i})}=\widehat{\C}$. Moreover, the same fact also allows one to show that these non-escaping sets have zero area (cf. \cite[Corollary~6.3]{LLMM1}).

As the Schwarz reflection maps act as the identity on the boundaries of the desingularized droplets, a standard pullback argument as in \cite[Proposition~8.13]{LLMM2} or \cite[Theorem~5.1]{LMM1} can be employed to construct a sequence of $K-$quasiconformal maps $\{\mathfrak{G}_n\}$ such that 
\begin{enumerate}[leftmargin=8mm]
\item\label{ev_cond} $\sigma_{a_2}\circ\mathfrak{G}_n= \mathfrak{G}_{n-1}\circ \sigma_{a_1}$ on $\widehat{\C}\setminus\Int{T^0(\sigma_{a_1})}$,

\item\label{conf_cond} $\mathfrak{G}_n$ is conformal on $\displaystyle\bigcup_{i=0}^{n}\sigma_{a_1}^{-i}(T^0(\sigma_{a_1}))$, and

\item\label{matching_cond} $\mathfrak{G}_n=\mathfrak{G}_{n-1}$ on $\displaystyle\bigcup_{i=0}^{n-1}\sigma_{a_1}^{-i}(T^0(\sigma_{a_1}))$.
\end{enumerate}
By compactness of the family of $K-$quasiconformal homeomorphisms and Conditions~\eqref{ev_cond},~\eqref{matching_cond}, there exists a quasiconformal homeomorphism $\mathfrak{G}_\infty$ of $\widehat{\C}$ that conjugates $\sigma_{a_1}$ to $\sigma_{a_2}$ on the tiling set. By continuity and density of the tiling sets of $\sigma_{a_1}, \sigma_{a_2}$ in $\widehat{\C}$, the conjugacy relation holds on the entire domain of definition of $\sigma_{a_1}$. Also, Condition~\ref{conf_cond} implies that $\mathfrak{G}_\infty$ is conformal on the tiling set of $\sigma_{a_1}$. Since the non-escaping set of $\sigma_{a_1}$ has zero area, it follows by Weyl's lemma that $\mathfrak{G}_\infty$ is a M{\"o}bius map of $\widehat{\C}$. Finally, since the conjugacy $\mathfrak{G}_\infty$ fixes $\infty$, $y_f$ and $y_c$, it must be the identity map. Hence, $a_1=a_2$. 
\end{proof}

\begin{lemma}\label{hoc_arc_lem}
$\Gamma^{\mathrm{hoc}}\ \left(\subset\partial S_{\mathcal{T}}\right)$ is a closed real-analytic arc. 
\end{lemma}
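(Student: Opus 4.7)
The plan is to exhibit $\Gamma^{\mathrm{hoc}}$ as a closed sub-arc of an explicitly parametrized ambient real-analytic curve in $\C$.

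Using the local expansion of $\pmb{f}$ at the simple critical point $\pmb{v_b}$, I would write down a Taylor series for the curve germ $\alpha_\theta$ and read off its tangent direction and curvature at $\pmb{v_b}$ as real-analytic functions of $\theta$. The osculating condition from Lemma~\ref{cusp_anal_dyn_lem} together with the constraint $\arg(a-\pmb{v_b})=\theta$ then determines $a$ as a real-analytic function $a(\theta)$ on an open arc $\Theta\subset\R/2\pi\Z$ where the curvature of $\alpha_\theta$ has the appropriate sign (so that the center of the osculating circle lies on the correct side of $\pmb{v_b}$). Writing $\widetilde\Gamma^{\mathrm{hoc}}$ for the image of this real-analytic embedding, Lemma~\ref{cusp_anal_dyn_lem} gives $\Gamma^{\mathrm{hoc}}=\widetilde\Gamma^{\mathrm{hoc}}\cap S_\cT$.

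To show this intersection is a closed sub-arc, I would argue as follows. If $a(\theta_n)\in\Gamma^{\mathrm{hoc}}$ and $\theta_n\to\theta_\infty\in\Theta$, then $a(\theta_\infty)\in\overline{S_\cT}\cap\widetilde\Gamma^{\mathrm{hoc}}$ carries a higher-order cusp at $y_c$, so the description of $\Int S_\cT$ as consisting of $(3,2)$-cusp parameters (Theorem~\ref{bdry_thm}) combined with the boundary decomposition leaves only the possibilities $a(\theta_\infty)\in\Gamma^{\mathrm{hoc}}\cup\overline{\Gamma^{\mathrm{dp}}}\cup\Gamma^\perp$. The dynamical obstructions behind Lemma~\ref{pb_hoc_disjoint_lem} and Corollary~\ref{hoc_dp_disjoint_cor}---both deriving from the fact that the parabolic basin at a higher-order cusp must capture the unique free critical orbit of $\sigma_{a(\theta_\infty)}$, which is incompatible with $y_f$ being a fixed boundary point of $\Omega$ (as happens in $\Gamma^\perp$) or with the free critical orbit escaping to a bounded droplet component (as happens in $\Gamma^{\mathrm{dp}}$)---rule out the last two cases, giving $a(\theta_\infty)\in\Gamma^{\mathrm{hoc}}$. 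Connectedness of $\Gamma^{\mathrm{hoc}}$ then follows from Lemma~\ref{para_comp_jordan_lem}: each component of $\Int S_\cT$ is a Jordan domain whose boundary meets the real-analytic curve $\widetilde\Gamma^{\mathrm{hoc}}$ in a single sub-arc.

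The main obstacle will be handling degenerate limits in the closedness argument: when $\theta_n\to\theta_\infty$ and nearby parameters along $\widetilde\Gamma^{\mathrm{hoc}}$ lie in $\overline{\Gamma^{\mathrm{dp}}}\setminus\Gamma^{\mathrm{dp}}$, one must verify that the limiting double points can only collapse onto the critical point $\pmb{v_b}$ (the unique critical point of $\pmb{f}$ on $\partial\Delta_a$), merging into the higher-order cusp at $y_c$ rather than persisting as a genuine obstruction to univalence of $\pmb{f}|_{\overline{\Delta_{a(\theta_\infty)}}}$; this step requires a careful local geometric analysis of how the preimages of the Schwarz identification under $\pmb{f}$ vary with $\theta$ near $\pmb{v_b}$.
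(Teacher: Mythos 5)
Your reduction of $\Gamma^{\mathrm{hoc}}$ to the intersection of $S_\cT$ with an explicit real-analytic curve $\widetilde\Gamma^{\mathrm{hoc}}=\{a(\theta)=\pmb{v_b}+e^{i\theta}/\kappa(\theta)\}$ is legitimate (the paper itself observes in passing that $\Gamma^{\mathrm{hoc}}$ is contained in a real-algebraic curve), and your closedness argument --- a limit of higher-order-cusp parameters cannot land in $\Gamma^\perp$ or carry a double point because the attracting petal at $y_c$ must capture the unique free critical orbit, which is incompatible with $y_f$ being fixed on $\partial\Omega_a$ or with the escape to $T^0_b(\sigma_a)$ forced by Proposition~\ref{generic_bdd_droplet_dyn_prop} --- is essentially the argument the paper uses to control the endpoints. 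But two essential pieces are missing, and they are where the paper does most of its work.

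First, you never show $\Gamma^{\mathrm{hoc}}\neq\emptyset$. Nothing in the osculating-circle description forces $\widetilde\Gamma^{\mathrm{hoc}}$ to meet $S_\cT$ at all: a priori the univalence of $\pmb{f}|_{\overline{\Delta_a}}$ could always fail by a double point strictly before the osculating radius is reached, in every direction $\theta$. The paper produces a point of $\Gamma^{\mathrm{hoc}}$ by a genuinely dynamical construction: starting from the dynamically Shabat anti-polynomial of Lemma~\ref{dyn_shabat_existence_lem}, it performs surgeries replacing the basin of infinity by $\cR_d$ and the bounded invariant Fatou component by a parabolic anti-Blaschke product, yielding a parameter $a_0$ with a $(5,2)$-cusp. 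There is no purely local or topological substitute for this step in your outline.

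Second, your connectedness argument does not work. Knowing that $\Int{S_\cT}$ is a Jordan domain (Lemma~\ref{para_comp_jordan_lem}) does not tell you that its boundary meets the analytic curve $\widetilde\Gamma^{\mathrm{hoc}}$ in a single sub-arc. In the radial coordinates of the projection argument, $\Gamma^{\mathrm{hoc}}$ is the coincidence set $\{\theta: R(\theta)=\rho(\theta)\}$ of the univalence radius $R(\theta)$ and the osculating radius $\rho(\theta)$, and a coincidence set of two continuous functions is merely closed --- it could have many components separated by intervals of double-point directions. The paper rules this out by a rigidity argument that your proposal has no analogue of: it parametrizes the $(5,2)$-cusp parameters by the critical {\'E}calle height via quasiconformal deformation, obtaining a connected real-analytic arc $\Gamma'$, and then proves (using conformal removability of the limit sets and pullback arguments) that \emph{every} $(5,2)$-cusp parameter lies on $\Gamma'$ and that there are exactly two $(7,2)$-cusp parameters, namely the endpoints of $\Gamma'$. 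Without this rigidity input, the decomposition $\Gamma^{\mathrm{hoc}}=\widetilde\Gamma^{\mathrm{hoc}}\cap S_\cT$ gives you a closed subset of an arc, not an arc. By contrast, the ``degenerate double-point limit'' that you flag as the main obstacle is already disposed of by the dynamical incompatibility you invoke earlier, so your effort is aimed at the wrong place.
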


\begin{proof}
For clarity of exposition, we split the proof into several steps.
\smallskip

\noindent\textbf{Step I: $\Gamma^{\mathrm{hoc}}\neq\emptyset$.} One can apply the arguments of \cite[Theorem A]{PartI} on the dynamically Shabat anti-polynomial $p$ constructed in Lemma~\ref{dyn_shabat_existence_lem} (as usual, if $\mathcal{T}$ is a star-tree, we perform the surgery on $\overline{z}^d$) to 
\noindent\begin{enumerate}[leftmargin=8mm]
\item replace the dynamics on the basin of infinity with $\mathcal{R}_d\vert_{\Int{\mathcal{Q}}}$, and 
\item the dynamics on the bounded fixed critical Fatou component $U$ with the unicritical parabolic anti-Blaschke product $B_k\vert_{\D}$ (see Section~\ref{para_anti_rat_subsec}),
\end{enumerate}
such that the unique parabolic fixed point of $\mathcal{R}_d$ as well as the unique parabolic fixed point of $B_k$ correspond to the repelling fixed point $\pmb{v_b}$ of $p$ (this is possible by Remark~\ref{zero_ray_rem}). Here, $k$ is the degree of $p$ on $U$. This produces a parameter $a_0\in S_{\mathcal{T}}$ such that $\sigma_{a_0}^{\circ 2}$ has a unique attracting direction in $K(\sigma_{a_0})$ at the cusp $y_c$ of $\partial\Omega_{a_0}$. Hence, by Lemma~\ref{cusp_anal_dyn_lem} and Corollary~\ref{cusp_dyn_cor}, $y_c$ is a $(5,2)$ cusp of $\partial\Omega_{a_0}$; and hence in particular, $a_0\in\Gamma^{\mathrm{hoc}}$. Note that $\mathcal{J}(p)$ is removable for $W^{1,1}$ functions (by \cite[Theorem~4]{JS00} and the fact that $\mathcal{B}_\infty(p)$ is a John domain). Thus, according to \cite[Theorem~2.7]{LMMN}, the limit set $\partial K(\sigma_{a_0})$ is conformally removable.
\smallskip

\noindent\textbf{Step II:  Constructing an arc $\Gamma'\subset\Gamma^{\mathrm{hoc}}$ consisting of parameters with $(5,2)-$cusps.}  
By \cite[Appendix A]{PartI} there is a forward invariant attracting petal of $\sigma_{a_0}$ at the cusp point $y_c$, and a Fatou coordinate (unique up to real translations) on such a petal that conjugates $\sigma_{a_0}$ to the glide reflection $\zeta\mapsto\overline{\zeta}+\frac12$ on a right half-plane. By construction of $\sigma_{a_0}$, such an attracting petal at $y_c$ contains the tail of the $\sigma_{a_0}^{\circ 2}-$orbit of the free critical value $y_f$ (of $\sigma_{a_0}$). We refer to the imaginary part of $\sigma_{a_0}^{\circ 2n}(y_f)$ (for $n$ large enough) in this coordinate as the critical \emph{{\'E}calle height} of $\sigma_{a_0}$ (cf. \cite[\S 2]{HS}, \cite[\S 3]{MNS}).  It is readily seen that the critical {\'E}calle height is a conformal conjugacy invariant of the map $\sigma_{a_0}$. The real-symmetry of $B_d$ tells us that the critical {\'E}calle height of the map $\sigma_{a_0}$ is $0$.

One can now apply a quasiconformal deformation argument as in the proof of \cite[Proposition~6.6 (part 2)]{LLMM3} (cf. \cite[Theorem~3.2]{MNS}) to 
obtain an open real-analytic arc $\Gamma'\subset S_{\mathcal{T}}$ containing $a_0$ such that for each $t\in\R$, there exists a unique parameter $a(t)\in\Gamma'$ with the following properties:
\begin{enumerate}[leftmargin=8mm]
\item the cusp $y_c$ has a unique attracting direction under $\sigma_{a(t)}$, and hence $y_c$ is a $(5,2)$ cusp of $\partial\Omega_{a(t)}$,

\item the forward orbit of the free critical value $y_f$ of $\sigma_{a(t)}$ converges to $y_c$, and

\item the critical {\'E}calle height of $\sigma_{a(t)}$ is $t$.
\end{enumerate}
Clearly, $\Gamma'$ is contained in $\Gamma^{\mathrm{hoc}}$. Since conformal removability is preserved under quasiconformal maps, the limit set of each map produced above is conformally removable. 
\smallskip

\noindent\textbf{Step III: Rigidity of parameters in $\Gamma'$.}
We claim that there are no other parameters in $\Gamma^{\mathrm{hoc}}$ such that $y_c$ is a $(5,2)-$cusp on the corresponding quadrature domain boundary. Indeed, let $a'\in\Gamma^{\mathrm{hoc}}$ be such that $y_c$ is a $(5,2)-$cusp of $\partial\Omega_{a'}$. Then the unique free critical orbit of $\sigma_{a'}$ converges to $y_c$ and hence the map has a finite critical {\'E}calle height $t_0$. We claim that $a'=a(t_0)$, where $a(t_0)$ is the unique parameter on $\Gamma'$ with critical {\'E}calle height $t_0$.
Since both $\sigma_{a'}, \sigma_{a(t_0)}$ have a unique free critical orbit, one can adapt the arguments of  \cite[Propositions~5.30,~5.32]{LLMM1} to show that both $\Int{K(\sigma_{a'})}$, $\Int{K(\sigma_{a(t_0)})}$ equal the basin of attraction of the cusp $y_c$. Moreover, the proofs of \cite[Lemma~8.5,~Proposition~8.7]{LLMM2} (or more generally, that of \cite[Proposition~6.19]{kiwi1}) apply to the maps $\sigma_{a'},\sigma_{a(t_0)}$ and imply that both $\partial T^\infty(\sigma_{a'}), \partial T^\infty(\sigma_{a(t_0)})$ are quotients of $\partial \mathcal{Q}\cong\mathbb{S}^1$ under the closed $\mathcal{R}_d-$invariant equivalence relation generated by the angles of the dynamical rays landing at the preimages of $y_c$. Note that the angles of these rays only depend on the plane tree $\mathcal{T}$, and hence the corresponding equivalence relation is the same for the two maps $\sigma_{a'},\sigma_{a(t_0)}$. It now follows that the non-escaping set dynamics of $\sigma_{a'}$ and $\sigma_{a(t_0)}$ are topologically conjugate where the conjugacy is conformal on the interior (conformality is a consequence of the fact that the maps have the same critical {\'E}calle height).
Moreover by Lemma~\ref{schwarz_group}, their tiling set dynamics are also conformally conjugate. These two conjugacies match up to yield a global orientation-preserving topological conjugacy between the two Schwarz reflection maps such that the conjugacy is conformal off the limit set. Conformal removability of the limit set of $\sigma_{a(t_0)}$ now implies that $\sigma_{a'}$ and $\sigma_{a(t_0)}$ are M{\"o}bius conjugate and hence equal. (Alternatively, one can employ the pullback arguments of \cite[Proposition~9.4]{LLMM2} to prove rigidity of parameters with $(5,2)-$cusps.)
\smallskip

\noindent\textbf{Step IV: The two endpoints of $\Gamma'$ are the only parameters with $(7,2)-$cusps.}
Since $\Gamma^{\mathrm{hoc}}$ is contained in a real-algebraic curve (defined by the higher order cusp condition), each end of the arc $\Gamma'$ has a unique limit point in $\Gamma^{\mathrm{hoc}}$. 
We claim that the two ends of $\Gamma'$ have distinct endpoints. If this were not true, since $\Gamma^{\mathrm{hoc}}\cap\overline{\Gamma^{\perp}}=\emptyset$ (by Lemma~\ref{pb_hoc_disjoint_lem}), the closure of $\Gamma'$ (in $\C$) would be a topological circle contained in the open half plane $\mathbf{P}=\{z\in\C: \vert z-\pmb{v_w}\vert<\vert z-\pmb{v_b}\vert\}$. As $\Gamma^{\mathrm{hoc}}\subset\partial S_{\mathcal{T}}$, this topological circle must entirely lie on the boundary of $S_{\mathcal{T}}$. On the other hand, there must exist sub-arcs $I', I''$ of this topological circle such that the union of the line segments connecting points of $I'$ to $\pmb{v_b}$ contain $I''$ in its interior. The `projection argument' of Lemma~\ref{para_comp_jordan_lem} now implies that $I''$ is contained in the interior of $S_{\mathcal{T}}$, a contradiction.
\begin{figure}[ht!]
\captionsetup{width=0.98\linewidth}
\begin{tikzpicture}
\node[anchor=south west,inner sep=0] at (0.5,0) {\includegraphics[width=0.66\textwidth]{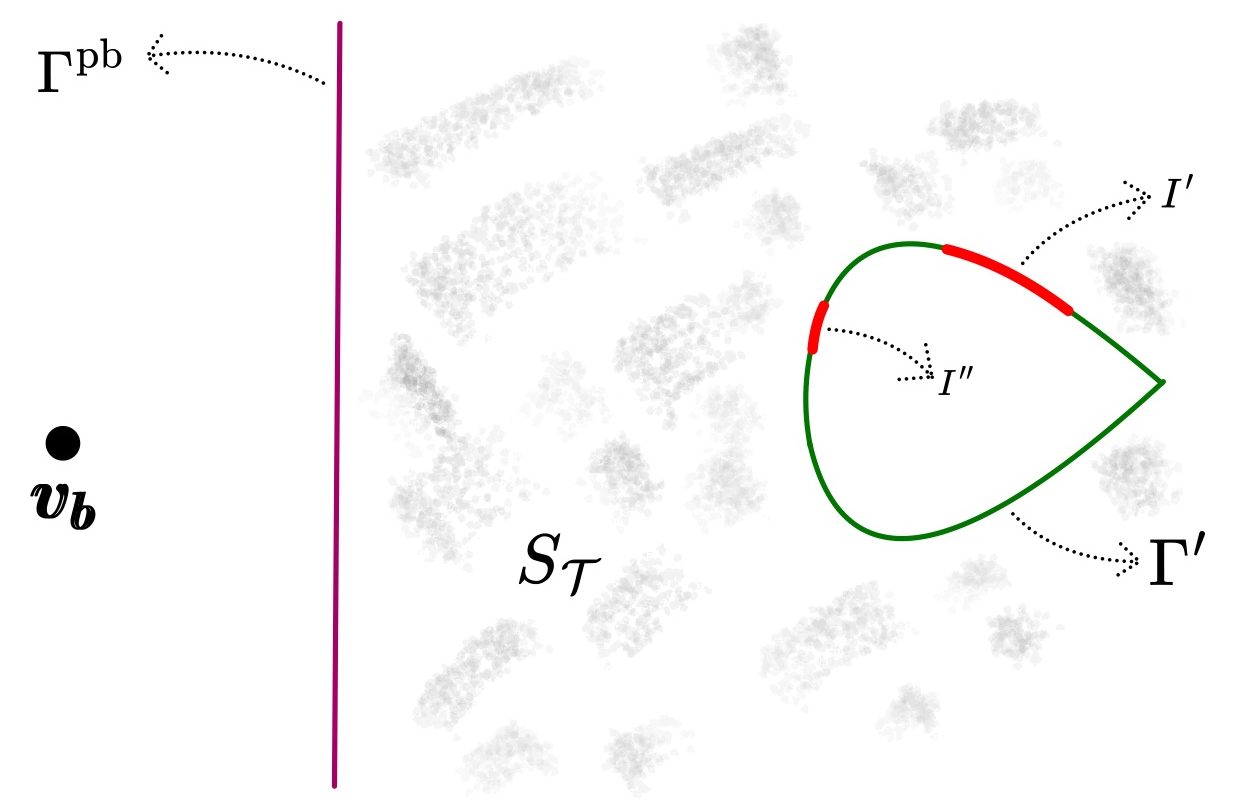}};
\end{tikzpicture}
\caption{}
\label{hoc_two_ends_fig}
\end{figure}

Let $a'$ be a limit point of $\Gamma'$. We will now argue that $a'\in S_{\mathcal{T}}$, and that $y_c$ is a $(7,2)-$cusp of $\partial\Omega_{a'}$. Clearly, $\pmb{f}$ is univalent on $\Delta_{a'}$, $\pmb{v_w}\in\overline{\Delta_{a'}}$, and $y_c$ is a cusp of type $(\nu,2)$ with $\nu>3$ on $\partial\Omega_{a'}$, where $\Omega_{a'}:=\pmb{f}(\Delta_{a'})$. Thus, by the arguments of Lemma~\ref{cusp_anal_dyn_lem}, the forward orbit of $y_f$ under the associated Schwarz reflection map $\sigma_{a'}$ converges non-trivially to the cusp $y_c$. It follows that $y_f=\pmb{f}(\pmb{v_w})\notin\partial\Omega_{a'}$; i.e., $\pmb{v_w}\in\Delta_{a'}$. If there were a double point on $\partial\Omega_{a'}$, then Proposition~\ref{generic_bdd_droplet_dyn_prop} would prevent the critical value $y_f$ of $\sigma_{a'}$ to converge non-trivially to $y_c$. Therefore, $\pmb{f}$ is injective on $\overline{\Delta_{a'}}$, and hence $a'\in S_{\mathcal{T}}$. Thanks to Corollary~\ref{cusp_dyn_cor}, it now suffices to show that $y_c$ is not a $(5,2)-$cusp on $\partial\Omega_{a'}$. We have already established that the parameters in $S_{\mathcal{T}}$ for which $y_c$ is a $(5,2)$ cusp on the corresponding quadrature domain boundaries comprise $\Gamma'$. Thus, we need to prove that $\Gamma'$ does not accumulate on itself. But this follows directly from the fact that $\ell_a\cap\mathbf{P}\setminus\{a\}\subset \Int{S_{\mathcal{T}}}$ for all $a\in\Gamma^{\mathrm{hoc}}$ (see the proof of Lemma~\ref{para_comp_jordan_lem}). Hence, $y_c$ is a $(7,2)-$cusp of $\partial\Omega_{a'}$.

As in the case of $(5,2)-$cusps, one can now apply the arguments of \cite[Proposition~8.15]{LLMM2} to prove a rigidity statement for parameters with $(7,2)-$cusps and conclude that there are no other parameters in $\Gamma^{\mathrm{hoc}}$ such that $y_c$ is a $(7,2)-$cusp of $\partial\Omega_a$.

By Corollary~\ref{cusp_dyn_cor}, these are all the parameters in $\Gamma^{\mathrm{hoc}}$. Therefore, $\Gamma^{\mathrm{hoc}}=\overline{\Gamma'}$ is a closed real-analytic arc.
\end{proof}

\begin{remark}
We do not know if $\Gamma^{\mathrm{hoc}}$ is necessarily a smooth arc.
\end{remark}

\begin{remark}
In Step I of the proof of Lemma~\ref{hoc_arc_lem}, the construction of the parameter $a_0\in S_\cT$ can be made more classical by using a two-step surgery. The first surgery, a la Ha{\"i}ssisnky-Lei \cite{HT04,Hai00}, manufactures a parabolic anti-rational map $R\in\mathfrak{L}_\cT$ that is David conjugate to $p$ on its Julia set and whose dynamics on the bounded invariant Fatou component is conformally conjugate to $B_k\vert_{\D}$ so that $\infty$ is a double parabolic fixed point (where $k$ is the degree of $p$ on its bounded invariant Fatou component). The second step involves the quasiconformal surgery of \cite[Theorem~5.1]{PartI} (cf. Theorem~\ref{qc_bijection_thm}), which produces a Schwarz reflection in $S_\cT$ that is hybrid conjugate to $R$ in neighborhoods of their non-escaping sets. We note that the boundary of the invariant Fatou component of the resulting Schwarz reflection $\sigma_{a_0}$ is a quasicircle. 
We refer the reader to \cite[Figure~14, \S 11.4]{LMMN} for the picture of a Schwarz dynamical plane such that the boundary of the quadrature domain has a $(5,2)-$cusp and the associated limit set forms a wedge at this cusp. 
\end{remark}

\begin{corollary}
For all $a\in\Gamma^{\mathrm{hoc}}$, the free critical orbit $\{\sigma_a^{\circ n}(y_f)\}$ converges to the cusp~$y_c$.
\end{corollary}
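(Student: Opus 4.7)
The plan is to observe that the corollary is essentially an immediate packaging of Corollary~\ref{cusp_dyn_cor} together with Lemma~\ref{cusp_anal_dyn_lem}(3) and the fact that maps in $S_\cT$ have only one free critical value.

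First I would record that by Corollary~\ref{cusp_dyn_cor}, for any $a\in\Gamma^{\mathrm{hoc}}$ the cusp $y_c\in\partial\Omega_a$ is of type $(5,2)$ or $(7,2)$. In either case the local analysis of Schwarz reflection near a conformal cusp (cf.\ Lemma~\ref{cusp_anal_dyn_lem}(3) and \cite[Proposition~A.4]{PartI}) yields the existence of a $\sigma_a^{\circ 2}$-forward-invariant attracting petal $P\subset\Omega_a$ at $y_c$.

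Next I would invoke the standard Fatou-type principle that every cycle of attracting parabolic petals must absorb the forward orbit of a singular value of the map. Since $\sigma_a\in S_\cT$ admits a unique free critical value $y_f$ (the fully ramified critical value at $\infty$ is confined to the tiling component $T^0(\sigma_a)$ and plays no role in $\Omega_a$), every critical point of $\sigma_a$ lying in $\overline{\Omega_a}\setminus\{y_c\}$ maps to $y_f$ in a single iterate. Consequently the only candidate orbit that the petal cycle at $y_c$ can absorb is the forward orbit of $y_f$, and so $\{\sigma_a^{\circ n}(y_f)\}$ must eventually enter $P$ (or its $\sigma_a$-image in the $(5,2)$ case, which forms a period-$2$ petal cycle) and converge to $y_c$.

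There is no real obstacle here: both the attracting-petal structure at higher-order cusps and the uniqueness of the free critical orbit have already been established, so the corollary amounts to combining these two facts. The statement is in fact contained within the second sentence of Lemma~\ref{cusp_anal_dyn_lem}(3), and is recorded here as a separate corollary only because it is the conclusion that will be used downstream.
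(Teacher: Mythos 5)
Your proposal is correct and matches the paper's (implicit) reasoning: the corollary is indeed an immediate consequence of Lemma~\ref{cusp_anal_dyn_lem} (whose condition (3) already asserts that for $a\in\Gamma^{\mathrm{hoc}}$ the unique free critical orbit converges non-trivially to $y_c$), with the Fatou-type petal-absorption argument and the uniqueness of the free critical value supplying the justification, exactly as the paper does in analogous situations (e.g.\ Lemma~\ref{dp_dyn_classification_lem}). The paper gives no separate proof, so your packaging via Corollary~\ref{cusp_dyn_cor} and the attracting-petal structure is essentially the intended one.
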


\begin{remark}
The existence of two distinct parameters $a$ for which $y_c$ is a $(7,2)-$cusp of $\partial\Omega_a$ can be interpreted as follows. For such a parameter, the interior of $K(\sigma_a)$ is the basin of attraction of $y_c$, and every component of $\Int{K(\sigma_a)}$ is mapped eventually to the $2-$cycle of Fatou components touching at $y_c$ (which correspond to the two attracting directions of $\sigma_a^{\circ 2}$ at $y_c$). Moreover, the first return map $\sigma_a^{\circ 2}$ to each of these two components is a unicritical holomorphic Blaschke product of degree equal to the valence of $\pmb{v_w'}$ in $\mathcal{T}$. Since such Blaschke products are rigid and since the external class of $\sigma_a$ is frozen, it follows that all parameters with a $(7,2)-$cusp at $y_c$ have conformally conjugate dynamics on the union of the tiling set and the interior of the non-escaping set. However, the internal and external conjugacies between two such maps would agree if and only if the circular order of the two periodic Fatou components which are marked because one of them contains the marked critical point $\pmb{f}(\eta_a(\pmb{v_w'}))$ while the other contains the free critical value $y_f$) and the droplet at $y_c$ are the same. This implies that there are at most two maps $\sigma_a$ for which $y_c$ is a $(7,2)-$cusp of $\partial\Omega_a$. The proof of Lemma~\ref{hoc_arc_lem} confirms that both possibilities are realized.
\end{remark}

\begin{theorem}\label{para_space_conn_thm}
\noindent\begin{enumerate}[leftmargin=8mm]
\item $\Int{S_{\mathcal{T}}}$ is a bounded Jordan domain. 

\item $\overline{S_{\mathcal{T}}}$ is a topological quadrilateral whose sides are given by $\Gamma^{\mathrm{hoc}}$, $\overline{\Gamma^{\perp}}$, and the two connected components of $\overline{\Gamma^{\mathrm{dp}}}$.
\end{enumerate}
\end{theorem}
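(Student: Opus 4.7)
My plan is to identify $\partial S_\cT$ as a single Jordan curve built from four arcs glued cyclically; both claims then follow from the Jordan curve theorem combined with Lemmas~\ref{para_space_bdd_lem} and~\ref{para_comp_jordan_lem}.

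The first step is to show $\overline{\Gamma^{\perp}}$ is a single closed subarc of $L$ with two endpoints in $\overline{\Gamma^{\mathrm{dp}}}$. Since $S_\cT\subset\mathbf{P}$ and univalence on closed disks is preserved under limits, $L\cap\overline{S_\cT}=\overline{\Gamma^{\perp}}$. The radial retraction of Lemma~\ref{para_comp_jordan_lem}, sending $a\in\overline{S_\cT}$ to $\ell_a\cap L$, is a continuous surjection onto $\overline{\Gamma^{\perp}}$; since $\Int S_\cT\neq\emptyset$ (Corollary~\ref{non_emp_cor}), its image has non-empty interior in $L$, so every component of $\overline{\Gamma^{\perp}}$ is a non-degenerate closed subinterval. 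At an endpoint $a^\ast$ of such a subinterval, $\partial S_\cT$ must continue off $L$ into $\mathbf{P}$: otherwise the $\mathbf{P}$-side of a small neighborhood of $a^\ast$ would be entirely in $\Int S_\cT$, forcing nearby points of $L\setminus\overline{\Gamma^{\perp}}$ to lie in $\overline{S_\cT}$ as limits from $\mathbf{P}$, a contradiction. By Theorem~\ref{bdry_thm} and Lemma~\ref{pb_hoc_disjoint_lem}, this continuation is an arc of $\overline{\Gamma^{\mathrm{dp}}}$, so $a^\ast\in\overline{\Gamma^{\perp}}\cap\overline{\Gamma^{\mathrm{dp}}}$. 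Lemma~\ref{dp_pb_two_lem} then forces $\overline{\Gamma^{\perp}}$ to be a single closed arc.

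The second step shows each endpoint of $\Gamma^{\mathrm{hoc}}$ (a $(7,2)$-cusp parameter, from Step~IV of the proof of Lemma~\ref{hoc_arc_lem}) lies in $\overline{\Gamma^{\mathrm{dp}}}$. Such an endpoint lies in $\partial S_\cT\setminus\overline{\Gamma^{\perp}}$, and the same local-continuation argument shows $\partial S_\cT$ must extend from it along a second arc which, by Theorem~\ref{bdry_thm} and Corollary~\ref{hoc_dp_disjoint_cor}, lies in $\overline{\Gamma^{\mathrm{dp}}}$. Hence $\Gamma^{\mathrm{hoc}}\cap\overline{\Gamma^{\mathrm{dp}}}$ consists of exactly the two endpoints of $\Gamma^{\mathrm{hoc}}$.

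With these four corner points identified, the finitely many real-analytic arcs comprising $\overline{\Gamma^{\mathrm{dp}}}$ can terminate only at them. A closed-loop component of $\overline{\Gamma^{\mathrm{dp}}}$ would enclose a bounded component $R$ of $\overline{S_\cT}^c$ sitting in $\mathbf{P}$; but for any $p\in R$, choosing $a\in S_\cT$ on the radial line from $\pmb{v_b}$ through $p$ and beyond $p$ (which exists since $\overline{S_\cT}$ surrounds $R$), the segment $\ell_a\cap\mathbf{P}$ lies in $\overline{S_\cT}$ by Lemma~\ref{para_comp_jordan_lem} and passes through $p$, contradicting $p\notin\overline{S_\cT}$. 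So $\overline{\Gamma^{\mathrm{dp}}}$ consists of exactly two arcs, each of which---for $\partial S_\cT$ to close up---must connect one endpoint of $\Gamma^{\mathrm{hoc}}$ to one endpoint of $\overline{\Gamma^{\perp}}$. The cyclic concatenation of $\Gamma^{\mathrm{hoc}}$, one arc of $\overline{\Gamma^{\mathrm{dp}}}$, $\overline{\Gamma^{\perp}}$, and the other arc of $\overline{\Gamma^{\mathrm{dp}}}$ is thus a Jordan curve, and the Jordan curve theorem combined with Lemmas~\ref{para_space_bdd_lem} and~\ref{para_comp_jordan_lem} gives both statements.

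The principal technical obstacle is the local-continuation argument invoked at each corner point: one must verify that $\partial S_\cT$ genuinely extends beyond each endpoint of $\Gamma^{\mathrm{hoc}}$ and of $\overline{\Gamma^{\perp}}$ into $\mathbf{P}$ along an arc of $\overline{\Gamma^{\mathrm{dp}}}$, rather than terminating abruptly. This requires a careful local analysis of $\overline{S_\cT}^c$ near such points, combined with the structural description of $\partial S_\cT$ from Theorem~\ref{bdry_thm}.
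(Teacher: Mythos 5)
Your proposal is essentially the paper's argument: the paper likewise combines the radial projection of Lemma~\ref{para_comp_jordan_lem} with the two-point bound of Lemma~\ref{dp_pb_two_lem} (each component of $\Int{S_{\mathcal{T}}}$ must consume two points of $\overline{\Gamma^{\perp}}\cap\overline{\Gamma^{\mathrm{dp}}}$, of which only two exist) and then reads the quadrilateral structure off the resulting Jordan curve; your reorganization around components of $\overline{\Gamma^{\perp}}$ rather than components of $\Int{S_{\mathcal{T}}}$ is cosmetic. One justification should be repaired: injectivity on a \emph{closed} disk is \emph{not} preserved under locally uniform limits (Hurwitz only controls the open disk, and boundary double points can appear in the limit --- this is exactly why $\Gamma^{\mathrm{dp}}$ is nonempty), so the identification of $L\cap\overline{S_{\mathcal{T}}}$ should instead be extracted from Theorem~\ref{bdry_thm} together with the fact that $\Gamma^{\mathrm{hoc}}$ and $\Gamma^{\mathrm{dp}}$ lie in the open half-plane $\mathbf{P}$.
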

\begin{proof}
1) By Lemma~\ref{para_comp_jordan_lem}, it suffices to show that $\Int{S_{\mathcal{T}}}$ is connected.

Let us first note that by the projection argument of Lemma~\ref{para_comp_jordan_lem}, the boundary of each component of $\Int{S_{\mathcal{T}}}$ contains a non-degenerate interval in $\overline{\Gamma^{\perp}}$. As $\Gamma^{\mathrm{hoc}}\cap\overline{\Gamma^{\perp}}=\emptyset$, we conclude that the boundary of each component of $\Int{S_{\mathcal{T}}}$ must contain at least two points of $\overline{\Gamma^{\perp}}\cap\overline{\Gamma^{\mathrm{dp}}}$. It follows that if $\Int{S_{\mathcal{T}}}$ had two distinct components, then $\overline{\Gamma^{\perp}}\cap\overline{\Gamma^{\mathrm{dp}}}$ would contain at least three distinct points, which would contradict Lemma~\ref{dp_pb_two_lem}. Hence, $\Int{S_{\mathcal{T}}}$ is connected.

2) By Lemma~\ref{para_comp_jordan_lem}, $\overline{S_{\mathcal{T}}}=\overline{\Int{S_{\mathcal{T}}}}$. By the proof of the first part of the theorem, $\partial S_{\mathcal{T}}$ is a Jordan curve in $\C$ consisting of a closed real-analytic arc $\Gamma^{\mathrm{hoc}}$, a closed interval $\overline{\Gamma^{\perp}}$, and a pair of closed real-analytic arcs $\overline{\Gamma^{\mathrm{dp}}}$ that connect the two endpoints of $\Gamma^{\mathrm{hoc}}$ to the two endpoints of $\overline{\Gamma^{\perp}}$. This yields the desired topological quadrilateral structure of $\overline{S_{\mathcal{T}}}$. 
\end{proof}

\section{Combinatorics of connected filled Julia sets}\label{dynamical_plane_sec}

Before we delve deeper into the topological structure of the connectedness locus $\cC(S_\cT)$, we collect some useful results on the dynamics of the associated Schwarz reflection maps.

 Recall from Corollary~\ref{schwarz_tree_cor} that for $a\in S_\cT$, the dessin d'enfant $\cT(\sigma_a)$ of the Belyi Schwarz reflection $\sigma_a$ is isomporphic to $\cT$ as a plane bicolored tree.

\textbf{Notation:} We denote the black (respectively, white) vertices of $\cT(\sigma_a)$ as $\{b_j\}$ and $\{w_i\}$, where $b_0=y_c=\pmb{f}(\pmb{v_b})$ is the root of the tree and $w_1=\pmb{f}(\eta_a(\pmb{v_w'}))$ is the white vertex adjacent to the root (cf. Definition~\ref{shabat_schwarz_family_def}). Note that $y_f=\sigma_a(w_i)$ is the free critical value of $\sigma_a$.

\begin{proposition}\label{TembK}For all $a\in \mathcal{C}(S_\mathcal{T})$ for which $K(\sigma_a)$ is locally connected, the tree $\mathcal T$ embeds into $K(\sigma_a)$, and the embedding is unique up to a homotopy fixing the vertices.\end{proposition}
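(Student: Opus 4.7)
My approach is to produce the embedding by choosing a single simple arc inside $K(\sigma_a)$ connecting the two critical values and pulling it back under $\sigma_a$, and then to argue uniqueness using the topology of the filled Julia set together with a standard covering-space lift.

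\textbf{Existence.} Because $a\in\cC(S_\cT)$, both the cusp $y_c$ and the free critical value $y_f$ lie in $K(\sigma_a)$ (the former is fixed by $\sigma_a$, and the latter belongs to $K(\sigma_a)$ by Proposition~\ref{connected_critical}). Since $K(\sigma_a)$ is a compact, connected, locally connected continuum, the Hahn--Mazurkiewicz theorem provides a simple arc $\gamma_a\subset K(\sigma_a)$ joining $y_c$ to $y_f$. The non-escaping set is backward $\sigma_a$-invariant: if $\sigma_a(z)\in K(\sigma_a)$ then the forward orbit of $z$ never reaches the tiling set, so $z\in K(\sigma_a)$. Consequently $\sigma_a^{-1}(\gamma_a)\subset K(\sigma_a)$, and by Corollary~\ref{schwarz_tree_cor} this preimage is a bicolored plane tree isomorphic to $\cT$ as a rooted plane tree, with vertex set $\{b_j\}\cup\{w_i\}$. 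This delivers the desired embedding of $\cT$ into $K(\sigma_a)$.

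\textbf{Uniqueness.} Given two such arcs $\gamma_a,\gamma_a'\subset K(\sigma_a)$, I will first build a homotopy from $\gamma_a$ to $\gamma_a'$ inside $K(\sigma_a)$ rel $\{y_c,y_f\}$, and then lift it via $\sigma_a$ to a homotopy of the trees $\sigma_a^{-1}(\gamma_a)$ and $\sigma_a^{-1}(\gamma_a')$ keeping all vertices fixed. The existence of the arc homotopy follows from the fact that $K(\sigma_a)$ is simply connected as a topological space: its complement in $\widehat{\C}$ is $T^\infty(\sigma_a)$, which is a simply connected domain by Proposition~\ref{connected_critical}, so $K(\sigma_a)$ is a full locally connected planar continuum, and such a continuum has trivial fundamental group. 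In a simply connected space, any two paths with the same endpoints are homotopic rel endpoints. For the lifting step, one perturbs the homotopy slightly so that its interior avoids the (finite) set of critical values of $\sigma_a$ except at the prescribed endpoint $y_f$; the branches of $\sigma_a^{-1}$ can then be chosen continuously along the homotopy and anchored at the vertices $\sigma_a^{-1}(\{y_c,y_f\})$, producing the required tree homotopy.

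\textbf{Main obstacle.} The crux is justifying the simple connectivity of $K(\sigma_a)$ and hence the uniqueness of arcs up to homotopy rel endpoints. Once this topological input is available, the remainder of the argument is essentially formal: pulling back by $\sigma_a$ to obtain the tree, and lifting the arc homotopy through a covering-space argument. Should a more hands-on uniqueness proof be preferred, one can invoke the Fatou-component structure of $K(\sigma_a)$---each bounded Fatou component is a Jordan disk (by local connectivity and the hybrid conjugacy of Theorem~\ref{qc_bijection_thm} with an anti-rational map $R\in\mathfrak{L}_\cT$), the complement is dendrite-like, and regulated arcs à la Douady--Hubbard give a canonical representative in each homotopy class.
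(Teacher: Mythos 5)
Your proposal is correct and follows essentially the same route as the paper: the paper's proof likewise takes an arc $\gamma\subset K(\sigma_a)$ from $y_c$ to $y_f$, observes that $\sigma_a^{-1}(\gamma)$ is an embedded copy of $\cT$ (via Corollary~\ref{schwarz_tree_cor}), and derives uniqueness from the simple connectivity of $K(\sigma_a)$. Your write-up simply spells out the details (Hahn--Mazurkiewicz for the arc, backward invariance of $K(\sigma_a)$, and the covering-space lift of the arc homotopy) that the paper leaves implicit.
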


\begin{proof}
Let $\gamma$ be a path in $K(\sigma_a)$ with endpoints $y_c$ and $y_f$. Then $\sigma_a^{-1}(\gamma)$ is an embedding of the tree $\cT$ in $K(\sigma_a)$. The simple connectivity of $K(\sigma_a)$ guarantees the uniqueness.
\end{proof}

We will show that even when the Julia set is not locally connected, the combinatorial structure of $\mathcal T$ is still present in $K(\sigma_a)$.
A dynamical $\theta-$ray of $\sigma_a$ (see Definition~\ref{dyn_ray_schwarz}) is said to be \emph{periodic} if $\theta$ is periodic under $\mathcal{R}_d$.

\begin{proposition}\label{per_rays_land_prop}
Let $a\in\cC(S_\cT)$. Then, all periodic dynamical rays of $\sigma_a$  land on $\partial K(\sigma_a)$. In particular, the $0-$ray lands at the cusp $y_c$.
\end{proposition}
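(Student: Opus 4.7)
The dynamical rays of $\sigma_a$ are defined in Definition~\ref{dyn_ray_schwarz} as images under the conformal conjugacy $\psi_a\colon \mathrm{Int}(\mathcal{Q})\to T^\infty(\sigma_a)$ (Lemma~\ref{schwarz_group}) of the $\pmb{\Gamma}_d$-rays in $\mathcal{Q}$. Since $\psi_a$ conjugates $\mathcal{R}_d$ to $\sigma_a$, landing of a dynamical ray $R_\theta$ reduces to the boundary behavior of $\psi_a$ at the landing endpoint $\theta$ of the corresponding $\pmb{\Gamma}_d$-ray in $\partial \mathcal{Q}$.

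The plan for the $0$-ray is local at the cusp. The $0$-ray in $\mathcal{Q}$ is the concatenation of geodesic segments emanating from $0$ and accumulating at the ideal vertex $1\in\partial\mathcal{Q}_1$, and $1$ is the unique parabolic fixed point of $\mathcal{R}_d$. Under the normalization of $\psi_a$ used in Lemma~\ref{schwarz_group}, the vertex $1$ corresponds to the cusp $y_c\in\partial T^0(\sigma_a)$. From the local description of a Schwarz reflection near a conformal cusp (Proposition~\ref{simp_conn_quad} together with \cite[Appendix~A]{PartI}), one sees that $\psi_a$ extends continuously across $1$ and sends this vertex to $y_c$. Consequently the $0$-dynamical ray of $\sigma_a$ lands at $y_c$.

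For a periodic ray $R_\theta$ of exact period $n$ under $\mathcal{R}_d$ with $\theta\neq 0$, the angle $\theta$ is a repelling periodic point of $\mathcal{R}_d$ on $\partial\mathcal{Q}$, since $1$ is the only parabolic periodic point on $\partial\mathcal{Q}$ and no other periodic angle lies in its grand orbit. The ray $R_\theta$ is forward invariant under $\sigma_a^{\circ n}$ up to reparametrization, with a fundamental domain of uniformly bounded hyperbolic length in $T^\infty(\sigma_a)$ (transported from $\mathcal{Q}$ via $\psi_a$). I would follow the Douady--Sullivan scheme: the accumulation set $X=\overline{R_\theta}\setminus R_\theta$ is a compact, connected, $\sigma_a^{\circ n}$-invariant subset of $\partial K(\sigma_a)$. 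Using the hypothesis $a\in\cC(S_\cT)$, which places $y_f$ in $K(\sigma_a)$, the map $\sigma_a^{\circ n}$ admits univalent anti-holomorphic inverse branches (defined on $T^\infty(\sigma_a)$) along the tail of $R_\theta$ that expand the hyperbolic metric of $T^\infty(\sigma_a)$. Since hyperbolic fundamental domains on $R_\theta$ stay of bounded hyperbolic diameter but approach $\partial T^\infty(\sigma_a)$, the standard comparison of hyperbolic and Euclidean metrics at the boundary of a bounded planar domain shows that their Euclidean diameters shrink to zero, forcing $X$ to be a singleton in $\partial K(\sigma_a)$.

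The main obstacle I anticipate is controlling the passage from hyperbolic to Euclidean shrinkage near $\partial T^\infty(\sigma_a)$, which may be highly irregular since $K(\sigma_a)$ need not be locally connected. The resolution is to note that the landing point of a periodic ray is necessarily a fixed point of the corresponding iterate of $\sigma_a$, and such fixed points in $\partial K(\sigma_a)$ form a discrete set; thus it suffices to show that each point of $X$ is fixed by $\sigma_a^{\circ n}$, which follows from the bounded hyperbolic diameter of fundamental domains combined with the fact that $\sigma_a^{\circ n}$ is, up to the parabolic fixed point $y_c$ handled separately above, locally expanding at all $\sigma_a^{\circ n}$-fixed points on $\partial K(\sigma_a)$.
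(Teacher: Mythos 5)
Your proposal is correct and follows essentially the same route as the paper: the Douady--Hubbard hyperbolic-metric argument showing that the fundamental segments of a periodic ray have bounded hyperbolic (hence shrinking Euclidean) diameter, that consequently every accumulation point is fixed by the relevant iterate, and that connectivity of the accumulation set together with finiteness of such fixed points on $\partial K(\sigma_a)$ forces landing. You correctly anticipate the one genuine subtlety (Euclidean shrinkage alone does not give a singleton) and resolve it exactly as the paper does; your separate local treatment of the $0$-ray at the cusp is a harmless addition.
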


\begin{proof}[Sketch of proof]
The proof is similar to that of \cite[Proposition~5.44]{LLMM1} (cf. \cite[Theorem~18.10]{Milnor06}). Let $R_a(\theta)$ be a periodic ray of period $p$. In the hyperbolic metric on $\widehat{\C}\setminus K(\sigma_a)$ the ray is a union of geodesic segments tending towards the boundary at infinity such that all segments have the same length and $\sigma_a$ sends one endpoint of the segment to the other. In particular, it must accumulate to at least one point on $K(\sigma_a)$, which we denote as $z_0$. We claim that $z_0$ is a periodic point with period dividing $p$. Let $U$ be a neighborhood of $z_0$. Let $I_{k}$ denote the union of $p$ fundamental segments of the ray, with endpoints at a rank $pk$ tile and a rank $p(k+1)$ tile. There is some $k$ large enough such that $I_{k}\subset U$. But now as $U\cap \sigma_a^{\circ p}(U)\supset I_{k}\cap I_{(k-1)}\neq \emptyset$, it follows that $z_0$ is fixed by $\sigma_a^{\circ p}$.

The set of accumulation points of a ray must be connected, and is thus either a single point or uncountable. But as the number of periodic points of $\sigma_a$ (on $\partial K(\sigma_a)$) of period dividing $p$ is finite, it follows that $z_0$ must be the only accumulation point of this ray.
\end{proof}

\begin{proposition}\label{rep_para_landing_points_prop}
Let $a\in\cC(S_\cT)$, and $z_0$ be a repelling or parabolic periodic point of $\sigma_a$ on $\partial K(\sigma_a)$. Then the following assertions are true.
\begin{enumerate}[leftmargin=8mm]
\item At least one periodic dynamical ray of $\sigma_a$ lands at $z_0$.

\item The number of rays landing at $z_0$ is equal to the number of connected components of $K(\sigma_a)\setminus\{z_0\}$.
\end{enumerate}
\end{proposition}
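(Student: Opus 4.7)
The plan is to adapt the classical Douady-Hubbard landing theorems (cf. \cite[Theorems~18.10,~18.11]{Milnor06}) to the present setting, leveraging the conformal conjugacy between the tiling dynamics of $\sigma_a$ and the anti-Farey map $\cR_d$ provided by Lemma~\ref{schwarz_group}(1). Under this conjugacy, dynamical rays $R_a(\theta)$ of $\sigma_a$ correspond to $\pmb{\Gamma}_d$-rays in $\mathcal{Q}$, and periodic rays correspond to angles periodic under $\cR_d$; in particular there are only finitely many periodic rays of any given period.

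For part~(1) in the repelling case, let $p$ be the period of $z_0$. The even iterate $\sigma_a^{\circ 2p}$ is holomorphic in a neighborhood of $z_0$ with a repelling fixed point there, hence linearizable on some neighborhood $U$. Since $z_0\in\partial K(\sigma_a)=\partial T^\infty(\sigma_a)$, choose $w\in U\cap T^\infty(\sigma_a)$; it lies on some dynamical ray $R_a(\theta)$. Iterating the inverse branch $g=(\sigma_a^{\circ 2p})^{-1}$ fixing $z_0$ produces $w_n:=g^{\circ n}(w)\to z_0$, where $w_n\in R_a(\theta_n)$ and $\{\theta_n\}$ is a coherent sequence of preimages of $\theta$ under iterates of $\cR_d^{\circ 2p}$. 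A bounded-geometry argument, using uniform contraction of $g$ near $z_0$ together with the fact that fundamental arcs of rays have bounded hyperbolic length in $T^\infty(\sigma_a)$, shows that suitable pieces of these rays have Euclidean diameter shrinking geometrically to zero; hence $\{\theta_n\}$ accumulates at the periodic angles of the appropriate period, and by finiteness it must converge to a single periodic angle $\theta_\infty$ whose ray $R_a(\theta_\infty)$ lands at $z_0$. In the parabolic case the analogous pullback argument is performed inside a repelling petal of $\sigma_a^{\circ p}$ at $z_0$, using that the petal intersects $T^\infty(\sigma_a)$ nontrivially and that the inverse branch fixing $z_0$ contracts a fundamental domain toward $z_0$ in Fatou coordinates; the resulting accumulating periodic ray must then land at $z_0$ by Proposition~\ref{per_rays_land_prop}.

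For part~(2), the finitely many periodic rays landing at $z_0$ partition $\widehat{\C}\setminus\{z_0\}$ into open sectors, and each connected component of $K(\sigma_a)\setminus\{z_0\}$ is contained in exactly one such sector because the rays lie entirely in $T^\infty(\sigma_a)$. Conversely, the cyclic order of landing rays at $z_0$ matches (via linearization in the repelling case, or via Fatou coordinates in the parabolic case) the cyclic order of local accesses to $z_0$ from within $K(\sigma_a)$, and between any two consecutive landing rays there is exactly one such local access, yielding exactly one component of $K(\sigma_a)\setminus\{z_0\}$ in that sector. The main obstacle is the bookkeeping for the orientation-reversing dynamics: both $\sigma_a$ and $\cR_d$ reverse orientation, which forces us to pass to even iterates for holomorphic linearization and to track carefully the action of $\cR_d$ on ray angles, while in the parabolic case one must additionally reconcile the Leau-Fatou flower combinatorics at $z_0$ with the tile combinatorics inherited from the anti-Farey model. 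Once this bookkeeping is in place, the remaining arguments transfer directly from the classical polynomial setting of \cite[\S 18]{Milnor06}.
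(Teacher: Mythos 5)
Your proposal is correct and takes essentially the same route as the paper: the paper's proof consists of the single remark that the arguments of \cite[Theorems~18.11,~18.13]{Milnor06} (and \cite[Theorems~24.5,~24.6]{Lyu25}) apply mutatis mutandis, and your write-up is precisely that adaptation, with the expected modifications (passing to even iterates for linearization, using the anti-Farey model via Lemma~\ref{schwarz_group} in place of B\"ottcher coordinates, and repelling petals in the parabolic case) spelled out.
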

\begin{proof}
The proofs of \cite[Theorems~18.11,~18.13]{Milnor06}, \cite[Theorems~24.5,~24.6]{Lyu25} apply mutatis mutandis to the present setting.
\end{proof}

\begin{corollary}\label{corollary: crit_points_separated}
For $a\in \cC(S_\cT)$ no two distinct vertices of $\cT(\sigma_a)$ can lie in the same Fatou component of $\sigma_a$.
\end{corollary}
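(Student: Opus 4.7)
The plan is to argue in two steps: first rule out black vertices from Fatou components, then apply a Riemann--Hurwitz count to rule out two distinct white vertices in a common Fatou component.

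For the first step, Proposition~\ref{per_rays_land_prop} gives $y_c\in\partial K(\sigma_a)$, as the dynamical $0$-ray lands there. Since $\sigma_a$ sends Fatou components (components of $\Int{K(\sigma_a)}$) properly onto Fatou components, any $z\in K(\sigma_a)$ with $\sigma_a(z)\in\partial K(\sigma_a)$ must itself lie on $\partial K(\sigma_a)$. Applied to preimages of $y_c$, this forces every black vertex of $\cT(\sigma_a)$ to lie on $\partial K(\sigma_a)$, and hence in no Fatou component.

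For the second step, suppose for contradiction that two distinct white vertices $w_1\neq w_2$ both lie in a Fatou component $U$. Both map to $y_f$, forcing $y_f\in\sigma_a(U)\subset\Int{K(\sigma_a)}$; let $U^{(0)}$ denote the Fatou component containing $y_f$, so that $\sigma_a\colon U\to U^{(0)}$ is a proper branched cover of some degree $D\geq 2$. Both $U$ and $U^{(0)}$ are simply connected (see final paragraph). The critical values of $\sigma_a$ are $\{y_c,y_f,\infty\}$, of which only $y_f$ can lie in $U^{(0)}\subset\Int{K(\sigma_a)}$, since $y_c\in\partial K(\sigma_a)$ and $\infty\in T^\infty(\sigma_a)$. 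Hence every critical point of $\sigma_a|_U$ is a preimage of $y_f$ inside $U$, i.e., a white vertex of $\cT(\sigma_a)$ contained in $U$. Enumerating these white vertices with local degrees $k_1,\ldots,k_m$, preimage counting with multiplicity gives $\sum_{i=1}^m k_i = D$, while Riemann--Hurwitz applied to the branched cover $\sigma_a\colon U\to U^{(0)}$ between topological disks yields $\sum_{i=1}^m (k_i-1) = D-1$. Subtracting forces $m=1$, contradicting our assumption $m\geq 2$.

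The one non-formal ingredient is simple connectivity of the Fatou components inside $K(\sigma_a)$, which is where I expect the main care to be needed. I would justify it by transporting the question through the hybrid conjugacy of Theorem~\ref{qc_bijection_thm} to the corresponding parabolic anti-rational map $R\in\mathfrak{L}_\cT$; for such $R$, Sullivan's non-wandering theorem together with the classification of periodic Fatou components shows that each Fatou component inside $\cK(R)$ is either an attracting basin, a non-fully-invariant parabolic basin, or a Siegel disk, and in particular simply connected.
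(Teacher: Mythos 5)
Your argument is correct, but it takes a genuinely different route from the paper's. The paper's proof is a separation argument: using Proposition~\ref{per_rays_land_prop}, each black vertex $b_j$ of $\cT(\sigma_a)$ lies on $\partial K(\sigma_a)$ and is the landing point of at least $\mathrm{val}(b_j)$ dynamical rays, so every black vertex of valence at least two is a cut-point of $K(\sigma_a)$; since any two distinct white vertices are separated in the tree by such a black vertex, they lie in different components of $K(\sigma_a)\setminus\{b_j\}$, and a Fatou component, being a connected subset of $K(\sigma_a)$ missing $b_j$, cannot contain both. You replace this with a Riemann--Hurwitz count on the proper map $\sigma_a\colon U\to U^{(0)}$ between simply connected Fatou components, exploiting that $y_f$ is the only critical value of $\sigma_a$ that can lie in $\Int{K(\sigma_a)}$. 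Both arguments are valid and cover all cases (black vertices are excluded from Fatou components in either approach). The paper's route reuses the ray-landing machinery and simultaneously yields the cut-point decomposition of $K(\sigma_a)$ that is needed again in Proposition~\ref{partition_prop}; your route is more local, avoids rays almost entirely (for your Step 1 it suffices to note $y_c\in\partial T^0(\sigma_a)\cap K(\sigma_a)$, hence $y_c\in\partial K(\sigma_a)$, without invoking the $0$-ray), and gives the slightly sharper count that each Fatou component contains at most one preimage of $y_f$ even with multiplicity.

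The one place your justification should be tightened is simple connectivity of the Fatou components. Passing through the hybrid conjugacy and then citing Sullivan plus the classification of periodic Fatou components does not quite close the gap: an attracting or parabolic basin of a (anti-)rational map need not be simply connected in general, so "in particular simply connected" does not follow from the classification alone. What actually does the job --- and much more cheaply --- is fullness: for $a\in\cC(S_\cT)$, Proposition~\ref{connected_critical} shows that $T^\infty(\sigma_a)$ is a simply connected domain, so $K(\sigma_a)$ is a full continuum, and every component of the interior of a full compact set is simply connected. (The same reasoning applies on the straightened side, since $\cK(R)=\widehat{\C}\setminus\cB(R)$ is full because $\cB(R)$ is connected.) With that substitution your proof is complete.
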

\begin{proof}
We note that the black vertices $b_j$ of $\cT(\sigma_a)$ lie on $\partial K(\sigma_a)$. Further, by Proposition~\ref{per_rays_land_prop}, each of them is the landing point of at least $\mathrm{val}(b_j)$ many dynamical rays, where $\mathrm{val}(b_j)$ is the valence of the vertex $b_j$ in the tree $\cT(\sigma_a)$.

Let $w_i$ and $w_{i'}$ be distinct white vertices of $\cT(\sigma_a)$. Then, any sub-continuum $K'$ of $K(\sigma_a)$ containing $w_i, w_{i'}$ also contains some vertex $b_j$ of $\cT(\sigma_a)$ with $\mathrm{val}(b_j)\geq 2$ (where $b_j$ lies between $w_i$ and $w_{i'}$ in the tree). It follows from the previous paragraph that $b_j$ is a cut-point of $K(\sigma_a)$ and hence also a cut-point of $K'$. This clearly implies that $w_i$ and $w_{i'}$ cannot lie in the same Fatou component of $\sigma_a$.
\end{proof}

Define the \textit{principal sectors} for a Schwarz reflection $\sigma_a\in S_\mathcal T$ to be the planar regions whose boundaries are given by the dynamical rays landing at preimages of the cusp $y_c$, and such that no such dynamical ray is in the interior. When $\sigma_a\in \mathcal C(S_\mathcal T)$ we may intersect these principal sectors with $K(\sigma_a)$ to arrive at the following analogue of \ref{TembK}:

\begin{proposition}\label{partition_prop}
Let $a\in \mathcal C(S_\cT)$. Then $K(\sigma_a)$ is the union of finitely many sub-continua, any two of which intersect in at most one point, namely at a critical preimage of the cusp $y_c$ (i.e., at a black vertex $b_j$ with $\mathrm{val}(b_j)\geq 2$). Each such sub-continuum contains exactly one white vertex $w_i$.

Let $G$ be the plane tree whose vertices are preimages of the cusp $y_c$ and the free critical value $y_f$, and with edges connecting a preimage of $y_c$ to a preimage of $y_f$ if they lie in the same sub-continuum as above. Then $G$ is isomorphic as a bi-colored plane rooted tree to $\mathcal T$.
\end{proposition}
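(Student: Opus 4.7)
The plan is to realize the decomposition through the principal sectors and count everything using Euler's formula. First, observe that the $0$-ray is fixed under the anti-Farey map $\mathcal{R}_d$, and that $\mathcal{R}_d\vert_{\partial\mathcal{Q}}$ is an orientation-reversing covering of degree $d$. Consequently, under $\sigma_a$ the preimages of the $0$-ray consist of precisely $d$ dynamical rays, each of which lands at some black vertex $b_j\in\sigma_a^{-1}(y_c)$ by Proposition~\ref{per_rays_land_prop}. By the counting argument in the proof of Corollary~\ref{corollary: crit_points_separated} (which relies on Proposition~\ref{rep_para_landing_points_prop}(2) together with the fact that $b_j$ disconnects $K(\sigma_a)$ into at least $\mathrm{val}_{\cT}(b_j)$ pieces), at least $\mathrm{val}_{\cT}(b_j)$ of these rays land at $b_j$; since $\sum_j\mathrm{val}_{\cT}(b_j)=d$ for the tree $\cT$ and there are only $d$ rays in total, equality must hold pointwise.

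Next, I would regard these $d$ rays together with the point $\infty$ and the vertices $\{b_j\}$ as a connected planar graph $H\subset\widehat{\C}$ with $V=1+B$ vertices and $E=d$ edges, where $B$ is the number of black vertices of $\cT$. Writing $B+W=d+1$ for $\cT$ (with $W$ the number of white vertices), Euler's formula on the sphere gives $F=2-V+E=1+d-B=W$ faces; that is, the principal sectors are equinumerous with the white vertices of $\cT$. To pair them up, I claim that at each $b_j$ the $\mathrm{val}_{\cT}(b_j)$ rays and the $\mathrm{val}_{\cT}(b_j)$ edges of the embedded dessin $\cT(\sigma_a)$ must alternate cyclically: the former lie in $T^\infty(\sigma_a)$ while the latter lie in $K(\sigma_a)$, so no two edges of one family can sit in the same local cyclic gap of the other. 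Each edge of $\cT(\sigma_a)$ therefore remains inside a single principal sector and exhibits a white vertex in that sector, so by the count each principal sector contains exactly one white vertex. Taking intersections with $K(\sigma_a)$ produces the promised sub-continua; since the interiors of the rays are disjoint from $K(\sigma_a)$, any two sub-continua can only meet at a common landing point of a ray shared by their sectors, namely at a black vertex $b_j$ with $\mathrm{val}_{\cT}(b_j)\geq 2$.

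Finally, this picture furnishes a tautological edge-bijection between $\cT(\sigma_a)$ and $G$: the edge of $\cT(\sigma_a)$ leaving $b_j$ inside the local $K$-wedge that sits in a sector $F$ is matched with the edge of $G$ joining $b_j$ to the unique white vertex of $F$. The bijection preserves the bicoloring, preserves the cyclic order of edges at every vertex, and sends the root $b_0=y_c$ to itself, yielding the desired isomorphism of rooted bicolored plane trees $G\cong\cT(\sigma_a)\cong\cT$. The main technical point is the alternation claim of the middle paragraph, which is where one must carefully exploit the fact that the dessin edges and dynamical rays lie in disjoint sets and the rotation number around $b_j$ is the same in both combinatorial structures; once this is granted, the rest reduces to the Euler-formula bookkeeping.
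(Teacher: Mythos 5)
Your route is the one the paper intends: the statement is presented there as an immediate consequence of cutting the plane into principal sectors along the dynamical rays landing at $\sigma_a^{-1}(y_c)$ (using Propositions~\ref{per_rays_land_prop} and~\ref{rep_para_landing_points_prop} and the count from Corollary~\ref{corollary: crit_points_separated}), and your Euler-formula bookkeeping correctly fleshes out the face/white-vertex correspondence that the paper leaves implicit.

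The one step that does not hold up as written is exactly the one you flag: the alternation of rays and dessin edges around a black vertex $b_j$. The justification ``the former lie in $T^\infty(\sigma_a)$ while the latter lie in $K(\sigma_a)$, so no two edges of one family can sit in the same local cyclic gap of the other'' is a non sequitur — disjointness of the two families from each other does not exclude a cyclic pattern such as ray, ray, edge, edge at $b_j$; a priori two consecutive rays could bound a local branch of $K$ containing no dessin edge, and then some sector would capture two white vertices while another captured none, breaking your count. The correct (and shorter) argument is local: $\sigma_a$ maps a punctured neighborhood of $b_j$ onto a punctured neighborhood of $y_c$ as a degree $\mathrm{val}(b_j)$ covering, and the tail of the $0$-ray and the tail of the arc $\gamma_a$ are two arcs emanating from $y_c$ that are disjoint away from $y_c$; hence their preimages necessarily interlace in the cyclic order around $b_j$ (one full turn around $b_j$ covers $\mathrm{val}(b_j)$ turns around $y_c$, each contributing one preimage of each arc in a fixed relative position). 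This simultaneously yields that exactly $\mathrm{val}(b_j)$ rays land at $b_j$ and the alternation you need, after which your Euler count and the tautological edge-bijection with $G$ go through. With that substitution the proof is complete.
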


\subsection{Hubbard trees for Schwarz reflections}

We will say that a Schwarz reflection $\sigma_a\in S_\cT$ is \emph{postcritically finite} if the orbit of the free critical value $y_f$ is~finite.

To describe the combinatorics of Schwarz reflections where all non-escaping critical points have finite forward orbits, one can define its \emph{augmented Hubbard tree} precisely as in Definition~\ref{def_augmented_hubbard_tree_poly}. 

\begin{definition}\label{aug_hub_tree_schwarz_def}
    For a postcritically finite Schwarz reflection $\sigma_a\in S_\cT$, define the \textit{augmented Hubbard tree} $\mathcal{H}(\sigma_a)$ of $\sigma_a$ to be the minimal hull in the filled Julia set of the post-critical set, preimages of the critical values, the landing point of the $0$ ray (i.e., the cusp), and its preimages.\\
  Furthermore, if the cusp has valence larger than one in the augmented Hubbard tree, we record the cyclic ordering at the cusp of the edges of the tree and the $0-$ray.
\end{definition}

\begin{remark}
    For maps in $\mathfrak{L}_\cT$ such that the critical points not in the marked parabolic basin have finite forward orbits, we may define augmented Hubbard trees in an analogous way.
\end{remark}

For a PCF Schwarz reflection $\sigma_a\in S_\cT$, one can choose the dessin $\cT(\sigma_a)$ so that it lies entirely in the non-escaping set $K(\sigma_a)$. Indeed, this can arranged by defining $\cT(\sigma_a)$ to be the tree $\sigma^{-1}(\gamma_a)$, where $\gamma_a$ is an arc connecting $y_c, y_f$ in $K(\sigma_a)$ (cf. Corollary~\ref{schwarz_tree_cor}). The following result is a consequence of this construction, the definition of the augmented Hubbard tree, and fullness of $K(\sigma_a)$.

\begin{proposition}\label{dessin_in_aug_hub_tree_prop}
    For a PCF Schwarz reflection $\sigma_a\in S_\cT$, the augmented Hubbard tree $\cH(\sigma_a)$ contains the dessin $\cT(\sigma_a)$, up to homotopy in the interior of $K(\sigma_a)$.
\end{proposition}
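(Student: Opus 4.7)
The plan is to exhibit an embedded realization of $\cT(\sigma_a)$ inside $\cH(\sigma_a)$ that is isotopic rel vertices to the defining realization $\sigma_a^{-1}(\gamma_a)$, using the three ingredients indicated in the text: the flexibility in choosing $\gamma_a$, the definition of $\cH(\sigma_a)$, and fullness of $K(\sigma_a)$.

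The first (combinatorial) step is to verify, directly from Definition \ref{aug_hub_tree_schwarz_def}, that the vertex set $V := \sigma_a^{-1}(\{y_c,y_f\})$ of $\cT(\sigma_a)$ is contained in $\cH(\sigma_a)$: the white vertices lie in $\sigma_a^{-1}(y_f)$ and are thus preimages of a critical value; the root $b_0 = y_c$ is the landing point of the $0$-ray by Proposition \ref{per_rays_land_prop}; and the remaining black vertices are preimages of $y_c$. Since $\cH(\sigma_a)$ is a tree containing both $y_c$ and $y_f$, there is a unique arc $\gamma_a^{\ast} \subset \cH(\sigma_a)$ joining them, and I fix $\gamma_a := \gamma_a^{\ast}$, so that $\cT(\sigma_a) = \sigma_a^{-1}(\gamma_a^{\ast})$ is an embedded planar tree in $K(\sigma_a)$ with $V \subset \cH(\sigma_a)$. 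The second (topological) step uses fullness: since $a \in \cC(S_\cT)$, Proposition \ref{connected_critical} gives that $T^\infty(\sigma_a)$ is simply connected, and combined with local connectivity of $K(\sigma_a)$ (which holds for PCF maps by standard semi-hyperbolicity arguments), $K(\sigma_a)$ is a full, simply connected, locally connected planar continuum. In such a set, any arc with endpoints in a sub-tree $T \subset K(\sigma_a)$ is homotopic rel endpoints inside $K(\sigma_a)$ to the unique arc of $T$ between the same endpoints. Applying this edge-by-edge to $\cT(\sigma_a)$ yields a planar sub-tree $\cT' \subset \cH(\sigma_a)$ together with an ambient isotopy of $K(\sigma_a)$ taking $\cT(\sigma_a)$ to $\cT'$ rel $V$.

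The main obstacle will be to ensure that the edge-by-edge homotopies assemble into a genuinely embedded sub-tree of $\cH(\sigma_a)$, rather than a degenerate configuration in which distinct dessin edges incident at a common $V$-vertex collapse into the same branch of $\cH(\sigma_a)$. This is controlled by two facts: first, cyclic orders at $V$-vertices are preserved under ambient planar isotopy inside $K(\sigma_a)$; second, the local valence of $\cH(\sigma_a)$ at each $V$-vertex $v$ is at least the local valence of $\cT(\sigma_a)$ at $v$, since $\cH(\sigma_a)$ also records incident edges arising from the post-critical structure and additional preimages passing through $v$. Together these imply that distinct dessin edges incident at $v$ can be sent to distinct branches of $\cH(\sigma_a)$ at $v$ in a manner respecting the cyclic order, yielding the desired embedded containment of $\cT(\sigma_a)$ in $\cH(\sigma_a)$ up to homotopy in $\Int{K(\sigma_a)}$.
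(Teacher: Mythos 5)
Your argument is correct and follows essentially the same route as the paper, which justifies this proposition in a single sentence by citing exactly the three ingredients you use: the freedom to choose $\gamma_a$ inside $K(\sigma_a)$, the fact that the vertex set $\sigma_a^{-1}(\{y_c,y_f\})$ consists of marked points in the definition of $\cH(\sigma_a)$ (preimages of the critical values, the cusp, and its preimages), and fullness of $K(\sigma_a)$ to upgrade vertex containment to containment up to homotopy. Your extra discussion of why the edge-by-edge homotopies produce an embedded subtree goes beyond what the paper records, but it is consistent with the paper's cut-point results (Propositions \ref{rep_para_landing_points_prop} and \ref{partition_prop}).
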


Poirier's realization theorem for PCF anti-polynomials (see Theorem~\ref{poi_thm}), together with Theorem~\ref{Thm_C_part_I} allows the construction of many post-critically finite maps in $S_\cT$. We give a construction below showing that any `critically periodic' map in $S_\cT$ can be enriched by gluing in the dynamics of PCF unicritical (anti-)holomorphic polynomials.

\begin{lemma}[Tuning Trees]\label{tuning_trees_lem}
    Let 
    \begin{enumerate}[leftmargin=8mm]
        \item $\sigma_a\in S_\cT$ be \emph{critically periodic of period $n$}; i.e., one of the white critical points $w_i$ of $\sigma_a$ lies in an $n-$cycle;
        \item $d_i$ be the local degree of $\sigma_a$ at $w_i$; and 
        \item $p$ be a degree $d_i$ unicritical, PCF, polynomial, with $p$ anti-holomorphic if $n$ is odd and holomorphic if $n$ is even.
        \end{enumerate}    
    Then there exists a PCF map $\sigma_{a'}\in S_\cT$ such that there is a neighborhood $U\ni y_f$ of the free crticial value with $U\subset \sigma_{a'}^{\circ n} (U)$ and a David conjugacy from $p$, restricted to some neighrborhood of $\cK(p)$, to $\sigma_{a'}^{\circ n}\vert_U$.
\end{lemma}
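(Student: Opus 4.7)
The plan is to realize the desired tuning first combinatorially as a PCF anti-polynomial using Poirier's realization theorem, and then to transfer it to a Schwarz reflection using the David surgery of Theorem~\ref{Thm_C_part_I}. The local David conjugacy in the conclusion will be recovered by combining the global David conjugacy produced by Theorem~C with a standard straightening argument that identifies the first-return map of the anti-polynomial near the tuning cycle with $p$.

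For the combinatorial step I would start with the augmented Hubbard tree $\cH(\sigma_a)$, which by Proposition~\ref{dessin_in_aug_hub_tree_prop} contains $\cT$ as a sub-tree. Form a tuned angled tree $\widetilde{\cH}$ by excising a small topological neighborhood of each of the $n$ periodic points in the orbit of $w_i$ and gluing in its place a copy of the augmented Hubbard tree of $p$, matching the $d_i$ edges of $\cH(\sigma_a)$ incident at the excised point with the $d_i$ preimages of $p$'s critical value in the glued copy. Define the tuned dynamics $\widetilde{F}$ to act as the original tree dynamics off the inserted copies and as the Hubbard tree dynamics of $p$ on them; the parity hypothesis on $p$ in (3) is precisely what makes the whole of $\widetilde{F}$ orientation-reversing. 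One then checks that $\widetilde{F}$ is expanding (by combining the expansion properties of the two ingredients), so that Poirier's Theorem~\ref{poi_thm} realizes $(\widetilde{\cH},\widetilde{F})$ as a PCF anti-polynomial $P$, unique up to affine conjugacy.

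Since $P$ is PCF and every periodic critical point of $P$ (if any) is superattracting and hence lies in the Fatou set, $P$ is semi-hyperbolic. Theorem~\ref{Thm_C_part_I} therefore produces a Schwarz reflection $(\Omega_{a'},\sigma_{a'})\in\cS_{\cR_d}$ together with a David conjugacy $\phi$ between $P$ on a pinched neighborhood of $\cK(P)$ and $\sigma_{a'}$ on a corresponding neighborhood of $K(\sigma_{a'})$. The tuning only modifies $\cH(\sigma_a)$ inside the Fatou cycle of $w_i$, so the external combinatorial type of the dessin is preserved; together with Corollary~\ref{schwarz_tree_cor}, this shows that $\sigma_{a'}\in S_\cT$ and is PCF with the $w_i$-analogue still periodic of period $n$. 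Finally, the Douady--Hubbard straightening theorem applied to the polynomial-like restriction of $P^{\circ n}$ near the inserted copy of $\cK(p)$, combined with the rigidity of PCF unicritical (anti-)polynomials, yields a quasiconformal conjugacy between $p$ on some neighborhood of $\cK(p)$ and $P^{\circ n}$ on a neighborhood of the inserted copy; composing with $\phi$ produces the required David conjugacy onto $\sigma_{a'}^{\circ n}|_U$, and the inclusion $U\subset\sigma_{a'}^{\circ n}(U)$ follows from the analogous polynomial-like inclusion for $p$.

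The main obstacle I expect is the verification of the expansion condition for the glued angled tree dynamics at the junctions between $\cH(\sigma_a)$ and the inserted copies, together with the careful tracking of the dessin of $\sigma_{a'}$ through $\phi$; the latter amounts to showing that the pullback under $\phi^{-1}$ of an arc in $K(\sigma_{a'})$ connecting the cusp to the free critical value gives a sub-tree of $\widetilde{\cH}$ combinatorially isomorphic to $\cT$. A possibly cleaner alternative would be to bypass Poirier's theorem and Theorem~C altogether by directly quasiconformally tuning $\sigma_a$ by $p$ on the superattracting basin of $w_i$'s cycle, in the spirit of the surgery in Lemma~\ref{slice_non_emp_lem}; one would then verify that the resulting map remains a Schwarz reflection in $S_\cT$ using the quasiconformal closedness result of Proposition~\ref{qc_closed_prop}.
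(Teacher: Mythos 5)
Your overall architecture is exactly the paper's: tune the augmented Hubbard tree by blowing up the points of the critical cycle into copies of $\cH(p)$, verify expansion, realize the result as a PCF anti-polynomial via Poirier's theorem, and transfer to $S_\cT$ via the David surgery of Theorem~\ref{Thm_C_part_I}. However, two details of your gluing recipe are wrong as stated, and both would derail the construction.

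First, you attach the $d_i$ components of $\cH(\sigma_a)\setminus\{w_i\}$ at ``the $d_i$ preimages of $p$'s critical value in the glued copy.'' The critical value of a unicritical degree-$d_i$ map has a \emph{single} preimage (the critical point), so there are no such $d_i$ points. The correct attachment points are the landing point $\beta$ of the $0$-ray of $p$ --- a fixed point of $p$ --- together with its $p$-preimages; the component of $\cH(\sigma_a)\setminus\{w_i\}$ containing the root $y_c$ must be attached at $\beta$ itself and the remaining components at the other preimages of $\beta$, respecting the circular order. This is precisely why the \emph{augmented} Hubbard tree of $p$ (which contains $\beta$ and its preimages by Definition~\ref{def_augmented_hubbard_tree_poly}) is the right object to glue in: without these marked points the attachment sites do not exist, and without the equivariant choice ($\beta$ fixed, external dynamics fixing the root) the tuned tree map is not well defined.

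Second, you let $\widetilde{F}$ act ``as the Hubbard tree dynamics of $p$'' on all the inserted copies. If each of the $n$ copies in the cycle were mapped by $p$'s tree dynamics, the first-return map to a copy would realize $p^{\circ n}$, not $p$, contradicting the conclusion you want (a conjugacy from $p$ to $\sigma_{a'}^{\circ n}|_U$). The correct prescription is that only the copy at the critical point $w_i$ maps to the copy at $y_f$ via the action induced by $p$ (or $\overline{p}$, whichever makes the step orientation-reversing); the remaining $n-1$ maps around the cycle are orientation-reversing tree isomorphisms. With these two corrections your argument coincides with the paper's proof; the closing appeal to Douady--Hubbard straightening to identify the first-return map with $p$ is consistent with (if slightly more elaborate than) what the paper does, and your proposed alternative via direct quasiconformal surgery on the superattracting basin is plausible but is not the route taken.
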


\begin{proof}
We will construct $\sigma_{a'}$ by constructing its Hubbard tree. Denote the augmented Hubbard tree of $p$ as $\cH(p)$ and the augmented Hubbard tree of $\sigma_a$ as $\mathcal{H}(\sigma_a)$.

Recall that $\cH(p)$ contains the landing point for the dynamical ray at angle $0$, which we denote $\beta$, as well as all preimages of $\beta$. We produce a \emph{tuning} of $\mathcal{H}(\sigma_a)$ by $\cH(p)$ as follows (see \cite[\S 5.2]{IK12} for the dual notion of tuning rational laminations). Delete from $\mathcal{H}(\sigma_a)$ the point $w_i$, giving $d_i$ components which are circularly ordered. Attach the component of $\mathcal{H}(\sigma_a)\setminus \{w_i\}$ which contains the root $y_c$ of $\cT(\sigma_a)$ to $\cH(p)$ at $\beta$, and attach the remaining components of $\mathcal{H}(\sigma_a)\setminus \{w_i\}$ to the preimages of $\beta$, maintaining the circular ordering. This amounts to `blowing up' $w_i$ to the tree $\cH(p)$. Note that every other point of the critical cycle maps to $w_i$ locally injectively under some iterate of $\sigma_a$. Hence, we can blow up these points of the critical cycle and glue in copies of $\cH(p)$, where the gluings are determined by the action of $\sigma_a$ and the gluing of $\cH(p)$ at $w_i$.

Denote the resulting `tuned' tree as $\widehat{\mathcal{H}}$. We now equip $\widehat{\mathcal{H}}$ with a tree map. The copy of $\cH(p)$ (in $\widehat{\mathcal{H}}$) corresponding to $w_i$ is mapped to the copy of $\cH(p)$ (in $\widehat{\mathcal{H}}$) corresponding to $y_f$ using the action induced by $p$ or $\overline{p}$ (ensuring that the map is orientation-reversing). Each of the other copies of $\cH(p)$ (i.e., the ones corresponding to the non-critical points in the critical cycle) is mapped cyclically to the next one as an orientation-reversing isomorphism. We define the map on the rest of $\widehat{\mathcal{H}}$ by mapping all black vertices of $\cT(\sigma_a)\subset\mathcal{H}(\sigma_a)$ to the root, and mapping all remaining white vertices of $\cT(\sigma_a)\subset\mathcal{H}(\sigma_a)$ to the critical value of the tree map contained in the embedding of $\cH(p)$ corresponding to $y_f$. Finally, we extend the map homeomorphically to the edges.

It is readily checked that this tree dynamics is expanding, and hence by \cite[Theorem 5.1]{Poi13}, there exists a postcritically finite, anti-holomorphic polynomial $q$, whose augmented Hubbard tree dynamics is conjugate to the above tree map on $\widehat{\mathcal{H}}$. By construction, $q^{\circ n}$ has a (anti-)polynomial-like restriction that is hybrid equivalent to $p$.

Note that $q$ has two finite critical values, one of which is fixed. Furthermore for an arc $I$ conecting the two critical values, $q^{-1}(I)$ is isomorphic as a bi-colored planar embedded tree to $\mathcal T$.  By Theorem~\ref{Thm_C_part_I}, the map $q$ is David conjugate to a Schwarz reflection in $\cS_{\mathcal{R}_d}$, and by the critical point structure just described, such a Schwarz reflection $\sigma_{a'}$ belongs to $\cC(S_\mathcal T)$. The map $\sigma_{a'}$ is the desired PCF map in~$S_\cT$.
\end{proof}

\section{Hyperbolic components in $S_{\mathcal{T}}$}\label{hyp_comp_sec}

We say that a parameter $a\in S_\cT$ is \textit{hyperbolic} if $\sigma_a$ has an attracting cycle\footnote{Strictly speaking, we should call such parameters \textit{relatively} hyperbolic, as there is always a fixed point with parabolic dynamics. However, as this behavior is persistent throughout our parameter space, we call these maps hyperbolic for the sake of brevity.}. Let $\cU_a$ be the immediate basin of the attracting cycle; i.e., the union of the Fatou components which contain the attracting periodic cycle. Necessarily there is a critical point $c_a$ contained in $\cU_a$ \cite{Milnor06}. 

\begin{proposition}
There are no critical or co-critical points in $\cU_a$ other than $c_a$.
\end{proposition}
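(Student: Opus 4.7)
The plan is to enumerate all critical and co-critical points of $\sigma_a$ in $\overline{\Omega_a}$ and rule out every candidate other than $c_a$ from lying inside $\cU_a$. Writing $\sigma_a=\pmb{f}\circ\eta_a\circ(\pmb{f}\vert_{\overline{\Delta_a}})^{-1}$, the critical values of $\sigma_a$ are contained in $\{y_c,y_f,\infty\}$, so the full collection of critical and co-critical points of $\sigma_a$ splits as the fully ramified preimage $\pmb{f}(a)$ of $\infty$ together with the vertex set $\{w_i\}\cup\{b_j\}$ of the dessin $\cT(\sigma_a)$ (the white vertices being the preimages of $y_f$ and the black vertices the preimages of $y_c$).

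First I would locate these candidates with respect to $K(\sigma_a)$. The point $\pmb{f}(a)$ lies in $\sigma_a^{-1}(T^0(\sigma_a))\subset T^\infty(\sigma_a)$, so it is excluded from $K(\sigma_a)\supset\cU_a$ outright. Every black vertex $b_j$ is a $\sigma_a$-preimage of the parabolic fixed point $y_c\in\partial K(\sigma_a)$; since $\Int K(\sigma_a)$ is forward invariant under the open map $\sigma_a$, its preimage cannot meet $\Int K(\sigma_a)$, so $\{b_j\}\subset\partial K(\sigma_a)$ and in particular $\{b_j\}\cap\cU_a=\emptyset$. Consequently $c_a$ is forced to be a white critical vertex with $\sigma_a(c_a)=y_f$, the attracting cycle passes through $y_f$, and $a\in\cC(S_\cT)$.

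The only remaining possibility is a second white vertex $w_j\neq c_a$ sitting in $\cU_a$. Enumerate the components of $\cU_a$ as $U_0,\ldots,U_{n-1}$ with $\sigma_a(U_k)=U_{k+1\bmod n}$, arranged so that $y_f\in U_0$ and $c_a\in U_{n-1}$. If $w_j\in U_k$, then $\sigma_a(w_j)=y_f\in U_0$ forces $k=n-1$, placing $w_j$ and $c_a$ in the same Fatou component $U_{n-1}$. This contradicts Corollary~\ref{corollary: crit_points_separated}, so $w_j=c_a$.

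The real work is already carried by Corollary~\ref{corollary: crit_points_separated}; the only subtlety here is to notice that black vertices are automatically trapped on the limit set (so the corollary, whose proof primarily addresses the white-white separation, combined with this observation covers all cases), and then to bookkeep carefully which critical or co-critical points of $\sigma_a$ can a priori live in the bounded Fatou components making up $\cU_a$.
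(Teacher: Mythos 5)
Your proof is correct and follows essentially the same route as the paper: the paper's one-line argument invokes Corollary~\ref{corollary: crit_points_separated} together with the observation that any Fatou component containing a white vertex maps to the periodic component containing $y_f$, which is exactly your final step forcing $w_j$ into $U_{n-1}$ alongside $c_a$. Your additional bookkeeping (excluding $\pmb{f}(a)$ via the tiling set and the black vertices via forward invariance of $\Int K(\sigma_a)$) just makes explicit what the paper leaves implicit.
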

\begin{proof}
This is a consequence of Corollary~\ref{corollary: crit_points_separated} and the fact that each Fatou component containing a white vertex $w_i$ maps under $\sigma_a$ to the periodic Fatou component containing $y_f$.
\end{proof}

With notation as above, we will call the critical point $c_a$ the \emph{distinguished critical point} of $\sigma_a$. Further, we denote the periodic Fatou component containing $c_a$ as $U_a$, and call it the \emph{distinguished Fatou component} of $\sigma_a$.

\begin{definition}
We define the \textit{primary} hyperbolic component $H$ to be the hyperbolic component of $\mathcal C(S_\mathcal T)$ such that each $\sigma_a\in H$ has an attracting fixed point and  the distinguished critical point $c_a$ of $\sigma_a$ is $w_1$, which is the critical point adjacent to the cusp point $y_c$ in $\mathcal T(\sigma_a)$.
\end{definition}

\begin{proposition}\label{hoc_bdry_primary}
$\Gamma^{\mathrm{hoc}}\subset\overline{H}\subset\mathcal{C}(S_\cT)$. Moreover, any sequence in $S_{\mathcal{T}}$ accumulating on the interior of $\Gamma^{\mathrm{hoc}}$ is eventually contained in $H$.
\end{proposition}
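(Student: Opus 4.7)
The plan is to establish the three assertions of the proposition separately.

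For $\overline H\subset\mathcal C(S_\cT)$: any $a\in H$ has an attracting fixed point whose immediate basin contains the distinguished critical point $w_1$ of $\sigma_a$; since $\sigma_a(w_1)=y_f$, the free critical value $y_f$ also lies in this basin, so $y_f\in K(\sigma_a)$ and $a\in\mathcal C(S_\cT)$. As escape of $y_f$ through $T^0(\sigma_a)$ occurs in finitely many iterates (an open condition in $a$), $\mathcal C(S_\cT)$ is closed, and the inclusion extends to $\overline H$.

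For $\Gamma^{\mathrm{hoc}}\subset\overline H$: I would invert a parabolic bifurcation. Starting from the specific $a_0\in\Gamma^{\mathrm{hoc}}$ constructed in Step~I of the proof of Lemma~\ref{hoc_arc_lem} from a postcritically finite anti-polynomial $p$ with a superattracting fixed point, I would perturb $p$ to a nearby anti-polynomial with a linearly attracting fixed point of small multiplier and apply only the first (anti-Farey) step of the surgery, omitting the parabolic replacement of the bounded Fatou component. The resulting Schwarz reflections have an attracting fixed point in the Fatou component containing $w_1$ and hence lie in $H$, and they converge to $a_0$ as the multiplier tends to the parabolic limit. Parallel quasiconformal deformations carried out inside $H$ realize every parameter of the \'Ecalle-height arc $\Gamma'$ (Step~II of Lemma~\ref{hoc_arc_lem}) as a limit of parameters in $H$; taking closure gives $\Gamma^{\mathrm{hoc}}=\overline{\Gamma'}\subset\overline H$.

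For the last assertion, fix $a_*\in\mathrm{int}(\Gamma^{\mathrm{hoc}})=\Gamma'$ and let $a_n\in S_\cT$ with $a_n\to a_*$. By Theorem~\ref{bdry_thm} we may assume each $a_n\in\mathrm{int}(S_\cT)$, so $y_c$ is a $(3,2)$ cusp of $\partial\Omega_{a_n}$. Writing the local expansion
\begin{equation*}
\sigma_a^{\circ 2}(z)-z=c(a)(z-y_c)^2+d(a)(z-y_c)^3+O((z-y_c)^4),
\end{equation*}
the $(5,2)$-cusp condition at $a_*$ gives $c(a_*)=0$ and $d(a_*)\neq 0$. For $a_n$ close to $a_*$ there is a second fixed point $p(a_n)\approx y_c-c(a_n)/d(a_n)\to y_c$ of $\sigma_{a_n}^{\circ 2}$ with multiplier $1+c(a_n)^2/d(a_*)+O(c(a_n)^3)$. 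Since $a_*\in\partial S_\cT$ and $S_\cT$ lies locally on one definite side of the real-analytic arc $\{c(a)=0\}$ (by the projection argument of Lemma~\ref{para_comp_jordan_lem}), the leading real part of $c(a_n)^2/d(a_*)$ is forced to be negative for $a_n\in S_\cT$, so $p(a_n)$ is an attracting fixed point of $\sigma_{a_n}^{\circ 2}$. Combined with $p(a_n)\to y_c$ and the non-trivial convergence of the $\sigma_{a_*}$-orbit of $y_f$ to $y_c$, continuity of iterates on compacta forces the $\sigma_{a_n}$-orbit of $y_f$ into the immediate basin of $p(a_n)$ for large $n$, yielding $a_n\in H$.

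The hardest step is the sign determination above: one must match the geometric direction of approach $a_n\to a_*$ within $S_\cT$ to the dynamical bifurcation direction that produces an attracting (rather than repelling) post-bifurcation fixed point. I expect this to follow either from a direct local power-series computation at the cusp relating $c(a)$ and $d(a)$ to the osculation condition of Lemma~\ref{cusp_anal_dyn_lem}, or more intrinsically from the persistence of the attracting petal of $\sigma_{a_*}$ inside $\Omega_{a_*}$ as a full attracting basin of $p(a_n)$ under perturbations compatible with the quadrature-domain constraint, in the spirit of arguments used in \cite{LLMM3} for the Chebyshev family.
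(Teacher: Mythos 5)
Your overall strategy for $\Gamma^{\mathrm{hoc}}\subset\overline{H}$ and for the final assertion coincides with the paper's: perturbing a $(5,2)$-cusp parameter into $\Int{S_\cT}$ splits an attracting fixed point off the cusp, and its basin captures $w_1$ and hence the orbit of $y_f$. (Your first paragraph is fine; your second, surgery-based paragraph is redundant once the third is in place, and in any case rests on an unproved continuity of the David/quasiconformal surgery as the multiplier tends to the parabolic limit, plus an unexplained way of realizing every \'Ecalle height of $\Gamma'$ from inside $H$, where \'Ecalle heights are not even defined.) The genuine gap is in the execution of the splitting argument. You posit an expansion $\sigma_a^{\circ 2}(z)-z=c(a)(z-y_c)^2+d(a)(z-y_c)^3+O((z-y_c)^4)$, but no such integer-power Taylor expansion exists: $y_c$ is a conformal cusp of $\partial\Omega_a$, the map $\sigma_a$ is defined only on $\overline{\Omega_a}$ (and $\sigma_a^{\circ 2}$ only on part of a cusped neighborhood of $y_c$), and the local normal form at a $(\nu,2)$-cusp involves fractional powers of $z-y_c$ --- which is precisely why the paper classifies cusp dynamics by type and by attracting/repelling directions via \cite[Appendix~A]{PartI} (Lemma~\ref{cusp_anal_dyn_lem}, Proposition~\ref{cusp_dyn_cor}) rather than by germ coefficients. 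Contrast this with double points, where Proposition~\ref{prop_double_point_germ} does yield an honest analytic parabolic germ. Consequently the location $p(a_n)\approx y_c-c(a_n)/d(a_n)$ of the split-off fixed point, its multiplier formula, and above all the sign determination that you yourself flag as the hardest step are not established; the argument is incomplete exactly at its crux.

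A substitute that works: for $a'\in\Int{S_\cT}$ the cusp is of type $(3,2)$ and has only a repelling direction in $\Omega_{a'}$ (Theorem~\ref{bdry_thm} and Proposition~\ref{cusp_dyn_cor}), so the attracting direction present at $a_*\in\Int{\Gamma^{\mathrm{hoc}}}$ must, for nearby $a'$, be accounted for by a fixed point of $\sigma_{a'}$ inside $\Omega_{a'}$ that merges into $y_c$ as $a'\to a_*$. Because $\sigma_{a'}$ is antiholomorphic, any such fixed point has $\sigma_{a'}^{\circ 2}$-multiplier of the form $|\bar\partial\sigma_{a'}|^2\in[0,\infty)$, so it cannot be irrationally indifferent; it is attracting, repelling, or multiplier-one, and a continuity argument at the cusp (persistence of the invariant attracting petal of $\sigma_{a_*}$ as an absorbing region for $\sigma_{a'}$) rules out the last two possibilities and simultaneously shows that the Fatou component of the new fixed point contains $w_1$, so $a'\in H$. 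This is essentially what the paper's brief proof asserts, resting on the cusp classification of \cite[Proposition~A.4]{PartI}; it is the step your power-series ansatz was meant to replace but does not.
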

\begin{proof}
Let $a$ lie in the interior of $\Gamma^{\mathrm{hoc}}$. Then the cusp $y_c$ on $\partial\Omega_a$ is a $(5,2)-$cusp, and $y_c$ has an attracting direction in $\Omega_a$. Further, $\sigma_a$ has a Fatou component $V_a$ containing this attracting direction such that $w_1\in V_a$, $y_c\in\partial V_a$, and all points in $V_a$ converge to $y_c$ (asymptotically to the attracting direction).

One can perturb $a$ slightly into the interior of $S_\cT$ to produce parameters $a'$ for which the cusp $y_c$ splits into two distinct fixed points: one of them is the cusp $y_c$ itself with only a repelling direction in $\Omega_a$ (for the parameter $a'$, the cusp $y_c$ is of type $(3,2)$), and the other is an attracting fixed point. Moreover, the Fatou component $\cU_a$ containing this attracting fixed point also contains $w_1$. Hence, the parameter $a\in\Gamma^{\mathrm{hoc}}$ can be approximated by parameters $a'\in H$.
\end{proof}

\subsection{Uniformizing hyperbolic components}\label{hyp_comp_unif_subsec}

For a hyperbolic map $\sigma_a$, the orbit of the free critical value $y_f$ stays in $K(\sigma_a)$, and thus $a\in\cC(S_\cT)$. Suppose that for a hyperbolic map $\sigma_a$, the period of the $U_a$ is $p$.
Since the distinguished Fatou component $U_a\ni c_a$ is topologically equivalent to $\D$, we can choose a conformal isomorphism $\phi_a\colon \D\to U_a$ that sends the origin to $\sigma_a^{\circ p}(c_a)$. Then,
the first return map $\sigma_a^{\circ p}|_{U_a}$  is a proper (anti-)holomorphic map, and hence $\phi^{-1}\circ \sigma_a^{\circ p} \circ \phi\colon \D\to \D$ is a Blaschke product.
With the above normalization, the resulting holomorphic and anti-holomorphic (unicritical) Blaschke products are of the form
$$
B^+_{\alpha,\lambda,n}(z)=\lambda\left(\frac{z-\alpha}{\bar \alpha z-1}\right)^n \text{ and } B^-_{\alpha,\lambda,n}(z)= \overline{\lambda\left(\frac{z-\alpha}{\bar\alpha z-1}\right)^n},
$$
where $|\lambda|=1$, $\alpha\in \D$, and $n=\deg{\sigma_a^{\circ p}\vert_{U_a}}=\mathrm{val}(c_a)$. This argument, as explained in \cite[Lemma 3.2]{NS}, gives rise to the following proposition.

\begin{proposition}Let $\sigma_a\in S_\cT$ be hyperbolic, $U_a$ be the $p-$periodic Fatou component of $\sigma_a$ containing a critical point, and $n=\deg{\sigma_a^{\circ p}\vert_{U_a}}$. Then there exist $\alpha\in \D,\lambda\in\mathbb{S}^1$ such that the first return map $\sigma_a^{\circ p}|_{U_a}$ is conformally conjugate either to $B^+_{\alpha,\lambda,n}$ if $p$ is even or to $B^-_{\alpha,\lambda,n}$ if $p$ is odd.

Normalizing in such a way that $1$ is a fixed point of the Blaschke product, there are $d-1$ choices of $(\alpha,\lambda)$ in the holomorphic case, and $d+1$ choices of $(\alpha,\lambda)$ in the anti-holomorphic case.\end{proposition}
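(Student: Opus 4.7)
The plan is to first produce the claimed uniformization and then enumerate the discrete normalizations by counting fixed points on $\partial\D$.

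For the first statement, pick any conformal isomorphism $\phi_a\colon\D\to U_a$ with $\phi_a(0)=\sigma_a^{\circ p}(c_a)$. Since $\sigma_a^{\circ p}|_{U_a}$ is a proper (anti-)holomorphic self-map of $U_a$, the conjugate $B:=\phi_a^{-1}\circ\sigma_a^{\circ p}\circ\phi_a$ is a proper self-map of $\D$ fixing $0$. It is holomorphic when $p$ is even and antiholomorphic when $p$ is odd, hence a Blaschke or anti-Blaschke product. By Corollary~\ref{corollary: crit_points_separated}, the component $U_a$ contains only the critical point $c_a$ of $\sigma_a$, so $B$ has a unique critical point, of multiplicity $n-1$, at $\alpha:=\phi_a^{-1}(c_a)$. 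A standard factorization argument (write $B$ as a finite Blaschke product of degree $n$ and observe that its derivative has a single zero of order $n-1$) shows that any proper (anti-)holomorphic self-map of $\D$ of degree $n$ with a single critical point of multiplicity $n-1$ at $\alpha$ is exactly of the form $B^\pm_{\alpha,\lambda,n}$ for some $\lambda\in\mathbb{S}^1$, giving the first part.

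For the enumeration, the only remaining freedom in the uniformization is precomposition by a rotation $z\mapsto \mu z$, $\mu\in\mathbb{S}^1$, which changes $B$ within its one-parameter conformal conjugacy class. Imposing the normalization $B(1)=1$ then selects, from this $\mathbb{S}^1$-family, precisely those conjugates in which some fixed point of $B|_{\mathbb{S}^1}$ has been rotated to $1$. Hence the number of admissible pairs $(\alpha,\lambda)$ equals the number of fixed points of $B$ on $\mathbb{S}^1$.

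To complete the proof I count the boundary fixed points. Since $B$ has an attracting fixed point inside $\D$, the Schwarz--Pick lemma forbids any other fixed point in $\D$, and $B|_{\mathbb{S}^1}$ is an expanding (anti-)holomorphic covering of topological degree $n$ in the holomorphic case and $-n$ in the antiholomorphic case. For the model $z\mapsto z^n$, the equation $z^n=z$ on $\mathbb{S}^1$ yields $n-1$ solutions; for $z\mapsto\bar z^n$, the equation $\bar z^n=z$ rearranges to $z^{n+1}=1$ and gives $n+1$ solutions. For a general unicritical (anti-)Blaschke product with an attracting interior fixed point, the counts are the same: one deforms $(\alpha,\lambda)$ through the (connected) parameter space of such maps and observes that no boundary fixed point can enter $\D$ along the deformation (Schwarz--Pick again) and no two boundary fixed points can collide (the circle map remains an expanding covering). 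The main technical point is precisely this rigidity of the boundary fixed-point count under deformation; once it is in hand, the enumeration $n-1$ and $n+1$ matches the asserted counts $d-1$ and $d+1$.
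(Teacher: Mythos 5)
Your overall strategy is the same as the paper's, which simply runs the argument of Nakane--Schleicher (\cite[Lemma~3.2]{NS}): uniformize $U_a$ by $\phi_a$ with $\phi_a(0)=\sigma_a^{\circ p}(c_a)$, observe that the conjugated return map is a unicritical (anti-)Blaschke product, and count the residual rotational freedom via boundary fixed points. However, there is one genuine error in your first step. With the normalization $\phi_a(0)=\sigma_a^{\circ p}(c_a)$, the map $B=\phi_a^{-1}\circ\sigma_a^{\circ p}\circ\phi_a$ does \emph{not} fix $0$: it sends $\alpha=\phi_a^{-1}(c_a)$ to $0$, i.e.\ $0$ is the \emph{critical value} of $B$, not a fixed point (generically $B(0)=\phi_a^{-1}(\sigma_a^{\circ 2p}(c_a))\neq 0$). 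This is not merely a slip of terminology, because your factorization lemma as stated is false: a proper degree-$n$ self-map of $\D$ with a single critical point of multiplicity $n-1$ at $\alpha$ is only determined up to post-composition with an automorphism of $\D$, i.e.\ it has the form $M\circ B^{\pm}_{\alpha,\lambda,n}$ with $M\in\mathrm{Aut}(\D)$ arbitrary. What pins $M$ down to a rotation absorbed into $\lambda$ is precisely the condition that the critical value sits at $0$ --- which is what your choice of $\phi_a$ actually provides. So the conclusion is correct, but you should delete ``fixing $0$,'' state explicitly that the normalization places the critical value at $0$, and include that condition in the factorization lemma.

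Two smaller remarks. First, your enumeration is correct in substance (the normalized representatives in the conjugacy class $\{\rho_\mu^{-1}\circ B\circ\rho_\mu:\mu\in\mathbb{S}^1\}$ satisfying $B(1)=1$ correspond to boundary fixed points of $B$), but the deformation argument is heavier than necessary: since $\sigma_a$ is hyperbolic, the critical orbit of $B$ converges to the attracting cycle in $\D$ (and its reflection in $\D^*$), so $B\vert_{\mathbb{S}^1}$ is an expanding covering of topological degree $n$ (resp.\ $-n$); all its fixed points are transverse, and the lift-to-$\R$ count gives exactly $n-1$ (resp.\ $n+1$) of them directly, with no need to rule out collisions along a path of parameters. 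You should, however, say a word about why no parabolic fixed point can sit on $\mathbb{S}^1$, which is what ``expanding'' is doing for you. Second, your counts $n-1$ and $n+1$ are the correct ones; the statement's $d-1$ and $d+1$ are inherited verbatim from the unicritical setting of \cite{NS}, where the global and local degrees coincide, and should be read as $n\mp 1$ here since in general $n=\mathrm{val}(c_a)<d$.
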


\begin{remark}As noted in \cite{NS}, not every choice of $(\alpha,\lambda)$ gives rise to a fixed point of the Blaschke product in $\D$.\end{remark}

We will also record a parameter version of this statement. Let $\mathcal B_n^\pm$ be the spaces of maps $B^\pm_{\alpha,\lambda,n}$ such that there exists an attracting fixed point in $\D$.

For an anti-holomorphic map $f$ with an attracting fixed point at $\zeta$, it was shown in \cite[Lemma~5.1]{NS} that there exists a local conformal coordinate $\kappa\colon V(\ni\zeta)\hookrightarrow \C$ which linearizes $f$, in the sense that $\kappa\circ f = \bar\partial f(\zeta) \cdot \overline \kappa$.

For a hyperbolic map $\sigma_a\in S_\cT$, let $\zeta_a$ be the periodic point that lies in the distinguished Fatou component $U_a\ni c_a$. As mentioned in the previous paragraph, the first return map $\sigma_a^{\circ p}$ has a local linearizing coordinate $\kappa_a$ near $\zeta_a$. This coordinate can be extended to a closed neighborhood $\overline{W_a}\ni \zeta_a$ such that $c_a\in \partial W_a$. The map $\kappa_a$ becomes unique if we require $\kappa_a(c_a)=1$.

\begin{definition}
Let $H$ be a hyperbolic component of odd period $p$. Then, the \emph{critical value map} for $H$ is defined as $a\mapsto \kappa_a(\sigma_a^{\circ p}(c_a))$.
\end{definition}

The next result can be proved by adapting the arguments of \cite[Theorems~5.6,~5.9]{NS} to the current situation.

\begin{theorem} 
Every even period hyperbolic component whose critical point in the periodic critical Fatou component has local degree $n$ is homeomorphic to $\mathcal B^+_n$, and the homeomorphism respects the multiplier of the attracting fixed point.
Likewise, every odd period hyperbolic component whose critical point in the periodic critical Fatou component has local degree $n$ is homeomorphic to $\mathcal B^-_n$, and the homeomorphism respects the critical value map.
In particular, all hyperbolic components are simply connected.
\end{theorem}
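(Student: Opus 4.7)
The plan is to define a natural map $\Phi$ from a hyperbolic component $H$ of period $p$ to $\mathcal{B}^\pm_n$ (with $+$ for $p$ even, $-$ for $p$ odd) by sending $\sigma_a\in H$ to the Blaschke product model of the first return map $\sigma_a^{\circ p}\vert_{U_a}$, normalized as in Subsection~\ref{hyp_comp_unif_subsec}. Continuity of $\Phi$ follows from a standard Carathéodory kernel/holomorphic motion argument: the distinguished Fatou component $U_a$ moves continuously with $a$ in $H$, and the normalized uniformization $\phi_a\colon\D\to U_a$ also depends continuously on $a$, so the conjugated return maps form a continuous family of Blaschke products. We then show $\Phi$ respects the relevant invariant, which is the multiplier of the attracting fixed point when $p$ is even (so that $\sigma_a^{\circ p}$ is holomorphic on $U_a$) and the critical value map in the linearizing coordinate $\kappa_a$ when $p$ is odd (where a multiplier is ill-defined).

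For surjectivity, I would use a quasiconformal surgery argument in the spirit of \cite{NS}. Given a target Blaschke product $B\in\mathcal{B}_n^\pm$ and a reference map $\sigma_{a_0}\in H$ whose return map is conjugate to some $B_0\in\mathcal{B}_n^\pm$, one constructs a quasiconformal homeomorphism $\Psi\colon \D\to \D$ that intertwines $B_0$ and $B$ on a neighborhood of the attracting cycle and is the identity outside a larger set. Pulling back via $\phi_{a_0}$ and iterating the inverse branches of $\sigma_{a_0}$ produces a $\sigma_{a_0}$-invariant Beltrami coefficient $\mu$ on $\widehat{\C}$ supported on the grand orbit of the distinguished Fatou components. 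Proposition~\ref{qc_closed_prop} then provides a parameter $a'\in S_\cT$ with $\sigma_{a'}$ qc-conjugate to $\sigma_{a_0}$ via the solution to the Beltrami equation, and by construction the return map of $\sigma_{a'}$ on $U_{a'}$ is conformally conjugate to $B$. One checks $a'\in H$ by following the attracting cycle through the surgery.

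For injectivity, suppose $\sigma_{a_1},\sigma_{a_2}\in H$ have return maps on $U_{a_i}$ conformally conjugate by a map preserving the marked invariant (multiplier or critical value position). This conjugation extends equivariantly to the full grand orbit of $U_{a_i}$ in the interior of $K(\sigma_{a_i})$. Meanwhile, all maps in $\cC(S_\cT)$ share the same external map $\mathcal{R}_d$ by Proposition~\ref{conn_locus_srd_prop} and Lemma~\ref{schwarz_group}, so they are canonically conformally conjugate on their tiling sets. A pullback argument as in the proof of Lemma~\ref{dp_pb_two_lem} (using invariance of the limit set and absence of invariant line fields on a quasicircle type limit set for hyperbolic parameters) produces a global quasiconformal conjugacy that is conformal off the limit set; since for hyperbolic parameters the limit set has zero area (or is removable), Weyl's lemma yields a Möbius conjugacy, which must be the identity after normalization, hence $a_1=a_2$.

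The main obstacle I expect is the injectivity step, since it requires showing that the conformal conjugacy on the filled Julia set interior together with the anti-Farey conjugacy on the tiling set can be glued to a global quasiconformal map that is conformal almost everywhere off the limit set; this requires a careful pullback construction and an analysis of the limit set to apply Weyl's lemma, analogous to what is done in \cite[Proposition~8.13]{LLMM2} and \cite[Theorem~5.1]{LMM1}. Once the bijection $\Phi$ is established and shown to be a homeomorphism, simple connectivity of $H$ follows because each $\mathcal{B}_n^\pm$ is a union of components of a space parameterized by $\D\times \mathbb{S}^1$ and the condition of having an attracting fixed point cuts out a simply connected region, as shown in~\cite{NS}.
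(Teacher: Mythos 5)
Your proposal follows the same route as the paper, which simply defers to the arguments of Nakane--Schleicher \cite[Theorems~5.6,~5.9]{NS}: model the first return map on the distinguished Fatou component by a normalized Blaschke product, prove surjectivity by quasiconformal surgery (using Proposition~\ref{qc_closed_prop} to stay in the family), and prove injectivity by a pullback/rigidity argument gluing the internal conjugacy to the canonical anti-Farey conjugacy on the tiling set, concluding with Weyl's lemma. The details you flag as delicate (the gluing across the limit set and the zero-area/no-invariant-line-field input) are exactly the points where the adaptation from the Multicorn setting to Schwarz reflections requires care, and your sketch handles them in the intended way.
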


\subsection{Boundaries of hyperbolic components}\label{hyp_comp_bdry_subsec}

The unicriticality of our hyperbolic components allows many statements about the boundaries of hyperbolic components in the Multicorn family to carry over to our setting as well (cf. \cite{MNS}).
We will say that a parameter is \textit{parabolic} if $\sigma_a$ has a periodic cycle (other than the fixed point $y_f$) whose multiplier is a root of unity.

\begin{proposition}
\noindent\begin{enumerate}[leftmargin=8mm]
\item The boundary of a hyperbolic component of odd period $k$ of $\mathcal C(S_\mathcal T)$ which is not the primary hyperbolic component is contained in the interior of $S_\mathcal T$, and consists entirely of parameters having a parabolic orbit of exact period $k$. In suitable local conformal coordinates, the $2k-$th iterate of such a map has the form $z\mapsto z+z^{q+1}+\cdots$ with $q\in\{1, 2\}$.

\item Every parameter on the boundary of the primary hyperbolic component $\mathcal H$ is either contained in $\Gamma^\text{hoc}$ or has a parabolic fixed point (with local power series as above).

\item Every parameter on the boundary of an even period hyperbolic component has a neutral cycle.
\end{enumerate}
\end{proposition}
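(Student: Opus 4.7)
The plan is to combine the Blaschke product uniformizations of Section~\ref{hyp_comp_unif_subsec} with the description of $\partial S_\cT$ from Theorem~\ref{bdry_thm} in order to identify boundary parameters of hyperbolic components.

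For part (1), let $H$ be a non-primary hyperbolic component of odd period $k$ and fix $a_0\in\partial H$. First I would show $\partial H\subset\Int S_\cT$ by ruling out each piece of $\partial S_\cT=\Gamma^{\mathrm{hoc}}\cup\overline{\Gamma^{\mathrm{dp}}}\cup\Gamma^\perp$. Parameters in $\Gamma^\mathrm{dp}$ have the free critical orbit landing in $T^0_b(\sigma_a)$ by Proposition~\ref{generic_bdd_droplet_dyn_prop}, precluding any attracting cycle containing the distinguished critical point. Parameters in $\Gamma^\perp$ have $y_f\in\partial\Omega_a$ fixed by $\sigma_a$ on the boundary, and points in $\Gamma^\mathrm{hoc}$ yield attracting dynamics at the cusp $y_c$ whose basin contains $w_1$; by Proposition~\ref{hoc_bdry_primary} this corresponds to the closure of the primary component only. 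Hence $\partial H\cap\partial S_\cT=\emptyset$.

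Next, via the uniformization of Section~\ref{hyp_comp_unif_subsec}, the first return $\sigma_a^{\circ k}\vert_{U_a}$ is conjugate to an anti-holomorphic unicritical Blaschke product in $\mathcal B_n^-$ whose attracting fixed point has a multiplier varying continuously on $\overline H$. At $a_0$ this multiplier $\lambda=\overline\partial\sigma_{a_0}^{\circ k}(z_0)$ lies on the unit circle, and since $\sigma_{a_0}^{\circ 2k}$ then has multiplier $|\lambda|^2=1$ at $z_0$, this produces a parabolic fixed point with local normal form $z\mapsto z+c z^{q+1}+\cdots$, $c\neq 0$. Exact period $k$ follows from continuity of the degeneration of the attracting $k$-cycle into the parabolic cycle. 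The bound $q\in\{1,2\}$ follows from a Fatou-type petal count modelled on the Multicorn arguments of \cite{NS,MNS}: the $q$ attracting petals of $\sigma_{a_0}^{\circ 2k}$ are permuted by the anti-holomorphic map $\sigma_{a_0}^{\circ k}$ (which in suitable local coordinates acts as the reflection $z\mapsto\overline z$ on the normal directions), and each cycle of petals must capture an infinite critical orbit of $\sigma_{a_0}$. Since the only free critical orbit available is that of $y_f$, at most one cycle of attracting petals can exist; a direct inspection of the reflection action on the $q$ equally-spaced petals shows this forces $q\le 2$.

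For part (2), the primary component $\mathcal H$ has period $1$ and the Blaschke argument specializes directly: any $a_0\in\partial\mathcal H\cap\Int S_\cT$ carries a parabolic fixed point with the stated local form, while $\Gamma^\mathrm{hoc}\subset\partial\mathcal H$ by Proposition~\ref{hoc_bdry_primary} accounts for the remaining contribution from $\partial S_\cT$. For part (3), if $H$ has even period $k$ then the first return $\sigma_a^{\circ k}\vert_{U_a}$ is modeled by a holomorphic unicritical Blaschke product $\mathcal B_n^+$, whose attracting fixed point becomes indifferent at $a_0\in\partial H$; thus $\sigma_{a_0}$ acquires a neutral $k$-cycle with multiplier of unit modulus (not necessarily a root of unity, as the holomorphic first-return setting does not force it). The main technical obstacle will be the petal-counting step in part (1), which requires careful bookkeeping of the reflection action on the attracting petals under the unicriticality constraint.
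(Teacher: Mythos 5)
Your proposal is correct and follows essentially the same route as the paper: exclude $\Gamma^{\mathrm{dp}}$ and $\Gamma^{\perp}$ via escape of the free critical orbit, reserve $\Gamma^{\mathrm{hoc}}$ for the primary component via Proposition~\ref{hoc_bdry_primary}, and then use the antiholomorphic multiplier identity $|\lambda|^2=1$ together with the uniqueness of the free critical orbit to force parabolicity with $q\in\{1,2\}$, with the even-period case handled by degeneration of the multiplier. The only cosmetic difference is that you re-derive the petal-counting content of \cite[Lemma~2.5]{MNS} (which the paper simply cites) rather than quoting it.
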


\begin{proof}
1) The boundary of any hyperbolic component $H$ will either intersect $\Gamma^\text{hoc}$ or be contained in the interior of $\mathcal C(S_\mathcal T)$. By \ref{hoc_bdry_primary} it follows that for a non-primary hyperbolic component that its boundary is contained in the interior of $\mathcal C(S_\mathcal T)$. Now an application of \cite[Lemma 2.5]{MNS} combined with the fact that the Schwarz reflection maps under consideration have unique free critical values show that for every parameter on the boundary of $H$, the $k-$cycle to which the critical orbit converges must be parabolic with the desired local Taylor series expansion.

2) The proof is analogous to part 1), accounting for the fact that $\Gamma^\text{hoc}\subset \overline{\mathcal H}$ by Proposition~\ref{hoc_bdry_primary}.

3) As a parameter moves from the interior to the boundary of an even period hyperbolic component, the multiplier of the unique attracting cycle tends to $1$ in absolute value and produces a neutral cycle in the limit.
\end{proof}

\begin{definition}
Let $a$ be a parameter with a parabolic periodic point of odd period. If for the above proposition $q=1$ we say that it is a \textit{simple} parabolic parameter. If $q=2$, we say that it is a \textit{parabolic cusp}.
\end{definition}

Recall that in Lemma~\ref{hoc_arc_lem}, we constructed a real-analytic arc in $\Gamma^{\mathrm{hoc}}\subset S_\cT$ (consisting of parameters for which the quadrature domain has a $(5,2)-$cusp on its boundary) using a quasiconformal deformation argument that allows one to change the {\'E}calle height of the free critical value. This deformation argument applies verbatim to simple parabolic parameters (of odd period) and yields the following result.

\begin{proposition}
Let $\widetilde a$ be a simple parabolic parameter of odd period. Then $\widetilde a$ is on a parabolic arc in the following sense: there exists a real-analytic arc of simple parabolic parameters $a(h)$ (for $h \in \R$) with quasiconformally equivalent but conformally distinct dynamics of which $\widetilde a$ is an interior point, and the {\'E}calle height of the free critical value of $\sigma_{a(h)}$ is $h$. This arc is called a \textit{parabolic arc}.
\end{proposition}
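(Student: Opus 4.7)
The proof will proceed by mimicking the quasiconformal shear construction used in Step~II of Lemma~\ref{hoc_arc_lem} to produce the $(5,2)$-cusp arc $\Gamma'$, but now applied at the simple parabolic cycle rather than at the cusp. The key ingredients are Fatou coordinates at an odd-period parabolic cycle, a shear-by-$s$ deformation in those coordinates, and the quasiconformal closedness of the family $S_\cT$ recorded in Proposition~\ref{qc_closed_prop}.

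\textbf{Fatou coordinates.} Let $p$ be the odd period of the parabolic cycle of $\sigma_{\widetilde{a}}$, and let $z_0$ be a point of this cycle. Since $p$ is odd, $\sigma_{\widetilde{a}}^{\circ p}$ is antiholomorphic near $z_0$, and the simplicity assumption gives $\sigma_{\widetilde{a}}^{\circ 2p}(z)=z+c(z-z_0)^2+\cdots$ with $c\neq 0$. On a forward invariant attracting petal $\mathcal{P}^+\ni z_0$, a Fatou coordinate $\Phi\colon\mathcal{P}^+\to\{\mathrm{Re}\,\zeta>M\}$, unique up to a real translation, conjugates $\sigma_{\widetilde{a}}^{\circ 2p}$ to $\zeta\mapsto\zeta+1$ and $\sigma_{\widetilde{a}}^{\circ p}$ to the glide reflection $\zeta\mapsto\overline{\zeta}+\tfrac12$. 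The orbit of $y_f$ eventually enters $\mathcal{P}^+$ (because $\widetilde{a}$ lies on the boundary of a hyperbolic component), and the imaginary part $h_0:=\mathrm{Im}\,\Phi(\sigma_{\widetilde{a}}^{\circ kp}(y_f))$ for large $k$ (well-defined up to sign) is the critical Écalle height, a conformal conjugacy invariant.

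\textbf{Beltrami deformation.} For each $s\in\R$, choose a quasiconformal homeomorphism $L_s$ of the right half-plane that commutes with $\zeta\mapsto\zeta+1$ and with $\zeta\mapsto\overline{\zeta}+\tfrac12$, fixes the boundary at infinity, and whose action shifts imaginary parts of images by $s$ (an explicit model is an affine shear $\zeta\mapsto\zeta+is\,\chi(\mathrm{Re}\,\zeta)$ tapered off via a smooth cutoff $\chi$). Pull back the Beltrami coefficient of $L_s$ through $\Phi$ to obtain a coefficient on $\mathcal{P}^+$, spread it to all of the parabolic basin of the $p$-cycle by iterated pullback under $\sigma_{\widetilde{a}}$, and extend by zero elsewhere. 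The resulting $\mu_s$ on $\widehat{\C}$ is $\sigma_{\widetilde{a}}$-invariant, has $\Vert\mu_s\Vert_\infty<1$, and depends real-analytically on $s$. Let $\Psi_s$ solve the Beltrami equation for $\mu_s$ normalized to fix $y_c$, $y_f$, and $\infty$. By Proposition~\ref{qc_closed_prop}, there is a unique $a(s)\in S_\cT$ with $\Psi_s\circ\sigma_{\widetilde{a}}\circ\Psi_s^{-1}=\sigma_{a(s)}$, and standard parameter-dependence for the measurable Riemann mapping theorem gives real-analyticity of $s\mapsto a(s)$. Reparameterize by $h:=h_0+s$ so that the critical Écalle height of $\sigma_{a(h)}$ equals $h$.

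\textbf{Verification and obstacle.} The conjugacy $\Psi_s$ preserves period and parabolic multiplicity, so each $\sigma_{a(h)}$ is simple parabolic of period $p$; different values of $h$ yield distinct Écalle heights and hence conformally inequivalent dynamics, while quasiconformal equivalence is built into the construction; these facts give the injectivity and the real-analytic arc structure with $\widetilde{a}=a(h_0)$ an interior point. The main technical point is to ensure that no new singularity appears on $\partial\Omega_{a(h)}$ as $h$ varies over $\R$: this is guaranteed because $\mu_s$ is supported inside the parabolic basin, which stays at positive distance from $\partial\Omega_{\widetilde{a}}$, and the normalization pinning $y_c$ preserves the $(3,2)$-cusp structure, ruling out by Theorem~\ref{bdry_thm} any exit from $\Int S_\cT$ caused by the deformation itself. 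Any failure of real-analyticity or injectivity of $h\mapsto a(h)$ is similarly excluded by the Ahlfors--Bers theorem and by conformal invariance of the Écalle height, respectively.
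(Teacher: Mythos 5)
Your proof is correct and follows essentially the same route as the paper, which simply invokes the quasiconformal shear-in-Fatou-coordinates deformation from Step~II of Lemma~\ref{hoc_arc_lem} (itself modeled on \cite[Theorem~3.2]{MNS} and \cite[Proposition~6.6]{LLMM3}) and notes that it "applies verbatim" to simple parabolic parameters of odd period; you have just written out that argument explicitly, including the correct use of Proposition~\ref{qc_closed_prop} to stay in $S_\cT$. The only nitpick is your parenthetical "well-defined up to sign": since the Fatou coordinate normalized to conjugate $\sigma^{\circ p}$ to $\zeta\mapsto\overline{\zeta}+\tfrac12$ is unique up to a \emph{real} translation, the critical Écalle height is well defined with its sign, which is what makes the parameterization by $h\in\R$ legitimate.
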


For a critical point of local degree $n$, there are $n+1$ distinct combinatorial ways for parabolic arcs to occur. This fact can be used to conclude the following (cf. \cite[Theorem~1.2]{MNS}, \cite{LLMM2}).
\begin{proposition}
1) The boundary of every non-primary hyperbolic component of odd period of $\mathcal C(S_\mathcal T)$ is a topological polygon with $n+1$ sides having parabolic cusps as vertices and parabolic arcs as sides, where $n$ is the local degree of the distinguished critical point.

2) The boundary of the primary hyperbolic component consists of $m$ parabolic arcs, $\Gamma^\text{hoc}$, and $m-1$ parabolic cusps, where $m$ is the valence of the vertex $w_1$ (which is adjacent to the root $y_c$) in $\cT(\sigma_a)$.
\end{proposition}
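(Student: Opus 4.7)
The plan is to follow the template established for analogous polygonality statements in the Multicorn and Schwarz-reflection families (cf.\ \cite{MNS}, \cite{LLMM2}), systematically exploiting the Blaschke uniformization of hyperbolic components obtained in Subsection~\ref{hyp_comp_unif_subsec} together with the parabolic-arc construction proved just above. Throughout I assume the decomposition of $\partial H$ into parabolic arcs and parabolic cusps established in the preceding propositions, so the remaining task is purely enumerative: count the parabolic arcs, show adjacent arcs share a cusp, and identify any piece of $\partial H$ not of parabolic type.

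For part (1), fix a non-primary odd-period $p$ hyperbolic component $H$. By the previous proposition $\partial H \subset \Int S_\cT$, and every point of $\partial H$ carries a parabolic cycle of exact period $p$ with local power series $z\mapsto z+z^{q+1}+\cdots$, $q\in\{1,2\}$; the parameters with $q=1$ already lie on parabolic arcs. First I would transfer the enumeration of arcs to the unit-disk model via the homeomorphism $H \cong \mathcal{B}_n^-$ of Subsection~\ref{hyp_comp_unif_subsec}: an anti-Blaschke product of degree $n$ on $\mathbb{S}^1$ is an orientation-reversing degree $n$ circle map, hence has exactly $n+1$ boundary fixed points, and the parabolic degeneration of the interior attracting fixed point may limit onto any one of them. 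These $n+1$ combinatorial types of degeneration give $n+1$ distinct parabolic arcs. Next I would show that combinatorially adjacent arcs share a common endpoint: if $h$ denotes the critical \'Ecalle height along a parabolic arc, letting $|h|\to\infty$ pushes the free critical orbit into a parabolic direction shared with the neighboring arc, and a standard limit-of-multipliers argument (as in \cite[\S 3]{MNS}) produces a parameter with $q=2$ at which the two arcs meet.

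The main obstacle is this last step: one must verify that each such endpoint is a genuine parabolic cusp (not just an abstract accumulation), that the two arcs incident at a cusp are distinct, and that no additional pieces of $\partial H$ exist. The first follows from the explicit local form at a parabolic cusp; the second from the combinatorial distinctness of the petal choice on the two sides; the third from the Blaschke-parameter model, in which $\partial \mathcal{B}_n^-$ is already known to be a topological polygon. Combining these yields a topological $(n+1)$-gon with parabolic cusp vertices and parabolic arc sides.

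For part (2) I would run the same program for the primary hyperbolic component, the distinguished Fatou component being fixed and containing $w_1$ with local degree equal to its valence $m$ in $\cT(\sigma_a)$. The combinatorial enumeration gives $m+1$ ``sides'' in total. The crucial observation, supplied by Proposition~\ref{hoc_bdry_primary}, is that the combinatorial side corresponding to the parabolic fixed point being the cusp $y_c$ is realized not by a parabolic arc in $\Int S_\cT$ but by $\Gamma^{\mathrm{hoc}}$: a $(5,2)$-cusp at $y_c$ is precisely a simple parabolic degeneration of the attracting fixed point of $\sigma_a$ onto $y_c$, and varying the critical \'Ecalle height along $\Gamma^{\mathrm{hoc}}$ (as in Lemma~\ref{hoc_arc_lem}) is the analogue on this side of the Écalle deformation used for ordinary parabolic arcs. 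The two endpoints of $\Gamma^{\mathrm{hoc}}$, where $y_c$ becomes a $(7,2)$-cusp, take the place of two of the $m+1$ would-be parabolic cusp vertices; the remaining $m-1$ vertices are genuine parabolic cusps between consecutive parabolic arcs. This gives the announced structure: $m$ parabolic arcs, $\Gamma^{\mathrm{hoc}}$, and $m-1$ parabolic cusps arranged as an $(m+1)$-gon, with the main subtlety again being the matching of the $(7,2)$-cusp endpoints of $\Gamma^{\mathrm{hoc}}$ to the endpoints of the adjacent parabolic arcs, which should be handled by the same multiplier-limit analysis as in part (1).
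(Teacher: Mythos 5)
Your proposal is correct and follows essentially the same route as the paper, which simply invokes the fact that a local degree $n$ critical point admits $n+1$ combinatorial types of parabolic arc and then cites the Multicorn argument of \cite[Theorem~1.2]{MNS} and \cite{LLMM2}; your write-up merely fleshes out that citation, including the correct identification of $\Gamma^{\mathrm{hoc}}$ as the side replacing one parabolic arc (and its two $(7,2)$-cusp endpoints replacing two would-be parabolic cusps) for the primary component.
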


\begin{proposition}
If $\sigma_a$ has a neutral periodic point of period $k$, then every neighborhood of $a$ in $S$ contains parameters with attracting periodic points of period $k$, so the parameter $a$ is on the boundary of a hyperbolic component of period $k$ of $\mathcal C(S_\mathcal T)$.
\end{proposition}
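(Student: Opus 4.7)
The plan is to track the neutral cycle of $\sigma_{a_0}$ real-analytically, define the multiplier of its $2k$-th iterate as a real-analytic function $\Lambda(a)$ on a neighborhood of $a_0$, and use a dimension-count argument to show $|\Lambda|$ is non-constant; real-analyticity then forces $\{|\Lambda|<1\}$ to accumulate at $a_0$, producing the required hyperbolic parameters.

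First I would persist the cycle. Let $z_0$ belong to the neutral period-$k$ cycle. The $2k$-th iterate $\sigma_a^{\circ 2k}$ is holomorphic in $z$ and real-analytic in $a$, and its multiplier $\mu := (\sigma_{a_0}^{\circ 2k})'(z_0)$ has modulus one. If $\mu\neq 1$, the complex implicit function theorem in $z$ coupled with real-analytic dependence on $a$ yields a real-analytic family $z_0(a)$ of period-$k$ points on a neighborhood $V\ni a_0$ together with the real-analytic multiplier $\Lambda(a) := (\sigma_a^{\circ 2k})'(z_0(a))$ satisfying $|\Lambda(a_0)|=1$. If instead $\mu=1$, the local normal form $\sigma_{a_0}^{\circ 2k}(z) = z+c(z-z_0)^{q+1}+\cdots$ places $a_0$ on a parabolic arc ($q=1$) or at a parabolic cusp ($q\geq 2$), both of which have already been identified as boundary points of hyperbolic components of period $k$ by the structural results of Subsection~\ref{hyp_comp_bdry_subsec}, and we are done. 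Henceforth assume $\mu\neq 1$.

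The key step is to prove $|\Lambda|$ is non-constant near $a_0$. Suppose for contradiction that $|\Lambda|\equiv 1$ on an open (hence real two-dimensional) set $V'\subset V$. Then every $\sigma_a$, $a\in V'$, has a non-repelling period-$k$ cycle distinct from the parabolic fixed point at $y_c$. Under the straightening bijection $\chi$ of Theorem~\ref{qc_bijection_thm}, the hybrid conjugate $R_a := \chi(a)\in\mathfrak{L}_\cT$ carries a non-repelling period-$k$ cycle with the same multiplier modulus. By the Fatou--Shishikura inequality, $R_a$ supports at most one non-repelling cycle other than the fully ramified parabolic fixed point at $\infty$, since all but one of its critical orbits are absorbed by the passive critical values (the parabolic fixed point at $\infty$ together with any periodic/superattracting black vertices of $\cT(R_a)$); hence the single free critical orbit must be attached to our neutral cycle. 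The conformal invariants of the neutral cycle with marked critical value --- the multiplier and the conformal position of the critical orbit in the Fatou or Siegel coordinate --- then pin down the germ up to conformal conjugacy, and varying only the conformal position with multiplier held fixed covers at most a one-real-dimensional family. Composing with the injection $S_\cT/\mathrm{Aut}(\C)\xrightarrow{\chi}\mathfrak{L}_\cT/\mathrm{Aut}(\C)$ from Proposition~\ref{shabat_family_prop}, this would embed the two-dimensional open set $V'$ into a real-analytic subset of dimension at most one, a contradiction.

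Since $|\Lambda|$ is then real-analytic, non-constant, and equal to one at $a_0$, the open sublevel set $\{a\in V:|\Lambda(a)|<1\}$ accumulates at $a_0$; any such $a$ has an attracting period-$k$ cycle and so belongs to a hyperbolic component of $\cC(S_\cT)$ of period exactly $k$, placing $a_0$ on the boundary of such a component. The main obstacle will be the non-constancy step, whose core content --- that the neutral-cycle locus of period $k$ in $\mathfrak{L}_\cT/\mathrm{Aut}(\C)$ has no interior --- rests on the unicriticality of the single free critical orbit together with the Fatou--Shishikura inequality, and is a transversality phenomenon familiar from the classical theory of one-parameter families of (anti-)rational maps.
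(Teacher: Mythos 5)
Your overall strategy (continue the cycle, show the multiplier modulus is not locally constant, conclude) is genuinely different from the paper's, which simply observes that the proof of \cite[Theorem~2.1]{MNS} for the Multicorn is purely local at the neutral cycle and transfers verbatim to $S_\cT$. Unfortunately your version has two genuine gaps. The decisive one is the final step: a real-analytic function that is non-constant and equal to $1$ at $a_0$ need not take values below $1$ near $a_0$ --- for instance $|\Lambda(a)|=1+|a-a_0|^2$ is non-constant, real-analytic, and $\geq 1$ everywhere. In the holomorphic setting one excludes this by applying the open mapping theorem to the multiplier, but here $\Lambda$ is only real-analytic in $a$ (the Schwarz reflection depends on $a$ through $a$, $\bar a$ and $|a-\pmb{v_b}|^2$), and ruling out a local minimum of $|\Lambda|$ at height $1$ is precisely the hard content of the proposition. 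Your dimension count only excludes $|\Lambda|\equiv 1$ on an open set; it says nothing about the scenario $|\Lambda|\geq 1$ with equality on a point or a curve, which would leave $a_0$ approximable only by repelling and neutral parameters. (The dimension count is itself shaky: for a Cremer cycle there is no Fatou or Siegel coordinate in which to read off a ``conformal position,'' and local germ invariants do not control the global conjugacy class that your one-dimensionality claim requires.)

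The second gap concerns the case $\mu=1$, which you set aside. For odd $k$ the first return map $\sigma_a^{\circ k}$ is antiholomorphic, so the multiplier of $\sigma_a^{\circ 2k}$ at a neutral $k$-cycle is the non-negative real number $|\partial(\sigma_a^{\circ k})(z_0)|^2=1$; thus \emph{every} odd-period neutral cycle has $\mu=1$, and your main argument is vacuous for odd $k$. Deferring this case to Subsection~\ref{hyp_comp_bdry_subsec} does not close it: the results there show that simple parabolic parameters of odd period lie on parabolic arcs and describe what the boundary of a \emph{given} hyperbolic component looks like, but they do not assert that every parabolic parameter (or every parabolic arc or cusp) lies on the boundary of a period-$k$ hyperbolic component --- in the Multicorn literature that implication is deduced from the very statement you are proving, so the appeal is circular. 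What is needed here is the local perturbation analysis of an antiholomorphic parabolic germ (the splitting of the multiple fixed point of the return map and the behaviour of the holomorphic fixed-point index), which is exactly the local argument of \cite{MNS} that the paper invokes.
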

\begin{proof}
See \cite[Theorem 2.1]{MNS} for a proof in the Multicorn family. Since the proof given there only uses local dynamical properties of anti-holomorphic maps near neutral periodic points, it applies to the family $S_\mathcal T$ as well.
\end{proof}

The following proposition is specific to our setting.
We say that two hyperbolic components of $\mathcal C(S_\mathcal T)$ are adjacent if they have intersecting closures.

\begin{proposition}
Let $H_1$ and $H_2$ be two adjacent hyperbolic components in $\cC(S_\cT)$. Then the distinguished critical points associated with their centers correspond to the same white vertex in $\mathcal T$.
\end{proposition}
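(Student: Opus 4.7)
The plan is to identify the distinguished white vertex of each hyperbolic component $H_j$ with a single intrinsic white vertex at a shared boundary parameter $a_0 \in \overline{H_1} \cap \overline{H_2}$, thereby forcing the indices $i(H_j)$ specified by $c_a = w_{i(H_j)}$ (for $a\in H_j$) to agree. The starting observation is that each white critical vertex $w_k(a)$ of $\sigma_a$ varies continuously in $a$ (it is the image under $\pmb{f}$ of $\eta_a$ applied to a fixed critical point of $\pmb{f}$ in $\D^*$), and the family $\{w_k(a)\}_k$ remains pairwise disjoint throughout $S_\cT$ by univalence of $\pmb{f}$ on $\overline{\Delta_a}$. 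The index $i(\cdot)$ is therefore locally constant on the connected set $H_j$ and equals a single value $i(H_j)$.

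Next, fix $a_0 \in \overline{H_1} \cap \overline{H_2}$. Since $\Gamma^{\mathrm{hoc}} \subset \overline{\mathcal H}$ by Proposition~\ref{hoc_bdry_primary}, the assumption $H_1 \neq H_2$ rules out $a_0 \in \Gamma^{\mathrm{hoc}}$; by the boundary description in Section~\ref{hyp_comp_bdry_subsec}, $\sigma_{a_0}$ thus has a parabolic cycle with immediate basin $\cU_{a_0}$. I claim that $\cU_{a_0}$ contains exactly one white critical vertex, which I denote $w_{k_0}(a_0)$. Indeed, the same structural argument used for hyperbolic maps in Section~\ref{hyp_comp_sec} applies at $a_0$: Corollary~\ref{corollary: crit_points_separated} forbids two white vertices in any single Fatou component, and uniqueness of the free critical value $y_f$ forces exactly one Fatou component of the cycle (namely the one mapping with nontrivial branching onto the $y_f$-containing component) to contain a white vertex.

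To conclude, take sequences $a_n^{(j)} \in H_j$ with $a_n^{(j)} \to a_0$. By continuity, $c_{a_n^{(j)}} = w_{i(H_j)}(a_n^{(j)}) \to w_{i(H_j)}(a_0)$. By stability of attracting basins under perturbation of a parabolic parameter, the Fatou component $U_{a_n^{(j)}} \ni c_{a_n^{(j)}}$ Carath\'eodory-converges to a Fatou component of $\cU_{a_0}$. Lifting by one iterate using the $a$-independent relation $\sigma_a(c_a) = y_f$, where $y_f$ is an interior point of the next Fatou component in the cycle, shows that $w_{i(H_j)}(a_0)$ is an interior point of a Fatou component of $\cU_{a_0}$. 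Combined with the uniqueness above, this gives $w_{i(H_1)}(a_0) = w_{k_0}(a_0) = w_{i(H_2)}(a_0)$, and the distinctness of the $w_k(a_0)$ then forces $i(H_1) = i(H_2)$.

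The main technical point to verify is the claim in the last paragraph that the limit $w_{i(H_j)}(a_0)$ lies in $\cU_{a_0}$ rather than on its boundary. At simple parabolic arc boundaries this is straightforward using an extension of the holomorphic motion across the boundary, but at parabolic cusps and period-multiplying boundaries — where several attracting Fatou components of $\cU_a$ can coalesce into a single parabolic Fatou component of $\cU_{a_0}$ — one must use compactness of the $y_f$-containing Fatou component together with appropriate inverse branches of $\sigma_a$ to prevent the critical point from drifting to $\partial\cU_{a_0}$.
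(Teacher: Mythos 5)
Your overall strategy (localize at a common boundary parameter $a_0$, show the immediate parabolic basin contains a unique white vertex, and force both distinguished critical points to converge to it) is a genuinely different route from the paper's, but it has a gap at exactly the step that carries all the content of the proposition. The ``lifting by one iterate'' argument in your third paragraph only shows that $w_{i(H_j)}(a_0)$ lies in the interior of \emph{some} Fatou component mapping onto the characteristic component $V_0\ni y_f$. But this is true of \emph{every} white vertex of $\cT(\sigma_{a_0})$: all of them are critical points with image $y_f$, so each sits in the interior of some component of $\sigma_{a_0}^{-1}(V_0)$. Only one of these preimage components is periodic (i.e., belongs to $\cU_{a_0}$); the others are strictly preperiodic. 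So the argument does not distinguish the distinguished critical point from any other, and the statement you actually need --- that the Carath\'eodory limit of the distinguished components $U_{a_n^{(j)}}$ is the \emph{periodic} preimage of $V_0$ rather than a preperiodic one --- is precisely the assertion that no ``jump'' occurs, i.e., it is essentially the proposition itself. Your closing paragraph flags this, but the proposed repair (``compactness of the $y_f$-containing component plus inverse branches'') is aimed at preventing drift to $\partial\cU_{a_0}$, which is not the actual failure mode; the danger is landing in the interior of a different, preperiodic preimage component, and nothing in the sketch rules that out. A secondary issue: for even-period components the paper's boundary description only guarantees a \emph{neutral} cycle, so your assertion that $\sigma_{a_0}$ is parabolic needs an argument excluding irrationally neutral common boundary points.

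For comparison, the paper avoids the local analysis at $a_0$ entirely. It observes that the white critical points are separated by the \emph{principal sectors}, whose boundaries are dynamical rays landing at the $\sigma_a$-preimages of the cusp $y_c$, and that the distinguished Fatou component stays in a fixed principal sector throughout a hyperbolic component. If the addresses differed across $H_1$ and $H_2$, continuity would force $y_f$ to cross a sector boundary at some $a'\in\partial H_1\cup\partial H_2$; since $y_f\in K(\sigma_{a'})$, this means $y_f$ coincides with a preimage of $y_c$, making $a'$ a postcritically finite (Misiurewicz-type) parameter --- which cannot lie on the boundary of a hyperbolic component. That combinatorial separation mechanism is what your argument is missing; incorporating it (or an equivalent separating cut-line stable under perturbation) is what would close the gap.
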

\begin{proof}
Let us first note that the distinguished critical point of a hyperbolic map $\sigma_a$ has the same address in $\cT$ as $a$ runs over a fixed hyperbolic component. Indeed, the principal sectors and the white critical points of $\sigma_a$ depend continuously on $a$, and the distinguished Fatou component remains in the same principal sector throughout a hyperbolic component.

By way of contradiction, suppose that there were two adjacent hyperbolic components $H_1, H_2$ such that the distinguished critical points of maps in these components have different addresses in $\mathcal T$. By the arguments of the previous paragraph, this would force the free critical value $y_f$ (which is the image of the distinguished critical point) to cross the common boundary of two principal sectors as $H_1$ bifurcates to $H_2$. But this would produce a parameter $a'\in\partial H_1\cup\partial H_2$ such that $y_f$ coincides with one of the $\sigma_{a'}-$preimages of the cusp $y_c$.
But a PCF parameter cannot be on the boundary of a hyperbolic component.
\end{proof}

The above proposition allows us to consider bifurcations of hyperbolic parameters analogously to the standard unicritical case. Specifically, the proofs of \cite[Theorem~1.1,]{MNS}, \cite[Proposition~3.7,~Theorem~3.8,~Corollary~3.9]{HS} can be adapted for the family $S_\cT$ to prove the following results.

\begin{proposition}
If $\sigma_a$ has a 2$k-$periodic cycle with multiplier $e^{2\pi p/q}$ with $\gcd(p, q) =1$, then the parameter $a$ sits on the boundary of a hyperbolic component of period $2kq$ (and is the root thereof) of $\mathcal C(S_\mathcal T)$.
\end{proposition}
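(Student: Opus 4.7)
The plan is to combine the classical parabolic bifurcation theory of Douady--Hubbard, applied to the holomorphic iterate $\sigma_a^{\circ 2k}$, with the boundary description of hyperbolic components of odd period established in Subsection~\ref{hyp_comp_bdry_subsec}. Fix a point $z_0$ of the given $2k$-cycle and set $g := \sigma_a^{\circ 2k}$, which is holomorphic near $z_0$ and fixes it with multiplier $e^{2\pi i p/q}$. Then $g^{\circ q}$ is a parabolic germ at $z_0$; its local dynamics has $\nu q$ attracting petals (for some $\nu\geq 1$) that are cyclically permuted by $g$. Because $\sigma_a$ carries a unique free critical value, the standard Fatou-type argument (as used repeatedly for unicritical Multicorn-type families and recalled in Section~\ref{dynamical_plane_sec}) forces $y_f$ to converge to this parabolic cycle and to visit every petal of the cycle.

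Next I would perform the bifurcation. By classical parabolic implosion, arbitrarily small holomorphic perturbations of $g$ produce $q$ attracting fixed points of $g^{\circ q}$ that merge into $z_0$ in the limit, with multipliers tending to zero. Since $a$ lies in the interior of $S_\cT$ (by Theorem~\ref{bdry_thm}, a parabolic condition on a cycle distinct from $y_f$ excludes $a$ from $\Gamma^{\mathrm{hoc}}\cup\overline{\Gamma^{\mathrm{dp}}}\cup\Gamma^{\perp}$), and $S_\cT$ is real two-dimensional, this bifurcation is actually realized by parameters $a'$ arbitrarily close to $a$. The period of the newborn attracting cycle of $\sigma_{a'}$ is exactly $2kq$ because $e^{2\pi i p/q}$ is a primitive $q$-th root of unity, so no proper subperiod occurs. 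By continuity of the critical-orbit and Fatou-component structure, together with the uniqueness of the free critical value of $\sigma_{a'}$, the critical value $y_f$ is captured by the new attracting cycle. Hence $a'$ lies in a hyperbolic component $H\subset\cC(S_\cT)$ of period $2kq$, so $a\in\partial H$.

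To identify $a$ as the \emph{root} of $H$, I would invoke the boundary description from Subsection~\ref{hyp_comp_bdry_subsec}: every parabolic parameter on $\partial H$ other than the root carries a parabolic cycle of period exactly equal to the period of $H$, namely $2kq$ (it is either an interior point of a parabolic arc or a parabolic cusp). At our parameter $a$, however, the parabolic cycle has period $2k<2kq$. The only way a parabolic parameter on $\partial H$ can carry a parabolic cycle of strictly smaller period is at the root, where the $2kq$ attracting cycle points of maps in $H$ collapse in groups of $q$ onto the $2k$ points of the shorter cycle of $\sigma_a$. This pigeonhole forces $a$ to be the root of $H$.

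The main obstacle is turning the last paragraph into a rigorous identification of $a$ as a singleton boundary point (not merely ``a boundary point carrying a shorter parabolic cycle''). The cleanest route is to adapt the argument of \cite[Proposition~3.7, Theorem~3.8]{HS} to $S_\cT$: using the holomorphic linearizing coordinate $\kappa_{a'}$ from Subsection~\ref{hyp_comp_unif_subsec} depending analytically on $a'\in H$, together with Fatou-coordinate asymptotics in the parabolic implosion, one shows that the uniformizing map of $H$ extends continuously to $a$ with a prescribed boundary value, singling $a$ out as the unique such boundary parameter. A secondary technical point is to rule out that the perturbed attracting cycle splits into several shorter cycles for $\sigma_{a'}$, which again follows from the primitivity of $e^{2\pi i p/q}$ combined with the analytic dependence of the multiplier on $a'$.
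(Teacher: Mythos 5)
Your proposal is correct in outline and takes essentially the same approach as the paper: the paper offers no written argument for this proposition beyond the instruction that the proofs of \cite[Theorem~1.1]{MNS} and \cite[Proposition~3.7, Theorem~3.8, Corollary~3.9]{HS} adapt to $S_\cT$, and your sketch is exactly the standard outline of that adaptation. The two genuinely technical steps you defer --- realizing the satellite bifurcation inside the real two-dimensional family (dimension counting alone does not do this) and identifying $a$ as the root via the boundary extension of the multiplier/uniformizing map of the period-$2kq$ component --- are precisely the content of the cited results, so your deferral matches the paper's.
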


\begin{proposition}
Every parabolic arc of odd period $k>1$ intersects the boundary of a hyperbolic component of period $2k$ along an arc consisting of the set of parameters where the parabolic fixed point index is at least $1$. In particular, every parabolic arc has, at both ends, an interval of positive length at which bifurcation from a hyperbolic component of odd period $k$ to a hyperbolic component of period $2k$ occurs.
\end{proposition}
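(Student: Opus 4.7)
My plan is to adapt the parabolic implosion/index arguments developed for the Multicorn family (\cite[\S3]{HS}, \cite[\S2]{MNS}) to the present setting, using the fact that (apart from global structural statements) all the key computations are purely local at the parabolic periodic point and involve only the holomorphic second iterate. Fix a parabolic arc $\mathcal{A}$ of odd period $k$, parameterized by the critical \'Ecalle height $h\in\R$, and write $a(h)\in\mathcal A$ and $z(h)$ for the (relevant) parabolic periodic point. Since $k$ is odd, $\sigma_{a(h)}^{\circ k}$ is antiholomorphic at $z(h)$ with parabolic multiplier, and the holomorphic $2k$-th iterate is conjugate, in a suitable local coordinate, to $z\mapsto z+z^{2}+\mathrm{res}(h)\,z^{3}+O(z^{4})$, with $\mathrm{res}(h)\in\R$ real-analytic (the reality forced by the antiholomorphic symmetry $\sigma_{a(h)}^{\circ k}$). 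Set $\iota(h):=1-\mathrm{res}(h)$; this is the parabolic fixed point residue.

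The first step is to show that the intersection of $\mathcal{A}$ with the boundary of the adjacent period-$2k$ hyperbolic component is exactly $\{h\in\R:\iota(h)\geq 1\}$. Perturbing $a(h)$ transversally off $\mathcal{A}$ into $\Int S_\cT$ makes the parabolic point of $\sigma_{a}^{\circ 2k}$ split into a pair of holomorphic fixed points whose combined indices equal $\iota(h)$ in the limit. On one side of $\mathcal{A}$ the perturbation yields a period-$k$ cycle of $\sigma_a^{\circ 2}$ whose antiholomorphic first return cannot be attracting (given our simple parabolic assumption $q=1$), while on the other side one obtains a genuinely holomorphic period-$2k$ cycle of $\sigma_a^{\circ 2k}$. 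The standard index computation (see \cite[Proposition~3.3]{HS}, \cite[Lemma~2.7]{MNS}) shows that this period-$2k$ cycle is attracting in a full one-sided parameter neighborhood of $a(h)$ precisely when $\iota(h)\geq 1$; combining this with the uniformization of hyperbolic components of $\cC(S_\cT)$ established in Section~\ref{hyp_comp_unif_subsec}, one obtains the claimed identification.

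The second step, which gives the bifurcation intervals at the ends, is the asymptotic $\iota(h)\to+\infty$ as $h\to\pm\infty$. At each end of $\mathcal{A}$ the parabolic arc terminates at a parabolic cusp, where the local form of $\sigma_a^{\circ 2k}$ degenerates from $z+z^2+\cdots$ to $z+z^3+\cdots$: two simple holomorphic parabolic fixed points collide. A classical parabolic implosion estimate (cf.\ \cite[\S~2]{HS}) ensures that in any such collision both colliding fixed point residues diverge to $+\infty$. By continuity and real-analyticity of $\iota$, the closed set $\{h:\iota(h)\geq 1\}$ thus contains an interval of positive length near each end of $\mathcal{A}$, and in particular is nonempty; the Multicorn refinement (using monotonicity of the \'Ecalle height parameterization of critical-value position in the Fatou coordinate) promotes this to a single closed sub-arc, yielding the full statement.

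The principal technical obstacle is justifying the cusp asymptotics $\iota(h)\to+\infty$ inside a family of Schwarz reflections, rather than a family of holomorphic polynomials for which such computations are standard. The resolution is that along $\cC(S_\cT)$ the external map is frozen to the anti-Farey map $\mathcal R_d$ and, by Theorem~\ref{simp_restrict_thm}, each $\sigma_{a(h)}$ admits a pinched anti-polynomial-like restriction in a neighborhood of the parabolic cycle with uniformly controlled geometry as $h\to\pm\infty$. Thus the perturbative analysis of the parabolic germ reduces to the classical Hubbard--Schleicher--Mukherjee--Nakane--Schleicher calculation, completing the argument.
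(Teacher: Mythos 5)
The paper does not write out a proof of this proposition: it is stated as one of the results for which ``the proofs of \cite[Theorem~1.1]{MNS}, \cite[Proposition~3.7, Theorem~3.8, Corollary~3.9]{HS} can be adapted,'' so the intended argument is exactly the Hubbard--Schleicher index computation you invoke. Your first step --- realness of the index along the arc, splitting of the parabolic point into a $2$-cycle of $g=\sigma_a^{\circ k}$ with multiplier $\rho\approx 1-2/\iota(h)$, hence attraction iff $\iota(h)\geq 1$ --- is the same as theirs and is fine (modulo the garbled sentence about the ``period-$k$ cycle of $\sigma_a^{\circ 2}$ whose first return cannot be attracting''; on one side of the arc the split produces an attracting \emph{fixed} point of $g$, which is how the arc bounds the period-$k$ component).

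The gap is in your second step, the asymptotic $\iota(h)\to+\infty$ as $h\to\pm\infty$, which is the entire content of the ``in particular'' clause. You replace the Hubbard--Schleicher argument by a collision argument at the parabolic cusps, but the key claim you use --- that ``in any such collision both colliding fixed point residues diverge to $+\infty$'' --- is impossible: the sum of the holomorphic fixed point indices of all fixed points of $g^{\circ 2}$ in a small disk is $\frac{1}{2\pi i}\oint \frac{dz}{z-g^{\circ 2}(z)}$, which varies continuously and converges to the \emph{finite} index of the resulting triple fixed point at the cusp. So one index must go to $+\infty$ and the other to $-\infty$, and without a sign argument you could equally conclude $\iota(h)\to-\infty$, i.e.\ no bifurcation at the ends. (You also misdescribe the collision: it is a \emph{simple, non-parabolic} fixed point of $g$ merging with the double parabolic point, not two simple parabolic points colliding, and no ``classical parabolic implosion estimate'' in \cite[\S 2]{HS} gives what you assert.) The missing ingredient that rescues this route is unicriticality: the merging simple fixed point of $G=g^{\circ 2}$ is $g$-fixed (it is the unique extra point of the $g$-invariant fixed-point set of $G$ near the cusp), so its multiplier is $|\mu|^2\geq 0$; it cannot be attracting, since the unique free critical orbit is already absorbed by the parabolic point, so $|\mu|^2>1$ and its index $\frac{1}{1-|\mu|^2}$ is negative and tends to $-\infty$ as $|\mu|^2\to 1^+$ at the collision. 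Conservation of the total index then forces $\iota(h)\to+\infty$. As written, your proof omits exactly this step, and the final sentence about ``monotonicity of the \'Ecalle height parameterization'' promoting $\{\iota\geq 1\}$ to a single sub-arc is also unsubstantiated.
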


\section{Tessellation of the escape locus of $S_\cT$}\label{escape_unif_sec}

We will now provide a dynamically natural uniformization of the escape locus $S_\cT\setminus\cC(S_\cT)$ in terms of the conformal position of the free critical value $y_f$. This will be done using the dynamical uniformization $\psi_a$ of the tiling set (see Proposition~\ref{schwarz_group}).

\subsection{Uniformization of The escape locus}\label{escape_unif_subsec}

\begin{theorem}\label{escape_unif_thm}
The map 
$$
\pmb{\Psi}:S_{\mathcal{T}}\setminus\cC(S_\cT)\to\Int{(\mathcal{Q})}\setminus\mathcal{Q}_1,\quad a\mapsto\psi_a^{-1}(y_f)
$$
is a homeomorphism. In particular, the connectedness locus is connected.
\end{theorem}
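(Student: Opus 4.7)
The plan is to show $\pmb{\Psi}$ is a homeomorphism in three steps --- well-definedness with continuity, injectivity via quasiconformal rigidity, and surjectivity via properness --- and then to deduce connectedness of $\cC(S_\cT)$ as a topological corollary. Well-definedness follows immediately from Definition~\ref{def_depth} and Lemma~\ref{schwarz_group}(2): for $a\in S_\cT\setminus\cC(S_\cT)$ of depth $n(a)$, the critical value $y_f$ lies in the tile $\sigma_a^{-n(a)}(T^0(\sigma_a))$, which belongs to the domain of $\psi_a^{-1}$, so $\pmb{\Psi}(a)\in \mathcal{R}_d^{-n(a)}(\mathcal{Q}_1)\subset \Int(\mathcal{Q})\setminus\mathcal{Q}_1$. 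Continuity of $\pmb{\Psi}$ is a consequence of the holomorphic dependence of the Riemann uniformizations of $T^0(\sigma_a)$ on $a$, combined with the uniqueness of the normalized $\psi_a$ produced in the proof of Lemma~\ref{schwarz_group}.

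For injectivity, suppose $\pmb{\Psi}(a_1)=\pmb{\Psi}(a_2)$. Then the depths necessarily coincide, $n(a_1)=n(a_2)=:n$, and $h_0:=\psi_{a_2}\circ\psi_{a_1}^{-1}$ is a conformal conjugacy between $\bigcup_{k=0}^{n}\sigma_{a_1}^{-k}(T^0(\sigma_{a_1}))$ and $\bigcup_{k=0}^{n}\sigma_{a_2}^{-k}(T^0(\sigma_{a_2}))$ that sends $y_f$ to $y_f$. Because the free critical values are matched, all further pullbacks along $\sigma_{a_1}$ and $\sigma_{a_2}$ are unbranched, and $h_0$ extends to a conformal conjugacy $h\colon T^\infty(\sigma_{a_1})\to T^\infty(\sigma_{a_2})$ on the entire tiling sets. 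I would then upgrade $h$ to a global quasiconformal conjugacy $H\colon \widehat{\mathbb{C}}\to\widehat{\mathbb{C}}$ via the iterative pullback/interpolation argument used in the proof of Lemma~\ref{hoc_arc_lem} (cf.~\cite[Proposition~8.13]{LLMM2}). Since all critical orbits of $\sigma_{a_i}$ either land on the parabolic fixed point $y_c$ or escape into the tiling set alongside $y_f$, Fatou-type arguments (as in \cite[Corollary~6.3]{LLMM1}) force the non-escaping set $K(\sigma_{a_i})$ to have empty interior and zero Lebesgue measure. Hence $H$ is conformal almost everywhere, so by Weyl's lemma $H$ is M\"obius, and the normalization $H(y_c)=y_c$, $H(y_f)=y_f$, $H(\infty)=\infty$ forces $H=\mathrm{id}$; thus $a_1=a_2$.

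For surjectivity, the plan is to verify that $\pmb{\Psi}$ is proper. A sequence $a_k$ in the escape locus converging to $\cC(S_\cT)$ satisfies $n(a_k)\to\infty$, pushing $\pmb{\Psi}(a_k)$ out to the ideal boundary $\partial\mathcal{Q}$; sequences approaching $\Gamma^\perp$ force $y_f\to\partial\Omega_{a_\infty}$ and therefore $\pmb{\Psi}(a_k)\to\partial\mathcal{Q}_1\setminus\{1\}$; the remaining boundary components $\Gamma^{\mathrm{hoc}}$ and $\Gamma^{\mathrm{dp}}$ are handled respectively via Lemma~\ref{cusp_anal_dyn_lem} and Proposition~\ref{generic_bdd_droplet_dyn_prop}. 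Combining properness, continuity, injectivity, and invariance of domain in dimension two then shows $\pmb{\Psi}$ is a covering of the simply connected target $\Int(\mathcal{Q})\setminus\mathcal{Q}_1$ and hence a homeomorphism. Connectedness of $\cC(S_\cT)$ follows by a standard planar topology argument using Theorem~\ref{para_space_conn_thm}: the escape locus is a simply connected open subset of the closed topological quadrilateral $S_\cT$, its topological boundary in $S_\cT$ --- which equals the boundary of $\cC(S_\cT)$ in $S_\cT$ --- is the continuous image of the prime-end circle of the open disk $\Int(\mathcal{Q})\setminus\mathcal{Q}_1$ and hence connected, and this rules out $\cC(S_\cT)$ decomposing into two disjoint nonempty closed subsets. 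The main obstacle I expect is the injectivity step, specifically controlling the quasiconformal dilatation through the infinite pullback and rigorously establishing the zero-area property of $K(\sigma_a)$ in the presence of the parabolic cusp at $y_c$ and the semi-global nature of Schwarz reflection dynamics.
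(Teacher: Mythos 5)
Your overall strategy coincides with the paper's: well-definedness and continuity, properness by the same three-way case analysis on $\overline{\Gamma^{\perp}}$, $\Gamma^{\mathrm{dp}}$ and $\cC(S_\cT)$ (with $\Gamma^{\mathrm{hoc}}$ absorbed into the connectedness locus via Proposition~\ref{hoc_bdry_primary}), injectivity by a pullback/Weyl/rigidity argument, and Invariance of Domain to conclude. However, your injectivity step has a genuine structural gap. First, the assertion that ``all further pullbacks are unbranched'' is false: for an escape parameter the critical value $y_f$ sits inside a tile, so the tiles of rank $n(a)+1$ containing free critical points \emph{are} ramified; the correct statement (and the one the paper uses) is that the two maps have the same branching data over the matched critical values, so the conjugacy can still be lifted. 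More seriously, your plan to first extend $h_0$ conformally to the \emph{entire} tiling sets and only then ``upgrade'' it to a global quasiconformal map via pullback/interpolation does not work in that order: once the conjugacy is defined on all of $T^\infty(\sigma_{a_1})$ (the complement of a Cantor set), there is nothing left to interpolate across, and what you actually need is a removability/extension statement for a map that is a priori only defined off a Cantor set. The paper avoids this by stopping at rank $n_0+1$, extending that finite-stage conformal conjugacy to a global $K$-quasiconformal homeomorphism, and only then running the pullback, which produces a sequence of $K$-quasiconformal maps with \emph{uniform} dilatation that are conformal on more and more tiles; the limit is then quasiconformal globally and conformal on the tiling set, after which zero area of the Cantor non-escaping set and Weyl's lemma apply.

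The finite-stage global extension is itself the key technical ingredient you omit: the tile boundaries carry the $(3,2)$-cusp at $y_c$ and its iterated preimages, and one can only interpolate quasiconformally across such boundaries because the conjugacy is \emph{asymptotically linear} at these cusps (this is where \cite[Lemmas~B.2,~B.3]{LLMM2} enter, and where the hypothesis $a_i\notin\Gamma^{\mathrm{hoc}}$, i.e.\ that the cusp is of type $(3,2)$, is used). Without this step the uniform bound on the dilatation through the infinite pullback --- which you correctly identify as the main obstacle --- cannot be obtained. A smaller issue: your justification for connectedness of $\partial\,\cC(S_\cT)$ via ``continuous image of the prime-end circle'' presupposes local connectivity of that boundary, which is not available; connectedness of the boundary of a simply connected planar domain should be invoked directly (or one should use the standard nested-intersection-of-compact-connecta argument over sublevel sets of the depth).
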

\begin{proof}
The proof is analogous to that of \cite[Theorem~1.3]{LLMM2}. We only indicate the key differences.

Note that for all $a\in S_{\mathcal{T}}$, the critical value $y_f$ of $\sigma_a$ lies in $\Omega_a$; i.e., $y_f\notin T^0(\sigma_a)$. It now follows from the definition of $\psi_a$ that $\psi_a^{-1}(y_f)\in\Int{(\mathcal{Q})}\setminus\mathcal{Q}_1$ for each $a\in S_{\mathcal{T}}\setminus\cC(S_\cT)$.

Continuity of $\pmb{\Psi}$ follows from the fact that the rank zero tile $T^0(\sigma_a)$ changes continuously as the
parameter $a$ runs over $S_\cT$, and hence the conformal map 
$$
\psi_a^{-1}:\displaystyle\bigcup_{n=0}^{n(a)} \sigma_a^{-n}(T^0(\sigma_a))\to\displaystyle\bigcup_{n=0}^{n(a)} \mathcal{R}_d^{-n}(\mathcal{Q}_1)
$$
also varies continuously with $a$.

We will show that $\pmb{\Psi}$ is proper and injective. Injectivity will imply (by the Invariance of Domain Theorem) that $\pmb{\Psi}$ is a homeomorphism onto its image, and properness will give surjectivity of the map.
\medskip

\noindent\textbf{Properness of $\pmb{\Psi}$.}
We need to consider several cases to show that $\pmb{\Psi}$ is proper. Let us first assume that $\{a_k\}_k$ is a sequence in $S_{\mathcal{T}}\setminus\cC(S_\cT)$ such that $a_k\to\overline{\Gamma^{\perp}}$. It follows from the definition of $\Gamma^{\perp}$ that the spherical distance between the co-critical point $\pmb{v_w}$ of $\pmb{f}$ and the circle $\partial\Delta_a$ tends to zero as $k\to\infty$. Hence, $d_{\mathrm{sph}}\left(y_f,\partial\Omega_{a_k}\right)$ tends to $0$ as $k\to\infty$. Therefore, $\pmb{\Psi}(a_k)=\psi_{a_k}^{-1}(y_f)$ accumulates on $\partial\mathcal{Q}_1$.

Now suppose that $\{a_k\}_k\subset S_{\mathcal{T}}\setminus\cC(S_\cT)$ is a sequence with $\{a_k\}_k\to a\in\Gamma^{\mathrm{dp}}$. It then follows from Proposition~\ref{generic_bdd_droplet_dyn_prop} that the free critical value $y_f$ of $\sigma_{a}$ lands in the bounded component $T_{b}^0(\sigma_{a})$ of $T^0(\sigma_{a})$ under $\sigma_{a}^{\circ n}$, for some $n\equiv n(a)\geq 1$. Note that for $k$ sufficiently large, $\sigma_{a_k}$ is a small perturbation of $\sigma_{a}$. We set 
$$
U_k:= \Int{(T^0(\sigma_{a_k})\cup\sigma_{a_k}^{-1} (T^0(\sigma_{a_k})))}.
$$ 
Then, for $k$ large enough, the critical value $y_f$ of $\sigma_{a_k}$ lands in $T^0(\sigma_{a_k})$ under $\sigma_{a_k}^{\circ n'}$, where $n'\in\{n,n+1\}$. Moreover, the hyperbolic geodesic in $U_k$ connecting $\sigma_{a_k}^{\circ n'}(y_f)$ and $\infty$ passes through a narrow channel formed by the splitting of the double point on $\partial T^0(\sigma_{a})$ such that the thickness of this channel goes to zero as $k\to\infty$ (i.e., it gets pinched in the limit). Since the Euclidean distance between this part of the geodesic and the boundary of $U_k$ tends to zero as $k\to\infty$, the hyperbolic distance between $\sigma_{a_k}^{\circ n'}(y_f)$ and $\infty$ (in $U_k$) tends to $\infty$ as $k\to\infty$. Furthermore, as $\psi_{a_k}^{-1}$ is a conformal isomorphism from $U_k$ onto $\Int{(\mathcal{Q}_1\cup\mathcal{R}_d^{-1}(\mathcal{Q}_1))}$, we have that the hyperbolic distance between $\psi_{a_k}^{-1}(\sigma_{a_k}^{\circ n'}(y_f))$ and $0$ (in $\Int{(\mathcal{Q}_1\cup\mathcal{R}_d^{-1}(\mathcal{Q}_1))}$) tends to $\infty$ as $k\to\infty$. Consequently, $\{\psi_{a_k}^{-1}(\sigma_{a_k}^{\circ n'}(y_f))\}_k$ escapes to the boundary of $\Int{(\mathcal{Q}_1\cup\mathcal{R}_d^{-1}(\mathcal{Q}_1))}$ as $k\to\infty$. But the sequence $\{\psi_{a_k}^{-1}(\sigma_{a_k}^{\circ n'}(y_f))\}_k$ is contained in $\mathcal{Q}_1$, and hence, $\{\psi_{a_k}^{-1}(\sigma_{a_k}^{\circ n'}(y_f))\}_k$ must converge to $1\in\partial\mathcal{Q}_1$. In fact, it follows from the proof of Proposition~\ref{generic_bdd_droplet_dyn_prop} that for $k$ sufficiently large, each $\psi_{a_k}^{-1}(\sigma_{a_k}^{\circ j}(y_f))$, $j\in\{0,\cdots, n'\}$, is close to $1\in\partial\mathcal{Q}_1$. Hence $\pmb{\Psi}(a_k)=\psi_{a_k}^{-1}(y_f)$ converges to $1\in\partial\mathcal{Q}_1$ as $k\to\infty$.

Finally let $\{a_k\}_k\subset S_{\mathcal{T}}\setminus\cC(S_\cT)$ be a sequence accumulating on $\cC(S_\cT)$. Suppose that $\{\pmb{\Psi}(a_k)\}_k$ converges to some $u\in\Int{(\mathcal{Q})}\setminus\mathcal{Q}_1$. Then, $\{\psi_{a_k}^{-1}(y_f)\}_k$ is contained in a compact subset $\mathcal{X}$ of $\Int{(\mathcal{Q})}\setminus\mathcal{Q}_1$. After passing to a subsequence, we can assume that $\mathcal{X}$ is contained in a single tile of $\Int{(\mathcal{Q})}\setminus\mathcal{Q}_1$. But this implies that each $a_k$ has a common depth $n_0$ (see Definition~\ref{def_depth}), and $\psi_{a_k}^{-1}(\sigma_{a_k}^{\circ n_0}(y_f))$ is contained in the compact set $\mathcal{R}_d^{\circ n_0}(\mathcal{X})\subset\mathcal{\mathcal{Q}}_1$ for each $k$. Note that the map $\sigma_{a'}$, the fundamental tile $T^0(\sigma_{a'})$ as well as (the continuous extension of) the conformal isomorphism $\psi_{a'}:\mathcal{Q}_1\to T^0(\sigma_{a'})$ change continuously with the parameter as $a'$ runs over $S_{\mathcal{T}}$. Therefore, for every accumulation point $a$ of $\{a_k\}_k$, the point $\sigma_{a}^{\circ n_0}(y_f)$ belongs to the compact set $\psi_{a}(\mathcal{R}_d^{\circ n_0}(\mathcal{X}))$. In particular, the critical value $y_f$ of $\sigma_{a}$ lies in the tiling set $T^\infty(\sigma_a)$. This contradicts the assumption that $\{a_k\}_k$ accumulates on $\cC(S_\cT)$, and proves that $\{\pmb{\Psi}(a_k)\}_k$ must accumulate on the boundary of $\Int{(\mathcal{Q})}\setminus\mathcal{Q}_1$.
\medskip

\noindent\textbf{Injectivity of $\pmb{\Psi}$.}
For $a\in S_\cT$, we denote the union of the tiles of rank zero through $k$ for the Schwarz reflection map $\sigma_a$ by $E_a^k$; i.e., 
$$
E_a^k:=\displaystyle\bigcup_{i=0}^{k} \sigma_{a}^{-i}(T^0(\sigma_{a})).
$$

Let $\pmb{\Psi}(a_1)=\pmb{\Psi}(a_2)$, for $a_1,a_2\in S_\cT\setminus\cC(S_\cT)$. Then, the parameters $a_1$ and $a_2$ have the same depth, which we denote by $n_0$. 
By Proposition~\ref{schwarz_group}, for $i\in\{1,2\}$, there exist conformal maps 
$$
\psi_{a_i}^{-1}:E_{a_i}^{n_0}\longrightarrow \displaystyle\bigcup_{k=0}^{n_0} \mathcal{R}_d^{-k}(\mathcal{Q}_1)
$$ 
conjugating $\sigma_{a_i}$ to $\mathcal{R}_d$ (wherever the maps are defined).

We define a conformal homeomorphism 
$$
\psi_{a_1}^{a_2}:=\psi_{a_2}\circ\psi_{a_1}^{-1}: E_{a_1}^{n_0}\longrightarrow E_{a_2}^{n_0}
$$
that conjugates $\sigma_{a_1}$ to $\sigma_{a_2}$. Clearly, $\psi_{a_1}^{a_2}$ extends continuously to the unique cusp of $\partial\Omega_{a_1}$ and sends it to the unique cusp of $\partial\Omega_{a_2}$. Since $\psi_{a_1}^{-1}(y_f)=\psi_{a_2}^{-1}(y_f)$, the map $\psi_{a_1}^{a_2}$ carries the (unique) free critical value $y_f$ of $\sigma_{a_1}$ to that of $\sigma_{a_2}$. Further, the maps $\sigma_{a_1}$ and $\sigma_{a_2}$ have the same branching structure over $y_f$. Hence, $\psi_{a_1}^{a_2}$ can be lifted via the maps $\sigma_{a_1}, \sigma_{a_2}$ to a conformal homeomorphism from $E_{a_1}^{n_0+1}$ onto $E_{a_2}^{n_0+1}$ such that it respects the cusp and the marked critical points. Due to the equivariance property of $\psi_{a_1}^{a_2}$ (with respect to $\sigma_{a_1}, \sigma_{a_2}$), we can normalize the lift so that it agrees with $\psi_{a_1}^{a_2}$ on their common domain of definition. Thus, the lifted map is an extension of $\psi_{a_1}^{a_2}$ (to the tiles of rank $n_0+1$) and we denote this extension by $\psi_{a_1}^{a_2}$ as well.

As $a_i$ lies in the escape locus of $S_\cT$, the unique cusp on $\partial\Omega_{a_i}$ is a $(3,2)-$cusp, $i\in\{1,2\}$ (by Lemma~\ref{cusp_anal_dyn_lem}). Thus, by \cite[Lemmas~B.2,~B.3]{LLMM2}, the conformal map $\psi_{a_1}^{a_2}$ is asymptotically linear near the cusp. 
Since $\sigma_{a_i}$ is anti-holomorphic on $\Omega_{a_i}$, it follows that $\psi_{a_1}^{a_2}$ is also asymptotically linear near the singular points of $E_{a_1}^{n_0+1}$ (these singular points are the iterated preimages of the cusp). Since $\partial E_{a_i}^{n_0+1}$ is non-singular real-analytic away from the iterated preimages of the cusp, it follows that $\psi_{a_1}^{a_2}$ extends as a quasiconformal homeomorphism of $\widehat{\C}$.
Recall that by design, $\mathfrak{g}_1$ is a conformal conjugacy between $\sigma_{a_1}$ to $\sigma_{a_2}$ on the union of the tiles of rank $0$ through $n_0+1$. 

A pullback argument as in the proof of Lemma~\ref{dp_pb_two_lem} (cf. \cite[Theorem~1.3]{LLMM2}) now yields a global quasiconformal homeomorphism $\mathfrak{g}_\infty$ that is a conformal conjugacy between $\sigma_{a_1}$ and $\sigma_{a_2}$ on their tiling sets. As the non-escaping sets of these maps are Cantor sets of zero area (cf. \cite[Proposition~5.52, Proposition~6.8]{LLMM1}), the map $\mathfrak{g}_\infty$ is a global conjugacy between the two Schwarz reflections. Moreover, by Weyl's lemma, $\mathfrak{g}_\infty$ is conformal on $\widehat{\C}$ and hence a M{\"o}bius map. 
Finally, since the M{\"o}bius conjugacy $\mathfrak{g}_\infty$ fixes the critical values $y_f, \infty$, and the cusp $y_c$, it is the identity map. We conclude that $a_1=a_2$.
\end{proof}

\begin{remark}\label{bdry_psi_rem}
The proof of properness of $\pmb{\Psi}$ in Theorem~\ref{escape_unif_thm} yields the following finer information on the boundary behavior of $\pmb{\Psi}$. As a sequence $a_n\to \overline{\Gamma^{\perp}}$, the image $\pmb{\Psi}(a_n)\to \partial\mathcal{Q}_1$.  As $a_n\to \Gamma^{\mathrm{dp}}$, the image $\pmb{\Psi}(a_n)\to 1\in\partial\mathcal{Q}$. In fact, there are two accesses to $1$ in $\Int{(\mathcal{Q})}\setminus\mathcal{Q}_1$, and these two accesses correspond to the two components of $\Gamma^{\mathrm{dp}}$; i.e., $\pmb{\Psi}(a_n)$ converges to $1$ through one of these two accesses depending on which component of $\Gamma^{\mathrm{dp}}$ the sequence $a_n$ accumulates on. Finally,
if $a_n\to \cC(S_\cT)$ through an infinite sequence of tiles of increasing rank, then the image $\pmb{\Psi}(a_n)\to \partial\mathcal{Q}$.
\end{remark}

\begin{remark}
A dynamically natural uniformization for the exterior of the Tricorn (the connectedness locus of quadratic anti-polynomials) was given by Nakane \cite{Nak93}. In parameter spaces of antiholomorphic maps, such uniformizing maps given by the `conformal position of the free critical value' are not holomorphic. This makes studying the regularity of the candidate uniformizing map subtler than in the holomorphic case. In the case of the Tricorn, Nakane showed that the candidate uniformizing map is a proper covering map, where the covering property was established using the fact that the exterior of the Tricorn is a single quasiconformal deformation class. For the escape locus of our family $S_\cT$, there is an additional difficulty: the escape locus is not a single quasiconformal deformation class (indeed, the depth of a parameter in the escape locus is a topological conjugacy invariant). This forces us to first prove injectivity of the map $\pmb{\Psi}$ using a pullback argument and then appeal to the Invariance of Domain Theorem in the proof of Theorem~\ref{escape_unif_thm}.   
\end{remark}

\subsection{Parameter rays and their accumulation properties}\label{para_rays_subsec}

Recall that under the natural projection map $\overline{\D}\to\mathcal{Q}=\overline{\D}/\langle M_\omega\rangle$ (where $M_\omega(z)=\omega z$, $\omega=e^{2\pi i/(d+1)}$), the $\pmb{\Gamma}_d-$rays in $\D$ with angles in $[0,\frac{1}{d+1})$ descend to rays in $\mathcal{Q}$ with angles in $[0,1)$. Further, the image of (the tail of) such a ray at angle $\theta$ under $\mathcal{R}_d$ is (the tail of) a ray at angle $\mathcal{R}_d(\theta)$ (see Definition~\ref{Gd_rays}). Abusing notation, we call such rays $\pmb{\Gamma}_d-$rays in $\mathcal{Q}$. 

Recall also that in general there may be more than one $\pmb{\Gamma}_d-$rays in $\D$ at a given angle $\theta$, and these rays are homotopic in $\D$ relative to their endpoints. The same is true for two $\pmb{\Gamma}_d-$rays in $\mathcal{Q}$ at a given angle.

\begin{definition}[Parameter Rays of $S_{\mathcal{T}}$]\label{para_ray_schwarz}
The preimage of a $\pmb{\Gamma}_d-$ray at angle $\theta$ in $\mathcal{Q}$ under the map $\pmb{\Psi}$ is called a $\theta-$parameter ray of $S_\cT$, and is denoted by $\mathfrak{R}_\theta$.
\end{definition}

\begin{lemma}\label{ray_acu_pt_lem}
Every accumulation point of a parameter ray of $S_\cT$ lies in $\cC(S_\cT)$.    
\end{lemma}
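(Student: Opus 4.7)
The plan is to leverage the homeomorphism $\pmb{\Psi}\colon S_\cT\setminus\cC(S_\cT)\to\Int{(\mathcal{Q})}\setminus\mathcal{Q}_1$ from Theorem~\ref{escape_unif_thm} together with the boundary-accumulation dictionary of Remark~\ref{bdry_psi_rem}, using the depth $n(a)$ of Definition~\ref{def_depth} as the controlling invariant. The starting observation is geometric: along the $\pmb{\Gamma}_d$-ray at angle $\theta$ sitting in $\Int{(\mathcal{Q})}\setminus\mathcal{Q}_1$, the tail approaching the landing point $\theta\in\partial\mathcal{Q}$ must enter the nested tiles $\mathcal{R}_d^{-n}(\mathcal{Q}_1)$ of arbitrarily large $n$, since each such tile is a compact subset of $\Int{(\mathcal{Q})}$ bounded away from $\partial\mathcal{Q}$ while the ray accumulates on $\partial\mathcal{Q}$. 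Pulling back through $\pmb{\Psi}$ and using Lemma~\ref{schwarz_group}, for any sequence $a_k\in\mathfrak{R}_\theta$ with $\pmb{\Psi}(a_k)\to\theta$ the depth $n(a_k)$ tends to infinity.

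Next, I would take an accumulation point $a^*\in\overline{S_\cT}$ of $\mathfrak{R}_\theta$ arising from such a tail sequence $a_k\to a^*$ with $n(a_k)\to\infty$, and eliminate the possibility that $a^*\in\overline{S_\cT}\setminus\cC(S_\cT)$, using Theorem~\ref{bdry_thm} to partition $\partial S_\cT$ into the pieces $\Gamma^{\mathrm{hoc}}$, $\overline{\Gamma^\perp}$, and $\overline{\Gamma^{\mathrm{dp}}}$. If $a^*\in S_\cT\setminus\cC(S_\cT)$, then $T^0(\sigma_a)$ varies continuously near $a^*$, so $\sigma_{a'}^{\circ n(a^*)}(y_f)\in T^0(\sigma_{a'})$ for all $a'$ sufficiently close to $a^*$, giving upper semi-continuity of depth and contradicting $n(a_k)\to\infty$. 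If $a^*\in\overline{\Gamma^\perp}$, Remark~\ref{bdry_psi_rem} forces $\pmb{\Psi}(a_k)\to\partial\mathcal{Q}_1$; but the $\pmb{\Gamma}_d$-ray meets $\partial\mathcal{Q}_1$ only at its unique entry point, after which it remains in the rank-one tile adjacent to $\mathcal{Q}_1$, bounding $n(a_k)\le 1$ eventually. If $a^*\in\overline{\Gamma^{\mathrm{dp}}}\setminus\cC(S_\cT)$, the argument in the properness step of Theorem~\ref{escape_unif_thm} shows that the depth of nearby escape-locus parameters is uniformly bounded by $n(a^*)+1$, again contradicting $n(a_k)\to\infty$. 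The only remaining boundary piece is $\Gamma^{\mathrm{hoc}}\subset\overline{H}\subset\cC(S_\cT)$ by Proposition~\ref{hoc_bdry_primary}, which is already inside the connectedness locus. Hence $a^*\in\cC(S_\cT)$.

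The main obstacle I anticipate is the clean verification of the depth-to-infinity claim uniformly over all angles $\theta\in[0,1)$, and in particular for the $\theta=0$ ray whose landing point corresponds to the vertex $1\in\partial\mathcal{Q}\cap\partial\mathcal{Q}_1$ and can a priori be approached through several accesses singled out in Remark~\ref{bdry_psi_rem}. The resolution should come from a direct accounting of the admissible symbolic sequence $(i_1,i_2,\ldots)$ of Definition~\ref{Gd_rays} defining the ray, which forces the rank of tiles traversed to grow without bound; the bookkeeping is delicate when one must separate the $\overline{\Gamma^\perp}$ and $\overline{\Gamma^{\mathrm{dp}}}$ accesses near $1\in\partial\mathcal{Q}$ and confirm that the ray tail never re-enters the bounded-depth regions producing these two accesses.
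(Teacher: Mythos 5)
Your proposal follows essentially the same route as the paper: both arguments rest on the observation that the depth $n(a)$ tends to infinity along the tail of any parameter ray (the ray in $\mathcal{Q}$ traverses tiles of unbounded rank, and $\pmb{\Psi}$ transports tile rank to depth), while the depth is uniformly bounded near every point of $\overline{\Gamma^{\perp}}\cup\Gamma^{\mathrm{dp}}$ and locally bounded near every point of the open escape locus. Two remarks on your execution. First, in the $\overline{\Gamma^{\perp}}$ sub-case the assertion that the $\pmb{\Gamma}_d-$ray ``remains in the rank-one tile adjacent to $\mathcal{Q}_1$'' after its entry point is false --- the ray proceeds into tiles of ever higher rank, and near the vertex $1\in\partial\mathcal{Q}_1\cap\partial\mathcal{Q}$ high-rank tiles do accumulate on $\partial\mathcal{Q}_1$. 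What is true, and what you should say instead, is that for $a^*\in\overline{\Gamma^{\perp}}$ the free critical value $y_f$ sits at a non-singular point of $\partial\Omega_{a^*}$ (at the two corner parameters this uses the observation in Lemma~\ref{dp_pb_two_lem} that $y_f$ is neither the cusp nor the double point), so nearby escape parameters have depth at most $1$ --- i.e., run the same dynamical-side stability argument you already invoke for the $\Gamma^{\mathrm{dp}}$ sub-case. Second, once this is done, the ``delicate bookkeeping'' you anticipate for $\theta=0$ and the two accesses to $1\in\partial\mathcal{Q}$ evaporates: the contradiction between bounded depth near $\overline{\Gamma^{\perp}}\cup\Gamma^{\mathrm{dp}}$ and unbounded depth along the ray tail is obtained entirely in the parameter/dynamical plane, so one never needs to decide through which access the image of the tail approaches $1$, and no symbolic analysis of the sequence $(i_1,i_2,\ldots)$ is required. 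This uniform depth comparison is exactly how the paper's (much shorter) proof goes.
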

\begin{proof}
Note that if $a\in\overline{\Gamma^{\perp}}\cup\Gamma^{\mathrm{dp}}$, then the free critical $y_f$ value escapes to the fundamental tile $T^0(\sigma_a)$ in finite time. This property is stable under small perturbation; i.e., parameters $a'$ close to $a$ lie in the escape locus and have bounded depth. If $a$ is an accumulation point of a parameter ray, then there are parameters $a'$ close to $a$ on this parameter ray such that $\pmb{\Psi}(a')$ lies in a tile of arbitrarily large rank, or in other words, $a'$ has arbitrarily large depth. This is a contradiction, which proves that no parameter $a\in\overline{\Gamma^{\perp}}\cup\Gamma^{\mathrm{dp}}$ can be  an accumulation point of a parameter ray. Hence, all accumulation points of parameter rays of $S_\cT$ lies in $\cC(S_\cT)$.  
\end{proof}

In light of the discussion preceding Definition~\ref{para_ray_schwarz}, there may be more than one parameter rays in $S_\cT$ at a given angle. When these rays land on $\cC(S_\cT)$, they define the same access to a point in $\cC(S_\cT)$, with a single exception that we now describe.
 
There are $\pmb{\Gamma}_d-$rays at angles $0, 1/(d+1)$ in $\D$ such that these rays are contained in the sector $Q$ of angle $2\pi/(d+1)$ (see Section~\ref{antiFarey_subsec}). Under the projection map $\overline{\D}\to\mathcal{Q}$, these rays descend to $\pmb{\Gamma}_d-$rays in $\mathcal{Q}$ at angle $0$. These two rays are homotpic in $\mathcal{Q}$, but not in $\mathcal{Q}\setminus\mathcal{Q}_1$. We will shortly see that the corresponding parameter rays, which we denote by the $0/1-$ray and $1/1-$ray, land on $\cC(S_\cT)$ but do not define the same access to $\cC(S_\cT)$.

We start with accumulation properties of the parameter rays of $S_\cT$ at strictly pre-periodic angles (under $\cR_d$). 

\begin{proposition}\label{preper_para_rays_prop}
Let $\theta\in\R/\Z\setminus\{0\}$ be strictly pre-periodic under $\mathcal{R}_d$. Then, the $\theta-$parameter ray of $S_\cT$ lands at a parameter $a\in\cC(S_\cT)$ such that the free critical value $y_f$ of $\sigma_a$ is strictly pre-periodic. Furthermore, the corresponding dynamical ray at angle $\theta$ lands at $y_f$ in the dynamical plane of $\sigma_a$.
\end{proposition}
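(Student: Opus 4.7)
The strategy is to follow Douady's principle of ``ploughing in the dynamical plane and harvesting in the parameter plane''. Unwinding Definition~\ref{para_ray_schwarz} together with Theorem~\ref{escape_unif_thm} we see that $a\in \mathfrak{R}_\theta$ if and only if the free critical value $y_f$ of $\sigma_a$ lies on the dynamical $\theta$-ray $R_a(\theta)$ of $\sigma_a$. So the landing of $\mathfrak{R}_\theta$ is equivalent to showing that the set of parameters for which $y_f$ lies on $R_a(\theta)$, together with its accumulation set in $\cC(S_\cT)$, is a singleton closure of an arc.

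First I would analyze accumulation points. By Lemma~\ref{ray_acu_pt_lem}, every accumulation point $a^*$ of $\mathfrak{R}_\theta$ lies in $\cC(S_\cT)$, so $\psi_{a^*}$ extends conformally to all of $\Int{\mathcal Q}$ by Lemma~\ref{schwarz_group}(1) and the dynamical $\theta$-ray $R_{a^*}(\theta)$ is well-defined. As the tiles of $T^\infty(\sigma_a)$ of any given rank vary continuously with $a$, one checks that $R_{a^*}(\theta)$ is the Hausdorff limit of $R_{a_n}(\theta)$ along a sequence $a_n\to a^*$ of parameters on $\mathfrak{R}_\theta$. Since $\theta$ is strictly pre-periodic under $\mathcal R_d$, some forward iterate sends $R_{a^*}(\theta)$ onto a periodic dynamical ray, which lands at a repelling or parabolic periodic point by Proposition~\ref{per_rays_land_prop}; pulling back along inverse branches of $\sigma_{a^*}$ (which are well-defined away from the finitely many critical values on $\partial K(\sigma_{a^*})$) shows that $R_{a^*}(\theta)$ itself lands at a strictly pre-periodic repelling/parabolic preimage $z^*$ of that cycle. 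A standard continuity argument then forces $y_f=z^*$: indeed, $\pmb{\Psi}(a_n)$ converges along the $\pmb\Gamma_d$-ray to a boundary point of $\mathcal Q$, and transporting back by $\psi_{a_n}$ and passing to the limit places $y_f$ at the landing point of $R_{a^*}(\theta)$. In particular $a^*$ is \emph{Misiurewicz} in the sense that $y_f$ is strictly pre-periodic under $\sigma_{a^*}$.

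The main obstacle is then combinatorial rigidity: I want to show that any two accumulation points $a_1^*, a_2^*$ of $\mathfrak{R}_\theta$ coincide. Both share the same pre-periodic combinatorial data, namely $y_f$ is the landing point of the dynamical $\theta$-ray and all preperiodic rays with angles in the $\mathcal R_d$-orbit of $\theta$ have the same landing pattern on the orbit of $y_f$. Following the pullback scheme used in the injectivity step of Theorem~\ref{escape_unif_thm}, I would build a quasiconformal conjugacy $\widehat\psi\colon\widehat\C\to\widehat\C$ between $\sigma_{a_1^*}$ and $\sigma_{a_2^*}$: start from $\psi_{a_2^*}\circ\psi_{a_1^*}^{-1}$ on the tiling sets (which extends across $\partial T^0$ by asymptotic linearity at the $(3,2)$-cusps, as in Lemma~\ref{dp_pb_two_lem}), lift iteratively through $\sigma_{a_i^*}$ using that $y_f$ corresponds to $y_f$ on both sides, and take the limit. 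Because $y_f$ is strictly pre-periodic, the post-critical set of $\sigma_{a_i^*}$ is finite and disjoint from the limit set, so the non-escaping set has no interior and carries no invariant line field; combined with the zero area argument of \cite[Propositions~5.30,~5.52, 6.8]{LLMM1}, the map $\widehat\psi$ is conformal by Weyl's lemma, hence M\"obius, and fixing $y_c$, $y_f$, $\infty$ forces $\widehat\psi=\mathrm{id}$ and $a_1^*=a_2^*$.

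Putting this together, the accumulation set of $\mathfrak R_\theta$ in $\widehat\C$ is a connected subset of $\cC(S_\cT)$ on which $y_f$ is a pre-periodic point with prescribed combinatorial data; by rigidity it is a single point, so $\mathfrak R_\theta$ lands at a Misiurewicz parameter $a$, and the dynamical $\theta$-ray of $\sigma_a$ lands at $y_f$ as required. The rigidity/pullback step is the main technical core; once it is established, the remaining assertions follow from connectedness of accumulation sets of arcs.
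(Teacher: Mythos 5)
The overall architecture of your proposal (show every accumulation point is Misiurewicz with the $\theta$-ray landing at $y_f$, then show the accumulation set is a single point) matches the paper, but there is a genuine gap at the crucial step where you claim $y_f=z^*$. Your ``standard continuity argument'' --- that $\pmb{\Psi}(a_n)$ converges to $\partial\mathcal{Q}$ along the $\pmb{\Gamma}_d$-ray and ``transporting back by $\psi_{a_n}$ and passing to the limit places $y_f$ at the landing point of $R_{a^*}(\theta)$'' --- does not work as stated. The maps $\psi_{a_n}$ converge only locally uniformly on $\Int{\mathcal{Q}}$, while the points $\pmb{\Psi}(a_n)=\psi_{a_n}^{-1}(y_f)$ escape to the ideal boundary, where there is no uniform control; so one cannot pass to the limit to identify $y_f$ with the landing point of the limit ray. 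This is exactly the subtlety in the classical Douady--Hubbard proof for the Mandelbrot set, and the paper closes it with stability/separation arguments: it first rules out parabolic cycles for $\sigma_{a_0}$ by producing a cut-line of dynamical rays through repelling periodic points separating $R_{a_0}^\theta$ from $y_f$, stable under perturbation; then it observes that if $z_0$ is neither precritical nor equal to $y_f$, the landing of $R_a^\theta$ persists for $a\approx a_0$ and the ray stays away from $y_f$, contradicting the definition of $\mathfrak{R}_\theta$; a further cut-line argument excludes $z_0$ precritical but different from $y_f$. Your proposal also silently allows $z^*$ to lie over a parabolic cycle, which must be excluded before any such stability argument can be run.

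For uniqueness you propose a quasiconformal pullback rigidity argument between two accumulation points, whereas the paper uses that there are only finitely many Misiurewicz parameters with the preperiod and period prescribed by $\theta$ (finitely many Hubbard trees plus combinatorial rigidity of PCF maps), together with connectedness of the accumulation set of a ray. Your route could likely be carried out, but it requires first knowing that all accumulation points share the same full rational lamination --- not merely that the $\theta$-ray lands at $y_f$ --- which you assert without justification; the paper's counting argument avoids this issue entirely and is considerably lighter.
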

\begin{proof}
Let $a_0$ be an accumulation point of $\mathfrak R_\theta$. For a Schwarz reflection $\sigma_a$, we denote the dynamical ray at angle $\theta$ by $R_{a}^\theta$. Also recall, that by definition, there exist $a$ close to $a_0$ on $\mathfrak{R}_\theta$ for which the free critical value $y_f$ of $\sigma_{a}$ lies on $R_{a}^\theta$.

Note that $R_{a_0}^\theta$ is strictly pre-periodic and thus lands at a pre-periodic point, which we denote by $z_0$. The point $z_0$ eventually falls on a repelling or a parabolic periodic point.

First, we rule out the possibility that $z_0$ eventually hits a parabolic periodic point, with the possible exception of the cusp $y_c$ on the quadrature domain. To see this, we assume by way of contradiction that $\sigma_{a_0}$ has a parabolic cycle. Evidently, the strictly pre-periodic point $z_0$ is not the parabolic periodic point on the boundary of the Fatou component of $\sigma_{a_0}$ containing the critical value $y_f$. Since $\sigma_{a_0}$ has a parabolic cycle by assumption, it has a locally connected limit set and repelling periodic points are dense on the limit set. Hence, there exists a cut-line consisting of dynamical rays through repelling periodic points on $\Lambda(\sigma_{a_0})$ separating $R_{a_0}^\theta$ from the critical value $y_f$. As such cut-lines are stable under small perturbation, it follows that for parameters $a\approx a_0$, the dynamical ray $R_a^\theta$ remains disjoint from the critical value $y_f$. This is a contradiction, and hence $\sigma_{a_0}$ does not have a parabolic cycle. In particular, $z_0$ is either a repelling pre-periodic point or a preimage of the cusp $y_c$.

If $z_0$ is neither pre-critical nor equal to the free critical value $y_f$, then the ray $R^\theta_{a}$ will still land at a repelling pre-periodic point for all $a$ near $a_0$ and will stay away from $y_f$ (cf. \cite[Expos\'e~VIII, \S 2.3, Proposition~3]{DH1}), giving rise to a contradiction.

This means that $z_0$ is either equal to $y_f$ or pre-critical, and hence $\sigma_{a_0}$ is critically pre-periodic and hence locally connected. In fact, a similar cut-line argument as above implies that if $z_0$ is not equal to the free critical value $y_f$ (cf. \cite[Lemma~8.14]{LLMM3}), then we may separate the dynamical $\theta-$ray from $y_f$ for all nearby parameters, giving rise to a contradiction. Hence $z_0=y_f$. 
Since $\theta$ is strictly pre-periodic under $\mathcal{R}_d$, it follows that $a_0$ is a \emph{Misiurewicz} parameter; i.e., all critical points of $\sigma_{a_0}$ are strictly pre-periodic.

Lastly we show that $a_0$ is the unique accumulation point of $\mathfrak{R}_\theta$. By the preceding discussion, any accumulation point must be a critically pre-periodic parameter with pre-period and period defined by $\theta$. We claim that at most finitely many parameters satisfy this property for the given pre-period and period. Indeed, there are at most finitely many Hubbard trees with such combinatorics. The claim now follows by combinatorial rigidity of post-critically finite maps. Finally, as the accumulation set of a ray must be connected, we have that $a_0$ is the only accumulation point of~$\mathfrak{R}_\theta$.
\end{proof}

Now, we note one way in which our setup departs from the standard situation for unicritical maps.

For a unicritical (anti-)polynomial $f\colon \C\to \C$ if the critical value $c$ is periodic of period $p$ then one has $c=f^{\circ p}(c) = f(f^{\circ (p-1)}(c))$, and as $c$ has a unique preimage, namely the critical point, it follows that $f^{\circ (p-1)}(c)$ is the critical point, and hence is a superattracting periodic point of period $p$. In our setup however, one can have the free critical value lie in a repelling periodic orbit so long as the dessin $\cT$ has at least one white vertex of valence $1$. If $\sigma_a$ is a map for which such a white vertex is periodic, then the critical value will be periodic as well, but all critical \textit{points} will be strictly pre-periodic, and hence the limit set $\Lambda(\sigma_a)$ will have empty interior, and all periodic points will be repelling.

The above observation leads to an important distinction between the behavior of parameter rays at periodic angles in the classical setting and in our family. If a unicritical (anti-)polynomial lies on a parameter ray at a periodic angle $\theta$, or equivalently, if the critical value of a unicritical (anti-)polynomial lies on a periodic dynamical ray at angle $\theta$, then the corresponding dynamical ray bifurcates (cf. \cite[Expos\'e~VIII, \S 1, Proposition~1]{DH1}). This is essentially due to the fact that any preimage of such a dynamical ray hits the unique critical point. However, this is not true in our family. Indeed, if the free critical value $y_f$ lies in a repelling cycle of $\sigma_{a_0}$, then a periodic dynamical ray $R_{a_0}^\theta$ lands at $y_f$, and one can choose nearby parameters $a$ so that the dynamical ray $R_{a}^\theta$ contains the critical value $y_f$ and lands at the analytically continued repelling periodic point.

We will now see that as a consequence of the above differences, parameter rays at periodic angles can land at critically pre-periodic (i.e., Misiurewicz) parameters in the space $S_\cT$, while such parameter rays necessarily accumulate on parabolic parameters in unicritical parameter spaces (see Figure~\ref{per_para_ray_misi_fig}, cf. \cite{DH1,EMS16,IM16}).

\begin{figure}[ht!]
\captionsetup{width=0.98\linewidth}
\begin{tikzpicture}
\node[anchor=south west,inner sep=0] at (0,0) {\includegraphics[width=0.96\textwidth]{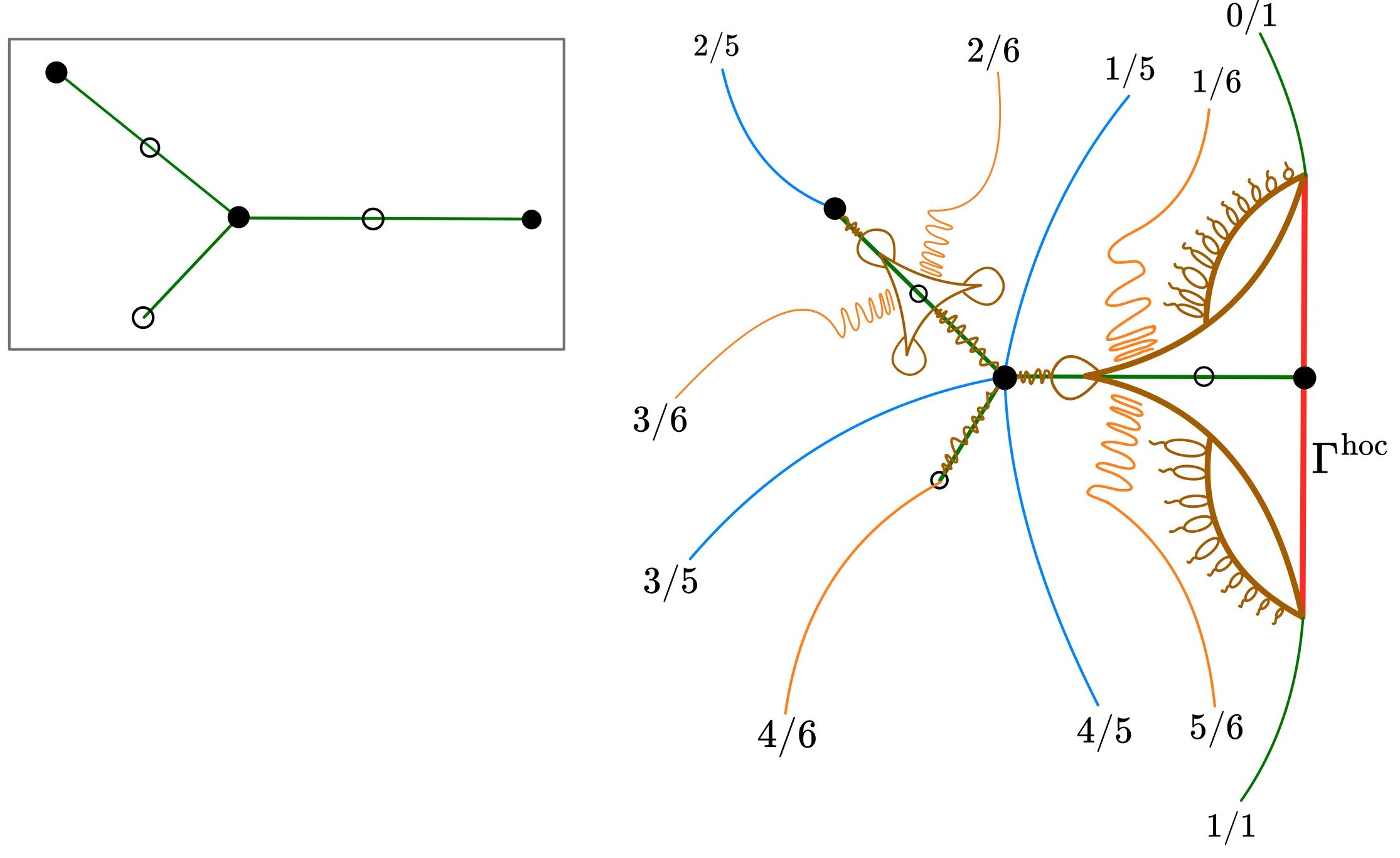}};
\end{tikzpicture}
\caption{Depicted is a cartoon of the connectedness locus $\cC(S_\cT)$ (on the right) for the dessin $\cT$ (in the left box). The blue (respectively, green) parameter rays land at the pre-cuspidal parameters (respectively, at the ends of $\Gamma^{\mathrm{hoc}}$). Four of the orange parameter rays (at $\cR_d$-fixed angles) accumulate on period $1$ parabolic arcs, while the fifth one lands at a Misiurewicz parameter. (The angle marked next to a ray $\mathfrak{R}_\theta$ is the image of $\theta$ under the circle homeomorphism conjugating $\cR_d$ to $\overline{z}^d$.)}
\label{per_para_ray_misi_fig}
\end{figure}

\begin{proposition}\label{per_para_rays_prop}
Let $\theta\in\R/\Z\setminus\{0\}$ be periodic under $\mathcal{R}_d$. Then one of the following three possibilities occurs.
\begin{enumerate}[leftmargin=8mm]
    \item $\mathfrak{R}_\theta$ lands at a parameter $a\in\cC(S_\cT)$ such that $\sigma_a$ has an even periodic parabolic cycle and the dynamical ray $R_a^\theta$ lands at the characteristic parabolic~point.
    \item $\mathfrak{R}_\theta$ accumulates on a parabolic arc lying on the boundary of an odd period hyperbolic component. For each parameter $a$ on this parabolic arc, the dynamical ray $R_a^\theta$ lands at the characteristic parabolic point.
    \item $\mathfrak{R}_\theta$ lands at a parameter $a\in \cC(S_\cT)$ such that the dynamical ray $R_a^\theta$ lands at the free critical value $y_f$, which is a repelling periodic point.
\end{enumerate}
(The characteristic parabolic point of a parabolic map is the unique parabolic periodic point on the boundary of the periodic Fatou component containing $y_f$.)
\end{proposition}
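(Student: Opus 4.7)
\smallskip

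The plan is as follows. Let $a_0$ be any accumulation point of $\mathfrak{R}_\theta$. By Lemma~\ref{ray_acu_pt_lem}, $a_0\in\cC(S_\cT)$, so since $\theta$ is $\mathcal{R}_d$-periodic, the dynamical ray $R^\theta_{a_0}$ is periodic and thus, by Proposition~\ref{per_rays_land_prop}, lands at a periodic point $z_0\in\partial K(\sigma_{a_0})$, which by Proposition~\ref{rep_para_landing_points_prop} is either repelling or parabolic. The key observation driving every subsequent step is that by the very definition of $\pmb{\Psi}$, for every $a\in\mathfrak{R}_\theta$ the free critical value $y_f$ lies on $R^\theta_a$ (because $\psi_a$ sends the $\pmb{\Gamma}_d$-ray at angle $\theta$ containing $\pmb{\Psi}(a)=\psi_a^{-1}(y_f)$ onto $R^\theta_a$). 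Passing to the limit along a sequence in $\mathfrak{R}_\theta$ converging to $a_0$ yields $y_f(a_0)\in\overline{R^\theta_{a_0}}=R^\theta_{a_0}\cup\{z_0\}$; since $a_0\in\cC(S_\cT)$ forces $y_f(a_0)\notin T^\infty(\sigma_{a_0})$, we conclude either $y_f(a_0)=z_0$ or $y_f(a_0)$ lies in a Fatou component whose boundary contains $z_0$.

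Suppose first that $z_0$ is repelling. Then stability of landings at repelling periodic points (cf.\ \cite[Expos\'e~VIII]{DH1}) shows $R^\theta_a$ lands at the holomorphic continuation of $z_0$ for $a$ in a neighborhood of $a_0$, which combined with the limit above forces $y_f(a_0)=z_0$. Thus $y_f(a_0)$ is itself a repelling periodic point at which $R^\theta_{a_0}$ lands, yielding case~(3). I would then argue, via a Thurston-style pullback / quasiconformal rigidity argument in the spirit of the injectivity part of the proof of Theorem~\ref{escape_unif_thm}, that the combinatorial data (the dessin $\cT$ together with the $\mathcal{R}_d$-orbit of $\theta$) determines $a_0$ uniquely, so that the accumulation set of $\mathfrak{R}_\theta$, being connected, reduces to $\{a_0\}$.

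Now suppose $z_0$ is parabolic of period $p$ dividing the period of $\theta$. The classical Fatou-flower description of the parabolic basin, combined with $y_f(a_0)\in\overline{R^\theta_{a_0}}$, forces $y_f(a_0)$ into the attracting petal whose boundary contains $z_0$; in particular $z_0$ is the characteristic parabolic point. If $p$ is even, then $\sigma_{a_0}^{\circ p}$ is holomorphic at $z_0$ with multiplier a root of unity, and a parabolic-implosion / Fatou-coordinate analysis combined with the rigidity argument of the previous paragraph shows $\mathfrak{R}_\theta$ lands at the single parameter $a_0$, giving case~(1). If $p$ is odd, then by the discussion in Subsection~\ref{hyp_comp_bdry_subsec}, $a_0$ lies on a real-analytic parabolic arc $\mathcal{A}$ parametrized by the critical \'Ecalle height; the external combinatorics, in particular the angle of the ray landing at the characteristic parabolic point, is constant along $\mathcal{A}$, and a perturbation analysis in \'Ecalle coordinates shows $\mathfrak{R}_\theta$ accumulates on the entire arc $\mathcal{A}$, giving case~(2).

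The principal technical obstacle is the full accumulation claim in case~(2): one must exclude the possibility that $\mathfrak{R}_\theta$ lands at a single point of $\mathcal{A}$ and, conversely, verify that every \'Ecalle height on $\mathcal{A}$ is realized in the accumulation set. My plan here is to adapt the \'Ecalle-coordinate perturbation machinery developed in \cite{HS, IM16, MNS} for the Tricorn and Multicorns, tracking $\pmb{\Psi}(a)$ as $a$ moves along $\mathfrak{R}_\theta$ via the asymptotic expansion of $\psi_a^{-1}(\sigma_a^{\circ kp}(y_f))$ in the attracting Fatou coordinate at $z_0$, and showing that this parametrization continuously sweeps out all of $\mathcal{A}$. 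A secondary subtlety, pointed out before the statement, is that case~(3) genuinely occurs in $S_\cT$: when $\cT$ has a valence-one white vertex in the $\mathcal{R}_d$-critical cycle, the free critical value can be repelling periodic while the critical points are merely pre-periodic, so $\Lambda(\sigma_{a_0})$ has empty interior and no parabolic cycle appears, in sharp contrast to the unicritical (anti-)polynomial case.
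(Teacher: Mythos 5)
Your treatment of the repelling case is essentially the paper's: stability of the landing point under perturbation plus the fact that $y_f\in R^\theta_a$ for $a\in\mathfrak{R}_\theta$ near $a_0$ forces $z_0=y_f$. The gap is in the parabolic case. Your key step there is the claim that ``passing to the limit'' along $a_n\to a_0$ in $\mathfrak{R}_\theta$ yields $y_f\in\overline{R^\theta_{a_0}}=R^\theta_{a_0}\cup\{z_0\}$. This is not justified: dynamical rays do not vary continuously in the parameter near a parabolic parameter (this is exactly the parabolic implosion phenomenon), and the uniformizations $\psi_a$ need not converge up to the boundary when $\Lambda(\sigma_{a_0})$ is not locally connected. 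Worse, even granting the claim, the deduction that $y_f$ is ``forced into the attracting petal'' does not follow, since $\overline{R^\theta_{a_0}}$ is disjoint from every attracting petal (the ray lies in the tiling set and $z_0$ is a boundary point of the petal, not an interior point). The paper identifies $z_0$ as the characteristic parabolic point by the opposite, contradiction-based route: if $z_0$ were not characteristic, then (as in \cite[Lemma~3.7]{Sch00}) a pair of (pre-)periodic dynamical rays landing at a common repelling (pre-)periodic point would separate $z_0$ from $y_f$; such a cut-line persists under small perturbation and keeps $y_f$ off $R^\theta_a$ for all $a\approx a_0$, contradicting the defining property of $\mathfrak{R}_\theta$. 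You need this separation argument (or a substitute) to close the parabolic case.

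You also misidentify the main remaining difficulty. The statement only asserts that $\mathfrak{R}_\theta$ accumulates on \emph{a} parabolic arc, not that its accumulation set is the entire arc, so the \'Ecalle-height ``sweeping'' analysis you flag as the principal obstacle is not required (and whether such rays land or wander along the arc is a genuinely delicate question, cf.\ \cite{IM16}). The paper's final step is much lighter: the Misiurewicz parameters of given period, the even-period parabolic parameters of given ray period, and the parabolic arcs of given ray period form finitely many pairwise-separated pieces, so connectivity of the accumulation set of $\mathfrak{R}_\theta$ pins it inside exactly one of them. For the same reason, the Thurston-style pullback rigidity you invoke to get single-point landing in cases (1) and (3) is unnecessary here; finiteness plus connectedness already does the job.
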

\begin{proof}[Sketch of proof]
Let $a_0$ be an accumulation point of $\mathfrak R_\theta$.  By definition of parameter rays, there exist $a\in\mathfrak{R}_\theta$ close to $a_0$ for which the free critical value $y_f$ of $\sigma_{a}$ lies on the periodic ray $R_{a}^\theta$.

The periodic ray $R_{a_0}^\theta$ lands at a repelling or parabolic periodic point $z_0$. The point $z_0$ is clearly not pre-critical; indeed, a critical point in a cycle is necessarily superattracting.

Suppose that $z_0$ is repelling. If $z_0\neq y_f$, then the ray $R^\theta_{a}$ will land at a repelling periodic point for all $a$ near $a_0$ and will stay away from $y_f$, giving rise to a contradiction. Thus, if $z_0$ is repelling, then $z_0=y_f$; i.e., $a_0$ is a Misiurewicz parameter for which $y_f$ is a repelling periodic point.

Now let $z_0$ is parabolic. We claim that $z_0$ is the characteristic parabolic point. Suppose by way of contradiction that this is not the case. Then arguments similar to the ones used in \cite[Lemma~3.7]{Sch00} imply that the characteristic parabolic point of $\sigma_{a_0}$ can be separated from $z_0$ by a pair of (pre-)periodic dynamical rays landing at a common repelling (pre-)periodic point. Since such cut-lines persist under small
perturbation and separate the free critical value $y_f$ from the dynamical ray at angle $\theta$, for parameters $a\approx a_0$, the free critical value $y_f$ stays away from $R_a^\theta$. Once again, this is a contradiction, which proves that $z_0$ is the characteristic parabolic point of~$\sigma_{a_0}$.

Finally, note that the set of Misiurewicz parameters of given pre-period and period as well as the set of even periodic parabolic parameters with a given ray period are finite. On the other hand, the set of odd periodic parabolic parameters of a given ray period is the union of finitely many parabolic arcs lying on boundaries of odd period hyperbolic components. Further, the closures of such parabolic arcs is disjoint from the set of Misiurewicz parameters and even periodic parabolic parameters mentioned above. As the accumulation set of $\mathfrak{R}_\theta$ is connected, the result now follows.
\end{proof}

Finally, we turn our attention to the $0/1$ and $1/1$ parameter rays.

\begin{proposition}\label{zero_rays_prop}
The $0/1-$ray and $1/1-$ray land at the two distinct endpoints of~$\Gamma^{\mathrm{hoc}}$.
\end{proposition}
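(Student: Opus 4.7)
The plan is to combine the uniformization $\pmb{\Psi}$ of the escape locus (Theorem~\ref{escape_unif_thm}) with the topological quadrilateral structure of $\overline{S_\cT}$ (Theorem~\ref{para_space_conn_thm}), invoking the dynamical properties of cusps (Lemma~\ref{cusp_anal_dyn_lem}, Proposition~\ref{cusp_dyn_cor}) and the position of $\Gamma^{\mathrm{hoc}}$ relative to the primary hyperbolic component (Proposition~\ref{hoc_bdry_primary}) to pin down the landing points.

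First, I would reduce the problem to analyzing subsequential limits of a sequence $a_n$ on the $0/1-$ray going out toward the far end, so that $\pmb{\Psi}(a_n) = \psi_{a_n}^{-1}(y_f) \to 1 \in \partial\mathcal{Q}$ along one of the two accesses to $1$ in $\Int\mathcal{Q}\setminus\mathcal{Q}_1$; by Lemma~\ref{ray_acu_pt_lem}, any such subsequential limit $a_0$ lies in $\cC(S_\cT)$. The key first step is to show $a_0 \in \Gamma^{\mathrm{hoc}}$ via a continuity argument. Since $\cR_d$ fixes $1$, the finite orbit $\{\cR_d^{\circ j}(\pmb{\Psi}(a_n))\}_{j=0}^{k}$ stays in an arbitrarily small neighborhood of $1$ for $n$ large and $k$ fixed. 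Transporting through $\psi_{a_n}$ (available by Lemma~\ref{schwarz_group} once $n(a_n) > k$) and passing to the limit using uniform convergence $\psi_{a_n} \to \psi_{a_0}$ on compact subsets of $\Int\mathcal{Q}$, one sees that the forward orbit $\{\sigma_{a_0}^{\circ k}(y_f)\}_{k\geq 0}$ converges non-trivially to $y_c$. Proposition~\ref{cusp_dyn_cor} then forces the cusp at $y_c$ for $\sigma_{a_0}$ to be of type $(5,2)$ or $(7,2)$, since there is no attracting direction at a $(3,2)-$cusp; hence $a_0 \in \Gamma^{\mathrm{hoc}}$.

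Next I would exclude the interior of $\Gamma^{\mathrm{hoc}}$: Proposition~\ref{hoc_bdry_primary} asserts that any sequence in $S_\cT$ accumulating on an interior point of $\Gamma^{\mathrm{hoc}}$ is eventually contained in the primary hyperbolic component $H \subset \cC(S_\cT)$, contradicting $a_n \in S_\cT \setminus \cC(S_\cT)$. Thus $a_0$ must be one of the two $(7,2)-$cusp endpoints of $\Gamma^{\mathrm{hoc}}$. Since the accumulation set of each parameter ray is connected and now contained in a two-point set, each of the $0/1-$ and $1/1-$rays lands at a single endpoint. Distinctness follows from Remark~\ref{bdry_psi_rem}: the two accesses to $1$ in $\Int\mathcal{Q}\setminus\mathcal{Q}_1$ correspond under $\pmb{\Psi}^{-1}$ to the two components of $\Gamma^{\mathrm{dp}}$, and by the quadrilateral structure of $\overline{S_\cT}$ (Theorem~\ref{para_space_conn_thm}) each component of $\overline{\Gamma^{\mathrm{dp}}}$ has a distinct endpoint of $\Gamma^{\mathrm{hoc}}$ as one of its corner vertices.

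The main obstacle will be the continuity argument that yields $\sigma_{a_0}^{\circ k}(y_f) \to y_c$. The subtlety is that each $\psi_{a_n}$ is only defined on the tiles of rank $\leq n(a_n)$ in $\mathcal{Q}$, with the domain varying in $n$, so one must justify that for any fixed $k$ and all sufficiently large $n$, the depth $n(a_n)$ is at least $k$ and the intermediate iterates $\cR_d^{\circ j}(\pmb{\Psi}(a_n))$ lie in a common compact subdomain where $\psi_{a_n} \to \psi_{a_0}$ uniformly. The qualitative input that $\pmb{\Psi}(a_n) \to 1$ along the $0-$ray forces $n(a_n) \to \infty$ is clear from the tile structure, but making the convergence uniform near the degenerate boundary point $1$ requires careful bookkeeping with the nested tiles accumulating at the cusp.
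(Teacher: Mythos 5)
Your skeleton for the landing part coincides with the paper's: accumulation points lie in $\cC(S_\cT)$ by Lemma~\ref{ray_acu_pt_lem}, they must lie in $\Gamma^{\mathrm{hoc}}$, the interior of $\Gamma^{\mathrm{hoc}}$ is excluded by Proposition~\ref{hoc_bdry_primary}, and connectedness of the accumulation set forces landing at an endpoint. The problem is the step that places an accumulation point $a_0$ in $\Gamma^{\mathrm{hoc}}$, and the obstruction there is not the domain bookkeeping you flag at the end --- it is fatal to the whole transport argument. For each \emph{fixed} $j$ you have $\cR_d^{\circ j}(\pmb{\Psi}(a_n))\to 1$ as $n\to\infty$ (since $\cR_d$ fixes $1$), and $\sigma_{a_n}^{\circ j}(y_f)=\psi_{a_n}\bigl(\cR_d^{\circ j}(\pmb{\Psi}(a_n))\bigr)\to\sigma_{a_0}^{\circ j}(y_f)$. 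If the family $\{\psi_{a_n}\}$ had the uniform control at $1$ that your argument needs (images of points near $1$ uniformly near $\psi_{a_n}(1)=y_c$), you would conclude $\sigma_{a_0}^{\circ j}(y_f)=y_c$ for \emph{every} $j\geq 0$, in particular $y_f=y_c$ at $j=0$, which is false. So the equicontinuity of $\psi_{a_n}$ at the boundary point $1$ provably fails; no amount of careful bookkeeping with the nested tiles will recover it. All your limiting procedure can yield is that the orbit of $y_f$ under $\sigma_{a_0}$ lies in the prime-end impression of the $0$-ray, and showing that this impression is trivial when $a_0\notin\Gamma^{\mathrm{hoc}}$ is precisely the content that is missing from your argument.

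The paper supplies that content by a separation argument rather than a convergence argument: assuming $a_0\notin\Gamma^{\mathrm{hoc}}$, the cusp has no attracting direction in $\Omega_{a_0}$, and by the Kiwi-type landing results one finds a cut-line --- a pair of dynamical rays landing at a common repelling (pre-)periodic point of $\Lambda(\sigma_{a_0})$ --- separating $y_f$ from $R_{a_0}^{0}\cup\{y_c\}$. Such cut-lines persist under perturbation, so $y_f$ stays off the dynamical $0$-ray for all nearby parameters, contradicting the existence of points of $\mathfrak{R}_{0/1}$ arbitrarily close to $a_0$. You should replace your continuity step with this (or an equivalent ``trivial fiber'' statement); note that it also rules out the case, which your argument does not address, where the orbit of $y_f$ lands exactly on a preimage of $y_c$ or merely accumulates on the impression without converging. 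Your distinctness argument via Remark~\ref{bdry_psi_rem} and the quadrilateral structure is a reasonable alternative to the paper's one-line separation claim, but as written it still needs a justification that the access through which the dynamical ray approaches $1$ determines which corner of the quadrilateral the parameter ray accumulates at.
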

\begin{proof}
Let $a$ be an accumulation point of the $0/1-$ray. The dynamical ray of $\sigma_a$ at angle $0$ lands at the cusp $y_c$, which is different from the critical value $y_f$ (this is true for every parameter in $\cC(S_\cT)$).
We claim that $a\in\Gamma^{\mathrm{hoc}}$. Let us assume by way of contradiction that this is not true. Then $\sigma$ has no attracting direction in $\Omega_a$ at $y_c$, and the free critical value $y_f$ can be separated from $R_a^0\cup\{y_c\}$ by a cut-line consisting of a pair of dynamical rays landing at a repelling (pre-)periodic point on $\Lambda(\sigma_a)$ (see \cite[Theorem~2, Corollary~2.18]{kiwi2}, cf. \cite[Definition~2.3]{Sch98}). (We remark that such a cut-line does not exist for parameters on $\Gamma^{\mathrm{hoc}}$ because for such parameters the forward orbit of $y_f$ converge non-trivially to $y_c$ through a Fatou component, see Lemma~\ref{cusp_anal_dyn_lem}.) This cut-line remains stable under small perturbation, and separates $y_f$ from the dynamical $0-$ray. But this contradicts the fact that there exist $a'$ close to $a_0$ (on the $0/1$ parameter ray) for which the free critical value $y_f$ lies on the dynamical ray $R_{a'}^0$. Thus, $a\in\Gamma^{\mathrm{hoc}}$. Since a parameter ray lies in the escape locus, it follows from Proposition~\ref{hoc_bdry_primary} that $a$ cannot lie in the interior of $\Gamma^{\mathrm{hoc}}$. Hence $a$ is one of the two endpoints of $\Gamma^{\mathrm{hoc}}$.

The same argument, applied to the $1/1-$parameter ray, implies that this ray also lands at one of the two endpoints of $\Gamma^{\mathrm{hoc}}$. 

It is now easily seen that the connectedness locus $\cC(S_\cT)$ lies between the above two landing parameter rays, and hence, the landing points of the $0/1$ and $1/1$ parameter rays must be distinct.
\end{proof}

\section{Puzzles, renormalizability, and combinatorial rigidity}\label{renorm_sec}

In this and the following section, we investigate  parameters which do \textit{not} lie on the boundaries of hyperbolic components. They naturally divide themselves into two subcategories - those parameters which are at most finitely renormalizable, and those which are infinitely renormalizable.

\begin{definition}\label{renormalizability_def}
Let $\sigma_a\in \cC(S_\cT)$ and $w_i$ be a free critical point of $\sigma_a$ (i.e., $w_i$ is some white vertex of $\cT(\sigma_a)$ with valence greater than one). We say that $\sigma_a$    
    is \emph{renormalizable at $w_i$} if there exist Jordan disks $U, V$ with $w_i\in U\Subset V$, such that $\sigma^{\circ p}\colon U\to V$ (for some $p\geq 1$) is a unicritical polynomial-like or anti-polynomial-like map (of degree at least two) with connected non-escaping set. The critical point $w_i$ will be called the \emph{renormalizable} critical point for the associated renormalization.

A map $\sigma_a$ is said to be pinched renormalizable at $w_i$ if the above condition holds for Jordan disks $U, V$ with $w_i\in U\subset V$ where $\overline{U}\cap\overline{V}$ is a singleton and $\sigma^{\circ p}\colon U\to V$ is a unicritical pinched (anti-)polynomial-like map with connected non-escaping set. 
\end{definition}
We note that a map $\sigma_a$ has at most one renormalizable critical point.

\subsection{Period 1 renormalizations}

Before describing the puzzle structure associated with renormalizable Schwarz reflection maps in $\cC(S_\cT)$, let us first mention a case with particularly simple renormalization combinatorics.

For a fixed map $\sigma_a\in \cC(S_\cT)$, denote by $\Omega^1_{1,a},\Omega^1_{2,a},\cdots,\Omega^1_{k,a}$ the preimages of $\Omega_a$ under $\sigma_a$. As a convention we will always choose $\Omega^1_{1,a}$ to be the component which contains the root $y_c$ on its boundary. It is easily seen from the bicolored structure of $\cT(\sigma_a)$ that each $\Omega^1_{j,a}$ contains a unique white vertex $w_i$.

Now suppose that for some $\sigma_a$, the critical orbit $\{\sigma_a^{\circ n}(w_i)\}_{n\geq 0}$ lies within a single component $\Omega^1_{j,a}$, where $w_i$ is a critical point of $\sigma_a$. If $j\neq 1$ then $\sigma_a\colon \Omega^1_{j,a}\to \Omega_a$ is a unicritical anti-polynomial-like map of degree $\mathrm{val}(w_i)$, with connected non-escaping set. When $j=1$, the restriction $\sigma_a\colon \overline{\Omega^1_{1,a}}\to \overline{\Omega_a}$ is a unicritical pinched anti-polynomial-like map of degree $\mathrm{val}(w_1)$. 
Further, if such a parameter $a$ does not lie in $\Gamma^\mathrm{hoc}$, then there is a restriction of $\sigma_a$ to a subset of $\overline{\Omega^1_{1,a}}$ which is a simple pinched anti-polynomial-like mapping (see Theorem~\ref{simp_restrict_thm}).

\subsection{Finitely renormalizable maps and rigidity}\label{fin_renorm_rigid_subsec}
For the remainder of this section we will be primarily concerned with those parameters which are at most finitely renormalizable. The following theorem appears in \cite{LLM24}, based on the main result of \cite{CDKvS}.

\begin{theorem}\cite[Theorem~10.1]{LLM24}\label{fin_renorm_geom}
    Let $\sigma_a \in \cC(S_\cT)$ be periodically repelling (i.e., all of its periodic points other than $y_c$ are repelling) and at most finitely renormalizable. Then
    \begin{enumerate}[leftmargin=8mm]
        \item the limit set $\Lambda(\sigma_a)$ is locally connected;
        \item $\sigma_a$ has no invariant line fields on $\Lambda(\sigma_a)$;
        \item if $\sigma_{a'}\in \cC(S_\cT)$ is periodically repelling and has the same rational lamination as $\sigma_a$, then $\sigma_a=\sigma_{a'}$.
    \end{enumerate}
\end{theorem}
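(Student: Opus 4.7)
The plan is to adapt the Yoccoz--Kahn--Lyubich and Kozlovski--Shen--van~Strien rigidity machinery (in the form developed in \cite{CDKvS}) to Schwarz reflections in $S_\cT$, exploiting the dessin-based puzzle structure previewed in Section~\ref{special_features_subsec}. First, I would construct Yoccoz-type puzzle pieces as follows: the depth-zero pieces are the connected components of $\Omega_a$ cut along finitely many (pre-)periodic dynamical rays landing at repelling periodic points that separate the free critical orbit from the rest of $\Lambda(\sigma_a)$ (such rays exist by Propositions~\ref{per_rays_land_prop} and~\ref{rep_para_landing_points_prop} since $\sigma_a$ is periodically repelling). Depth-$n$ pieces are the connected components of the $\sigma_a^{-n}$-preimage of depth-zero pieces. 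By Corollary~\ref{schwarz_tree_cor}, the contact graph of these pieces is obtained by pulling back the dessin $\cT$, and every non-precuspidal point of $\Lambda(\sigma_a)$ is canonically nested in an infinite sequence of puzzle pieces.

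The core step is to establish \emph{a priori} complex bounds for the principal nest of puzzle pieces around the (at most one) accumulation free critical point $w_i$. Under the standing assumption that $\sigma_a$ is periodically repelling and at most finitely renormalizable, appropriate generalized first-return maps to deep enough puzzle pieces around $w_i$ eventually become honest unicritical (anti-)polynomial-like; for pieces abutting the cusp $y_c$ one replaces them with simple pinched (anti-)polynomial-like maps and invokes Theorem~\ref{straightening_thm}. The covering-lemma estimates of \cite{CDKvS} then produce a subsequence $P_n\ni w_i$ with $\mathrm{mod}(P_n\setminus P_{n+1})$ bounded below. I expect this to be the main obstacle: one must verify that the semi-global nature of $\sigma_a$ (defined only on $\overline{\Omega_a}$), together with the presence of multiple free critical points all mapping to the single free value $y_f$, fits the hypotheses of the covering lemmas, and that pinched branches of return maps at the cusp do not degrade the bounds.

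Granting the complex bounds, assertion (1) follows from the standard Yoccoz shrinking argument: puzzle pieces around any point of $\Lambda(\sigma_a)$ shrink to that point, which gives local connectivity. Assertion (2) is then deduced via a McMullen-type density argument. Assuming a measurable $\sigma_a$-invariant line field on $\Lambda(\sigma_a)$, one picks a density point and pulls the line field back through inverse branches onto a sequence of shrinking puzzle pieces of bounded geometry; bounded distortion forces the pulled-back Beltrami coefficient to remain nearly constant on pieces of definite shape, which is incompatible with its near-uniform rotation induced by the antiholomorphic dynamics, contradicting density.

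Finally, (3) is obtained by a pullback/qc-conjugacy scheme in the style of Lemma~\ref{dp_pb_two_lem} and Theorem~\ref{escape_unif_thm}. Equality of rational laminations together with the local connectivity from (1) yields a topological conjugacy between $\Lambda(\sigma_a)$ and $\Lambda(\sigma_{a'})$ respecting the combinatorics; this glues, via the canonical conformal conjugacy on the tiling sets supplied by Proposition~\ref{schwarz_group}, to a global topological conjugacy between $\sigma_a$ and $\sigma_{a'}$ that is already conformal off $\Lambda(\sigma_a)$. A standard pullback argument upgrades it to a global quasiconformal conjugacy of $\widehat\C$. The absence of invariant line fields from (2) then forces the Beltrami coefficient of this conjugacy to vanish almost everywhere on $\Lambda(\sigma_a)$, so by Weyl's lemma the conjugacy is a M\"obius map. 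As it fixes $y_c$, $y_f$, and $\infty$, it is the identity, and hence $a=a'$.
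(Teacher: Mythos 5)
The first thing to note is that the paper does not prove this theorem at all: it is imported verbatim from \cite[Theorem~10.1]{LLM24}, with a one-sentence remark that the proof there (built on the complex box mapping machinery of \cite{CDKvS}) ``applies mutatis mutandis'' after replacing the Nielsen external map by the anti-Farey one. So there is no internal proof to compare against; the comparison is with the cited argument. Your outline is essentially the strategy of that cited proof: extract a (possibly pinched) unicritical return system around the unique renormalizable critical point, establish a priori complex bounds via the covering-lemma estimates, deduce shrinking of puzzle pieces and hence local connectivity, get absence of invariant line fields, and conclude combinatorial rigidity by a pullback/QC-conjugacy scheme plus Weyl's lemma. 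Two remarks on where you diverge from the paper's setup. First, the puzzle pieces the paper actually uses (Definition~\ref{puzzle_dessin_depth_k_def}) are the closures of components of $\sigma_a^{-k}(\Omega_a)$ -- bounded by arcs of tiles and touching at preimages of the cusp, dual to the dessin -- not ray-cut Yoccoz pieces; your ray-based construction is workable but is not what the dessin-dual combinatorics of Sections~\ref{renorm_sec}--\ref{conn_comb_model_sec} is built on, so you would need to reconcile the two if you want statement (3) phrased in terms of rational laminations of pre-cuspidal angles.

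Second, and more substantively: the step you yourself flag as ``the main obstacle'' -- verifying that the semi-global map $\sigma_a$, with several free critical points all over the single value $y_f$ and with pinched return pieces abutting the cusp $y_c$, genuinely satisfies the axioms of a complex box mapping so that the \cite{CDKvS} estimates apply -- is precisely the entire technical content of \cite[\S 10, \S 12]{LLM24}, and it is not carried out in your proposal. As written, your argument is a correct road map with its hardest mile left unwalked; it becomes a proof only by citing the same external result the paper cites. A smaller issue: your mechanism for ruling out invariant line fields (``near-uniform rotation induced by the antiholomorphic dynamics'' contradicting density) is not a standard argument and is too vague to check; in the finitely renormalizable, periodically repelling case the absence of line fields comes packaged with the box-mapping rigidity (or, classically, from the fact that at a density point the pulled-back line field would have to extend continuously through a critical point of $\sigma_a^{\circ 2}$), and you should either invoke that directly or spell out the contradiction you have in mind.
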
 

Here a \textit{rational lamination} is an equivalence relation on $\cR_d-$pre-periodic angles in $\mathbb{S}^1\cong\R/\Z$ where one identifies two angles if the corresponding dynamical rays of $\sigma_a$ land at the same point of $\Lambda(\sigma_a)$.

\begin{remark}
    The above theorem, as written in \cite{LLM24}, applies to a slightly different class of Schwarz reflections, namely those with the \textit{Nielsen} external map as opposed to the anti-Farey external map. However the proof of the above theorem applies mutatis mutandis to the setting considered here (cf. \cite[\S 12]{LLM24}).
\end{remark}

In the remainder of the section, we will give a description of what it means for two periodically repelling, non-renormalizable Schwarz reflections in $\cC(S_\cT)$ to share a rational lamination in terms of the dessin and its iterated preimages (see \cite[Lemma~8.5]{LLM24} for a related description).

\subsection{Puzzle pieces and rational laminations}

We now describe a \textit{puzzle piece} structure associated to Schwarz reflections in our family.

\begin{figure}
\captionsetup{width=0.98\linewidth}	
\begin{tikzpicture}
    \node[anchor=south west,inner sep=0] at (0,1) {\includegraphics[width=0.4\textwidth]{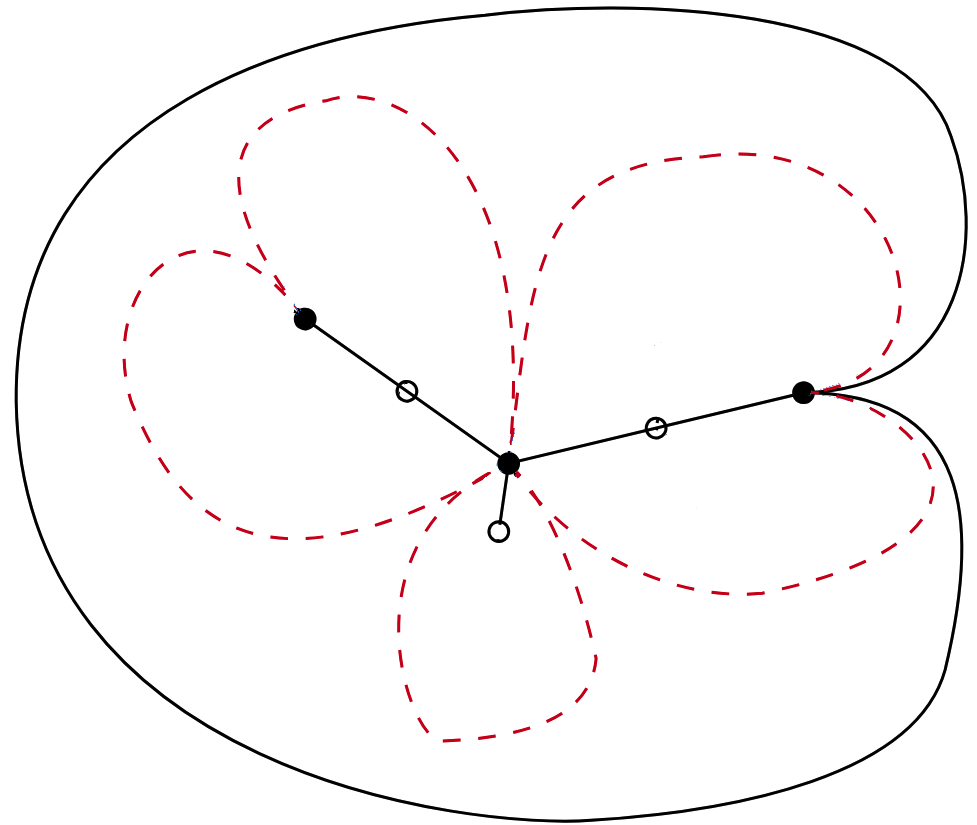}};
    \node[anchor=south west,inner sep=0] at (5.4,0) {\includegraphics[width=0.56\textwidth]{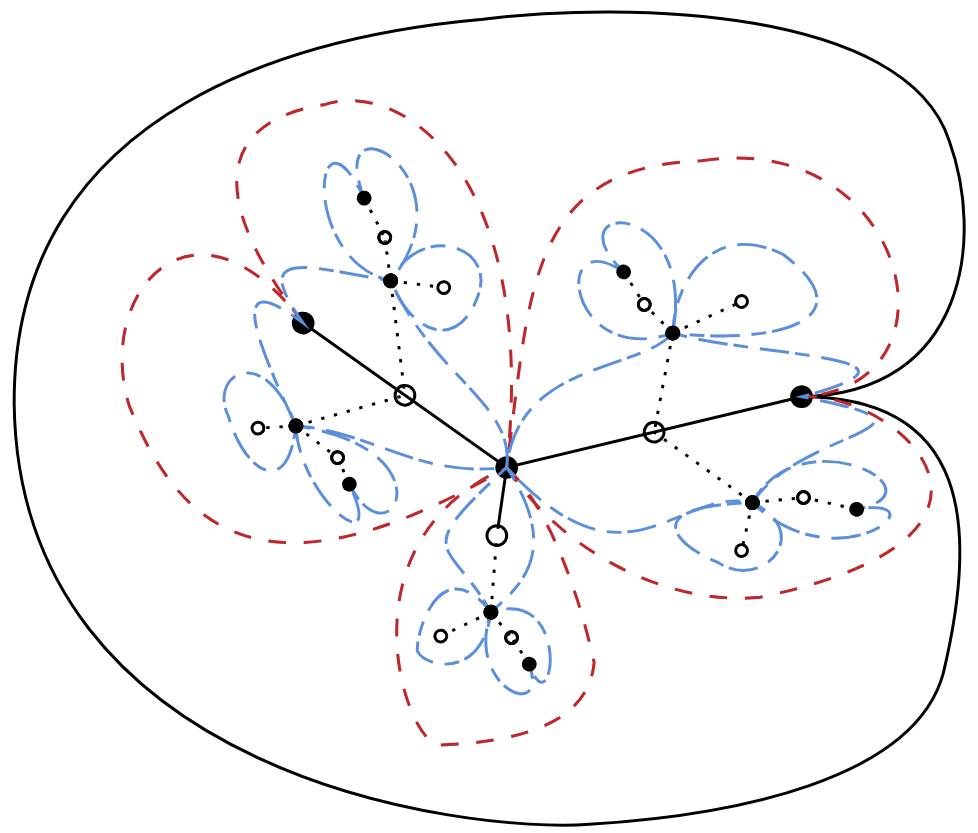}};
\end{tikzpicture}
\caption{The depth 0, 1, and 2 puzzle pieces for a Schwarz reflection map. The depth 2 puzzle pieces are determined under the assumption that the critical value lies in the depth 1 puzzle piece whose boundary contains the cusp.}
    \label{puzzle_fig}
\end{figure}

\begin{definition}\label{puzzle_dessin_depth_k_def}
	For a map $\sigma_a\in \cC(S_\cT)$, the \textit{puzzle pieces of depth} $k\geq 0$ are the closures of the components of $\Omega_a^k:=\sigma_a^{-k}(\Omega_a)$.
    
    The \textit{depth $k$ dessin} $\cT^k(a)$ is an augmentation of the contact graph for the depth $k$ puzzle pieces, defined to be the combinatorial, bi-colored, plane graph with a white vertex $w^k_i$ for every component of $\Omega_a^k$, a black vertex $b^k_j$ for every $k-$th preimage of the cusp $y_c$, and an edge connecting $b_j^k$ to $w_i^k$ if $b_j^k$ lies on the boundary of the component corresponding to $w_i^k$. (See Figure~\ref{puzzle_fig}.)
\end{definition}

We will call a puzzle piece \emph{valuable} if it contains the free critical value $y_f$, and \emph{critical} if it contains a free critical point. 
The map $\sigma_a$ induces a branched covering $\widehat{\sigma}_a:\cT^k(a)\to\cT^{k-1}(a)$; where the sub-graph of $\cT^k(a)$ contained in a depth $k$ critical puzzle piece is mapped as a unicritical branched covering onto the sub-graph of $\cT^{k-1}(a)$ in the valuable depth $(k-1)$ puzzle piece (where the white vertex corresponding to this depth $k$ critical puzzle piece is the branch point), while the sub-graph of $\cT^k(a)$ in a non-critical depth $k$ puzzle piece maps homeomorphically. Thus, the graph $\cT^{k}(a)$ can be regarded as the pullback of $\cT^{k-1}(a)$ under the induced branched covering map~$\widehat{\sigma}_a$.

It is easy to see from the tiling structure of the escaping set that any puzzle piece of depth $k$ is contained in exactly one puzzle piece of depth $k-1$.

\begin{remark}
    If $\Lambda(\sigma_a)$ is locally connected then there is a combinatorial copy of $\cT^1(a)=\cT(a)$ embedded in the the non-escaping set $K(\sigma_a)$. In the non-locally connected case, Proposition~\ref{partition_prop} gives us a copy of $\cT^1(a)$ (in $\C$) to which $K(\sigma_a)$ has a homotopic retraction rel the vertices. We may use this observation to give an alternative description of the depth $k$ dessins.
\end{remark}

    \begin{proposition}
        For $k\geq 1$, the graph  $\cT^k(a)$ is a tree. 
    \end{proposition}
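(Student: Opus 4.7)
The plan is to proceed by induction on $k$, realizing $\cT^k(a)$ geometrically as $\sigma_a^{-k}(\gamma_a) \cap \overline{\Omega_a^k}$, where $\gamma_a \subset \overline{\Omega_a}$ is a simple arc from the cusp $y_c$ to the free critical value $y_f$. The base case $k=1$ is immediate: $\cT^1(a)$ is canonically identified with the dessin $\cT(\sigma_a)$, which is isomorphic to $\cT$ by Corollary~\ref{schwarz_tree_cor}.

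For the inductive step, assume $\cT^{k-1}(a)$ is a tree. First, I would show by Riemann--Hurwitz that every component $U$ of $\Omega_a^k$ is a Jordan disk containing a unique $k$-th preimage of $y_f$. Indeed, if $V \subset \Omega_a^{k-1}$ is the component onto which $U$ maps under $\sigma_a$, then $V$ is a disk by induction and the only possible critical value of $\sigma_a|_U \colon U \to V$ in $V$ is $y_f$; writing $d_U := \deg(\sigma_a|_U)$ and letting $p$ denote the number of preimages of $y_f$ in $U$, Riemann--Hurwitz gives $\chi(U) = d_U - (d_U - p) = p$, which together with $\chi(U) \leq 1$ (as $U$ is a connected planar open set) forces $p = 1$ and $\chi(U) = 1$.

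Next, on each such $\overline U$ the restriction $\cT^k(a) \cap \overline U$ is a ``star'' with center at the white vertex $w_U$ and leaves at the $k$-th preimages of $y_c$ on $\partial U$. Indeed, with $T_V := \cT^{k-1}(a) \cap \overline V$, the open set $V \setminus T_V$ is simply connected (a disk minus a sub-tree attached to its boundary), and since all branching of $\sigma_a|_U$ occurs over $T_V$, the preimage $(\sigma_a|_U)^{-1}(V \setminus T_V)$ is a disjoint union of $d_U$ open disks in $U$; an Euler characteristic count then shows that $\cT^k(a) \cap \overline U$ is a tree, and inspecting valences confirms its star shape.

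Finally, the full graph $\cT^k(a)$ is obtained by gluing these stars along the $k$-th preimages of $y_c$ shared between boundaries of adjacent puzzle pieces, and its combinatorial structure is governed by the projection $\widehat\sigma_a \colon \cT^k(a) \to \cT^{k-1}(a)$, which restricts to a homeomorphism between each component star and its image star in $\cT^{k-1}(a)$. Connectedness of $\cT^k(a)$ will follow by lifting edge-paths in $\cT^{k-1}(a)$ starting at the root $y_c$ (which is fixed by $\sigma_a$ and hence lies in every $\cT^k(a)$), while acyclicity will follow because any cycle in $\cT^k(a)$ would project to a non-backtracking closed walk in the tree $\cT^{k-1}(a)$, which is impossible. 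The main obstacle will be this final gluing step, where one must carefully verify that the combinatorial projection $\widehat\sigma_a$ faithfully reflects the tree structure of $\cT^{k-1}(a)$ rather than introducing new cycles at the shared black vertices.
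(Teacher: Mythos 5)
Your skeleton --- each depth-$k$ puzzle piece is a Jordan disk containing a unique $k$-th preimage of $y_f$ (via Riemann--Hurwitz), the restriction of $\cT^k(a)$ to each piece is a star centered at its white vertex, and $\cT^k(a)$ is these stars glued at black vertices --- agrees with the paper's. The divergence, and the gap, is in the final step. Your acyclicity argument asserts that a cycle in $\cT^k(a)$ projects under $\widehat{\sigma}_a$ to a non-backtracking closed walk in the tree $\cT^{k-1}(a)$. That is not true in general: $\widehat{\sigma}_a$ is a branched covering of graphs and can backtrack in two ways. At a critical white vertex $w$, the star centered at $w$ maps onto its image star with degree $\geq 2$ on edges, so the two edges of a cycle meeting at $w$ may have the same image edge. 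At a black vertex $b$ shared by two pieces $U_1\neq U_2$ with $\sigma_a(U_1)=\sigma_a(U_2)$ (two components of the preimage of a single depth-$(k-1)$ piece touching at $b$), the two incident edges of the cycle again map to the same edge downstairs. So the projected walk may backtrack exactly where a cycle could hide, and the contradiction does not come for free; this is the ``main obstacle'' you flagged yourself, and it is a genuine one. Connectedness via path-lifting is likewise not automatic for a branched covering of graphs that is not an honest covering of the whole tree: one must check that lifts can be continued through the shared black vertices.

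The paper closes both connectedness and acyclicity at once with a global topological observation instead: the union of the closed depth-$k$ puzzle pieces is the complement of the interior of the union of the tiles of rank at most $k$, which is a topological disk by (the proof of) Proposition~\ref{connected_critical}; hence that union is a full, simply connected continuum. Each closed puzzle piece retracts onto the star it contains, these retractions agree at the shared black vertices, and so the whole union retracts onto $\cT^k(a)$, which is therefore connected and without loops. If you want to keep your inductive setup, replace your final paragraph with this argument --- your Riemann--Hurwitz computation is essentially the same one that establishes simple connectivity of the tile union in Proposition~\ref{connected_critical}, so nothing in your preparation is wasted.
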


    \begin{proof}
         The combinatorial embedding of $\cT^1(a)=\cT$ in $K(\sigma_a)$ as described in Proposition~\ref{partition_prop} is given by the contact graph of the depth $1$ puzzle pieces, augmented to include preimages of the cusp $y_c$. Further depth puzzle pieces are determined by pulling back these under $\sigma_a$, and hence the augmented contact graphs are pulled back under $\widehat{\sigma}_a$. By definition of depth $k$ dessins, the sub-graph of $\cT^k(a)$ in each depth $k$ puzzle piece is a star-tree with the corresponding white vertex as the base-point and black vertices as tips. The graph $\cT^k(a)$ is the union of these (star-tree) sub-graphs touching only at black vertices. Hence, there is a retraction from each depth $k$ puzzle piece to the part of $\cT^k(a)$ contained in it, and as the union of depth $k$ puzzle pieces is simply connected (it is the complement of the interior of the union of tiles of rank at most $k$, which is a topological disk; see Proposition~\ref{connected_critical}), it follows that $\cT^k(a)$ is connected and has no loops.
    \end{proof}

    Define the \textit{pre-cuspidal} lamination for a map $\sigma_a\in S_\cT$ to be the restriction of the rational lamination to those angles whose associated dynamical rays land at preimages of the cusp, that is the restriction to $\bigcup_{n\geq 0} \cR_d^{-n}(\{0\})$, where $\cR_d$ is the anti-Farey map described in Section~\ref{antiFarey_subsec}. The pre-cuspidal lamination is naturally stratified by depth, and we will say that the pre-cuspidal lamination of depth $k$ is the restriction of the rational lamination to the angles in $\bigcup_{n=0}^k \cR_d^{-n}(\{0\})$.
    
    Yet another description of the depth $k$ dessins is as the tree dual to the depth $k$ pre-cuspidal lamination. Indeed, each region cut out by the pre-cuspidal rays corresponds to a white vertex, and each landing point of the rays corresponds to a black vertex.

\begin{definition}
	We define the \textit{depth $k$ kneading sequence} $\{\vartheta^k_n\}$ to be the itinerary of the critical orbit $\{\sigma_a^{\circ n}(y_f)\}_{n\geq 0}$ in the depth $k$ puzzles.
\end{definition}

\begin{proposition}\label{dessins_from_kneading_seqn_prop}
	The depth $k$ dessin and the depth $k$ kneading sequence uniquely determine the depth $k+1$ dessin. That is, for two parameters $a_1, a_2\in\cC(S_\cT)$, if the depth $k$ dessins $\cT^k(a_1)$ and $\cT^k(a_2)$ are isomorphic as bi-colored planar embedded trees and $\vartheta^k_n(a_1)=\vartheta^k_n(a_2)$ for all $n$, then $\cT^{k+1}(a_1)$ is isomorphic to $\cT^{k+1}(a_2)$ as bi-colored planar embedded trees.
\end{proposition}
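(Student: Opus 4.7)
The plan is to realize $\cT^{k+1}(a)$ as the combinatorial pullback of $\cT^k(a)$ under the branched covering $\sigma_a\colon \Omega_a^{k+1}\to \Omega_a^k$, and then to show that this pullback depends only on the data supplied in the hypothesis. First, I would interpret the hypothesis as giving a bi-colored planar isomorphism $\phi\colon \cT^k(a_1)\to \cT^k(a_2)$ which is equivariant with respect to the kneading data; that is, $\phi(\vartheta^k_n(a_1))=\vartheta^k_n(a_2)$ for every $n\geq 0$. In particular, $\phi$ sends the valuable white vertex $\vartheta^k_0(a_1)$ to $\vartheta^k_0(a_2)$, and it must identify the critical white vertices of the sub-tree $\cT^1(a_1)\subset \cT^k(a_1)$ (with their valences/local degrees) with those of $\cT^1(a_2)\subset \cT^k(a_2)$.

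Next, I would show that $\cT^{k+1}(a)$ is determined up to bi-colored planar isomorphism by $\cT^k(a)$, the sub-tree $\cT^1(a)$ with its valence data, and the single distinguished vertex $\vartheta^k_0(a)$. The key point is that $\sigma_a\colon \Omega_a^{k+1}\to \Omega_a^k$ is a proper branched cover of degree $d$ whose critical points are precisely the white vertices $w_i$ of $\cT^1(a)$ with $\mathrm{val}(w_i)\geq 2$ (each of local degree $\mathrm{val}(w_i)$), and whose only critical value is $y_f$, which lies in the depth-$k$ piece $\vartheta^k_0(a)$. Decomposing the preimage over the depth-$1$ pieces $P_i$, one sees that the preimage of a depth-$k$ piece $W$ inside $P_i$ is a single branched component of degree $\mathrm{val}(w_i)$ when $W=\vartheta^k_0(a)$, and $\mathrm{val}(w_i)$ homeomorphic copies otherwise. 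The adjacencies of the resulting depth-$(k+1)$ pieces with the new black vertices (the depth-$(k+1)$ preimages of $y_c$) are then dictated by the cyclic ordering of edges around $w_i$ in $\cT^1(a)$. Uniqueness of planar tree covers with prescribed branching data (in the spirit of Poirier's realization theorem, cf.\ \cite{Poi13}) completes this step.

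Finally, applying the construction of the previous step to both $a_1$ and $a_2$ and observing that $\phi$ matches the triples $(\cT^k(a_i),\,\cT^1(a_i),\,\vartheta^k_0(a_i))$, one obtains a bi-colored planar isomorphism $\cT^{k+1}(a_1)\cong \cT^{k+1}(a_2)$ extending $\phi$. I expect the main obstacle to lie in the uniqueness claim of the previous step, specifically the bookkeeping needed to ensure that the combinatorial pullback inside each $P_i$ respects the planar (cyclic) structure around the branch points $w_i$, and that the gluings of depth-$(k+1)$ pieces across preimages of $y_c$ in $\partial P_i$ are forced by the planar embedding. A subsidiary subtlety is reading the hypothesis ``$\vartheta^k_n(a_1)=\vartheta^k_n(a_2)$ for all $n$'' as requiring $\phi$ to be equivariant on the entire kneading orbit—this is what guarantees that $\phi$ matches critical vertices with critical vertices and preserves the depth filtration on black vertices, information that is not automatic from a mere bi-colored planar tree isomorphism.
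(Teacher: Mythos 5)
Your proposal is correct and takes essentially the same route as the paper: the paper's (one-line) proof simply observes that $\cT^{k+1}(a)$ is the pullback of $\cT^k(a)$ under the induced branched covering $\widehat{\sigma}_a$, with branching data recorded by the kneading sequence, exactly as you do. Your write-up merely fleshes out the details (in particular, that only $\vartheta^k_0$, the location of the valuable piece, together with the valences of the critical white vertices of $\cT^1(a)$ and the planar structure, is needed to determine the combinatorial pullback), which the paper leaves implicit in the discussion preceding the proposition.
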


\begin{proof}
    The tree $\cT^{k+1}(a_i)$ is given by pulling back $\cT^k(a_i)$ under the branched covering $\widehat{\sigma}_{a_i}$, where the branching data is given by $\vartheta_n^k(a_i)$, $i\in\{1,2\}$.
\end{proof}

\begin{question} 
What are the admissible kneading sequences and dessins?
\end{question}

We a priori do not know what the allowable kneading sequences and depth $k$ dessins are. However, given a realization of a kneading sequence by \textit{some} parameter, we may realize these same combinatorics up to an arbitrary depth with a PCF parameter as the following proposition shows.

As discussed above, the map $\sigma_a$ induces a branched covering map $\widehat{\sigma}_a:\cT^k(a)\to \cT^{k-1}(a)$ for $k\geq 2$. One may also  construct natural `homotopic' embeddings $\cT^{k-1}(a)\hookrightarrow\cT^k(a)$, identifying the black vertices of $\cT^{k-1}(a)$ with the corresponding black vertices in $\cT^k(a)$ (since $y_c$ is a fixed point, the black vertices of $\cT^{k-1}(a)$ are also black vertices of~$\cT^k(a)$) and mapping white vertices to appropriate white vertices (see Figure~\ref{dessin_map_fig}). The composition of these two maps now gives rise to branched covering dynamics on (angled) trees, to which we may apply Poirier's theorem \cite{Poi13} (c.f. Section~\ref{dynamical_plane_sec}).
\begin{figure}[ht!]
\captionsetup{width=0.98\linewidth}	
\begin{tikzpicture}
    \node[anchor=south west,inner sep=0] at (0,1) {\includegraphics[width=0.4\textwidth]{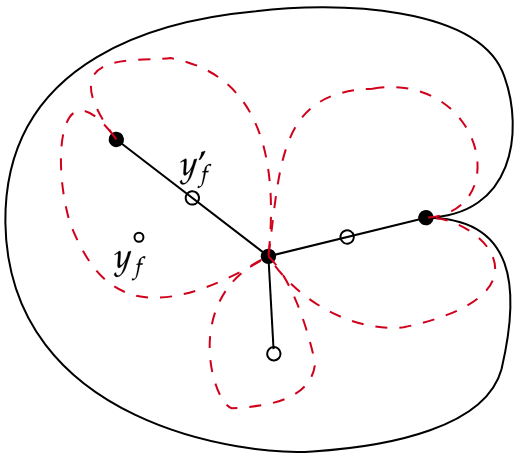}};
    \node[anchor=south west,inner sep=0] at (5.4,0) {\includegraphics[width=0.56\textwidth]{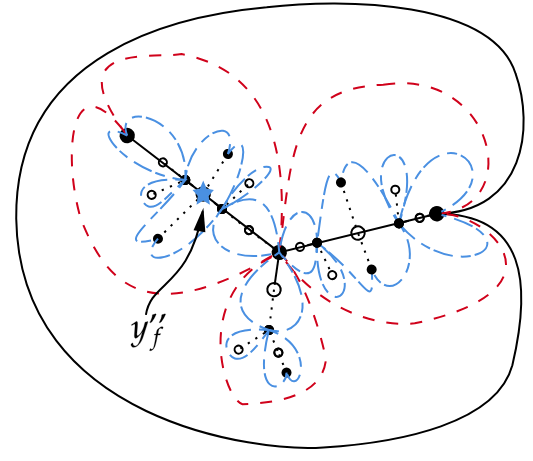}};
\end{tikzpicture}
    \caption{The depth 1 and depth 2 puzzle pieces and dessins for a given combinatorics. The branched covering map $\widehat{\sigma}_a$ sends $y_f''$ to $y_f'$ with local degree $2$, and respects the color of vertices. The natural embedding  $\cT^1\hookrightarrow\cT^2$ maps $y'_f$ to $y''_f$. The induced tree map $s:\cT^2\to\cT^2$ from Proposition~\ref{knead_pcf_prop} fixes the critical point~$y_f''$.}
    \label{dessin_map_fig}
\end{figure}

\begin{proposition}\label{knead_pcf_prop}
    Let $\sigma_a\in \cC(S_\cT)$, and let $s\colon \cT^k(a)\to \cT^{k-1}(a)\hookrightarrow \cT^k(a)$ be the composition of the branched covering $\sigma_a$ and the natural embedding.

    \begin{enumerate}[leftmargin=8mm]
        \item There exists a PCF map $\sigma_{a'}\in \cC(S_\cT)$ realizing the dynamics of $s$ on the augmented Hubbard tree $\cH(\sigma_{a'})$.
        \item For the depth $k$ dessins we have $\cT^{k}(a)=\cT^{k}(a')$, and we also have agreement of the kneading sequences $\vartheta_n^k(a)=\vartheta^k_n(a')$ for $n=0,1,\cdots,k-1$.
        \item If the depth $k$ kneading sequence $\{\vartheta^k_n(a)\}$ is periodic of period $p$, then $\sigma_{a'}$ can be chosen to be hyperbolic with the free critical value having period~$p$.
    \end{enumerate}
\end{proposition}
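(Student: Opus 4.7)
The plan is to realize the abstract tree dynamics $s$ first as a PCF anti-holomorphic polynomial via Poirier's realization theorem (Theorem~\ref{poi_thm}), and then convert it into a Schwarz reflection in $\cC(S_\cT)$ using the David surgery of Theorem~\ref{Thm_C_part_I}. This mirrors the strategy of Lemma~\ref{tuning_trees_lem}, only now the tree being realized is obtained from puzzle-piece combinatorics rather than by tuning.

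First I would promote the plane tree $\cT^k(a)$ endowed with the self-map $s$ to a normalized orientation-reversing angled tree dynamics. At each white vertex $w$ of $\cT^k(a)$ corresponding to a depth $k$ puzzle piece $\Omega_{w}^k$, assign local degree $d(w)=\deg(\sigma_a\vert_{\Omega_{w}^k})$, which equals $\mathrm{val}(w_i)$ when the puzzle piece contains the free critical point $w_i$ and $1$ otherwise. Black vertices are non-critical (they are preimages of the cusp $y_c$), so $d(b)=1$. Normalize the edge angles at each vertex to be equally spaced. The branched-covering structure of $\widehat{\sigma}_a$ together with orientation-reversal gives condition (3) of an angled tree dynamics, and composing with the natural embedding $\cT^{k-1}(a)\hookrightarrow\cT^k(a)$ (which is a planar, color-preserving, circular-order-preserving inclusion by construction) yields a bona fide self-map $s$ satisfying the three axioms.

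Second, I would verify expansiveness of $(s,\cT^k(a))$. The Fatou-type vertices are precisely those eventually trapped in a critical periodic cycle of $s$; all others are Julia type. For part (3), when the kneading sequence is $p$-periodic, the critical white vertex $y_f^k$ of $\cT^k(a)$ is itself $s$-periodic of period $p$, so the Fatou-type vertices form exactly this single cycle. Expansiveness then reduces to the statement that any two adjacent Julia-type vertices are eventually separated by iterating $s$. This is where most of the work lies: one iterates until a preimage of the cuspidal vertex (which is fixed with local degree $1$, hence acts as a repelling landmark in the combinatorial sense) is forced between them, using the fact that iterated preimages of $y_c$ become dense among the vertex set of deeper-depth dessins. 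This is the main obstacle and will require a careful case analysis at the distinguished critical vertex, but the finite-depth setting makes it a finite combinatorial verification rather than a genuine dynamical estimate.

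Third, Poirier's Theorem~\ref{poi_thm} produces a PCF anti-holomorphic polynomial $q$ realizing $s$ on its augmented Hubbard tree $\cH(q)$, unique up to affine conjugation. By construction $q$ has two finite critical values (the preimages of the black vertex $y_c$ and the white critical vertex $y_f$, which become fixed points of $q$), hence $q$ is dynamically Shabat with dessin isomorphic to $\cT$. In case (3) the critical vertex is $s$-periodic of period $p$, so under Poirier's realization $q$ has a superattracting cycle of period $p$ containing the critical point associated with $y_f$.

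Fourth, apply Theorem~\ref{Thm_C_part_I} to $q$ to obtain a David conjugate Schwarz reflection $\sigma_{a'}\in\cS_{\cR_d}$. The critical-point structure of $q$ ensures $\sigma_{a'}\in\cC(S_\cT)$, and the David conjugacy sends the augmented Hubbard tree of $q$ to an augmented Hubbard tree of $\sigma_{a'}$ realizing the same abstract angled tree dynamics. Since the depth $k$ dessin of any Schwarz reflection in $\cC(S_\cT)$ is reconstructed from the valence data at white vertices and the kneading itinerary through the pull-back description of Proposition~\ref{dessins_from_kneading_seqn_prop}, and both of these combinatorial inputs match those used to define $s$, we conclude $\cT^k(a')\cong\cT^k(a)$ and $\vartheta_n^k(a')=\vartheta_n^k(a)$ for $0\le n\le k-1$. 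In case (3) the superattracting cycle of $q$ is preserved by the David surgery (the conjugacy is holomorphic in a neighborhood of the attracting basin), so $\sigma_{a'}$ is hyperbolic with free critical value of period $p$.
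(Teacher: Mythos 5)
Your overall architecture is exactly the paper's: promote $(\cT^k(a),s)$ to an orientation-reversing angled tree dynamics, verify expansiveness, realize it by a PCF anti-polynomial via Poirier's theorem, and convert to a Schwarz reflection with Theorem~\ref{Thm_C_part_I}; parts (2) and (3) then follow by pulling back and by noting the valuable vertex is $s$-periodic. However, the one substantive step — expansiveness — is where your proposal has a genuine gap. You correctly identify it as the main obstacle but do not carry it out, and the strategy you sketch would not work as stated: you propose to separate adjacent Julia vertices by "forcing a preimage of the cuspidal vertex between them, using the fact that iterated preimages of $y_c$ become dense among the vertex set of deeper-depth dessins." But $s$ acts on the \emph{fixed} finite tree $\cT^k(a)$; iterating $s$ never accesses deeper-depth dessins or creates new vertices, so density of pre-cuspidal points at greater depth is not available. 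The paper's argument is instead a short dichotomy exploiting the bicolored structure: any two adjacent vertices consist of one black and one white vertex; every black vertex eventually maps under $s$ to the fixed root $y_c$ and every white vertex eventually maps to the valuable vertex. If the critical orbit eventually leaves $\Omega^1_{1,a}$, the images $y_c$ and $y_f$ become non-adjacent and expansiveness holds; if the critical orbit remains in $\overline{\Omega^1_{1,a}}$ forever, all white vertices are of Fatou type and the expansiveness condition is vacuous (adjacent vertices can never both be of Julia type). Your "careful case analysis at the distinguished critical vertex" needs to be replaced by this dichotomy, or by some argument of equivalent force; as written the verification is missing.

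Two smaller inaccuracies. First, you assert that both finite critical values of the realized polynomial $q$ "become fixed points of $q$, hence $q$ is dynamically Shabat." Only $y_c$ is fixed; the free critical value follows the kneading orbit and is generally not fixed, so $q$ is Shabat (two finite critical values, dessin $\cT$) but not dynamically Shabat except in special cases. What is actually needed to conclude $\sigma_{a'}\in\cC(S_\cT)$ is only the Shabat property together with the identification of $q^{-1}(I)$ with $\cT$ for an arc $I$ joining the critical values, as in the proof of Lemma~\ref{tuning_trees_lem}. Second, in part (3) your appeal to holomorphy of the David conjugacy on the attracting basin is unnecessary: it suffices that $\sigma_{a'}$ has a periodic critical point of period $p$, which is immediate from the $s$-periodicity of the valuable vertex.
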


\begin{proof}
    (1) As noted above, the PCF map is constructed by using Poirier's realization theorem to realize the tree dynamics $s: \cT^k(a)\to\cT^k(a)$ as the augmented Hubbard tree dynamics of a PCF \textit{polynomial} and then applying \cite[Theorem C]{PartI} to obtian a PCF Schwarz reflection. To apply Poirier's result we must argue that the tree dynamic is \textit{expanding}, that is, any two adjacent Julia vertices are eventually non-adjacent. Note that all of the white vertices of $\cT^k(a)$ will eventually map under $s$ onto $y_f$ and all of the black vertices will eventually map onto $y_c$.
    There are two possible cases: in the first, the critical orbit of $\sigma_a$ remains in $\Omega^1_{1,a}$, the depth $1$ puzzle piece with the cusp $y_c$ on the boundary. In this case $y_f$ is fixed, and therefore all white vertices in $\cT^k$ are of Fatou type under $s$.
    In the second case the free critical value eventually leaves $\Omega^1_{1,a}$. But then any two adjacent vertices will eventually be mapped to $y_c$ and $y_f$, which are not adjacent.
    
    (2) This statement follows immediately by taking appropriate pullbacks.
    
    (3) The valuable white vertex of $\cT^k$ (i.e., the white vertex in the puzzle piece containing the free critical value $y_f$) is periodic in this setting, of period  $p$.
\end{proof}

Note that since the depth $1$ dessin is the original tree $\cT$, Proposition~\ref{dessins_from_kneading_seqn_prop} can be restated as follows: dessins of all depth can be recovered from the knowledge of kneading sequences of all depth. We will now show that the rational lamination of a non-renormalizable map in $\cC(S_\cT)$ can be recovered from the kneading sequences. Here, non-renormalizable means that $\sigma_a$ is neither renormalizable nor pinched renormalizable in the sense of Definition~\ref{renormalizability_def}.

\begin{proposition}\label{rat_lam_prop}
    Suppose that $\sigma_{a_1},\sigma_{a_2}$ are non-renormalizable maps with the same kneading sequences; i.e., $\vartheta_n^k(a_1)=\vartheta_n^k(a_2)$ for all $n,k$. Then $\sigma_{a_1}$ and $\sigma_{a_2}$ have the same rational lamination.
\end{proposition}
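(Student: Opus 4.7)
The plan is to show that the rational lamination of a non-renormalizable map in $\cC(S_\cT)$ is determined entirely by the combinatorial data encoded by the dessins $\{\cT^k\}_{k\geq 1}$ at all depths. Since the depth $1$ dessin is the fixed tree $\cT$ for every map in $\cC(S_\cT)$, a straightforward induction using Proposition~\ref{dessins_from_kneading_seqn_prop} shows that the hypothesis $\vartheta_n^k(a_1)=\vartheta_n^k(a_2)$ for all $n,k$ forces $\cT^k(a_1)\cong \cT^k(a_2)$ as bicolored planar embedded trees for every $k\geq 1$. Since $\cT^k(a_j)$ is dual to the depth $k$ pre-cuspidal lamination (each white vertex corresponds to a complementary region of the pre-cuspidal rays of depth $\leq k$ and each black vertex to a landing point), the pre-cuspidal laminations of $\sigma_{a_1}$ and $\sigma_{a_2}$ agree immediately.

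It remains to handle pre-periodic angles whose rays do not land at preimages of the cusp. Non-renormalizability combined with Theorem~\ref{fin_renorm_geom} yields that each $\Lambda(\sigma_{a_j})$ is locally connected, so every pre-periodic dynamical ray lands and every repelling (pre-)periodic point is the landing point of finitely many rays. Moreover, the Yoccoz puzzle theory underlying Theorem~\ref{fin_renorm_geom} (cf.~\cite[\S 10]{LLM24}) furnishes the usual shrinking property: for any non-pre-cuspidal pre-periodic point $z\in\Lambda(\sigma_{a_j})$, the nested sequence of depth $k$ puzzle pieces containing $z$ shrinks to $z$ as $k\to\infty$.

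Given this shrinking, I would characterize the rational lamination purely combinatorially as follows. For a pre-periodic angle $\theta$, the itinerary of the dynamical ray $R_{a_j}^\theta$ through the puzzle pieces at successive depths is determined by $\theta$ and by the branched covering maps $\widehat{\sigma}_{a_j}\colon \cT^k(a_j)\to\cT^{k-1}(a_j)$, whose branching data is precisely the kneading sequence. Two such angles $\theta_1,\theta_2$ have rays landing at a common non-pre-cuspidal point if and only if, for every sufficiently large $k$, the rays $R^{\theta_1}, R^{\theta_2}$ enter the same depth $k$ puzzle piece. For landing points that are themselves pre-cuspidal, the identification is already recorded by the pre-cuspidal lamination handled above. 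Since all combinatorial data entering both conditions coincide for $\sigma_{a_1}$ and $\sigma_{a_2}$, their rational laminations agree.

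The main obstacle is the shrinking of puzzle pieces at non-cuspidal pre-periodic points in our setting: the parabolic fixed point $y_c$ means the classical Yoccoz estimates do not apply uniformly near the cusp. However, this is exactly the issue confronted and resolved in \cite[\S 10]{LLM24} (in the closely related Nielsen-external family), and the arguments transfer with only cosmetic changes to Schwarz reflections with anti-Farey external map; cuspidal landing points are already controlled by the pre-cuspidal lamination, so the delicate estimates are only needed away from $y_c$, where the usual Branner--Hubbard--Yoccoz moduli-growth and tableau analysis go through.
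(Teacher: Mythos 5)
Your proposal is correct and follows essentially the same route as the paper: first deduce from Proposition~\ref{dessins_from_kneading_seqn_prop} that the depth-$k$ dessins, and hence the pre-cuspidal laminations, coincide, and then use non-renormalizability (no critical points in deep puzzle pieces, giving contracting inverse branches at repelling periodic points) together with local connectivity from Theorem~\ref{fin_renorm_geom} to recover the full rational lamination from the pre-cuspidal data. The paper packages this last step as density of the pre-cuspidal lamination in the rational lamination (Proposition~\ref{rat_lam_density_prop}) rather than as your itinerary characterization of co-landing, but the technical content is the same.
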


In fact we will show something stronger. As $\cT^k(a)$ is the dual tree to the depth $k$ pre-cuspidal lamination, if two parameters share the same kneading sequence for all depths, then they share the same pre-cuspidal lamination.

\begin{proposition}\label{rat_lam_density_prop}
    For a non-renormalizable map, the pre-cuspidal lamination is dense in the rational lamination.
\end{proposition}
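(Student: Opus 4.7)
The plan is to approximate each leaf of the rational lamination by pre-cuspidal leaves read off from the pinch points on the boundaries of a shrinking nest of puzzle pieces. Fix a pair of pre-periodic angles $(\theta_1, \theta_2)$ identified in the rational lamination of $\sigma_a$, so that the dynamical rays $R_a^{\theta_1}, R_a^{\theta_2}$ co-land at some pre-periodic point $z \in \Lambda(\sigma_a)$. If $z$ itself is a preimage of the cusp $y_c$, then $(\theta_1, \theta_2)$ already lies in the pre-cuspidal lamination and there is nothing to show, so assume otherwise; by Theorem~\ref{fin_renorm_geom}, $z$ eventually lands on a repelling periodic cycle.

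The main dynamical input I plan to use is that, for a non-renormalizable $\sigma_a$, the depth-$k$ puzzle pieces of Definition~\ref{puzzle_dessin_depth_k_def} touching $z$ shrink to $\{z\}$ as $k \to \infty$. This is essentially the Yoccoz-style modulus growth argument underpinning the rigidity clause of Theorem~\ref{fin_renorm_geom}, applied to the puzzle structure at hand as in \cite{LLM24} following \cite{CDKvS}. Granting this, the boundary $\partial P_k(z)$ of such a puzzle piece is a Jordan curve built as a concatenation of arcs of dynamical rays meeting at finitely many preimages $b_1^k,\ldots,b_{m_k}^k$ of $y_c$; each such pinch point is a multivalent black vertex of the dessin $\cT^k(a)$, and so carries a natural pre-cuspidal leaf joining two of the rays landing there.

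To conclude, I would select, for each sufficiently large $k$, a pinch point $b_k \in \partial P_k(z)$ whose associated pre-cuspidal leaf $\ell_k$ separates $P_k(z)$ from the rest of the disk on a prescribed side of the chord $(\theta_1,\theta_2)$. Since $\mathrm{diam}\, P_k(z) \to 0$ and the rays at $\theta_1, \theta_2$ land at $z$, the two endpoints of $\ell_k$ are forced to approach $\theta_1$ and $\theta_2$ respectively, yielding $\ell_k \to (\theta_1, \theta_2)$ in the Hausdorff topology on chords of $\overline{\D}$. The only delicate step I foresee is the puzzle-piece shrinking itself at an arbitrary pre-periodic endpoint $z$, especially when $z$ lies in the post-critical set; I expect to resolve this via a principal-nest modulus estimate of Kahn--Lyubich type. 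This is precisely where the non-renormalizability hypothesis is indispensable, since it prevents the combinatorial complexity of the nest around $z$ from stalling inside a renormalization window.
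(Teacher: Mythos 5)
Your overall strategy --- shrink a nest of puzzle pieces around the landing point and read off approximating pre-cuspidal leaves from the pinch points on their boundaries --- is the same as the paper's. But you have made the key analytic step substantially harder than it needs to be, and that step is exactly the one you leave unproved. The paper works only with a \emph{repelling periodic} point $z$ (neutral cycles are excluded because they force the parameter onto the boundary of a hyperbolic component, hence into a (pinched) renormalizable class), and gets the shrinking of the puzzle pieces $V^k_j(z)$ for free: non-renormalizability guarantees that for $k$ large the puzzle piece containing $z$ contains no critical point, so the inverse branch $\sigma^{-q}\colon V^k_j(z)\to V^{k+q}_{j'}(z)$ is a well-defined conformal contraction toward $z$. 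No Yoccoz/Kahn--Lyubich modulus estimate is needed. By contrast, you insist on treating an arbitrary pre-periodic $z$, which forces you to confront shrinking at points of the postcritical set --- the ``delicate step'' you defer to ``a principal-nest modulus estimate of Kahn--Lyubich type.'' That deferral is a genuine gap as written: it is the entire content of the hard case, it is not supplied by Theorem~\ref{fin_renorm_geom} (which gives local connectivity and rigidity, not shrinking of fibers at every point), and it is avoidable, since pre-periodic leaves are pullbacks of periodic ones and converging pre-cuspidal approximants pull back to converging pre-cuspidal approximants.

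Two smaller corrections. First, the depth-$k$ puzzle pieces of Definition~\ref{puzzle_dessin_depth_k_def} are closures of components of $\sigma_a^{-k}(\Omega_a)$; their boundaries are preimages of the real-analytic curve $\partial\Omega_a$, not concatenations of dynamical ray arcs. The correct object to invoke is the dual description: each puzzle piece corresponds to a gap of the depth-$k$ pre-cuspidal lamination, cut out by the $2n$ rays at angles in $\cR_d^{-k}(\{0\})$ landing at the $k$-th preimages of $y_c$ on its boundary. Second, your final convergence claim (``the two endpoints of $\ell_k$ are forced to approach $\theta_1$ and $\theta_2$'') needs the bookkeeping the paper supplies: using local connectivity of $\Lambda(\sigma_a)$, one chooses $k$ so that $(\Lambda(\sigma_a)\setminus\{z\})\cap V^k_j(z)$ has exactly $n$ components, each meeting $\partial V^k_j(z)$ in a single pre-cuspidal point; the resulting angles $\phi^{\pm}_{k,i}$ sandwich $\theta_i$ monotonically in $k$, and any limit other than $\theta_i$ would produce an extra ray landing at $z$. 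Diameter decay of $P_k(z)$ alone does not immediately control the angles.
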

\begin{proof}
Let $z\in \Lambda(\sigma)$ be a periodic point of period $q$. Note that neutral periodic points can only occur for parameters on the boundaries of hyperbolic components and such parameters are (pinched) renormalizable. Hence $z$ must be a repelling periodic point. Denote the angles of the (periodic) rays landing at $z$ by $\theta_1,\cdots, \theta_n$, and label them so that they are oriented counter-clockwise.
As $\sigma$ is non-renormalizalbe it follows that there is some $k$ sufficiently large so that $V^k_j(z)$ (where $V^k_j(z)$ is the depth $k$ puzzle piece containing $z$) does not contain any critical points of $\sigma$. In particular for all $k$ sufficiently large there is a well-defined inverse branch $\sigma^{-q}\colon V^k_j(z)\to V^{k+q}_{j'}(z)$ which is conformal and contracting towards the periodic point $z$. As $\Lambda(\sigma)$ is locally connected by Theorem~\ref{fin_renorm_geom}, we may also choose $k$ large enough so that $\left(\Lambda(\sigma)\setminus\{z\}\right)\cap V^k_j(z)$ has exactly $n$ connected components. Each such connected component will intersect $\partial V^k_j(z)$ in exactly one point, and such a point of intersection is a $k-$th preimage of the cusp.

There are $2n$ angles in $\cR_d^{-k}(\{0\})$ which cut out $\Lambda(\sigma)\cap V^k_j(z)$. We denote these angles counter-clockwise as $\phi_{k,1}^-,\phi_{k,1}^+,\cdots,\phi_{k,n}^{-},\phi_{k,n}^{+}$, labeled so that $\theta_i$ is immediately between $\phi_{k,i}^{-}$ and $\phi_{k,i}^{+}$, and hence $\phi_{k,i}^{+}$ has the same landing point as $\phi_{k,i+1}^{-}$ (where $i+1$ is taken mod $n$).  
    It follows that $\displaystyle\lim_{k\to \infty} \phi_{k,i}^\pm = \theta_i$, completing the proof.
\end{proof}

\begin{proof}[Proof of Proposition~\ref{rat_lam_prop}]
This follows from Proposition~\ref{rat_lam_density_prop} and the discussion preceding it.
\end{proof}

\section{Renormalization combinatorics and associated\\ Multibrot and Multicorn sets}\label{multibrot_multicorn_copies_sec}

In this section we will study the set of (possibly pinched) renormalizable parameters, organized by their combinatorics (see Definition~\ref{renormalizability_def} for the notion of renormalizability).

\begin{definition}
	We will say that two renormalizable maps $\sigma_a, \sigma_{a'}$ have the same \emph{first-level renormalization combinatorics} if for all $k$, $n$ we have that $\vartheta^k_n(a)=\vartheta^k_n(a')$ (in particular $\cT^k(a)=\cT^k(a')$ for all $k$).
\end{definition}

Recall that the degree $n$ Multibrot/Multicorn set $\cM_n^\pm$ is the connectedness locus of degree $n$ unicritical holomorphic/antiholomorphic polynomials (see \cite{EMS16,NS,MNS} for basic properties of these sets).

\begin{theorem}\label{multicorn_thm}
Let $\sigma_a$ be renormalizable of minimal renormalization period $p$, and let the renormalizable critical point $w_i$ have degree $d_i$. Further suppose that the orbit of the free critical value $y_f$ does not always lie in $\Omega_1^1(a)$, the preimage of $\Omega_a$ having $y_c$ on its boundary. Then there is a compact set $\mathcal M_a\subset \cC(S_\cT)$ consisting of all parameters with the same first-level renormalization combinatorics as $\sigma_a$ such that
    \begin{enumerate}[leftmargin=8mm]
        \item if $p$ is even then $\mathcal M_a$ is homeomorphic to a degree $d_i$ Multibrot set $\cM_{d_i}^+$, and
        \item if $p$ is odd then $\mathcal M_a$ is a combinatorial copy of a degree $d_i$ Multicorn~$\cM_{d_i}^-$. More precisely, there exists a dynamically natural bijection between the closure of the hyperbolic components in $\cM_a$ and the closure of the hyperbolic components of $\cM_{d_i}^-$ that is continuous at hyperbolic and quasiconformally rigid parameters.
    \end{enumerate}
\end{theorem}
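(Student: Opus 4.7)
The plan is to realize $\mathcal{M}_a$ as the locus of parameters carrying an (anti-)polynomial-like renormalization of period $p$ around $w_i$ with the same combinatorics as $\sigma_a$, and then to apply the Douady--Hubbard straightening theorem in the even-period case and its antiholomorphic counterpart in the odd-period case. The hypothesis that the orbit of $y_f$ is not permanently trapped in $\Omega^1_1(a)$ (whose closure meets the cusp $y_c$) is exactly what distinguishes this situation from the pinched case treated in Theorem~\ref{PinchedMulticorn}: it ensures that for $k$ large, nested puzzle pieces $U^k_a \Subset V^k_a$ around $w_i$ coming from the depth-$k$ and depth-$(k+p)$ puzzles (suitably thickened) have disjoint closures, so the first-return map $\sigma_a^{\circ p}\colon U^k_a \to V^k_a$ is an honest (anti-)polynomial-like map of degree $d_i$ with connected filled Julia set.

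By Proposition~\ref{dessins_from_kneading_seqn_prop}, parameters with the same kneading sequences share all depth-$k$ dessins, so the same combinatorial tubing can be placed at every $a' \in \mathcal{M}_a$. Continuity of puzzle pieces in the parameter then produces an (anti-)polynomial-like restriction $\sigma_{a'}^{\circ p}\colon U(a') \to V(a')$ of degree $d_i$ that varies continuously in $a'$. Compactness of $\mathcal{M}_a$ follows because the defining kneading conditions are closed in $\cC(S_\cT)$, and the puzzle bounds persist in the limit (the critical orbit stays in the prescribed depth-$k$ puzzles for every $k$). Define the straightening $\chi\colon \mathcal{M}_a \to \mathcal{M}^\pm_{d_i}$ sending $a'$ to the hybrid class of $\sigma_{a'}^{\circ p}\vert_{U(a')}$.

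In the even-period case, $\chi$ takes values in $\mathcal{M}^+_{d_i}$. Injectivity follows by a pullback argument in the spirit of Lemma~\ref{dp_pb_two_lem} and Theorem~\ref{escape_unif_thm}: a hybrid conjugacy between two renormalizations extends equivariantly to the external tiling dynamics (rigidly modeled by $\mathcal{R}_d$ via Lemma~\ref{schwarz_group}) and glues to a global quasiconformal conjugacy of $\sigma_{a_1}$ and $\sigma_{a_2}$; conformal removability of the limit set together with Weyl's lemma and the fixed normalizations at $y_c$, $y_f$, $\infty$ forces this conjugacy to be the identity. Continuity of $\chi$ comes from the holomorphic motion of the polynomial-like data over $\Int{\mathcal{M}_a}$, extended to $\partial \mathcal{M}_a$ by standard Mandelbrot-copy arguments. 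Surjectivity is obtained by combining Lemma~\ref{tuning_trees_lem} (which realizes every PCF Multibrot combinatorics as a parameter in $\cC(S_\cT)$ with the required outer combinatorics) with compactness of $\mathcal{M}_a$ and continuity of $\chi$.

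In the odd-period case, $\chi$ takes values in $\mathcal{M}^-_{d_i}$ but is not a homeomorphism: by \cite{IM21} the antiholomorphic straightening is discontinuous at ends of parabolic arcs. One still obtains a dynamically natural bijection between closures of hyperbolic components --- hyperbolic components biject via multipliers, parabolic arcs via \'Ecalle height, and the remaining boundary parameters match via the injectivity argument above. Continuity holds at hyperbolic parameters (continuity of multipliers) and at quasiconformally rigid parameters (via Theorem~\ref{fin_renorm_geom} applied to the outer dynamics), following the templates of \cite{IM21,IM22}. The main obstacle in both cases is securing a uniform lower bound on $\mathrm{mod}\,(V(a')\setminus \overline{U(a')})$ over $\mathcal{M}_a$, which is needed for compactness of polynomial-like germs and for extending the holomorphic motion of the tubing up to $\partial \mathcal{M}_a$; this bound follows from a Koebe-distortion argument once the puzzle depth $k$ is taken large enough that $w_i$ is uniformly separated from $\Omega^1_1(a')$.
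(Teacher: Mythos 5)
Your overall strategy coincides with the paper's: extract a family of unicritical (anti-)polynomial-like restrictions $\sigma_{a'}^{\circ p}\colon U(a')\to V(a')$ from the puzzle structure (the paper simply takes $U(a)$ to be the valuable component of $\sigma_a^{-p}(\Omega_a)$ mapping onto $\Omega_a$, which is compactly contained precisely because of the hypothesis on $\Omega^1_{1,a}$), define a straightening $\chi$, get compactness from a uniform modulus bound, injectivity from a pullback argument, surjectivity onto PCF/hyperbolic parameters from the tuning lemma (Lemma~\ref{tuning_trees_lem}) plus quasiconformal closedness of the image, and continuity only at hyperbolic and quasiconformally rigid parameters in the odd case, following \cite{IK12,IM21}. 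Two points, however, need repair.

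First, you never address the normalization of the straightening. A unicritical hybrid class determines $c$ only up to the $(d_i\mp 1)$-fold rotational symmetry of $\cM_{d_i}^{\pm}$, so as written $\chi$ is not a well-defined map and the injectivity/bijectivity claims are ambiguous. The paper resolves this by marking, at the center $a_0$, the repelling fixed point of $\sigma_{a_0}^{\circ p}$ on the boundary of the critical Fatou component that separates $w_i$ from $y_c$, together with the pair of dynamical rays cutting out the sector containing $w_i$ but not $y_c$; this access persists over $\cM_{a_0}$ and is sent to the $0$-ray of the straightened polynomial. Some such marking is indispensable. Second, your injectivity argument invokes ``conformal removability of the limit set,'' which is not available here: for renormalizable parameters $\Lambda(\sigma_a)$ contains quasiconformal copies of arbitrary unicritical Julia sets of the given hybrid class, and these need not be removable (they may even have positive area). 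The correct mechanism, which is what the pullback argument of \cite[Theorem~B]{IK12} supplies, is that the glued quasiconformal conjugacy has vanishing Beltrami coefficient almost everywhere --- conformal on the tiling set, hybrid (hence $\overline{\partial}$-free) on the grand orbit of the small filled Julia sets, with the residual part of $K(\sigma_a)$ of zero measure --- after which Weyl's lemma and the three fixed marked points force the identity. With these two fixes your argument matches the paper's proof; the remaining steps (compactness, qc-closedness of the image, boundary behavior of $\chi$) are as in the paper, though your compactness argument via closedness of the kneading conditions should be replaced by, or supplemented with, the uniform modulus bound on $\Omega_{a'}\setminus\overline{U(a')}$ that you correctly identify as the key estimate at the end.
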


\begin{remark}
    The even period case of the above theorem is almost classical at this point.
\end{remark}

\begin{theorem}\label{PinchedMulticorn}
	The parameters with constant depth-1 kneading sequence $\vartheta^1_n(a)=1$ (i.e., $\sigma_a^{\circ n}(y_f)\in\overline{\Omega^1_{1,a}}$), for all $n\geq 0$, form a set combinatorially equivalent to a parabolic Multicorn $\cM_{d_1}^{\mathrm{par},-}$.
\end{theorem}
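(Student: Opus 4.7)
The plan is to apply the Parabolic Straightening Theorem~\ref{straightening_thm} to the pinched anti-polynomial-like restrictions induced by $\sigma_a$ on the depth-$1$ puzzle piece containing the cusp, identifying the set in question with the one-parameter family $\cF_{d_1}^{\mathrm{simp}}/\mathrm{Aut}(\C)$ that parameterizes the parabolic Multicorn. The constancy of the depth-$1$ kneading sequence means precisely that the first-return map $F_a=\sigma_a|_{\overline{\Omega^1_{1,a}}}\colon \overline{\Omega^1_{1,a}}\to \overline{\Omega_a}$ is a unicritical pinched anti-polynomial-like map of degree $d_1=\mathrm{val}(w_1)$, pinched at the cusp $y_c$ (where the parabolic dynamics is concentrated) and with connected filled Julia set since the free critical orbit is trapped. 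Outside the higher-order cusp boundary $\Gamma^{\mathrm{hoc}}\cap\overline{H}$, this pinched restriction is simple in the sense of \cite[Definition 4.5]{PartI}, so Theorem~\ref{straightening_thm} produces a well-defined hybrid class $\mathfrak R(a)\in\cM_{d_1}^{\mathrm{par},-}$.

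Next I would prove that $\mathfrak R$ is a bijection onto $\cM_{d_1}^{\mathrm{par},-}$. Injectivity follows from a pullback argument modeled on Lemma~\ref{dp_pb_two_lem}: if $\mathfrak R(a_1)=\mathfrak R(a_2)$, then the corresponding pinched restrictions are hybrid conjugate, and since the external maps of $\sigma_{a_1},\sigma_{a_2}$ both coincide with $\mathcal R_d$ (Lemma~\ref{schwarz_group}), the hybrid conjugacy spreads through iterated pullback to a global quasiconformal conjugacy that is conformal off the zero-area limit set, hence a M\"obius map fixing $y_c, y_f,\infty$ and therefore the identity. For surjectivity, I would combine Theorem~\ref{Thm_C_part_I} with the tuning construction of Lemma~\ref{tuning_trees_lem}: any PCF or semi-hyperbolic unicritical parabolic anti-rational map can be realized by first manufacturing the corresponding anti-polynomial Hubbard tree data (Poirier's theorem) and then performing a David surgery to produce the desired Schwarz reflection in our set; density of such parameters in $\cM_{d_1}^{\mathrm{par},-}$ together with compactness of both sides handles the remaining parameters.

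Finally, to upgrade $\mathfrak R$ to a combinatorial equivalence in the sense of Theorem~\ref{multicorn_thm}(2), I would verify that it respects the natural boundary decorations developed in Section~\ref{hyp_comp_sec}: hyperbolic components correspond to hyperbolic components via standard quasiconformal deformation theory (Section~\ref{hyp_comp_unif_subsec}); odd-period parabolic arcs correspond via preservation of the critical \'Ecalle height, exactly as in Step~II of Lemma~\ref{hoc_arc_lem}; even-period parabolic parameters and Misiurewicz parameters match under the combinatorial rigidity of Theorem~\ref{fin_renorm_geom}; and the higher-cusp arc $\Gamma^{\mathrm{hoc}}\cap\overline{H}$ is paired with the cuspidal boundary of the central hyperbolic component of $\cM_{d_1}^{\mathrm{par},-}$ via the $(5,2)/(7,2)$-cusp analysis of Lemma~\ref{hoc_arc_lem}. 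The main obstacle is the well-known discontinuity of antiholomorphic straightening \cite{IM21}: one cannot hope for $\mathfrak R$ to be a global homeomorphism, so the entire argument must be run piece by piece on hyperbolic components and parabolic arcs to extract a dynamically natural bijection that is continuous only at hyperbolic and quasiconformally rigid parameters, exactly paralleling the statement of Theorem~\ref{multicorn_thm}(2).
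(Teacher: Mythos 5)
Your overall strategy coincides with the paper's: straighten the pinched restriction $\sigma_a\colon\overline{\Omega^1_{1,a}}\to\overline{\Omega_a}$ via Theorem~\ref{straightening_thm} away from $\Gamma^{\mathrm{hoc}}$, send $\Gamma^{\mathrm{hoc}}$ to the double-parabolic parameters by matching critical \'Ecalle heights, prove injectivity by a pullback argument, and get almost-surjectivity from tuning plus quasiconformal closedness. However, your injectivity step has a genuine gap. You assert that the glued quasiconformal conjugacy is ``conformal off the zero-area limit set,'' importing the conclusion of Lemma~\ref{dp_pb_two_lem}. That lemma concerns parameters whose free critical orbit escapes to the tiling set, where $K(\sigma_a)$ is a zero-area set with empty interior. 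For $a\in\cM_{\mathrm{cusp}}$ this is false in spirit and unjustified in fact: the non-escaping set typically has nonempty interior, and the limit set $\Lambda(\sigma_a)$ is not known to have zero area. The correct accounting is: the pullback makes the conjugacy conformal on the tiling set and (a.e.) on the grand orbit of the little filled Julia set $K^1(\sigma_{a_1})$, where the hybrid conjugacy is $\bar\partial$-trivial; what remains is the set of points of $K(\sigma_{a_1})$ that never enter $K^1(\sigma_{a_1})$, and one must show \emph{this} set has zero Lebesgue measure. The paper supplies exactly this via Proposition~\ref{meas_attr_prop}, which transfers Lyubich's measure-theoretic attractor theorem for (anti-)rational maps through the quasiconformal conjugacy of \cite{PartI}. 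Without that input your Weyl's lemma step does not close.

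Two smaller points. First, before any pullback you must actually glue the interior hybrid conjugacy of the pinched restrictions to the exterior conjugacy $\psi_{a_2}\circ\psi_{a_1}^{-1}$ across $\partial\Omega^1_{1,a}$ at the cusp $y_c$; this requires choosing the polygons $V_i$ with controlled angles at $y_c$ and invoking asymptotic linearity (\cite[Lemma~B.2]{LLMM2}) so that the glued map $\Phi_0$ is globally quasiconformal. You pass over this, but it is where the ``pinched'' geometry genuinely intervenes. Second, your middle paragraph claims full surjectivity onto $\cM_{d_1}^{\mathrm{par},-}$ from density of realized parameters plus compactness; the quasiconformal-closure argument only yields that the image contains the closures of the hyperbolic components, which is all the theorem (a combinatorial equivalence in the sense of Theorem~\ref{multicorn_thm}(2)) asserts. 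Your final paragraph correctly retreats to this weaker statement, so this is an internal inconsistency rather than a fatal error, but the surjectivity claim as stated is not established by the argument you give.
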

Here, the parabolic Multicorn $\cM_{d_1}^{\mathrm{par},-}$ is the unicritical slice of $\cF_{d_1}$ (i.e., maps in $\cF_{d_1}$ whose filled Julia sets contain a unique critical point, see Section~\ref{para_anti_rat_subsec}), where $d_1$ is the valence of the free critical point $w_1$ (which is adjacent to $y_c$) in~$\cT$.

    It suffices for our purposes to consider the first level of renormalization since by the above theorems this reduces our situation to the classical unicritical cases. These have been studied in \cite{DH2,IK12} in the holomorphic case and \cite{IM21,IM22} in the anti-holomorphic case.

\begin{proposition}\label{renorm_center_prop}
	For a given renormalization combinatorics of period $p$, there exists a unique parameter with the same combinatorics which has a periodic critical point of period~$p$.
\end{proposition}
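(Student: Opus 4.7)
The plan is to establish existence by invoking the tuning construction of Lemma~\ref{tuning_trees_lem} (or, failing that, a direct Poirier realization), and to establish uniqueness via combinatorial rigidity of PCF Schwarz reflections.

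For \textbf{existence}, first produce any PCF representative $\sigma_{a_0}\in\cC(S_\cT)$ with the given renormalization combinatorics whose free critical value $y_f$ is periodic of period $p$. This can be done by constructing the corresponding augmented Hubbard tree directly: it is the union of the depth $k$ dessin $\cT^k$ (for $k$ large enough that the kneading is periodic of period $p$) with the prescribed $p$-cycle of white vertices, and the combinatorial dynamics on it is determined by the dessin map $\widehat{\sigma}$ and the requirement that $w_i$ be a superattracting periodic point. Expansiveness of this angled tree dynamics is immediate from the non-adjacency of $y_c$ and $y_f$ in $\cT$ (compare the proof of Proposition~\ref{knead_pcf_prop}(1)). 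Poirier's theorem (Theorem~\ref{poi_thm}) then produces a unique PCF anti-polynomial realizing these combinatorics, and Theorem~\ref{Thm_C_part_I} converts this to a Schwarz reflection $\sigma_{a'}\in\cC(S_\cT)$ with the desired combinatorics and with $w_i$ periodic of period $p$. (Equivalently, one may start from any critically periodic representative of the combinatorics produced via Proposition~\ref{knead_pcf_prop}(3) and apply Lemma~\ref{tuning_trees_lem} with the central unicritical polynomial $p_0(z)=z^{d_i}$ or $\overline{z}^{d_i}$.)

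For \textbf{uniqueness}, suppose $\sigma_{a_1}, \sigma_{a_2}\in\cC(S_\cT)$ both have the specified renormalization combinatorics and both have $w_i$ periodic of period $p$. Both maps are then PCF, with every critical orbit either sitting at the fixed cusp $y_c$ or landing in the $p$-cycle of $w_i$. The augmented Hubbard tree $\cH(\sigma_{a_j})$ (Definition~\ref{aug_hub_tree_schwarz_def}) is determined combinatorially by (i) the underlying dessin $\cT(\sigma_{a_j})\cong\cT$, (ii) the depth $k$ dessins and kneading sequences defining the renormalization combinatorics, and (iii) the condition that $w_i$ is superattracting of period $p$, which fixes the dynamics on the interior of the renormalization. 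Hence $\cH(\sigma_{a_1})$ and $\cH(\sigma_{a_2})$ are isomorphic as angled tree dynamics, so by Poirier's uniqueness statement, the underlying PCF anti-polynomials obtained via Theorem~\ref{Thm_C_part_I} agree up to affine conjugation, and so the normalized Schwarz reflections agree.

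The \textbf{main obstacle} is matching normalizations: the David surgery of Theorem~\ref{Thm_C_part_I} involves non-canonical choices, so one must check that the resulting parameter $a\in S_\cT$ depends only on the combinatorics. The cleanest way to circumvent this is a direct pullback argument analogous to that of Lemma~\ref{dp_pb_two_lem} and Theorem~\ref{escape_unif_thm}: start with the conformal conjugacy on tiling sets supplied by Lemma~\ref{schwarz_group}; extend it to the superattracting basins of the two critical $p$-cycles via B\"ottcher coordinates (they are conformally conjugate to $z\mapsto z^{d_i}$); glue these two conformal conjugacies across an appropriate fundamental annulus by a quasiconformal interpolation; and pull back through the dynamics to obtain a global quasiconformal conjugacy that is conformal off the limit set. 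Since $\sigma_{a_j}$ is semi-hyperbolic and the associated limit set is conformally removable (cf.~\cite{LMMN}), Weyl's lemma upgrades the conjugacy to a M\"obius one; the normalizations $y_c\mapsto y_c$, $y_f\mapsto y_f$, $\infty\mapsto\infty$ force it to be the identity, giving $a_1=a_2$.
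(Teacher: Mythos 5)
Your proposal is correct and follows essentially the same route as the paper: the paper derives existence in one line from Proposition~\ref{knead_pcf_prop} (whose proof is exactly your Poirier-realization-plus-David-surgery construction, after noting that renormalizable parameters have periodic kneading sequences), and leaves uniqueness to the combinatorial rigidity of postcritically finite parameters, which is precisely the pullback/removability argument you spell out. Your explicit treatment of the normalization issue in Theorem~\ref{Thm_C_part_I} fills in a step the paper takes for granted, but it is not a different method.
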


\begin{proof}
    This follows immediately from Proposition~\ref{knead_pcf_prop}, after noting that renormalizable parameters have a dessin with a periodic kneading sequence.
    \end{proof}

\subsection{Non-pinched renormalizations}

We will begin our discussion with the case that the critical value does not remain in $\Omega^1_{1,a}$. Fix a renormalization combinatorics of renormalization period $p$ and let $a_0$ be the parameter given by Proposition~\ref{renorm_center_prop}. We denote $\mathcal{M}_{a_0}$ as those parameters with the same renormalization combinatorics. All parameters with the same renormalization combinatorics will have one  renormalizable critical point $w_i(a)$, with the address of $w_i(a)$ in $\cT$ depending only on the renormalization combinatorics. Let $U(a)$ be the component of $\sigma_a^{-p}(\Omega_a)$ containing the free critical value $y_f$. As the orbit of $y_f$ does not remain in the component $\Omega^1_{1,a}$ of $\sigma_a^{-1}(\Omega_a)$ adjacent to the cusp, $U(a)$ is compactly contained in $\Omega_a$, and $\sigma_a^{\circ p}\colon U(a)\to \Omega_a$ is a family of (anti-)holomorphic, unicritical, polynomial-like maps of degree $d_i=\mathrm{val}(w_i)$, where $\mathrm{val}(w_i)$ is the valence of $w_i$ in $\cT$.

Following \cite[\S 5]{IM21}, for each $a\in \mathcal{M}_{a_0}$ there exists a quasiconformal map $\rho_a\colon U(a)\to \C$ which is a hybrid conjugacy between the polynomial-like maps $\sigma_a^{\circ p}\vert_{U(a)}$ and a unicritical polynomial: if $p$ is even the polynomial will be holomorphic and of the form $g_{c_a}: z\mapsto z^{d_i}+c_a$, and if $p$ is odd the polynomial is anti-holomorphic and of the form $g_{c_a}: z\mapsto \overline z^{d_i} + c_a$. The parameter $c_a$ here is not unique; it is only determined up to a $d_i\pm 1$ ($+$ when $p$ is odd, $-$ when $p$ is even) order symmetry. To fix the polynomial to which we straighten, we will first canonically mark an access to a fixed point of the polynomial-like map.

For the parameter $a_0$ (produced in Proposition~\ref{renorm_center_prop}), the free critical point $w_i(a_0)$ is $p-$periodic. Consider the Fatou component $\mathcal{W}$ of $\sigma_{a_0}$ containing $w_i(a_0)$, so $\sigma_{a_0}^{\circ p}\vert_{\mathcal{W}}$ is conformally conjugate to $\overline{z}^p\vert_\D$ or $z^p\vert_\D$ (depending on the parity of $p$). There exists a unique fixed point of $\sigma_{a_0}^{\circ p}$ on $\partial\mathcal{W}$ that  separates the renormalizable critical point $w_i(a_0)$ from the cusp $y_c$. Now consider the two dynamical rays of $\sigma_{a_0}$ landing at this repelling periodic point that are consecutive in the cyclic order around their landing point, and such that the counter-clockwise sector enclosed by these rays (and their landing point) contains $w_i(a_0)$ but not $y_c$. 

Throughout $\cM_{a_0}$ the dynamical rays chosen above land at a common repelling periodic point, and separate the small filled Julia set of the associated (anti-)polynomial-like map from $y_c$. These rays define a preferred access to a distinguished periodic point of the (anti-)polynomial-like maps in question. To define the straightening map, we always normalize the hybrid conjugacy so that this access is sent to the dynamical $0$-ray of the straightened polynomial.
This gives rise to a well-defined \emph{straightening map} 
$$
\chi_{a_0}\colon \cM_{a_0}\to \mathcal M_{d_i}^*
$$
by applying a dynamical straightening to each parameter in $\cM_{a_0}$.

\begin{proposition}\label{surj_pcf_prop}
The image of the straightening map $\chi_{a_0}(\cM_{a_0})$ contains all postcritically finite parameters of $\mathcal M_{d_i}^*$. 
\end{proposition}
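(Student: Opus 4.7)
The plan is to realize every postcritically finite parameter $c\in \cM_{d_i}^*$ as the straightening of some PCF Schwarz reflection in $\cM_{a_0}$, using the tuning construction of Lemma~\ref{tuning_trees_lem}. Given such a $c$, denote by $g_c$ the corresponding unicritical PCF (anti-)polynomial of degree $d_i$, which is holomorphic when $p$ is even and antiholomorphic when $p$ is odd, matching the hypothesis of Lemma~\ref{tuning_trees_lem} applied to the critically periodic map $\sigma_{a_0}$ of period $p$ with renormalizable critical point $w_i(a_0)$ of local degree~$d_i$.

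First I would apply Lemma~\ref{tuning_trees_lem} to the pair $(\sigma_{a_0}, g_c)$ to produce a PCF Schwarz reflection $\sigma_{a'}\in S_\cT$ together with a neighborhood $U\ni y_f$ satisfying $U\subset \sigma_{a'}^{\circ p}(U)$ and a David conjugacy $\Phi$ from $g_c$ on a neighborhood of $\cK(g_c)$ to $\sigma_{a'}^{\circ p}\vert_U$. To see that $\sigma_{a'}\in\cM_{a_0}$, recall that the construction in the proof of Lemma~\ref{tuning_trees_lem} produces the augmented Hubbard tree of $\sigma_{a'}$ by blowing up the periodic critical cycle of $\sigma_{a_0}$ and gluing in copies of the Hubbard tree of $g_c$, leaving the remainder of the tree dynamics unchanged. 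Tracking iterated preimages of the cusp through this modified tree dynamics shows that $\sigma_{a'}$ and $\sigma_{a_0}$ have identical depth-$k$ dessins $\cT^k$ and identical depth-$k$ kneading sequences $\vartheta^k_n$ for all $k$, so that $\sigma_{a'}$ has the same first-level renormalization combinatorics as $\sigma_{a_0}$ and therefore lies in $\cM_{a_0}$.

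Next I would argue that $\chi_{a_0}(\sigma_{a'})=c$. Since $g_c$ is PCF, the Julia set of $g_c$ has zero Lebesgue measure, so the David map $\Phi$ is automatically $\bar\partial$-closed almost everywhere on $\mathcal J(g_c)$; on the Fatou components of $g_c$ the map $\Phi$ is quasiconformal and conjugates two (anti-)holomorphic dynamical systems, whence it may be replaced by a conformal conjugacy there by an appropriate Riemann-map adjustment. Restricting $\Phi$ to suitable polynomial-like domains therefore yields a hybrid equivalence between $g_c$ and the (anti-)polynomial-like restriction of $\sigma_{a'}^{\circ p}$ near $w_i(a')$. By the uniqueness of straightening up to the $(d_i\pm 1)$-fold root of unity symmetry of the unicritical family, this identifies $\chi_{a_0}(\sigma_{a'})$ with $c$ provided the two normalizations agree.

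The main obstacle is precisely verifying this normalization, namely that the marked pair of dynamical rays chosen to define $\chi_{a_0}$ is mapped by $\Phi$ to the canonical $0$-ray access at the $\alpha$-fixed point of $g_c$. This is a purely combinatorial matter: the distinguished repelling periodic point separating $w_i$ from $y_c$, together with the pair of dynamical rays landing at it consecutive in the cyclic order, is encoded entirely in the first-level renormalization combinatorics (and hence in the depth-$k$ dessins and kneading sequences). Since the tuning surgery inserts the copy of $\cK(g_c)$ into the dynamical plane of $\sigma_{a'}$ in a planar-isotopy-equivariant way and preserves all external combinatorics, the marked access for $\sigma_{a'}^{\circ p}$ matches the canonical $0$-ray access for $g_c$ under $\Phi$, possibly after pre-composing with a symmetry of the unicritical family (which only reparametrizes within a common equivalence class of unicritical maps). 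Absorbing this symmetry into the choice of hybrid conjugacy yields $\chi_{a_0}(\sigma_{a'})=c$, completing the proof.
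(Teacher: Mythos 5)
Your proof takes essentially the same route as the paper: apply the tuning construction of Lemma~\ref{tuning_trees_lem} to the center $\sigma_{a_0}$ with the chosen PCF unicritical map $g_c$, and observe that the tuned parameter retains the first-level renormalization combinatorics of $\sigma_{a_0}$, hence lies in $\cM_{a_0}$ and straightens to $c$. The paper's proof is a two-line appeal to exactly this; your extra care about upgrading the David conjugacy to a hybrid one (using that the PCF Julia set has zero area) and about matching the marked access fills in details the paper leaves implicit, but does not change the argument.
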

\begin{proof}
    This follows immediately from applying Lemma~\ref{tuning_trees_lem} to the parameter $\sigma_{a_0}$ constructed in Proposition~\ref{renorm_center_prop}. It is easy to see that tunings of a critically periodic parameter will have the same first-level renormalization combinatorics as it.
\end{proof}

\begin{proposition}\label{str_image_qc_closed_prop}
The image of the straightening map $\chi_{a_0}(\cM_{a_0})$ is closed under quasiconformal conjugations.
\end{proposition}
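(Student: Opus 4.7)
The plan is to perform a quasiconformal surgery transporting a qc-conjugacy between the target polynomials to the dynamical plane of $\sigma_a$, and then invoke the qc-closedness of the family $S_\cT$ (Proposition~\ref{qc_closed_prop}) to realize the image $c'$. Suppose $c \in \chi_{a_0}(\cM_{a_0})$ is realized by $\sigma_a$ with $a \in \cM_{a_0}$, and let $\phi:\widehat{\C}\to\widehat{\C}$ be a qc map with $\phi\circ g_c = g_{c'}\circ\phi$. Denote by $\rho_a: U(a) \to V$ the hybrid conjugacy between $\sigma_a^{\circ p}|_{U(a)}$ and $g_c$, and let $K\subset U(a)$ be the associated small filled Julia set.

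First I would build a $\sigma_a$-invariant Beltrami coefficient $\mu$ on $\widehat{\C}$ as follows: on $K$, set $\mu := \rho_a^*\mu_\phi$, where $\mu_\phi$ is the Beltrami coefficient of $\phi$; on each iterated $\sigma_a$-preimage of $K$, spread $\mu$ by pulling back via the appropriate branch of $\sigma_a$; and set $\mu = 0$ elsewhere in $\widehat{\C}$ (including on the tiling set). Since $\mu_\phi$ is $g_c$-invariant and $\rho_a$ hybrid-conjugates $\sigma_a^{\circ p}$ to $g_c$, the definition on the grand orbit of $K$ is consistent and yields $\sigma_a$-invariance there; outside this grand orbit $\sigma_a$ is (anti-)holomorphic and preserves the zero Beltrami coefficient, so invariance holds globally. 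Moreover $\|\mu\|_\infty = \|\mu_\phi\|_\infty < 1$. By Proposition~\ref{qc_closed_prop}, the qc solution $\Phi$ of the Beltrami equation (normalized to fix $y_c$, $y_f$, and $\infty$) conjugates $\sigma_a$ to a Schwarz reflection $\sigma_{a'}$ for some $a' \in S_\cT$. Since $\Phi$ is a global topological conjugacy, $K(\sigma_{a'}) = \Phi(K(\sigma_a))$ is connected, so $a' \in \cC(S_\cT)$; furthermore $\Phi$ preserves the depth $k$ dessins and kneading sequences for all $k$, so $a' \in \cM_{a_0}$.

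Finally I would verify $\chi_{a_0}(a') = c'$ by examining the composition $\rho_{a'} := \phi \circ \rho_a \circ \Phi^{-1}: U(a') \to \phi(V)$, with $U(a') = \Phi(U(a))$. By construction $\rho_{a'}$ is a qc map conjugating $\sigma_{a'}^{\circ p}|_{U(a')}$ to $g_{c'}|_{\phi(V)}$, and a chain-rule computation on the small filled Julia set $\Phi(K)$ shows that the Beltrami coefficient of $\Phi^{-1}$ there cancels exactly with that of $\phi\circ\rho_a$, so $\mu_{\rho_{a'}} \equiv 0$ on $\Phi(K)$, i.e., $\rho_{a'}$ is a hybrid conjugacy. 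The dynamical rays marking the preferred access to the chosen fixed point used to normalize straightenings are preserved by $\Phi$, so with the canonical normalization the straightening of $\sigma_{a'}^{\circ p}|_{U(a')}$ is $g_{c'}$; hence $\chi_{a_0}(a') = c'$.

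The main obstacle is the second step: checking that the spread Beltrami coefficient $\mu$ is globally well-defined, $\sigma_a$-invariant across the interface between the grand orbit of $K$ and the rest of the dynamics, and has essentially bounded norm strictly less than one. The key point (in the spirit of \cite{DH2,IM21}) is that $\rho_a$ is conformal on $\mathrm{Int}\, K$ and $\overline{\partial}\rho_a=0$ a.e.\ on $K$, so pulling back $\mu_\phi$ there is compatible with setting $\mu=0$ on $U(a)\setminus K$ and off the grand orbit, ensuring both consistency of the spread and control of the norm.
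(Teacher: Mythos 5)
Your proposal follows the same route as the paper's proof: pull the Beltrami coefficient of the conjugacy $\phi$ back to the small filled Julia set via the hybrid conjugacy $\rho_a$, spread it over the grand orbit by iterated pullback under $\sigma_a$, extend by zero, and invoke the Measurable Riemann Mapping Theorem together with Proposition~\ref{qc_closed_prop}. The only cosmetic difference is that the paper normalizes $\phi$ to be conformal on the basin of infinity at the outset, whereas you discard the part of $\mu_\phi$ off the filled Julia set of $g_c$ and recover $g_{c'}$ via the chain-rule computation at the end; the two devices are standard and equivalent.
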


\begin{proof}
Let $g_c\in\chi_{a_0}(\cM_{a_0})$ and $g_{c'}\in\cM_{a_0}$ be quasiconformally conjugate to $g_c$ via some quasiconformal map $\phi$ that is conformal on the basin of infinity. Then, $\mu:=\overline{\partial}\phi/\partial\phi$ is an invariant Beltrami coefficient supported on filled Julia set of $g_c$. Let $g_c=\chi_{a_0}(\sigma_a)$. Pull $\mu$ back under the hybrid conjugacy $\rho_a$ to the small filled Julia set of the (anti-)polynomial-like restriction of $\sigma_a$, and spread $\rho_a^\ast(\mu)$ to the entire grand orbit of this small filled Julia set by pulling it back iteratively under $\sigma_a$. Finally, extend this Beltrami coefficient to $\widehat{\C}$ by setting it equal to zero elsewhere. This produces a $\sigma_a-$invariant Beltrami coefficient on $\widehat{\C}$.
Then we use the Measurable Riemann Mapping Theorem and Proposition~\ref{qc_closed_prop} to find another $\sigma_{a'}\in\cM_{a_0}$ which straightens to the map $g_{c'}$.
\end{proof}

\begin{proposition}
    The set $\cM_{a_0}$ is compact.
\end{proposition}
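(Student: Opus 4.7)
The strategy is to establish boundedness and closedness of $\cM_{a_0}$ as a subset of $\C$ separately.

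Boundedness is immediate: $\cM_{a_0} \subset S_\cT$, and $S_\cT$ is bounded by Lemma~\ref{para_space_bdd_lem}.

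For closedness, I would take an arbitrary convergent sequence $a_n \to a_\infty$ with $a_n \in \cM_{a_0}$ and argue that $a_\infty \in \cM_{a_0}$ in two steps. The first step is to locate $a_\infty \in \Int{S_\cT}$ by eliminating each of the three boundary strata from Theorem~\ref{bdry_thm}. Parameters on $\overline{\Gamma^{\mathrm{dp}}}$ have critical orbit escaping to $T_b^0$ by Proposition~\ref{generic_bdd_droplet_dyn_prop}, and parameters on $\Gamma^{\perp}$ have $y_f$ fixed on $\partial \Omega_a$; both escape/degeneracy behaviors are stable under perturbation into $S_\cT$, so these strata are disjoint from (and bounded away from) $\cC(S_\cT) \supset \cM_{a_0}$. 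Parameters on $\Gamma^{\mathrm{hoc}}$ lie in $\overline{H}$ by Proposition~\ref{hoc_bdry_primary}, with the critical orbit converging to the cusp $y_c$ through an attracting petal, forcing the depth-$1$ itinerary to be identically $1$; this contradicts the standing non-pinched hypothesis that the critical orbit of $\sigma_{a_0}$ leaves $\Omega^1_{1,a_0}$.

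For the second step, I would use continuous dependence of the depth-$k$ puzzle pieces $\Omega_a^k = \sigma_a^{-k}(\Omega_a)$ on $a \in \Int{S_\cT}$. This makes the family $\{\sigma_a^{\circ p}\colon U(a) \to \Omega_a\}$ of unicritical (anti-)holomorphic polynomial-like maps of degree $d_i = \mathrm{val}(w_i)$ vary continuously over a neighborhood of $\cM_{a_0}$ in $\Int{S_\cT}$. Since each small filled Julia set $K(\sigma_{a_n}^{\circ p}\vert_{U(a_n)})$ is connected and contains $y_f$, by Hausdorff semi-continuity the limit is a connected continuum containing $y_f$, and coincides with the small filled Julia set of the limiting polynomial-like map at $a_\infty$. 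In particular $a_\infty$ admits a polynomial-like renormalization at $w_i(a_\infty)$ with connected filled Julia set, and the kneading sequences $\{\vartheta^k_n(a_\infty)\}$ match those of $a_0$ by continuity, so $a_\infty \in \cM_{a_0}$.

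The main obstacle is ruling out degeneration of the limit polynomial-like structure, i.e., ensuring $U(a_\infty) \Subset \Omega_{a_\infty}$ so that the limit map is genuinely polynomial-like rather than pinched polynomial-like. This reduces to establishing a uniform separation between $U(a)$ and $\partial \Omega_a$ (in particular from the cusp $y_c$) as $a$ ranges over $\cM_{a_0}$. The non-pinched hypothesis should ensure this separation: once the critical orbit is forced to leave $\Omega^1_{1,a}$, the period-$p$ puzzle cycle avoids a definite neighborhood of $y_c$, yielding the required uniform compact containment.
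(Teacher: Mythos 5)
Your argument is correct in outline but routed differently from the paper's. The paper's proof is two sentences: it asserts that the annulus $\Omega_a\setminus\overline{U(a)}$ has modulus uniformly bounded below over $\cM_{a_0}$, so that the hybrid conjugacies $\rho_a$ constructed for Theorem~\ref{multicorn_thm} have uniformly bounded dilatation, and then deduces closedness from pre-compactness of $K$-quasiconformal maps: along $a_n\to a_\infty$ a subsequence of the straightenings converges to a hybrid conjugacy for $\sigma_{a_\infty}^{\circ p}\vert_{U(a_\infty)}$, so the limit is renormalizable with the same combinatorics. You instead work directly with the geometry: you first rule out the boundary strata of $S_\cT$ (the stability of escape near $\overline{\Gamma^{\perp}}\cup\Gamma^{\mathrm{dp}}$ as in Lemma~\ref{ray_acu_pt_lem}, Proposition~\ref{generic_bdd_droplet_dyn_prop}, and Lemma~\ref{cusp_anal_dyn_lem} on $\Gamma^{\mathrm{hoc}}$ are the right tools), and then pass to the limit of the polynomial-like restrictions via continuity of the puzzle pieces and Hausdorff semicontinuity of the small filled Julia sets. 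Your route is independent of the straightening machinery; the paper's is shorter because the maps $\rho_a$ are already in hand.

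The point to be clear-eyed about is that both proofs hinge on the same uniform non-degeneracy: that $\overline{U(a)}$ stays a definite distance inside $\Omega_a$, equivalently that $\mathrm{mod}\left(\Omega_a\setminus\overline{U(a)}\right)$ is bounded below, as $a$ ranges over $\cM_{a_0}$. You correctly isolate this as the main obstacle and indicate why the non-pinched hypothesis should give it (the fixed combinatorics force the orbit of $y_f$ into a non-root depth-$1$ piece, so $U(a)$ cannot accumulate on the cusp), but you do not prove the uniformity, and the paper simply asserts it. Note that one cannot extract the uniform bound from compactness of $\cM_{a_0}$ itself, since that is what is being proved; the honest argument runs the estimate over the compact set obtained by deleting a neighborhood of the degenerate strata from $\overline{\cC(S_\cT)}$, on which the finitely many relevant puzzle pieces vary continuously --- which is essentially what your first step sets up. So this is not a gap relative to the paper's own standard, but it is the one estimate that a fully written-out proof would still need to supply.
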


\begin{proof}
Throughout the space $\cM_{a_0}$ the annulus $\Omega_a\setminus \overline{U(a)}$ has a uniformly bounded modulus. Compactness of $\cM_{a_0}$ follows by pre-compactness of quasiconformal maps of uniformly bounded dilatation.
\end{proof}

\begin{proof}[Proof of Theorem \ref{multicorn_thm}]
\textbf{Injectivity.} 
The proof of \cite[Theorem~B]{IK12}, which uses a pullback argument (c.f. \cite[Section~38.5]{Lyu25}), applies mutatis mutandis to the current setting establishing injectivity of the straightening map $\chi_{a_0}$. 
\smallskip

\noindent\textbf{Almost surjectivity.} It is easy to extend Proposition~\ref{surj_pcf_prop} to show that the image of $\chi_{a_0}$ contains all hyperbolic components by using the uniformization of hyperbolic parameters from Section~\ref{hyp_comp_unif_subsec}. Now suppose that $g_{c_n}\in \mathcal M_{d_i}^*$ are hyperbolic parameters in the associated Multibrot/Multicorn set with $c_n\to c$, and let $a_n\in \cM_{a_0}$ be the preimages under $\chi_{a_0}$. After passing to a subsequence, we assume that $a_n\to a\in \cM_{a_0}$. Denote the associated hybrid conjugacies as $\rho_{a_n}$, extending them as necessary so that they all share a common domain, and let $\rho_a$ be the straightening map for $\sigma_a$. Note that the domains $U(a)$ and $\Omega_a$ all vary continuously, and so the modulus of the annulus $\Omega_a\setminus \overline{U(a)}$ is bounded below. This tells us that the hybrid conjugacies $\rho_{a_n}$ can be chosen to have uniformly bounded dilatation, so in particular, after passing to a subsequence, there is a quasiconformal limit $\rho_{a_n}\to \widetilde{\rho}$ (cf. \cite[\S II.7, Lemma, Page 313]{DH2}). It follows that $g_c=\widetilde{\rho} \circ \sigma_a \circ \widetilde{\rho}^{-1} = \widetilde \rho\circ \rho^{-1} \circ \chi_{a_0}(a) \circ \rho\circ \widetilde\rho^{-1}$ on appropriate domains. As the image of the straightening map is quasiconformally closed by Proposition~\ref{str_image_qc_closed_prop}, it follows that $g_c\in \chi_{a_0}(\cM_{a_0})$.
\smallskip

\noindent\textbf{Continuity properties of $\chi_{a_0}$.}
Note than hybrid conjugacies preserve the conformal class of the dynamics on the interior of filled Julia sets, and hence $\chi_{a_0}$ respects the uniformizations of the hyperbolic components of $\cM_{a_0}$ (see Section~\ref{hyp_comp_unif_subsec}) and of the hyperbolic components of $\cM_{d_i}^*$ (cf. \cite{Dou83}, \cite[Theorem~4.7]{BF14}, \cite{NS}). It follows that $\chi_{a_0}$ induces a homeomorphism between the hyperbolic parameters of $\cM_{a_0}$ and those of $\cM_{d_i}^*$.

Continuity of the straightening map $\chi_{a_0}$ at quasiconformally rigid parameters
follows from the fact that if a sequence $\{a'_n\}\subset \cM_{a_0}$ converges to $a'_\infty$, then all accumulation points of $\chi_{a_0}(a'_n)$ in $\cM_{d_i}^*$ are quasiconformally conjugate to $\chi_{a_0}(a'_\infty)$ (see \cite[\S II.7, Lemma, Page 313]{DH2}, cf. \cite[\S 6]{PartI}).
\smallskip

\noindent\textbf{Homeomorphism in the even period case.}
It is well-known that straightening maps from full families of unicritical (holomorphic) polynomial-like maps are homeomorphisms onto Multibrot sets of appropriate degree. We refer the reader to \cite{DH2}, \cite[Theorem~9.3]{IK12} for details.
\end{proof}

Let us now fix a white vertex $\widehat{v}_w$ of $\cT$ of valence greater than one.
The proof of Lemma~\ref{dyn_shabat_existence_lem} (or Theorem~\ref{poi_thm}) and the David surgery of Theorem~\ref{Thm_C_part_I} can be combined to prove the existence of a parameter $\widehat{a}\in\cC(S_\cT)$ such that in the dynamical plane of $\sigma_{\widehat{a}}$, the corresponding free critical point $w_i(\widehat{a})$ is a superattracting fixed point; i.e., $w_i(\widehat{a})=\sigma_{\widehat{a}}(w_i(\widehat{a}))=y_f$. In fact, the parameter $\widehat{a}$ is unique by the rigidity of postcritically finite parameters. Evidently, the map $\sigma_{\widehat{a}}$ is renormalizable at the critical point $w_i(\widehat{a})$ with (minimal) renormalization period $1$. Let $\widehat{\cM}\subset\cC(S_\cT)$ be the set of
all parameters with the same first-level renormalization combinatorics as~$\sigma_{\widehat{a}}$. Theorem~\ref{multicorn_thm} now implies that if $\widehat{v}_w$ is not adjacent to the root $v_b$ in the tree $\cT$, then $\widehat{\cM}$ is a combinatorial copy of the degree $\widehat{d}$ Multicorn $\cM_{\widehat{d}}^-$, where $\widehat{d}$ is the valence of the white vertex $\widehat{v}_w$ in $\cT$.
We record this fact in the following corollary.

\begin{corollary}\label{special_copies_cor}
For each white vertex $\widehat{v}_w$ of $\cT$ of valence greater than one, except for the vertex $v_w'$ (that is adjacent to the root $v_b$), the connectedness locus $\cC(S_\cT)$ contains a combinatorial copy $\widehat{\cM}$ of the degree $\widehat{d}$ Multicorn $\cM_{\widehat{d}}^-$, where $\widehat{d}$ is the valence of the white vertex $\widehat{v}_w$ in $\cT$. Further, the map $\overline{z}^{\widehat{d}}\in\cM_{\widehat{d}}^-$ corresponds to the unique parameter $\widehat{a}\in\cM$ such that in the dynamical plane of $\sigma_{\widehat{a}}$, the free critical point $w_i(\widehat{a})$ corresponding to $\widehat{v}_w$ is a superattracting fixed point; i.e., $w_i(\widehat{a})=y_f$.
\end{corollary}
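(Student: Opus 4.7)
The plan is to construct a specific parameter $\widehat{a}$ that will serve as the \emph{center} of the combinatorial Multicorn copy, verify that it satisfies the hypotheses of Theorem~\ref{multicorn_thm}, and then invoke that theorem directly. First, I will construct $\widehat{a}$ as follows. Applying Lemma~\ref{dyn_shabat_existence_lem} to the augmented tree $\cT^{\mathrm{aug}}$ (with the edge $(v_b, v_w)$ attached to the root), but with the embedding chosen so that the distinguished white vertex $\widehat{v}_w$ is placed at the origin (rather than $v_w'$), yields a dynamically Shabat anti-polynomial $\widehat{p}$ whose augmented Hubbard tree is combinatorially isomorphic to $\cT^{\mathrm{aug}}$ and for which the critical point corresponding to $\widehat{v}_w$ is a superattracting fixed point. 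Alternatively, one may use Theorem~\ref{poi_thm} to realize an appropriate angled tree dynamics on $\cT^{\mathrm{aug}}$. Semi-hyperbolicity of $\widehat{p}$ and connectivity of $\mathcal{J}(\widehat{p})$ then allow invoking Theorem~\ref{Thm_C_part_I} to David-conjugate $\widehat{p}$ near $\mathcal{K}(\widehat{p})$ to a Schwarz reflection $\sigma_{\widehat{a}}\in\mathcal{S}_{\mathcal{R}_d}$. The combinatorial structure of the Hubbard tree and the fact that the David conjugacy respects the dessin (cf. Corollary~\ref{schwarz_tree_cor}) ensure that $\sigma_{\widehat{a}}\in\cC(S_\cT)$ with the critical point $w_i(\widehat{a})$ corresponding to $\widehat{v}_w$ equal to $y_f$.

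Uniqueness of $\widehat{a}$ follows from combinatorial rigidity of PCF parameters. More precisely, since $\sigma_{\widehat{a}}$ is post-critically finite with a prescribed dessin $\cT$ and a prescribed critical portrait (namely, $w_i(\widehat{a})=y_f$ is a superattracting fixed point while every other white critical point maps in one step to this fixed point), any other parameter with the same combinatorics would be hybrid equivalent to $\sigma_{\widehat{a}}$ and hence coincide with it on the non-escaping set. Matching with the common external map $\cR_d$ (Lemma~\ref{schwarz_group}) then forces global equality.

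Next, I will verify that $\sigma_{\widehat{a}}$ is renormalizable at $w_i(\widehat{a})$ with minimal period $1$, and that the hypothesis of Theorem~\ref{multicorn_thm} is satisfied. The full orbit of the free critical value of $\sigma_{\widehat{a}}$ is the single point $\{w_i(\widehat{a})\}$, which lies strictly inside the depth-one puzzle piece $\Omega^1_{i,\widehat{a}}$ associated to $\widehat{v}_w$. Crucially, because $\widehat{v}_w\neq v_w'$, this puzzle piece is not the one adjacent to the cusp, so $\overline{\Omega^1_{i,\widehat{a}}}\Subset \Omega_{\widehat{a}}$ and the post-critical set stays away from $\Omega^1_{1,\widehat{a}}$. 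Consequently, $\sigma_{\widehat{a}}\colon \Omega^1_{i,\widehat{a}}\to \Omega_{\widehat{a}}$ is a genuine (non-pinched) unicritical anti-polynomial-like map of degree $\widehat{d}=\mathrm{val}(\widehat{v}_w)$ with connected filled Julia set $\{w_i(\widehat{a})\}$, giving period-$1$ renormalizability. This is the non-pinched odd-period situation of Theorem~\ref{multicorn_thm}.

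Finally, defining $\widehat{\cM}:=\cM_{\widehat{a}}$ to be the set of parameters in $\cC(S_\cT)$ sharing the first-level renormalization combinatorics of $\sigma_{\widehat{a}}$, Theorem~\ref{multicorn_thm} (case 2) yields a combinatorial copy of the degree-$\widehat{d}$ Multicorn $\cM_{\widehat{d}}^-$, with a dynamically natural bijection between closures of hyperbolic components that is continuous at hyperbolic and quasiconformally rigid parameters. The identification of $\widehat{a}$ with the center $\overline{z}^{\widehat{d}}\in\cM_{\widehat{d}}^-$ follows from the construction of the straightening map in the proof of Theorem~\ref{multicorn_thm}: both maps are the unique postcritically finite parameters in their respective combinatorial classes having a superattracting fixed point with local degree $\widehat{d}$, and the hybrid conjugacy produced by the straightening procedure carries one to the other. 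The main (modest) obstacle is bookkeeping the combinatorial correspondence between the embedding of $\cT^{\mathrm{aug}}$ used in Lemma~\ref{dyn_shabat_existence_lem} and the dessin of the resulting Schwarz reflection; once this is set up, everything follows by directly quoting the results already established.
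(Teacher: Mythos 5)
Your proposal is correct and follows essentially the same route as the paper: construct the center $\widehat{a}$ by realizing the appropriate PCF tree dynamics (via the proof of Lemma~\ref{dyn_shabat_existence_lem} or Theorem~\ref{poi_thm}) followed by the David surgery of Theorem~\ref{Thm_C_part_I}, invoke rigidity of postcritically finite parameters for uniqueness, observe that the resulting map is period-one renormalizable in the non-pinched sense precisely because $\widehat{v}_w\neq v_w'$, and apply Theorem~\ref{multicorn_thm}. The only caveat is that Lemma~\ref{dyn_shabat_existence_lem} as stated places the superattracting white fixed point adjacent to the repelling black fixed point, so when $\widehat{v}_w$ is not adjacent to $v_b$ one must genuinely adapt its proof (or use Poirier's realization theorem, as you note as an alternative) --- which is exactly what the paper does.
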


\subsection{Pinched period 1 renormalizations}

In this subsection we consider the case considered by Theorem~\ref{PinchedMulticorn}. It is no longer true that $U(a)=\Omega^1_{1,a}$ is compactly contained in $\Omega_a$, and so the classical straightening theorem does not apply. However so long as the parameter $a$ does not lie in $\Gamma^\mathrm{hoc}$, the mapping $\sigma_a: \overline{U(a)}\to \overline{\Omega_a}$ can be restricted to a \textit{simple} pinched polynomial-like mapping, and the straightening theorem \cite[Theorem~4.8]{PartI} applies. Those parameters lying on $\Gamma^{\mathrm{hoc}}$ will be mapped bijectively to the parameters of $\cM_{d_1}^{\mathrm{par},-}$ with higher order parabolics.

We will refer to the set of parameters for which $\sigma^{\circ n}_a(y_f)\in \overline{\Omega^1_{1,a}}$, for all $n$, as $\cM_{\mathrm{cusp}}$, and denote by $K^1(\sigma_a)$ and $J^1(\sigma_a)$ the little filled Julia set and the little Julia set for the corresponding period $1$ pinched polynomial-like restrictions.

\begin{proposition}\label{meas_attr_prop}
Let $\sigma_a\in \cM_{\mathrm{cusp}}$. Almost every point $z\in K(\sigma_a)$ with respect to Lebesgue measure eventually lands in the little filled Julia set $K^1(\sigma_a)$.
\end{proposition}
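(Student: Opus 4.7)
The approach is to set $E := \{z \in K(\sigma_a) : \sigma_a^{\circ n}(z) \notin K^1(\sigma_a) \text{ for all } n \geq 0\}$ and to show $\mathrm{Leb}(E) = 0$. First I would record the basic dynamical structure of $E$: it is forward-invariant; it contains no critical point of $\sigma_a$, since every critical point $w_i$ maps to $y_f$, which lies in $K^1(\sigma_a)$ by the defining assumption $\sigma_a \in \mathcal M_{\mathrm{cusp}}$, so $w_i \in \sigma_a^{-1}(K^1(\sigma_a))$; and in fact the entire postcritical set of $\sigma_a$ is contained in $\overline{K^1(\sigma_a)}$. In particular, $E$ keeps a positive distance from the postcritical set and from the parabolic cusp $y_c$, which lies on $\partial K^1(\sigma_a)$.

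Next I would establish a Ma\~n\'e-style uniform expansion along orbits in $E$. Working on the hyperbolic Riemann surface $U := \widehat{\C} \setminus \overline{\{\sigma_a^{\circ n}(y_f) : n \geq 0\}}$ and invoking the Schwarz--Pick lemma in the antiholomorphic setting, the goal is to show that some iterate $\sigma_a^{\circ N}$ uniformly expands the hyperbolic metric of $U$ by a factor $\lambda > 1$ on a neighborhood of $E$. Combined with the Koebe distortion theorem, this yields that inverse branches of $\sigma_a^{\circ n}$ along orbits in $E$ have uniformly bounded distortion on hyperbolic disks of fixed radius centered at their forward iterates.

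With this expansion in hand, $\mathrm{Leb}(E) = 0$ follows by a standard density-point argument. Supposing for contradiction that a Lebesgue density point $z_0 \in E$ exists, the expansion and bounded distortion would allow me to pull back a round disk centered at some far iterate $\sigma_a^{\circ n}(z_0)$ through the appropriate branches of $\sigma_a^{-n}$ to a nearly-round neighborhood of $z_0$. Inside the target disk one can locate a sub-disk belonging to $\Int \bigcup_m \sigma_a^{-m}(K^1(\sigma_a))$, or to the tiling set $T^\infty(\sigma_a)$; both lie outside $E$, and their pullbacks contribute a definite proportion outside $E$ near $z_0$, contradicting the density-point property.

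The main obstacle will be making the expansion step robust near the parabolic cusp $y_c$, where the standard hyperbolic estimates degenerate because $\sigma_a$ is not uniformly expanding on any neighborhood of this parabolic fixed point. To navigate this I would invoke Theorem~\ref{straightening_thm} to hybrid-conjugate $\sigma_a|_{\overline{\Omega^1_{1,a}}}$ to a parabolic antirational map $R \in \mathcal{F}_{d_1}$, and transfer the measure-theoretic estimates for parabolic rational maps (\`a la Przytycki--Rivera-Letelier--Smirnov) to the Schwarz-reflection side. Since $E$ is disjoint from the parabolic basin (which is contained in $K^1(\sigma_a)$), uniform expansion is required only outside a fixed attracting petal at $y_c$, where hyperbolic-type expansion persists and the transfer from the rational case is comparatively routine.
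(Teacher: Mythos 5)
Your route is genuinely different from the paper's: the paper does not run a Ma\~n\'e-type expansion argument from scratch, but instead quasiconformally conjugates $\sigma_a$ to an anti-rational map (via the main result of \cite{PartI}), invokes the theorem that the postcritical set is a measure-theoretic attractor for the Julia set of a rational map (\cite{Lyu82}, \cite[Theorem 22.2]{Lyu25}), and transfers the conclusion back because quasiconformal maps preserve the Lebesgue measure class. What you are proposing is essentially to re-prove that attractor theorem in the Schwarz-reflection setting, and the place where your write-up breaks is exactly where that theorem's content lives. The claim that ``$E$ keeps a positive distance from the postcritical set'' does not follow from disjointness: $E$ is not closed (it is $K(\sigma_a)$ minus the $F_\sigma$ set $\bigcup_n\sigma_a^{-n}(K^1(\sigma_a))$), so points of $E$ may accumulate on $\overline{K^1(\sigma_a)}\supseteq \overline{P(\sigma_a)}\cap K(\sigma_a)$, e.g.\ along decorations attached to $\partial K^1(\sigma_a)$ at precuspidal points. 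Without that separation the uniform hyperbolic expansion on $\widehat\C\setminus\overline{P(\sigma_a)}$ is not available along the whole orbit, and the density-point argument only disposes of the set of points whose orbits return infinitely often to a fixed compact set away from $\overline{P(\sigma_a)}$. What remains after that is precisely the set of points whose $\omega$-limit sets are contained in $\overline{P(\sigma_a)}\subseteq K^1(\sigma_a)$ but which never \emph{enter} $K^1(\sigma_a)$ --- and ruling these out is a separate step that your proposal does not address. The paper handles it by observing that for $a\notin\Gamma^{\mathrm{hoc}}$ the cusp $y_c$ has only a repelling direction in $\Omega_a$, so nothing converges to it except its iterated preimages, and that $\Lambda(\sigma_a)\cap\partial U(a)$ consists only of pre-cuspidal points, so no orbit can accumulate on $\partial K^1(\sigma_a)$ from outside without eventually falling into a preimage of $U(a)$; the case $a\in\Gamma^{\mathrm{hoc}}$ is treated separately (the limit set then has zero area and a.e.\ point lies in a preimage of the parabolic basin inside $K^1(\sigma_a)$).

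Two further cautions if you want to salvage your approach. First, $\sigma_a$ is only defined on $\overline{\Omega_a}$, so the inverse-branch/Koebe machinery must be set up for the pinched anti-polynomial-like restriction rather than for a globally defined map, and the relevant postcritical set must include $y_c$ and the fully ramified value. Second, your final paragraph misplaces the parabolic difficulty: for $a\notin\Gamma^{\mathrm{hoc}}$ the attracting petals at $y_c$ lie on the tiling-set side, not in $K(\sigma_a)$, so ``expansion outside a fixed attracting petal'' is not the right dichotomy; and the hybrid conjugacy of Theorem~\ref{straightening_thm} only controls the restriction to $\overline{U(a)}$, hence says nothing about orbits approaching $y_c$ from outside $U(a)$. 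The cleanest fix is simply to do what the paper does: cite the measure-theoretic attractor theorem through the global quasiconformal conjugacy and then carry out the boundary analysis at $y_c$ and at $\Lambda(\sigma_a)\cap\partial U(a)$.
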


\begin{proof}
By the main result of \cite{PartI} every such Schwarz reflection is quasiconformally conjugate to an anti-rational map, and for rational maps the postcritical set forms a measure-theoretic attractor for the Julia set (see \cite{Lyu82}, \cite[Theorem 22.2]{Lyu25}), and as quasiconformal maps preserve the Lebesgue measure class this property is preserved for $\sigma_a$. Thus, almost every point in $\Lambda(\sigma_a)$ converges to some point in the little Julia set under iteration.

If $a\notin \Gamma^{\mathrm{hoc}}$ then no point converges to $y_c$ except for its iterated preimages. As the intersection $\Lambda(\sigma_a)\cap \partial U(a)$ consists exclusively of pre-cuspidal points, no points converge to them. Therefore Lebesgue-almost every point eventually lands inside of $U(a)$ and remains within it, and thus almost every point lies in some preimage of $K^1(\sigma_a)$.

On the other hand if $a\in \Gamma^{\mathrm{hoc}}$ then every critical point is either pre-periodic or lies in the preimage of a parabolic basin. It follows that the limit set $\Lambda(\sigma_a)$ has zero measure, and so almost every point of $K(\sigma_a)$ lies in the preimage of the parabolic basin contained in $K^1(\sigma_a)$.
\end{proof}

\begin{proposition}
    The renormalization locus $\cM_{\mathrm{cusp}}$ is compact.
\end{proposition}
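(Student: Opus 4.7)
The plan is to show that $\cM_{\mathrm{cusp}}$ is closed in $\C$; boundedness then follows automatically since $\cM_{\mathrm{cusp}}\subset S_\cT$ and $S_\cT$ is bounded by Lemma~\ref{para_space_bdd_lem}. The argument proceeds in two steps: first establish that the connectedness locus $\cC(S_\cT)$ itself is closed in $\C$, and then verify that the defining condition $\sigma_a^{\circ n}(y_f)\in\overline{\Omega^1_{1,a}}$ for all $n\geq 0$ passes to limits within $\cC(S_\cT)$.

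For the first step, Theorem~\ref{escape_unif_thm} identifies the escape locus with a domain in $\Int(\mathcal{Q})\setminus\mathcal{Q}_1$ via the homeomorphism $\pmb{\Psi}$, so the escape locus is open in $S_\cT$ and $\cC(S_\cT)$ is relatively closed in $S_\cT$. To rule out accumulation on $\partial S_\cT\setminus\Gamma^{\mathrm{hoc}}=\overline{\Gamma^{\mathrm{dp}}}\cup\Gamma^{\perp}$, one appeals to Remark~\ref{bdry_psi_rem}: any sequence $\{a_n\}\subset S_\cT$ approaching $\overline{\Gamma^{\mathrm{dp}}}\cup\Gamma^{\perp}$ has $\pmb{\Psi}(a_n)$ leaving every compact subset of $\Int(\mathcal{Q})\setminus\mathcal{Q}_1$, so the $a_n$ must eventually lie in the escape locus. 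Combined with the inclusion $\Gamma^{\mathrm{hoc}}\subset\overline{H}\subset\cC(S_\cT)$ from Proposition~\ref{hoc_bdry_primary}, this yields closedness of $\cC(S_\cT)$ in $\C$.

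For the second step, fix $\{a_n\}\subset\cM_{\mathrm{cusp}}$ with $a_n\to a_\infty$; then $a_\infty\in\cC(S_\cT)$ by the above. Since $\overline{\Delta_{a_n}}\to\overline{\Delta_{a_\infty}}$ in the Hausdorff metric and $\pmb{f}$ is a fixed polynomial, the quadrature domains $\overline{\Omega_{a_n}}$ and the depth-one puzzle pieces $\overline{\Omega^1_{1,a_n}}$ (distinguished by the condition that their closures contain $y_c$) converge to $\overline{\Omega_{a_\infty}}$ and $\overline{\Omega^1_{1,a_\infty}}$ respectively in the Hausdorff metric; moreover the Schwarz reflections $\sigma_{a_n}\to\sigma_{a_\infty}$ pointwise on $\overline{\Omega_{a_\infty}}$ with continuous boundary extensions. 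A straightforward induction on $k$ then produces $\sigma_{a_n}^{\circ k}(y_f)\to\sigma_{a_\infty}^{\circ k}(y_f)$ for each $k\geq 0$, and passing to the limit in $\sigma_{a_n}^{\circ k}(y_f)\in\overline{\Omega^1_{1,a_n}}$ delivers the required inclusion $\sigma_{a_\infty}^{\circ k}(y_f)\in\overline{\Omega^1_{1,a_\infty}}$, so $a_\infty\in\cM_{\mathrm{cusp}}$.

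The main subtlety is to justify convergence of iterates when the critical orbit of $\sigma_{a_\infty}$ visits $\partial\Omega_{a_\infty}$, such as at a parabolic parameter or on $\Gamma^{\mathrm{hoc}}$. For each fixed $k$, however, only pointwise continuity at the finitely many orbit points $\sigma_{a_\infty}^{\circ j}(y_f)$, $0\leq j<k$, is needed; the continuity of $\sigma_a$ on $\overline{\Omega_a}$ together with the Hausdorff continuity of $\overline{\Omega_a}$ in $a$ suffices for this, and no uniform statement (which would be delicate near parabolic implosion) is required.
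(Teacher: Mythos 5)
Your argument is correct and in substance the same as the paper's: both reduce the statement to showing that the orbit condition $\sigma_a^{\circ n}(y_f)\in\overline{\Omega^1_{1,a}}$ is closed in the parameter $a$ — the paper shows the complement is open using real-analytic dependence of the puzzle-piece boundaries on $a$, while you pass the condition to limits along sequences using Hausdorff continuity of $\overline{\Omega^1_{1,a}}$ and continuity of the finite orbit, which is the same continuity fact read in the other direction — and then invoke boundedness of $S_\cT$ from Lemma~\ref{para_space_bdd_lem}. Your preliminary step that $\cC(S_\cT)$ is closed in $\C$ (to ensure the limit parameter does not land on $\overline{\Gamma^{\mathrm{dp}}}\cup\Gamma^{\perp}$) is a harmless extra precaution that the paper's phrasing elides, but it does not change the nature of the argument.
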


\begin{proof}
    Let $a\notin \cM_{\mathrm{cusp}}$, so that there is some smallest $n\geq 0$ for which $\sigma_a^{\circ n}(y_f)\notin \overline{\Omega^1_{1,a}}$. Now note that the boundaries of the components of $\sigma_a^{-n}(\Omega_a)$ are real-analytic curves with coefficients depending real-analytically on $a$. It follows that $\sigma_{a'}^{\circ n}(y_f)\notin \overline{\Omega^1_{1,a'}}$ for $a'$ sufficiently close to $a$. Therefore $\cM_{\mathrm{cusp}}$ is closed, and hence compact by Lemma~\ref{para_space_bdd_lem}.
\end{proof}

\begin{proof}[Proof of Theorem~\ref{PinchedMulticorn}]
If $a\in \Gamma^{\mathrm{hoc}}$, then we will map $a$ to the Belyi anti-rational map $R\in \cM_{d_1}^{\mathrm{par},-}$ for which $\infty$ is a double parabolic fixed point, and such that the {\'E}calle height of the unique critical point of $R$ in $\cK(R)$ is equal to that of $y_f$ (see Lemma~\ref{hoc_arc_lem}.

If $a\notin \Gamma^{\mathrm{hoc}}$ then by Theorem~\ref{simp_restrict_thm}, the map $\sigma_a: \overline{U(a)}\to \overline{\Omega_a}$ admits a simple pinched polynomial-like restriction, and there exists a map $\rho_a\colon \overline{U(a)}\to \widehat{\C}$ which gives a hybrid conjugacy between the said simple pinched polynomial-like map and restrictions of maps in $\cM_{d_1}^{\mathrm{par},-}$. We will denote by $\chi\colon \cM_{\mathrm{cusp}}\to \cM_{d_1}^{\mathrm{par},-}$ the induced straightening map. 
Also recall that for each parameter $a\in \cC(S_\cT)$ there is the external uniformization $\psi_a\colon \mathcal{Q}\longrightarrow T^\infty(\sigma_a)$, conjugating the anti-Farey map $\cR_d$ to $\sigma_a$. 
\smallskip

\noindent\textbf{Injectivity.} 
Suppose that $\chi(\sigma_{a_1})=\chi(\sigma_{a_2})$. If $a_1\in\Gamma^{\mathrm{hoc}}$, then the definition of $\chi$ and Lemma~\ref{hoc_arc_lem} imply that $a_1=a_2$. Hence, we may assume that $a_1\notin\Gamma^{\mathrm{hoc}}$.

By the proofs of \cite[Theorem~4.8, Lemma~4.13]{PartI}, there exist, for each $i\in\{1,2\}$, a Jordan domain $V_{i}\subset U(a_i)$  and a global quasiconformal map $\beta_i$ such that
\begin{enumerate}[leftmargin=8mm]
    \item $\overline{V_i}\subset\overline{U(a_i)}$ and $\overline{\sigma_{a_i}(V_i)}\subset\overline{\Omega_{a_i}}$ are polygons touching only at the cusp $y_c$,
    \item the boundaries $\partial\sigma_{a_i}(V_i)$, $\partial V_{i}$ subtend positive angles at $y_c$, 
     \item $\beta_i$ is conformal on $\overline{\sigma_{a_i}(V_i)}$, and
    \item $\beta_i$ conjugates $\sigma_{a_i}:\overline{V_{i}}\to\overline{\sigma_{a_i}(V_{i})}$ 
to a unicritical pinched anti-polynomial-like map.
\end{enumerate} 
As $\chi(\sigma_{a_1})=\chi(\sigma_{a_2})$, there are global quasiconformal homeomorphisms $\rho_{1}, \rho_{2}$ that hybrid conjugate the above pinched anti-polynomial-like maps to the same member of $\cM_{d_1}^{\mathrm{par},-}$.
By choosing the angles of $\partial V_{i}$ at $y_c$ appropriately, we can assume that the quasiconformal map $\beta_2^{-1}\circ\rho_{2}^{-1}\circ\rho_{1}\circ\beta_1$, which carries $\partial V_1$ to $\partial V_2$, is asymptotically linear near $y_c$.

By \cite[Lemma~B.2]{LLMM2}, the homeomorphic extension of the conformal map $\psi_{a_2}\circ\psi_{a_1}^{-1}:\widehat{\C}\setminus\Omega_{a_1}\to\widehat{\C}\setminus\Omega_{a_2}$ is also asymptotically linear on $\partial\Omega_{a_1}$ near $y_c$. The above properties allow us to define a quasiconformal homeomorphism $\Phi_0$ of the sphere that agrees with $\beta_2^{-1}\circ\rho_{2}^{-1}\circ\rho_{1}\circ\beta_1$ on $\overline{V_1}$, and agrees with $\psi_{a_2}\circ\psi_{a_1}^{-1}$ outside $\Omega_1$. We note that $\bar \partial \Phi_0 = 0$ almost everywhere on $K^1(\sigma_{a_1})$.

Having the map $\Phi_0$ at our disposal, we can employ the pullback argument of \cite[Theorem~B]{IK12}, combined with Proposition~\ref{meas_attr_prop}, to construct a quasiconformal homeomorphism of the sphere that conjugates $\sigma_{a_1}$ to $\sigma_{a_2}$, and is conformal almost everywhere. Indeed, this conjugacy is obtained as the limit of a sequence of iterated lifts of $\Phi_0$, such that the limiting map is conformal on the tiling set $T^\infty(\sigma_{a_1})$ and on the union of the iterated preimages of $K^1(\sigma_{a_1})$. By Weyl's lemma, this quasiconformal map is a M{\"o}bius conjugacy between $\sigma_{a_1}$ and $\sigma_{a_2}$; and hence~$a_1=a_2$.
\smallskip

\noindent\textbf{Almost surjectivity and continuity properties.} The facts that the image of $\chi$ contains the closure of the hyperbolic components in $\cM_{d_1}^{\mathrm{par},-}$ and that $\chi$ is continuous at hyperbolic and quasiconformally rigid parameters follow from the same arguments as in Theorem~\ref{multicorn_thm} together with the results of \cite{PartI}.
\end{proof}

\section{Model of the connectedness locus $\cC(S_\cT)$}\label{conn_comb_model_sec}

\subsection{Combinatorial classes and parameter laminations}
Recall from Section~\ref{renorm_sec} that the rational lamination of $\sigma_a\in\cC(S_\cT)$ is an equivalence relation on $\cR_d-$pre-periodic angles in $\mathbb{S}^1\cong\R/\Z$ where two angles are identified if the corresponding dynamical rays of $\sigma_a$ land at the same point of $\Lambda(\sigma_a)$.

\begin{definition}
\noindent\begin{enumerate}[leftmargin=8mm]
\item Two parameters $a$ and $a'$ in $\cC(S_\mathcal T)$ are said to be {\em combinatorially equivalent} if they have the same rational lamination.
\item The combinatorial class $\text{Comb}(a)$ of $a\in \cC(S_\mathcal T)$ is defined as the set of all parameters in $\cC(S_\mathcal T)$ which are combinatorially equivalent to $a$.
\item A combinatorial class $\text{Comb}(a)$ is called {\em periodically repelling} if for every $a'\in \text{Comb}(a)$, each periodic orbit of $\sigma_{a'}$ is repelling.
\end{enumerate}
\end{definition}

\begin{proposition}
Two parameters $a$ and $a'$ in $\mathcal C(S_\mathcal T)$ are combinatorially equivalent if and only if $\chi(a),\chi(a')\in \mathfrak{L}_\cT$ are so.
\end{proposition}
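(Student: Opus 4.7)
The plan is to show that the straightening map $\chi$ preserves the rational lamination, once one has identified the angle parameterizations of external rays on the two sides via a canonical circle conjugacy. Both rational laminations are defined on pre-periodic angles in $\mathbb{S}^1\cong \R/\Z$: on the Schwarz side via $\pmb{\Gamma}_d$-rays (equivalently, $\cR_d$-pre-periodic angles) uniformizing the tiling set, and on the anti-rational side via external rays in the parabolic basin $\cB(R)$, parametrized by $B_d$-pre-periodic angles. Since both $\cR_d\vert_{\partial \mathcal Q}$ and $B_d\vert_{\mathbb S^1}$ are expansive orientation-reversing degree $d$ circle coverings with a unique (parabolic) fixed point corresponding to the cusp $y_c\leftrightarrow\infty$, they are canonically topologically conjugate, and in particular pre-periodic angles on the two sides are identified in a dynamically natural way. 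I will use this identification implicitly throughout.

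First I would verify that for $\sigma_a\in\cC(S_\cT)$ and $R=\chi(\sigma_a)$, all periodic dynamical rays of $\sigma_a$ land on $\Lambda(\sigma_a)$ (Proposition~\ref{per_rays_land_prop}) and, dually, all $B_d$-periodic external rays of $R$ land on $\cJ(R)$ (this is the standard parabolic analogue of Douady--Yoccoz ray landing, which applies to the parabolic anti-rational maps in $\cF_d$). The hybrid conjugacy provided by Theorem~\ref{qc_bijection_thm} between pinched anti-polynomial-like restrictions of $\sigma_a$ and of $R$ gives a topological conjugacy $\Phi_a$ between neighborhoods of $K(\sigma_a)$ and $\cK(R)$, respecting the dynamics. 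Under $\Phi_a$, periodic (respectively, pre-periodic) points on $\Lambda(\sigma_a)$ are carried bijectively to periodic (respectively, pre-periodic) points on $\cJ(R)$, with multipliers of corresponding cycles matched up (repelling/parabolic to repelling/parabolic), and the local combinatorial data (the cyclic order of local accesses) is preserved.

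Next I would argue that the landing pattern of (pre-)periodic rays is determined by this combinatorial data on the Julia/limit side. Concretely, by Proposition~\ref{rep_para_landing_points_prop} the number of dynamical rays landing at a repelling/parabolic periodic point $z\in\Lambda(\sigma_a)$ equals the number of complementary components of $K(\sigma_a)\setminus\{z\}$, and each landing ray corresponds to an access to $z$ from $T^\infty(\sigma_a)$; the analogous statement holds for $R$ with accesses from $\cB(R)$. Because $\Phi_a$ is a topological conjugacy preserving the cyclic order of local accesses at corresponding periodic points, the combinatorial ray portraits match up. Combined with the canonical identification of $\cR_d$- and $B_d$-angles set up above (which is equivariant with respect to the two ray dynamics), we conclude that a $\cR_d$-pre-periodic angle $\theta$ is equivalent to another angle $\theta'$ under the rational lamination of $\sigma_a$ if and only if the corresponding $B_d$-pre-periodic angles are equivalent under the rational lamination of $R$. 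In other words, the rational lamination of $\sigma_a$ coincides with that of $\chi(\sigma_a)$ under the canonical angle identification.

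With this key identity in hand, the proposition follows immediately: $\sigma_a$ and $\sigma_{a'}$ have the same rational lamination if and only if $R=\chi(\sigma_a)$ and $R'=\chi(\sigma_{a'})$ do. The main technical obstacle is controlling the ray portraits at parabolic cycles (and at the cusp $y_c\leftrightarrow\infty$ itself), where naive Fatou-coordinate matching through the hybrid conjugacy must be handled carefully; however, this is exactly the regime covered by the parabolic straightening results of \cite{PartI}, and the pinched polynomial-like framework developed there provides the required control. Once the ray-by-ray correspondence is established for (pre-)periodic rays, the identification of rational laminations is automatic.
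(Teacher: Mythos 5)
Your proposal is correct and follows essentially the same route as the paper, whose proof is the single sentence that the claim ``follows from the hybrid equivalence between $\sigma_a$ and $\chi(\sigma_a)$''; you have simply unpacked why a hybrid conjugacy between the pinched anti-polynomial-like restrictions forces the rational laminations to agree (matching of external angle parametrizations via the conjugate external maps $\cR_d$ and $B_d$, landing of (pre-)periodic rays on both sides, and preservation of accesses and their cyclic order at (pre-)periodic points). The extra detail is sound and consistent with the results the paper cites elsewhere (Propositions~\ref{per_rays_land_prop} and~\ref{rep_para_landing_points_prop}).
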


\begin{proof}
This follows from the hybrid equivalence between $\sigma_a$ and $\chi(\sigma_a)$.
\end{proof}

The following result can be proved by adapting the arguments of \cite[Proposition~8.4]{Sch04} to the current setting and taking the results of Section~\ref{hyp_comp_bdry_subsec} into consideration.

\begin{proposition}\label{comb_class_prop}
Every combinatorial class $\text{Comb}(a)$ of $\mathcal C(S_\mathcal T)$ is of one of the following types.
\begin{itemize}[leftmargin=6mm]
\item $\text{Comb}(a)$ consists of an even period hyperbolic component that does not bifurcate from an odd period hyperbolic component, its root point, and the irrationally neutral parameters on its boundary.
\item  $\text{Comb}(a)$ consists of an even period hyperbolic component that bifurcates from an odd period hyperbolic component, the unique parabolic cusp and the irrationally neutral parameters on its boundary.
\item  $\text{Comb}(a)$ consists of an odd period hyperbolic component and the parabolic arcs on its boundary.
\item  $\text{Comb}(a)$ is periodically repelling.
\end{itemize}

\end{proposition}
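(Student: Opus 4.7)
The strategy is to dichotomize according to whether $\sigma_a$ admits any non-repelling periodic cycle (apart from the persistent parabolic fixed point $y_c$), then use the description of hyperbolic components and their boundaries from Section~\ref{hyp_comp_bdry_subsec} to identify the combinatorial classes in the non-periodically-repelling case, and finally invoke a rigidity/stability argument to show these classes are as claimed. If every periodic cycle of $\sigma_a$ is repelling (equivalently, $\chi(\sigma_a)$ has no non-repelling cycle off the parabolic fixed point $\infty$), then $\mathrm{Comb}(a)$ is by definition periodically repelling, which is the fourth bullet.

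Assume now that $\sigma_a$ has a non-repelling cycle. If the cycle is attracting, then $a$ lies in a hyperbolic component $H$; on $H$, the repelling periodic points and the periodic dynamical rays admit a holomorphic motion, so the rational lamination is locally constant and therefore constant throughout $H$. This same rational lamination persists at parameters $a'$ on $\partial H$ as long as the periodic ray portraits survive the limit, which is standard for irrationally neutral cycles and for the parabolic parameters attached to $H$ (parabolic and attracting cycles share the same landing pattern of periodic rays, cf.\ Proposition~\ref{rep_para_landing_points_prop} and the discussion in Section~\ref{hyp_comp_bdry_subsec}). Using the boundary structure of hyperbolic components already established: odd-period components have boundary made of parabolic arcs meeting at parabolic cusps, and these parabolic parameters belong to the same class as $H$, producing the third bullet. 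For even-period components the root/cusp structure gives the first two bullets, with the dichotomy of having a root point versus a unique parabolic cusp being precisely the dichotomy of whether $H$ is obtained by period-doubling from an odd-period component (in which case the bifurcation parameter is a parabolic cusp) or not (in which case it is a simple root).

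Conversely, one must show that parabolic and irrationally neutral parameters that appear to belong to $H$ as above are the \emph{only} parameters in $\mathrm{Comb}(H)$, and that two distinct hyperbolic components give distinct combinatorial classes. For distinct hyperbolic components, their centers realize distinct angled tree dynamics (by the discussion of Section~\ref{hyp_comp_bdry_subsec} and the uniqueness part of Poirier's theorem), so their rational laminations differ. For a parameter $a'$ with a non-repelling cycle sharing $\mathrm{Comb}(H)$: the ray portrait forces the existence of a periodic point of the appropriate period, which must be neutral (attracting being impossible unless $a'\in H$ by openness of the hyperbolic locus); and then a quasiconformal deformation/pullback argument, analogous to the one used in the proofs of injectivity of the straightening maps in Section~\ref{multibrot_multicorn_copies_sec}, identifies $a'$ with a specific boundary parameter of $H$. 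For a parabolic arc or cusp parameter this uses the {\'E}calle-height parametrization of the parabolic arc, while for an irrationally neutral parameter it uses the conformal uniformization of $H$ from Section~\ref{hyp_comp_unif_subsec} together with the accessibility of boundary points.

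The main obstacle will be the last rigidity step: ruling out a periodically repelling parameter $a'$ that nevertheless shares the rational lamination of some hyperbolic component $H$. This is where one needs the combinatorial rigidity theory, in particular Theorem~\ref{fin_renorm_geom} in the finitely renormalizable case and the results of Section~\ref{multibrot_multicorn_copies_sec} (together with rigidity theorems for Multibrot/Multicorn straightenings) in the infinitely renormalizable case, to conclude that the rational lamination of such an $a'$ would realize either a parabolic or an attracting cycle at the relevant period, contradicting the periodically repelling assumption. With this in place, the four cases exhaust $\cC(S_\cT)$ and are mutually exclusive, completing the classification.
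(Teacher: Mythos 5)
Your proposal takes essentially the same route as the paper: the paper's entire proof of Proposition~\ref{comb_class_prop} is a one-line reduction to \cite[Proposition~8.4]{Sch04} combined with the boundary structure of hyperbolic components from Section~\ref{hyp_comp_bdry_subsec}, and your sketch --- dichotomize on the existence of a non-repelling cycle, use stability of ray portraits to make the rational lamination constant on a hyperbolic component and to attach the appropriate parabolic/neutral boundary parameters, then invoke combinatorial rigidity to separate these classes from the periodically repelling ones --- is precisely that adaptation written out. The only caveat worth flagging is terminological: since $a\mapsto\sigma_a$ is only real-analytically (not holomorphically) dependent on the parameter, the local constancy of the rational lamination should be derived from the persistence of repelling cycles and their landing rays as in the Wake Lemma (Lemma~\ref{wake_lem}) rather than from a holomorphic motion, and the bifurcation from an odd-period component to an even-period one occurs along a sub-arc of a parabolic arc rather than at a single cusp parameter, though neither point affects your classification.
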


We define an equivalence relation $\sim$ on $\mathbb S^2$ as follows:

\begin{enumerate}[leftmargin=8mm]
\item identify all points in the closure of each periodically repelling combinatorial class of $\mathcal C(S_\mathcal T)$,
\item identify all points in the closure of the non-bifurcating sub-arc of each parabolic arc of $\mathcal C(S_\mathcal T)$, and
\item identify all points in $\Gamma^\text{hoc}$.
\end{enumerate}

All equivalence classes are connected and non-separating, and therefore by Moore's theorem the image of $\mathbb S^2$ under this relation is again $\mathbb S^2$.

\begin{remark}
    Conjecturally, periodically repelling combinatorial classes consist of single points. If this is true, it would imply that hyperbolic maps are dense in the parameter space (a version of the Fatou conjecture).
\end{remark}

\begin{definition}
The abstract connectedness locus of $S_\cT$ is defined as the image of $\mathcal C(S_\mathcal T)$ under the above equivalence relation, and will be denoted as $\widetilde{\cC_\cT}$.
\end{definition}

Within $\cC_\cT$ we may consider those parameters for which the free critical value eventually lands at the cusp. These parameters are combinatorially rigid, and we will refer to both these parameters and their images in the combinatorial model as \textit{pre-cuspidal parameters}.

The following proposition provides a parameter space analog to Proposition~\ref{rat_lam_density_prop}, and can be proved by adapting standard arguments from polynomial dynamics.

\begin{proposition}\label{cusp_par_dense_prop}
    Pre-cuspidal parameters are dense among non-renormalizable maps.
\end{proposition}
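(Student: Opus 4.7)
The plan is to adapt the classical Yoccoz puzzle approach to the parameter space $\cC(S_\cT)$. Given a non-renormalizable parameter $a_0\in\cC(S_\cT)$, I would construct a nested sequence of parameter puzzle pieces $\mathcal{P}^k(a_0)$, each bounded (away from $\partial S_\cT$) by parameter rays landing at pre-cuspidal parameters, and then show that $\mathcal{P}^k(a_0)$ shrinks to $\{a_0\}$.

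First, for each $k\geq 1$, the angles in $\cR_d^{-k}(\{0\})\setminus\{0\}$ are strictly pre-periodic under $\cR_d$ with orbit eventually hitting the fixed angle $0$. By Proposition~\ref{preper_para_rays_prop} each such parameter ray $\mathfrak{R}_\theta$ lands at a pre-cuspidal parameter in $\cC(S_\cT)$ (since the associated dynamical ray at angle $\theta$ lands at $y_f$, and the orbit of this landing point under $\sigma_a$ tracks the orbit of $\theta$ under $\cR_d$, ultimately reaching $y_c$). Lifting the tile partition of $\mathcal{Q}$ of rank at most $k$ through the conformal uniformization $\pmb{\Psi}^{-1}$ provided by Theorem~\ref{escape_unif_thm}, and closing the resulting open regions with $\cC(S_\cT)$, partitions $\overline{S_\cT}$ into finitely many depth-$k$ parameter puzzle pieces; denote by $\mathcal{P}^k(a_0)$ the one containing $a_0$. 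Its boundary in $\cC(S_\cT)$ consists of the pre-cuspidal landing points of the bounding rays.

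Second, $\mathcal{P}^k(a_0)\cap\cC(S_\cT)$ coincides with the set of parameters $a$ sharing with $a_0$ the depth-$k$ pre-cuspidal lamination, equivalently (up to a shift in indexing) the depth-$k$ dessin $\cT^k(a)$ and the depth-$k$ kneading sequence. By Proposition~\ref{rat_lam_density_prop}, for a non-renormalizable map the pre-cuspidal lamination is dense in the rational lamination; combined with combinatorial rigidity (Theorem~\ref{fin_renorm_geom}(3)), this yields that the only non-renormalizable parameter in $\bigcap_k\mathcal{P}^k(a_0)$ is $a_0$ itself. A Yoccoz-style argument then upgrades this combinatorial uniqueness to geometric shrinking of $\mathcal{P}^k(a_0)$. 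Once shrinking is in place, the pre-cuspidal landing points sitting on $\partial\mathcal{P}^k(a_0)$ supply a sequence of pre-cuspidal parameters converging to $a_0$, proving the density statement.

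The main obstacle is the shrinking of the parameter pieces. A priori, $\mathcal{P}^k(a_0)$ could capture an entire renormalization locus -- a Multibrot or Multicorn copy as in Theorem~\ref{multicorn_thm} or the pinched Multicorn from Theorem~\ref{PinchedMulticorn} -- that persists across all depths. To exclude this, I would transfer the shrinking of the dynamical puzzle pieces around $y_f$ for $\sigma_{a_0}$ (available since non-renormalizability together with Theorem~\ref{fin_renorm_geom}(1) yields local connectivity of $\Lambda(\sigma_{a_0})$, and a Yoccoz growth-of-moduli estimate applies to the nest of dynamical puzzle pieces through $y_f$) to the parameter plane via a Douady-style equicontinuous motion of the dynamical puzzle pieces across $\mathcal{P}^k(a_0)$, using the fact that the rank-$\leq k$ tile structure, and hence the depth-$k$ dynamical pieces, depend continuously on $a$ throughout the piece. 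The remaining work is routine checking that the classical parameter--dynamical plane transfer goes through in the present (anti-holomorphic, Schwarz-reflection) setting, as already done in analogous settings in \cite{LLM24}.
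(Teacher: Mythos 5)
Your approach is genuinely different from the paper's, and its load-bearing step is left unproved. The paper's argument is soft and constructive: it transfers the rational lamination of $\sigma_{a_0}$ to an anti-polynomial model, approximates the associated critical portrait by admissible critical portraits whose free critical angles are iterated preimages of $0$, realizes these by postcritically finite anti-polynomials via Kiwi's realization theorem, converts them into pre-cuspidal Schwarz reflections by the David surgery of Theorem~\ref{Thm_C_part_I}, and then deduces convergence to $a_0$ from upper semi-continuity of rational laminations (\cite[Theorem 7.15]{LLM24}) together with the combinatorial rigidity of Theorem~\ref{fin_renorm_geom}. No parameter puzzle geometry is used anywhere.

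Your route instead hinges on the shrinking of the parameter puzzle pieces $\mathcal{P}^k(a_0)$, and this is where the gap lies. The ``Yoccoz growth-of-moduli estimate'' for the critical nest and the ``Douady-style equicontinuous motion'' transferring it to the parameter plane together constitute the entire analytic content of a Yoccoz-type parapuzzle theorem; nothing of the sort is developed in this paper (Theorem~\ref{fin_renorm_geom} is a purely dynamical-plane statement imported from \cite{LLM24,CDKvS}), and in the quadratic family this transfer is a major theorem, not ``routine checking.'' There is a softer way to close your argument that needs no moduli estimates at all: if one shows that $\bigcap_k\mathcal{P}^k(a_0)\cap\cC(S_\cT)=\{a_0\}$, then shrinking is automatic, since a nested sequence of compact connected sets with one-point intersection has diameters tending to $0$ (and escape-locus parameters have finite depth, so they drop out of the nest). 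But even this fiber-triviality requires an argument you do not supply: a parameter $a'$ in the fiber shares all kneading sequences with $a_0$, and before invoking Proposition~\ref{rat_lam_prop} and Theorem~\ref{fin_renorm_geom}(3) you must show that $a'$ is itself non-renormalizable and periodically repelling. Excluding renormalizable or non-repelling parameters from the fiber requires the structure of the renormalization loci from Theorems~\ref{multicorn_thm} and~\ref{PinchedMulticorn} (a renormalizable $a'$ sharing first-level renormalization combinatorics with $a_0$ would place $a_0$ in a Multibrot/Multicorn copy, contradicting its non-renormalizability), and this is exactly the point your proposal waves away. By contrast, the first step of your proposal --- that parameter rays at angles in $\cR_d^{-k}(\{0\})\setminus\{0\}$ land at pre-cuspidal parameters --- is correct and does follow from Proposition~\ref{preper_para_rays_prop}.
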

\begin{proof}[Sketch of proof]
Let $\sigma_{a_0}\in\cC(S_\cT)$ be a non-renormalizable map. Push forward the rational lamination of $\sigma_{a_0}$ under the circle homeomorphism conjugating $\cR_d$ to $\overline{z}^d$ (and sending $1$ to $1$). This yields a formal $m_{-d}-$invariant rational lamination, and by an antiholomorphic analog of \cite[Theorem~1.1]{Kiw01}, it is realized by some degree $d$ anti-polynomial $p_0$. We also choose a \emph{critical portrait} $\Theta_0$ for $p_0$, and note that the rational lamination of $p_0$ can be recovered from the critical portrait (cf. \cite[\S 6]{kiwi1}). We can now choose a sequence $\Theta_n$ of \emph{admissible critical portraits} converging to the critical portrait of $p_0$, such that the angles corresponding to the black critical points remain the same, while the angles corresponding to the white critical points are iterated $m_{-d}-$preimages of the angle $0$ (cf. \cite[Definition~6.23, Proposition~6.24]{kiwi1}). By \cite[Theorem~6.22]{kiwi1}, there exist post-critically finite degree $d$ anti-polynomials $p_n$ realizing the critical portraits $\Theta_n$. Such choices of $\Theta_n$ ensure that $p_n$ is a Shabat polynomial with dessin $\cT$ such that the black critical value is fixed and the white critical points eventually map to the black critical value. The David surgery construction of Theorem~\ref{Thm_C_part_I}, applied to the maps $p_n$, yields pre-cuspidal Schwarz reflections $\sigma_{a_n}\in\cC(S_\cT)$. Since the critical portraits $\Theta_n$ generate the rational laminations of $p_n$ (see \cite[Proposition~6.19]{kiwi1}) and $\Theta_n\to\Theta_0$, it follows that the rational laminations of $\sigma_{a_n}$ converge to that $\sigma_{a_0}$. By \cite[Theorem 7.15]{LLM24}, each accumulation point of $\sigma_{a_n}$ in $\cC(S_\cT)$ must have the same rational lamination as $\sigma_{a_0}$. But by the combinatorial rigidity of non-renormalizable maps (Theorem~\ref{fin_renorm_geom}), $\sigma_{a_0}$ is the only map with this rational lamination and so we must have $\sigma_{a_n}\to \sigma_{a_0}$.
\end{proof}

Let us give another description of the combinatorial model for the connectedness locus. Consider the equivalence relation on $\mathbb{S}^1$ defined so that two angles which are pre-periodic under $\cR_d$ are identified if the corresponding \textit{parameter rays} land at the same parabolic/Misiurewicz parameter or accumulate on the same parabolic arc (see Propositions~\ref{preper_para_rays_prop}) and~\ref{per_para_rays_prop}), then take the closure of this equivalence relation on $\mathbb{S}^1\times \mathbb{S}^1$. Equivalence classes are clearly unlinked as parameter rays do not intersect, and hence one can extend the relation to a lamination $\lambda_\mathrm{par}\subset \overline{\D}$.

\begin{theorem}\label{comb_model_thm}
There is a homeomorphism
$$
\overline{\D}/\lambda_\mathrm{par} \cong \widetilde{\cC_\cT}.
$$
\end{theorem}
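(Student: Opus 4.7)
The plan is to build a continuous surjection $\iota:\overline{\D}\to\widetilde{\cC_\cT}$ which factors through $\overline{\D}/\lambda_{\mathrm{par}}$ as a bijection. Since $\overline{\D}/\lambda_{\mathrm{par}}$ is compact and $\widetilde{\cC_\cT}$ is Hausdorff (it is the Moore quotient of $\mathbb{S}^2$ by a decomposition into connected non-separating classes), a continuous bijection between them is automatically a homeomorphism. To define $\iota$, first identify $\overline{\D}$ with $\overline{\mathcal Q}$ via a homeomorphism that is compatible with the circular coordinate on $\partial\mathcal Q$ (so that $\pmb{\Gamma}_d$-rays in $\mathcal Q$ correspond to radial segments in $\overline{\D}$).

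On the annular region $\overline{\mathcal Q}\setminus\mathcal Q_1$ (which corresponds to the escape locus side of the disk) define $\iota(z):=[\pmb{\Psi}^{-1}(z)]$ using the uniformization of Theorem~\ref{escape_unif_thm}, where $[\cdot]$ denotes the projection $\mathbb{S}^2\twoheadrightarrow\widetilde{\cC_\cT}$. On the boundary $\partial\mathcal Q$, send a pre-periodic angle $\theta$ to the combinatorial class of the landing parameter/parabolic arc of $\mathfrak R_\theta$ provided by Propositions~\ref{preper_para_rays_prop}, \ref{per_para_rays_prop}, and \ref{zero_rays_prop}. On the remainder of $\partial\mathcal Q$ extend by continuity using the fact that pre-periodic angles are dense and using the combinatorial description of $\widetilde{\cC_\cT}$ via rational laminations (any accumulation point of rays at pre-periodic angles has at most one image in $\widetilde{\cC_\cT}$ because of the collapsing of combinatorial classes).

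To extend $\iota$ to $\mathcal Q_1$, use density of pre-cuspidal parameters (Proposition~\ref{cusp_par_dense_prop}) as a scaffold in the non-renormalizable regime: pre-cuspidal parameters are combinatorially rigid and sit on the closure of parameter rays, so the induced map from their $\lambda_{\mathrm{par}}$-classes to the pre-cuspidal combinatorial classes in $\widetilde{\cC_\cT}$ is a bijection, which extends by continuity. For gaps of $\lambda_{\mathrm{par}}$ corresponding to renormalization loci, use the Multibrot and Multicorn (combinatorial) models from Theorems~\ref{multicorn_thm}--\ref{PinchedMulticorn} together with the tuned laminational descriptions of those classical connectedness loci to identify the gap with the corresponding $\mathcal M_a$ or $\cM_{\mathrm{cusp}}$. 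To verify that $\iota$ descends to a bijection modulo $\lambda_{\mathrm{par}}$, note that two pre-periodic angles are $\lambda_{\mathrm{par}}$-equivalent exactly when the corresponding parameter rays share a landing point or parabolic arc; combinatorial rigidity of non-renormalizable parameters (Theorem~\ref{fin_renorm_geom}) then implies that an entire convex hull of an equivalence class collapses to the corresponding combinatorial class in $\widetilde{\cC_\cT}$.

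The main obstacle will be the compatibility between $\lambda_{\mathrm{par}}$ and the renormalizable combinatorial classes: one must argue that the pieces of $\lambda_{\mathrm{par}}$ obtained by the tuning procedure (``filling in the gaps'' with tuned laminations) agree with the actual rational-lamination structure inside the renormalization copies in $\cC(S_\cT)$. This is a combinatorial bookkeeping problem requiring a careful inductive construction: at each depth, take the dual lamination to $\cT^k$, pull back under the anti-Farey dynamics, take closure, and for every gap identified as a Multibrot/Multicorn copy invoke the known laminational model of that set, transported through the straightening bijection of Theorems~\ref{multicorn_thm}--\ref{PinchedMulticorn}. Verifying that the resulting lamination coincides with the closure of the pre-periodic ray identifications (and nothing more) is the technical heart of the argument; continuity of $\iota$ on the quotient then follows once surjectivity has been checked on the dense collection of (pre-)cuspidal and hyperbolic parameters, and the Hausdorff--compact argument concludes the proof.
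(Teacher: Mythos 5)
Your endgame (continuous bijection from the compact quotient to the Hausdorff target, plus the landing results of Propositions~\ref{preper_para_rays_prop}, \ref{per_para_rays_prop}, \ref{zero_rays_prop} on the boundary circle) matches the paper's strategy, but the construction of the map itself contains a genuine error. You set $\iota(z):=[\pmb{\Psi}^{-1}(z)]$ on $\overline{\mathcal Q}\setminus\mathcal Q_1$, calling this ``the escape locus side of the disk.'' But $\pmb{\Psi}^{-1}(z)$ is a parameter in the escape locus $S_\cT\setminus\cC(S_\cT)$, and $\widetilde{\cC_\cT}$ is by definition the image of $\cC(S_\cT)$ alone under the Moore quotient; the projection of an escape-locus parameter does not lie in $\widetilde{\cC_\cT}$. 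More fundamentally, the disk $\overline{\D}$ in the pinched-disk model is an abstract combinatorial disk, not a copy of the parameter plane: its entire interior must be filled by combinatorial classes of the \emph{connectedness locus} (infinite gaps of $\lambda_{\mathrm{par}}$ bounded by leaves with periodic endpoints go to hyperbolic combinatorial classes, finite-sided polygons go to periodically repelling classes), and there is no ``escape-locus region'' of the disk. Your subsequent attempt to extend over $\mathcal Q_1$ via pre-cuspidal parameters inherits this confusion between the tiling of the escape locus and the gap structure of the abstract lamination.

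Two further ingredients that carry the paper's proof are absent from yours. First, well-definedness of the map on gaps rests on the Wake Lemma (Lemma~\ref{wake_lem}): two parameters that cannot be separated by parameter rays at $\cR_d$-pre-periodic angles have the same rational lamination, hence lie in the same combinatorial class; without this, assigning a single point of $\widetilde{\cC_\cT}$ to a gap is unjustified. Second, continuity is not automatic from density of pre-periodic angles; the paper invokes the convergence-of-rational-laminations result from \cite{LLM24} to show that if classes $\ell_n\to\ell_\infty$ then the corresponding combinatorial classes converge in $\widetilde{\cC_\cT}$. Finally, the ``tuning compatibility'' you single out as the technical heart is not actually needed for the theorem: the paper treats renormalizable gaps simply by mapping them to the corresponding hyperbolic or periodically repelling combinatorial classes (which are collapsed in $\widetilde{\cC_\cT}$ by construction), and defers the tuned-lamination description of $\lambda_{\mathrm{par}}$ to a remark after the proof.
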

\begin{remark}
We refer the reader to \cite{Dou92}, \cite[\S 8]{Sch04} for related statements for the Multibrot sets.
\end{remark}

To construct the desired homeomorphism between $\overline{\D}/\lambda_\mathrm{par}$ and $\widetilde{\cC_\cT}$, it is important to have the following wake lemma, detailing how parameter rays interplay with the bifurcation of certain dynamical behavior.

\begin{lemma}[Wake Lemma (c.f. \cite{MNS})]\label{wake_lem}
    Let $a_0$ be a parameter such that $\sigma_{a_0}$ has a repelling periodic point $z_0$ of period $p$ and the dynamical rays at angles $\theta_1,\theta_2,\cdots, \theta_v$ land at $z_0$.
    \begin{enumerate}[leftmargin=8mm]
        \item Then there is a neighborhood $U\subset S_\cT$ of $a_0$ and a unique real-analytic function $z\colon U\to \C$ with $z(a_0) = z_0$ so that for every $a \in U$ the point $z(a)$ is a repelling periodic point of period $p$ for $\sigma_a$, and all the dynamical rays at angles $\theta_1,\cdots,\theta_v$ land at $z(a)$.
        \item Let $n_i$ be the period of $\theta_i$ for $i=1,\cdots,v$. Define $F$ to be the set of all parameters $a$ such that $\sigma_a$ has a parabolic cycle, and some dynamical ray of period $n_i$ lands on the parabolic cycle of $\sigma_a$. Let $W$ be an open, connected neighborhood of $a_0$ which is disjoint from $F$ and from all parameter rays at angles in $\bigcup_{j\geq 0}\cR^j_d(\{\theta_1,\cdots,\theta_v\})$. Then for every parameter $a\in W$, the dynamical rays at angles $\theta_1,\cdots,\theta_v$ land at a common point.
    \end{enumerate}
\end{lemma}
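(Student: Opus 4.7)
The plan is to handle the two parts in sequence, following the standard template for wake-type results in parameter spaces of (anti-)holomorphic dynamical systems (cf. \cite{MNS}, \cite{Sch00}), with adaptations for the Schwarz reflection setting.

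For Part (1), I would first produce the real-analytic function $z(a)$ by applying the implicit function theorem to the equation $\sigma_a^{\circ 2p}(z)-z=0$ at $(a_0,z_0)$, passing to the even iterate so that the map is holomorphic in $z$. Since $z_0$ is repelling, $|(\sigma_{a_0}^{\circ 2p})'(z_0)|>1$ and in particular this derivative is not equal to $1$; hence a unique real-analytic branch $z(a)$ exists on a neighborhood $U$ of $a_0$, and each $z(a)$ is a repelling periodic point of exact period $p$. To show that the rays $R_a(\theta_i)$ continue to land at $z(a)$ for $a\in U$, I would fix a linearizing coordinate for $\sigma_a^{\circ 2p}$ near $z(a)$, varying real-analytically in $a$. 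The combinatorial itinerary of a dynamical ray through the tile decomposition of $T^\infty(\sigma_a)$ is determined by its angle alone, so $R_a(\theta_i)$ enters the linearizing neighborhood with the same angular combinatorics as $R_{a_0}(\theta_i)$. Stability of ray landing at repelling periodic points then gives the required conclusion.

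For Part (2), I would use a connectedness argument. Define
$$V:=\{a\in W:\ R_a(\theta_j)\text{ lands at } z(a)\text{ for every } j=1,\ldots,v\},$$
where $z(a)$ denotes the analytic continuation of $z_0$, which persists throughout $W$ because the only obstruction to continuation is parabolic bifurcation and $W\cap F=\emptyset$. The set $V$ contains $a_0$ and is open in $W$ by Part (1) applied at each of its points. To establish closedness, suppose $a_n\in V$ with $a_n\to a_*\in W$. The rays $R_{a_n}(\theta_j)$ vary continuously in the compact-open topology, and the only way the landing could fail to persist in the limit is if some ray in the forward orbit $\bigcup_{k\geq 0}\cR_d^{k}(\{\theta_1,\ldots,\theta_v\})$ passes through the free critical value $y_f$ of $\sigma_{a_*}$. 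By the definition of parameter rays (Section~\ref{para_rays_subsec}), this would place $a_*$ on the parameter ray at the corresponding angle, contradicting the hypothesis on $W$. Hence $a_*\in V$, and connectedness of $W$ gives $V=W$.

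The main obstacle will be rigorously establishing the stability and continuity of ray landing in our setting, where the external uniformization is given by the anti-Farey map $\cR_d$ through the conformal isomorphism $\psi_a$ rather than a B\"ottcher coordinate. Continuity of $\psi_a$ in $a$ and stability of the tile structure under perturbation should suffice for parameters away from the cusp and parabolic arcs. To handle the remaining subtleties (for instance, rays accumulating near the cusp, or perturbations across sub-arcs of parabolic arcs not contained in $F$), I would invoke the hybrid conjugacy between $\sigma_a$ and the anti-rational map $\chi(\sigma_a)\in\mathfrak{L}_\cT$ furnished by Theorem~\ref{qc_bijection_thm}, which transfers the classical stability theory for ray landing in (anti-)rational dynamics (see \cite[Theorem~18.11]{Milnor06}) directly to the Schwarz reflection family.
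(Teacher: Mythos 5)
Your overall route is the same as the paper's: the paper's proof of Lemma~\ref{wake_lem} consists of the single observation that the proof of \cite[Lemma~2.4]{MNS} uses only the local dynamics of repelling periodic points and the analytic parameter dependence of the external coordinate, and hence applies mutatis mutandis. Your proposal essentially reconstructs that argument (implicit function theorem on $\sigma_a^{\circ 2p}(z)-z=0$ for part (1), stability of landing via a linearizing coordinate, and an open--closed argument on $W$ for part (2)), and part (1) is fine as written.

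There is, however, one genuine soft spot in your part (2). You define $V$ in terms of ``the analytic continuation $z(a)$ of $z_0$, which persists throughout $W$ because the only obstruction to continuation is parabolic bifurcation and $W\cap F=\emptyset$.'' This does not follow: $F$ consists only of those parabolic parameters for which some dynamical ray of period $n_i$ lands on the parabolic cycle, so disjointness from $F$ does not a priori prevent the cycle of $z(a)$ from degenerating to a parabolic cycle somewhere in $W$. Correspondingly, in your closedness step you assert that the only failure mode for the landing in the limit $a_n\to a_*$ is that a ray in the forward orbit hits the free critical value; the second failure mode --- the common landing point becoming parabolic --- is exactly the one you have swept into the unjustified continuation claim. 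The missing step is the standard but necessary argument that if $a_n\in V$ and $z(a_n)$ converges to a parabolic periodic point of $\sigma_{a_*}$, then (using Propositions~\ref{per_rays_land_prop} and~\ref{rep_para_landing_points_prop} and the semicontinuity of landing points of non-bifurcating periodic rays) the rays at angles $\theta_1,\dots,\theta_v$, which have periods $n_1,\dots,n_v$, must land on that parabolic cycle, so that $a_*\in F$, contradicting $W\cap F=\emptyset$. Once this is inserted, the dichotomy at a boundary point of $V$ in $W$ is complete (either a ray bifurcates, which is excluded by the parameter-ray hypothesis, or the landing point goes parabolic, which is excluded by $W\cap F=\emptyset$), and connectedness of $W$ finishes the proof as you intend.
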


\begin{proof}
    The proof \cite[Lemma~2.4]{MNS} relies only on the local dynamics of repelling periodic points and the analytic parameter dependence of the B\"otcher coordinate. The same holds true in our setting, and so the proof applies mutatis mutandis to the above.
\end{proof}

\begin{proof}[Sketch of Proof of Theorem~\ref{comb_model_thm}]
    We begin by noting that there is a well-defined map from equivalence classes of $\lambda_\mathrm{par}$ having endpoints at periodic and pre-periodic angles (under $\cR_d$) to corresponding equivalence classes in $\widetilde{\cC_\cT}$, where leaves (including trivial ones) are sent to the equivalence classes (given by Misiurewicz parameters or parts of parabolic arcs) that the corresponding rays accumulate on, as guaranteed by Propositions~\ref{preper_para_rays_prop},~\ref{per_para_rays_prop}. We also note that by Proposition~\ref{zero_rays_prop}, both the $0/1$ and $1/1$ rays land on $\Gamma^\mathrm{hoc}$, and so we send $e^{2\pi i\cdot 0}=e^{2\pi i\cdot 1}\in \overline{\D}$ to $\Gamma^\mathrm{hoc}/\sim\ \in\widetilde{\cC_\cT}$.

    By Lemma~\ref{wake_lem}, if two parameters of $\cC(S_\cT)$ cannot be separated by parameter rays at $\cR_d-$pre-periodic angles, then these parameters have the same rational lamination. This fact allows us to define a map $\pmb{\Phi}:\overline{\D}/\lambda_{\mathrm{par}}\longrightarrow\widetilde{\cC_\cT}$ which extends the association defined in the previous paragraph.

To prove continuity of $\pmb{\Phi}$, suppose that a sequence of equivalence classes $\ell_n$ of $\lambda_{\mathrm{par}}$, consisting of $\cR_d-$pre-periodic angles, converge to some equivalence class $\ell_\infty$. Let $a_n$ be the co-landing Misiurewicz/parabolic point of the parameter rays $\mathfrak{R}_\theta$, $\theta\in\ell_n$, or a parabolic parameter on the parabolic arc where the parameter rays $\mathfrak{R}_\theta$, $\theta\in\ell_n$, co-accumulate. Then, the rational lamination of $\sigma_{a_n}$ is generated by the equivalence class $\ell_n$. By the proof of \cite[Theorem 7.15]{LLM24}, if $a_\infty$ is an accumulation point of $a_n$, then the free critical value of $a_\infty$ either lies in the impression of the corresponding dynamical rays at angles in $\ell_\infty$, or it lies in a parabolic Fatou component such that the corresponding dynamical rays at angles in $\ell_\infty$ land at the characteristic parabolic point.
Further, the rational lamination of $a_\infty$ is generated by $\ell_\infty$. It follows that the parameter rays $\mathfrak{R}_\theta$, $\theta\in\ell_\infty$, accumulate on the same combinatorial class. If this combinatorial class contains hyperbolic parameters, then $a_\infty$ is either a rigid neutral parameter, or lies on the non-bifurcating sub-arc of a parabolic arc. Otherwise, $a_\infty$ lies in a periodically repelling combinatorial class (see Proposition~\ref{comb_class_prop}). In all cases, we have that the classes $[a_n]\to[a_\infty]$ in $\widetilde{\cC_\cT}$. This proves continuity of the map $\pmb{\Phi}$.

By construction, two distinct equivalence classes of $\lambda_{\mathrm{par}}$ consisting of $\cR_d-$pre-periodic angles are mapped by $\pmb{\Phi}$ to distinct points in $\widetilde{\cC_\cT}$. Moreover, the infinite gaps of $\lambda_{\mathrm{par}}$ whose boundaries only consist of leaves having $\cR_d-$periodic endpoints, are mapped homeomorphically to the corresponding hyperbolic combinatorial classes. All other equivalence classes of $\lambda_{\mathrm{par}}$ are finite-sided polygons, and they are mapped to distinct periodically repelling combinatorial classes. This proves injectivity of $\pmb{\Phi}$.

Surjectivity of $\pmb{\Phi}$ follows from the fact that every point on the boundary of $\cC(S_\cT)$ lies in the impression of some parameter ray, and we can approximate this ray by parameter rays at $\cR_d-$pre-periodic angles.
As $\overline{\D}/\lambda_{\mathrm{par}}$ is compact and $\widetilde{\cC_\cT}$ is Hausdorff, it follows that $\pmb{\Phi}$ is a homeomorphism.
\end{proof}

Since $\overline{\D}/\lambda_\mathrm{par}$ is the quotient of a compact, locally connected space by a closed equivalence relation, it is locally connected. We then immediately have the following corollary.

\begin{corollary}
    The combinatorial model $\widetilde{\cC_\cT}$ is locally connected.
\end{corollary}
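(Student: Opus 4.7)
The plan is to simply invoke Theorem~\ref{comb_model_thm} together with a classical general topology fact. First, I would recall that Theorem~\ref{comb_model_thm} provides a homeomorphism $\widetilde{\cC_\cT}\cong\overline{\D}/\lambda_{\mathrm{par}}$, so local connectivity of $\widetilde{\cC_\cT}$ reduces to local connectivity of the quotient space $\overline{\D}/\lambda_{\mathrm{par}}$.

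Next, I would note that the closed disk $\overline{\D}$ is a compact, locally connected, metrizable space, and that $\lambda_{\mathrm{par}}$ is by construction a closed geodesic lamination, so the associated equivalence relation (identifying points in the same leaf/gap class) is a closed subset of $\overline{\D}\times\overline{\D}$ whose equivalence classes are the compact, connected, non-separating subsets of $\overline{\D}$ given by the closures of the leaves and finite-sided polygons together with the infinite gaps. This puts us in the setting of Moore-type decomposition theorems.

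I would then quote the standard fact (a consequence of the Whyburn--Moore theory of upper semicontinuous decompositions, see e.g.\ Kuratowski, \emph{Topology}~II, or Daverman, \emph{Decompositions of Manifolds}) that if $X$ is a compact, locally connected, metrizable space and $\sim$ is an upper semicontinuous decomposition of $X$ into continua, then the quotient $X/\sim$ is again compact, Hausdorff, metrizable, and locally connected. Upper semicontinuity follows from the closedness of the relation combined with compactness of the classes. Applying this to $X=\overline{\D}$ and the decomposition induced by $\lambda_{\mathrm{par}}$ yields the result. The only potential subtlety is checking upper semicontinuity of the decomposition, but this is immediate from the fact that $\lambda_{\mathrm{par}}$ is a closed lamination with compact equivalence classes, so no step should present a genuine obstacle.
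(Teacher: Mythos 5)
Your proposal is correct and follows essentially the same route as the paper: the paper likewise deduces the corollary immediately from Theorem~\ref{comb_model_thm} together with the fact that the quotient of a compact, locally connected space by a closed equivalence relation is locally connected. Your extra remarks on upper semicontinuity of the decomposition only make explicit what the paper leaves implicit.
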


The understanding of $\widetilde{\cC_\cT}$ as a pinched disk gives the following `depth 1' understanding of the combinatorial model (see Figure~\ref{per_para_ray_misi_fig}).

\begin{corollary}\label{dessin_in_conn_locus_thm}
    There is an embedding of $\cT$ into $\widetilde {\cC_\cT}$, which maps the white vertices of $\cT$ which are of valence at least $2$ to the combinatorial classes associated with period one hyperbolic components, the white vertices of valence $1$ to Misiurewicz parameter tips of the connectedness locus (the free critical value $y_f$ is a repelling fixed point for such parameters), and the black vertices to the combinatorial classes associated with pre-cuspidal parameters.
\end{corollary}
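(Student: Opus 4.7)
The plan is to construct the embedding explicitly using the pinched-disk model $\widetilde{\cC_\cT} \cong \overline{\D}/\lambda_{\mathrm{par}}$ from Theorem~\ref{comb_model_thm} together with the depth-1 pre-cuspidal structure of $S_\cT$. First I would fix the image of each vertex of $\cT$. The root $v_b$ is sent to the collapsed class of $\Gamma^{\mathrm{hoc}}$. Each non-root black vertex $b_j$ is sent to the unique depth-1 pre-cuspidal Misiurewicz parameter in which the free critical value $y_f$ sits at the combinatorial slot prescribed by $b_j$; such a parameter exists via Theorem~\ref{Thm_C_part_I} applied to a PCF anti-polynomial produced by Lemma~\ref{dyn_shabat_existence_lem} or Theorem~\ref{poi_thm} with the prescribed Hubbard combinatorics, and is unique by the combinatorial rigidity of Theorem~\ref{fin_renorm_geom}. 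Each white vertex $w_i$ of valence at least two is sent to the class of its associated period-one hyperbolic component: the primary component $H$ of Section~\ref{hyp_comp_sec} when $w_i = v_w'$, and the Multicorn copy of Corollary~\ref{special_copies_cor} in all other cases. Each white vertex of valence one is sent to the unique PCF Misiurewicz parameter where $y_f$ is a repelling fixed point positioned at that vertex, obtained by the same Poirier--David recipe.

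Next, I would realize the edges of $\cT$. By Proposition~\ref{zero_rays_prop}, the $0/1$ and $1/1$ parameter rays land at the two endpoints of $\Gamma^{\mathrm{hoc}}$, so in $\widetilde{\cC_\cT}$ they co-land at the class of $v_b$. By Proposition~\ref{preper_para_rays_prop} and the wake lemma (Lemma~\ref{wake_lem}), for each non-root black vertex $b_j$ the $\mathrm{val}(b_j)$ parameter rays at the angles whose dynamical counterparts land at $b_j$ co-land at the image of $b_j$. The union of these parameter rays partitions $\widetilde{\cC_\cT}$ into regions, one per white vertex of $\cT$; by the construction of step one, each region contains precisely the distinguished vertex-image corresponding to its white vertex, and the black-vertex images on its boundary are exactly the images of the adjacent black vertices in $\cT$. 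Drawing arcs inside each region from the distinguished interior parameter to each boundary vertex-image (via internal rays of the relevant Multicorn/Multibrot copy in the hyperbolic case, or directly from the Misiurewicz tip in the valence-one case) realizes every edge of $\cT$, and by construction the arcs are pairwise disjoint except at shared vertices.

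Finally, I would verify that the resulting embedding preserves the bicolored planar structure. The coloring holds by design. For the cyclic ordering at non-root black vertices, the dynamical rays at $b_j$ in the dynamical plane of the corresponding Misiurewicz representative have a specific cyclic order, which by the pinched-disk model $\overline{\D}/\lambda_{\mathrm{par}}$ is shared by the parameter rays at the same angles around the image of $b_j$ in $\widetilde{\cC_\cT}$, and hence determines the cyclic order of the edge-arcs incident to that black vertex. The cyclic orders at white vertices are controlled analogously by the puzzle structure at the distinguished critical point together with the uniformizations of hyperbolic components from Section~\ref{hyp_comp_unif_subsec} (for valence $\geq 2$) or by the combinatorial rigidity of PCF parameters (for valence one).

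The main obstacle will be the careful bookkeeping of the planar structure around the collapsed root $v_b$, where $\Gamma^{\mathrm{hoc}}$ has been pinched to a single point and the two accesses at angle $0$ must be distinguished from one another and from the two components of $\Gamma^{\mathrm{dp}}$ --- this access structure is governed by Remark~\ref{bdry_psi_rem}. A related subtlety is the realization of the edge from $v_b$ to $v_w'$, which must be drawn entirely inside the class of the primary hyperbolic component while respecting the cyclic order at $v_w'$ inherited from the Multicorn/Multibrot tuning structure around the adjacent black vertices; this will require combining the hyperbolic-component uniformization of Section~\ref{hyp_comp_unif_subsec} with the boundary description of $H$ provided by Proposition~\ref{hoc_bdry_primary}.
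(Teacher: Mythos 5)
Your proposal is correct and takes essentially the same route as the paper: the paper's proof is a one-line observation that $\cT$ is the tree dual to the first-level pre-cuspidal lamination, realized in the pinched-disk model $\overline{\D}/\lambda_{\mathrm{par}}$ of Theorem~\ref{comb_model_thm}, and your vertex assignments (root to the collapsed $\Gamma^{\mathrm{hoc}}$ class via Proposition~\ref{zero_rays_prop}, other black vertices to depth-one pre-cuspidal Misiurewicz parameters, white vertices to period-one hyperbolic classes or repelling-fixed-value Misiurewicz tips) together with the co-landing parameter rays are exactly the data that duality encodes. You have simply spelled out the bookkeeping that the paper leaves implicit.
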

\begin{proof}
This follows as the dessin $\cT$ is the tree dual to the first level pre-cuspidal lamination.
\end{proof}

A more refined description of the lamination $\lambda_\mathrm{par}$ can be given using Proposition~\ref{cusp_par_dense_prop}. One can explicitly compute the subset of $\lambda_\mathrm{par}$ given by rays landing at the pre-cuspidal parameters, by pulling back the persistently landing pre-cusp rays. 

We claim that the closure of this countable collection gives the parameter rays that land at non-renormalizable parameters and the parameter rays that separate the combinatorial (respectively, homeomorphic) copies of the Multicorn (respectively, Multibrot) sets from the rest of $\cC(S_\cT)$. Indeed, the tips of Multibrot/Multicorn sets are either combinatorially rigid Misiurewicz parameters, or combinatorially rigid parabolic arcs. By the proof of Proposition~\ref{cusp_par_dense_prop}, such parameters/arcs can be approximated by pre-cuspidal parameters such that the angles of the dynamical rays landing at the free critical value of the pre-cuspidal parameters converge to those of the dynamical rays landing at the free critical value of the limiting Misiurewicz parameter (or in the case of a limiting parabolic parameter, to the angles of the dynamical rays landing at the characteristic parabolic point). It now follows from Propositions~\ref{preper_para_rays_prop}) and~\ref{per_para_rays_prop} that the angles of the parameter rays landing at the approximating pre-cuspidal parameters converge to those of the parameter rays landing at the limiting Misiurewicz parameter (or to the angles of the parameter rays accumulating on the limiting parabolic arc).

Thanks to the above description, one can finally ``fill in" the gaps of this `parameter pre-cuspidal lamination' with copies of tunings of the laminations for appropriate Multibrot and Multicorn sets to produce the lamination $\lambda_{\mathrm{par}}$.

\section{Some explicit examples}\label{examples_sec}

In this final section, we will describe the Shabat polynomials $\pmb{f}$ appearing in the definition of the family $S_{\mathcal{T}}$ for some explicit choices of the rooted tree $(\mathcal{T},O)$.

\subsection{$\mathcal{T}$ is a star-tree}\label{unicritical_subsec} Suppose that $\mathcal{T}$ is a star-tree; i.e., $\mathcal{T}$ has a unique vertex of valence greater than one (colored white). Further let $O$ be any black vertex of $\mathcal{T}$. In this case, each $R\in\mathfrak{L}_{\mathcal{T}}$ has exactly two fully ramified critical points (one in the completely invariant parabolic basin $\mathcal{B}(R)$ and the other in the filled Julia set $\mathcal{K}(R)$). The dessin d'enfant $\mathcal{T}^{\mathrm{aug}}$ of the Shabat polynomial $\pmb{f}$ is given by attaching an edge to $\mathcal{T}$ at $O$.
\begin{figure}[ht!]
\captionsetup{width=0.98\linewidth}
\begin{tikzpicture}
\node[anchor=south west,inner sep=0] at (0.5,0) {\includegraphics[width=0.21\textwidth]{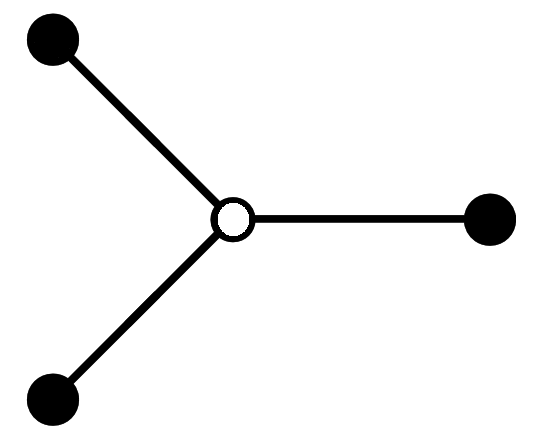}};
\node[anchor=south west,inner sep=0] at (5.5,0) {\includegraphics[width=0.32\textwidth]{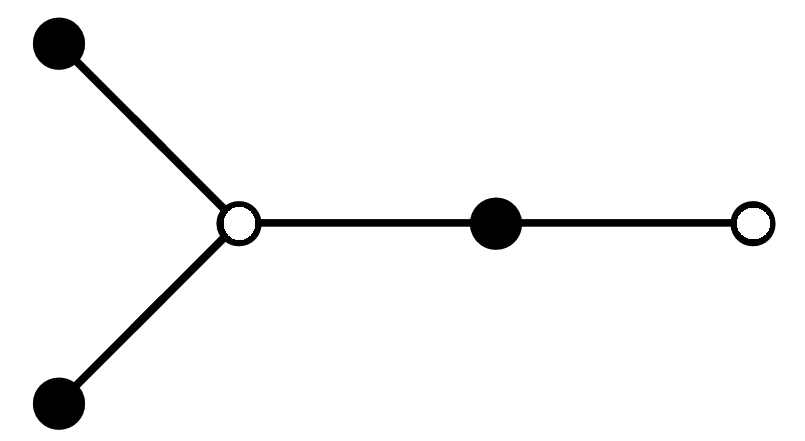}};
\node at (2.94,1.42) {$O$};
\node at (6.3,1.16) {$v_w'$};
\node at (8,1.4) {$v_b$};
\node at (9.32,1.4) {$v_w$};
\node at (2,0.28) {$\mathcal{T}$};
\node at (8,0.28) {$\mathcal{T}^{\mathrm{aug}}$};
\end{tikzpicture}
\caption{A $3-$star tree $\mathcal{T}$ and the corresponding augmented dessin $\mathcal{T}^{\mathrm{aug}}$ are displayed. $\mathcal{T}^{\mathrm{aug}}$ is realized by the Shabat polynomial $\pmb{f}(z)=z^3(4-z)$.}
\label{star_subtree_fig}
\end{figure}
See Figure~\ref{star_subtree_fig} for a $3-$star tree and an explicit formula for the corresponding Shabat polynomial $\pmb{f}$.

\subsection{$\mathcal{T}$ is a chain-tree}\label{chebychev_subsubsec} 
If $\mathcal{T}$ is a chain-tree (i.e., a tree with no branch point), then so is $\mathcal{T}^{\mathrm{aug}}$. Since chain-trees precisely correspond to equivalence classes of Chebyshev polynomials, it follows that $\pmb{f}$ can be chosen to be a Chebyshev polynomial in this case.

\subsection{Different roots for the same dessin $\mathcal{T}$}\label{root_matters_subsec}
Let $\mathcal{T}$ be the tree depicted on the top of Figure~\ref{root_matters_fig}. The two choices of the root point $O$ depicted in that figure yield two different rooted plane bicolored trees $(\mathcal{T},O)$, and hence the associated families of anti-rational maps $\mathfrak{L}_{\mathcal{T}}$ are dynamically different.
The corresponding augmented dessins $\mathcal{T}^{\mathrm{aug}}$ are shown at the bottom of the figure. We note that while these two augmented dessins are isomorphic as abstract trees, they are not homeomorphic as plane trees (i.e., there is no orientation-preserving homeomorphism of the sphere between these trees). In fact, the augmented dessins are realized by nonequivalent Shabat polynomials $\pmb{f}(z)=z^4(z^2-1)$ and $\pmb{f}(z)=z^4(z^2-2z+\frac{25}{9})$.

\begin{figure}[ht!]
\captionsetup{width=0.98\linewidth}
\begin{tikzpicture}
\node[anchor=south west,inner sep=0] at (0.5,0) {\includegraphics[width=0.8\textwidth]{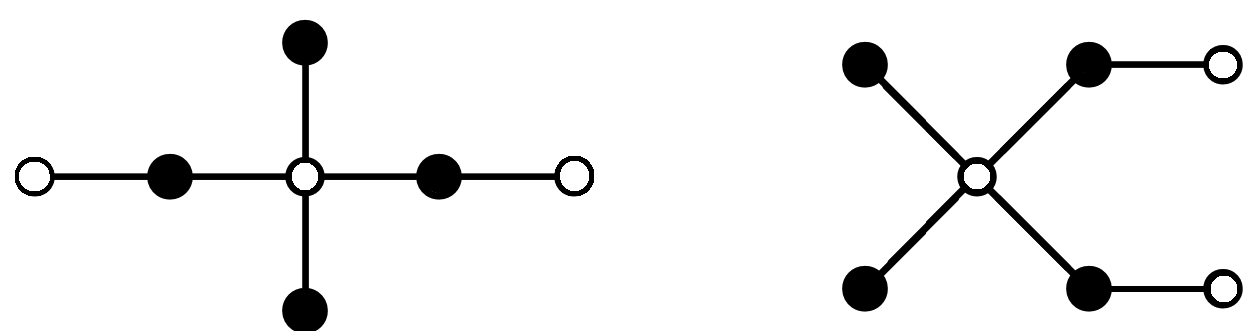}};
\node[anchor=south west,inner sep=0] at (4,2.8) {\includegraphics[width=0.28\textwidth]{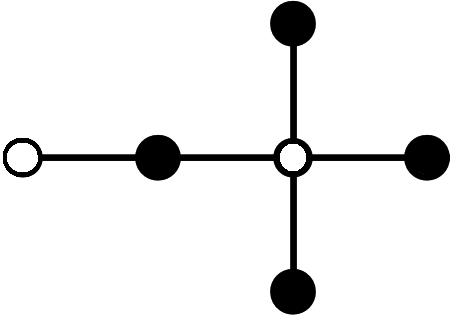}};
\node at (2.66,1.66) {\begin{large}$v_w'$\end{large}};
\node at (4,1.66) {\begin{large}$v_b$\end{large}};
\node at (5.08,1.66) {\begin{large}$v_w$\end{large}};
\node at (8.9,1.32) {\begin{large}$v_w'$\end{large}};
\node at (9.32,0.75) {\begin{large}$v_b$\end{large}};
\node at (10.36,0.75) {\begin{large}$v_w$\end{large}};
\node at (7.28,4.48) {$O$};
\node at (5.8,5.08) {$O$};
\node at (4.2,5.08) {$\mathcal{T}$};
\node at (2,0.28) {$\mathcal{T}^{\mathrm{aug}}$};
\node at (8.4,0.28) {$\mathcal{T}^{\mathrm{aug}}$};
\end{tikzpicture}
\caption{Top: The dessin $\mathcal{T}$ with two choices for the root point $O$ are shown. Bottom: The corresponding augmented dessins $\mathcal{T}^{\mathrm{aug}}$ and the vertices $v_b, v_w, v_w'$ are displayed.}
\label{root_matters_fig}
\end{figure}

\subsection{Different dessins leading to the same augmented dessin}\label{same_aug_tree_subsec} 

Consider the two rooted dessins $\mathcal{T}$ shown on the top of Figure~\ref{same_aug_tree_fig}. They correspond to two dynamically different families of Belyi anti-rational maps $\mathfrak{L}_{\mathcal{T}}$. The associated augmented dessins $\mathcal{T}^{\mathrm{aug}}$ are isomorphic (as plane bicolored trees), and are realized by Shabat polynomial $\pmb{f}(z)=z^3(z^2-2z+\alpha)^2$ (where $\alpha=\frac{1}{7}(34\pm 6\sqrt{21})$). However, the corresponding families of Schwarz reflections $S_\cT$ (which are dynamically different) are obtained from different choices of the black vertex $v_b$ on $\mathcal{T}^{\mathrm{aug}}$.

\begin{figure}[ht!]
\captionsetup{width=0.98\linewidth}
\begin{tikzpicture}
\node[anchor=south west,inner sep=0] at (-3.,4) {\includegraphics[width=0.4\textwidth]{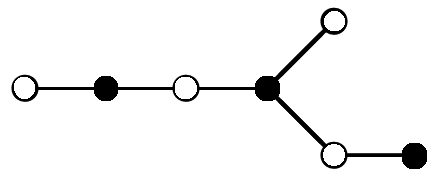}};
\node[anchor=south west,inner sep=0] at (3,4) {\includegraphics[width=0.41\textwidth]{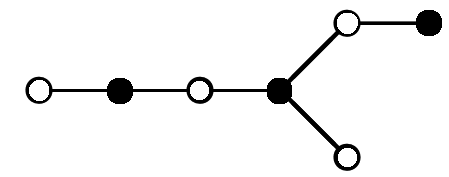}};
\node[anchor=south west,inner sep=0] at (0,0) {\includegraphics[width=0.4\textwidth]{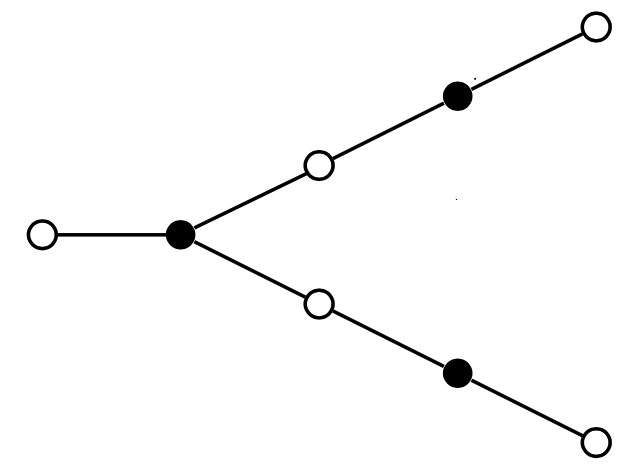}};
\node at (3.7,2.56) {\textcolor{red}{$v_b$}};
\node at (4.7,3.1) {\textcolor{red}{$v_w$}};
\node at (2.5,2.8) {\textcolor{red}{$v_w'$}};
\node at (3.7,1.06) {\textcolor{blue}{$v_b$}};
\node at (4.7,0.5) {\textcolor{blue}{$v_w$}};
\node at (2.5,0.84) {\textcolor{blue}{$v_w'$}};
\node at (1.75,4.6) {$O$};
\node at (8,5.4) {$O$};
\node at (2,0.28) {$\mathcal{T}^{\mathrm{aug}}$};
\node at (-1.66,4.28) {$\mathcal{T}$};
\node at (4.6,4.28) {$\mathcal{T}$};
\end{tikzpicture}
\caption{The rooted dessins $\mathcal{T}$ depicted on the top give rise to the same augmented dessin $\mathcal{T}^{\mathrm{aug}}$, but the corresponding black vertices $v_b$ are different.}
\label{same_aug_tree_fig}
\end{figure}

\end{document}